\newtheorem{thm}{Theorem}[section]
\newtheorem{cor}[thm]{Corollary}
\newtheorem{lem}[thm]{Lemma}
\newtheorem{prop}[thm]{Proposition}
\newtheorem{conj}[thm]{Conjecture}
\theoremstyle{definition}
\newtheorem{defn}[thm]{Definition}
\newtheorem{eg}[thm]{Example}
\theoremstyle{remark}
\newcommand{\mlg}[2]{{\mathcal{M}_{#1 , #2}}}
\newcommand{\tgroup}[1]{\mathbb{G}_{#1}} % torus group acting on the stack
\newcommand{\stackmap}[2]{\tilde{#1}_{#2}} %map associated to stacky fan map #1 with group element #2
\newcommand{\ver}[1]{{{#1}_v}}
\newcommand{\nver}[1]{{{#1}_{nv}}}
\newcommand{\univ}[1]{\mathbf{O}_{#1}(1)}
\newcommand{\linsys}[1]{{\mcl_{#1}}} %the linear system associated to $A$
\newcommand{\simplex}[2]{\Delta^{#1}_{#2}}
\newcommand{\dul}[1]{{\overline{#1}}}
\newcommand{\du}[1]{{\bar{#1}}} % the set of hyperplane supporting functions 
\newcommand{\orb}[1]{\textnormal{orb}_{#1}} % orbit corresponding to a face of a polyhedra
\newcommand{\psec}[1]{{\Sigma (#1 )}}
\newcommand{\psecv}[2]{{\Sigma_{#2} ( #1)}} %secondary polytope in L_A
\newcommand{\plaf}[1]{{\Theta_p ( #1 )}} % Lafforgue polyhedron
\newcommand{\ptlaf}[1]{{\Theta (#1 )}} % Lafforgue polytope
\newcommand{\fans}[1]{{\mcf_{\Sigma (#1 )}}} %Secondary fan
\newcommand{\fanl}[1]{{\mcf_{\plaf{#1}}}} %Lafforgue fan dual to \plaf
\newcommand{\fanlt}[1]{{\mcf_{\ptlaf{#1}}}} % Lafforgue fan dual to \ptlaf
\newcommand{\fan}{{\Sigma}} % Generic fan
\newcommand{\sfan}{{\mathbf{\Sigma }}} %Stacky fan
\newcommand{\laf}[1]{{\mathcal{X}}_{\ptlaf{ #1 }}} %Lafforgue stack %Open Lafforgue stack (before quotienting by \C^* action)
\newcommand{\tlaf}[1]{{\mathcal{X}_{\plaf{#1}}}} %total Lafforgue stack... best one!
\newcommand{\secon}[1]{\mathcal{X}_{\Sigma (#1 )}} %Secondary stack,
\newcommand{\hyp}[1]{\mathcal{Y}_{#1}} % The universal hypersurface in Lafforgue
\newcommand{\fib}[2]{\mathcal{Z}_{#1} (#2 )} % The fiber hypersurface in Lafforgue
\newcommand{\detl}[1]{\textnormal{Det} (#1 )}
\newcommand{\agroup}{\mathbf{G}}
\newcommand{\one}{\mathbf{1}} %To denote the trivial framing
\newcommand{\reduce}[1]{{{#1}^{red}}}
\newcommand{\relaxed}[1]{{{#1}^{rel}}}
\newcommand{\partrans}{{\mathbf{P}}}
\newcommand{\partran}{{\mathbf{P}_\infty}}
\newcommand{\frmee}{\mathbf{T}_\varepsilon}
\newcommand{\frme}{\mathbf{T}}
\newcommand{\maxint}{D_{[m]}} % The maximal intersection of the normal crossing divisor near a point.
\newcommand{\disc}{B} %Small disc about the origin in \C
\newcommand{\divi}{\tilde{D}} %Divisors components of \partial Z
\newcommand{\curve}{\mcc} % range orbifold curve of framed Lefschetz pencil
\newcommand{\vc}{L}
\newcommand{\vt}{T}
\newcommand{\rotat}{\rho }
\newcommand{\core}[1]{\textnormal{Core} (#1 )}
\newcommand{\interior}[2]{\mci^{#2}_{#1}}
\newcommand{\boundflow}{{\tau}}
\newcommand{\gdual}[1]{{#1}^{\vee}}
\newcommand{\ddata}[1]{\mathbf{M}_{#1}} % The degeneration data associated to a maximal degeneration: includes \{c_i\}, \{t_i\}, \{d_i\}, \{m_i\}
\newcommand{\simp}[1]{\textnormal{Simp} (#1 )}
\newcommand{\aff}{{\textnormal{Aff}}}
\newcommand{\lin}{{\textnormal{Lin}}}
\newcommand{\convhull}{{\textnormal{Conv}}}
\newcommand{\Symp}{{\textnormal{Symp}}}
\newcommand{\ext}{{\textnormal{Ext}}}
\newcommand{\rank}{{\textnormal{rk}}}
\newcommand{\Vol}{{\textnormal{Vol}}}
\newcommand{\Hom}{{\textnormal{Hom}}}
\newcommand{\Pic}{{\textnormal{Pic}}}
\newcommand{\Tor}{{\textnormal{Tor}}}
\newcommand{\Divisor}{{\textnormal{Div}}}
\newcommand{\image}{{\textnormal{im}}}
\newcommand{\Core}{{\textnormal{Core}}}
\newcommand{\Cone}{{\textnormal{Cone}}}
\newcommand{\Hess}{{\textnormal{Hess}}}
\newcommand{\R}{\mathbb{R}}
\newcommand{\C}{\mathbb{C}}
\newcommand{\Q}{\mathbb{Q}}
\newcommand{\N}{\mathbb{N}}
\newcommand{\p}{\mathbb{P}}
\newcommand{\mbbT}{\mathbb{T}}
\newcommand{\Z}{\mathbb{Z}}
\newcommand{\mca}{\mathcal{A}}
\newcommand{\mcb}{\mathcal{B}}
\newcommand{\mcc}{\mathcal{C}}
\newcommand{\mcd}{\mathcal{D}}
\newcommand{\mce}{\mathcal{E}}
\newcommand{\mcf}{\mathcal{F}}
\newcommand{\mcg}{\mathcal{G}}
\newcommand{\mci}{\mathcal{I}}
\newcommand{\mcj}{\mathcal{J}}
\newcommand{\mcl}{\mathcal{L}}
\newcommand{\mco}{\mathcal{O}}
\newcommand{\mcr}{\mathcal{R}}
\newcommand{\mcs}{\mathcal{S}}
\newcommand{\mct}{\mathcal{T}}
\newcommand{\mcu}{\mathcal{U}}
\newcommand{\mcv}{\mathcal{V}}
\newcommand{\mcw}{\mathcal{W}}
\newcommand{\mcx}{\mathcal{X}}
\newcommand{\mcy}{\mathcal{Y}}
\newcommand{\mcz}{\mathcal{Z}}
\newcommand{\mbG}{\mathbf{G}}
\newcommand{\mbF}{\mathbf{F}}
\newcommand{\mft}{\mathfrak{t}}
\begin{document}

\title[Symplectic relations and degenerations of LG models]{Symplectomorphism group relations and degenerations of Landau-Ginzburg models}

%    Remove any unused author tags.
%    author one information
\author[C. Diemer]{Colin Diemer}
\address{Department of Mathematics, University of Miami, Coral Gables, FL, 33146, USA
\indent Fakult\"at f\"ur Mathematik , Universit\"at Wien, 1090 Wien, Austria}
%\curraddr{}
\email{diemer@math.miami.edu}
%\thanks{colin}

%    author two information
\author[L. Katzarkov]{Ludmil Katzarkov}
\address{Department of Mathematics, University of Miami, Coral Gables, FL, 33146, USA 
\indent Fakult\"at f\"ur Mathematik , Universit\"at Wien, 1090 Wien, Austria}
\email{l.katzarkov@math.miami.edu}
%\thanks{NSF Grant DMS-0901330, NSF FRG Grant DMS-0854977}

%   author three information

\author[G. Kerr]{Gabriel Kerr}
\address{Department of Mathematics, Kansas State University, Manhattan, KS, 66502, USA
\indent Fakult\"at f\"ur Mathematik , Universit\"at Wien, 1090 Wien, Austria}
%\curraddr{}
\email{gdkerr@math.miami.edu}
%\thanks{My grant}

\subjclass[2010]{Primary 53D37; Secondary 53D05}
%    For articles to be published after 1 January 2010, you may use
%    the following version:
%\subjclass[2010]{Primary }

%\keywords{}

%\date{}

%\dedicatory{}

\begin{abstract}
We describe explicit relations in the symplectomorphism groups of hypersurfaces in toric stacks. To define the elements involved, we construct a proper stack of these hypersurfaces whose boundary represents stable pair degenerations. Our relations arise through the study of the one dimensional strata of this stack. The results are then examined from the perspective of homological mirror symmetry where we view sequences of relations as maximal degenerations of Landau-Ginzburg models. We then study the $B$-model mirror to these degenerations, which gives a new mirror symmetry approach to the minimal model program. 
\end{abstract}

\maketitle

\section{Introduction}

%In this paper we consider a situation where one retains explicit combinatorial control. Here the relevant moduli space is a moduli space of Landau-Ginzburg models (LG models), and the mirror symmetry is for toric Deligne-Mumford stacks. We take a minimalist approach to moduli of LG models, developing only aspects which serve the geometric goals of this paper, and defer to \cite{KKPS} for a systematic treatment.
%Still, we discover a wide range of unexpected geometric applications, and observe wall-crossing phenomena manifesting as explicit symplectomorphisms on hypersurfaces. We study the geometry of these symplectomorphisms and relations between them in the symplectic mapping class group.

The mapping class groups of punctured Riemann surfaces have been studied from a variety of perspectives for many years. Following the ideas of Hatcher, Thurston and others, Wajnryb gave a finite presentation for these groups \cite{Wajnryb}. Generalizations of these results to diffeomorphism groups in higher dimensions is much less tractable; moreover, if the manifold is equipped with a symplectic structure, there exist subtle distinctions between the group of diffeomorphisms and the group of symplectomorphisms \cite{seidelsymp}. However, by considering symplectic manifolds in the context of toric or tropical geometry, structures which produce meaningful relations in the symplectomorphism group arise. This paper aims to introduce a systematic approach for studying such generators and relations in appropriate symplectomorphism groups, valid in all dimensions.

Let $\mathcal{Y}$ denote a suitably generic hypersurface in a toric variety (or toric orbifold) $\mathcal{X}$. Note that $\mathcal{Y}$ has a  boundary divisor $\partial \mathcal{Y}$ obtained by the intersection with the toric boundary, and $\mathcal{Y}$ may be viewed as a symplectic orbifold $(\mathcal{Y} ,\omega )$ if $\mathcal{X}$ is itself equipped with a symplectic form. We then introduce generators and relations for a subgroup $\agroup$ of the symplectic mapping class group $\pi_0 (\Symp (\mathcal{Y} , \partial \mathcal{Y}))$. %Our method is to consider representations of the fundamental group of a stack $ \mcx^\circ = (\mcx - \mce)$ which can be thought of as a toric moduli space for $(\mcy, \omega)$.  
Our method is to consider a stack $\mathcal{V}$ whose points correspond to (orbifold) smooth hypersurfaces $\mcy$ moving in a fixed linear system on $\mcx$ and which obey appropriate transversality conditions with respect to the toric boundary of $\mcx$. 
%Here $\mcx$ is a proper stack over $\C$ and the determinant locus, $\mce$, is a singular, reducible hypersurface which includes the discriminant as a component. The other components of $\mce$ classify hypersurfaces which obtain singularities on their boundary. Removing $\mce$, 
We find a symplectic connection on the universal hypersurface  over $\mathcal{V}$ and employ symplectic parallel transport:
\begin{equation} \label{eq:paralleltransport}
\partrans : \Omega_* ( \mcv ) \to \Symp (\mathcal{Y} , \partial \mathcal{Y}).
\end{equation}
where $\Omega_* $ is the based loop space.

Taking the group $\agroup = \pi_0 (\image (\partrans ))$, we find generators and relations by studying them in $\pi_1 (\mathcal{V})$. The moduli space $\mathcal{V}$ is constructed following the techniques of Alexeev  \cite{alexeev02}, and is studied via the combinatorial methods of Gelfand, Kapranov, Zelevinsky \cite{GKZ}. 

Let $Q$ be an integral polytope and $A\subset Q$ a subset of lattice points such that the convex hull of $A$ is $Q$. This data defines a linear system on the toric variety $X_Q$. We construct a toric stack $\secon{A}$ which has the affine toric DM substack $\mcv_A$ containing $\mcv$. In fact, $\mathcal{V}$ arises as the complement of a particular discriminant locus in $\mcv_A \subset \secon{A}$. Unfortunately, there are precious few cases where the fundamental groups of complements of discriminant loci are completely understood (see e.g. \cite{DolgachevLibgober, Lonne}). We bypass this difficulty by considering only the one-dimensional strata of the toric boundary of $\secon{A}$. Combinatorially, these strata correspond to the circuits of $A$ \cite{GKZ}. 

In the case of curves, the generators of the mapping class group can be taken to be Dehn twists and braids. One expects a more complicated set of generators to occur in higher dimensions. Indeed, the generators we obtain fall into two different classes: hypersurface degeneration monodromy and stratified Morse function monodromy. The former refers to monodromy around the points in $\partial \secon{A}$. Combinatorially, this means monodromy obtained from degenerations of hypersurfaces which correspond to subdivisions of $Q$.  This monodromy was studied in the case of a maximal triangulation by Abouzaid \cite{abouzaid} in terms of tropical geometry. The geometric description of these symplectomorphisms is obtained by first breaking the hypersurface up into its degenerated components and then convolving along the degenerating vanishing cycle to obtain a global map. For curves, this amounts to a combination of a Dehn twist and a finite order map. The other generators corresponding to stratified Morse function monodromy arise from monodromy around the discriminant locus in $\secon{A}$. %A smooth point $t \in \mcx_{ \Sigma (A)}$ where the fiber $\mcz$ is smooth, but not transversely intersecting the boundary, also yields a symplectic monodromy map. 
The local model for monodromy here is a generalization of the usual monodromy around a Morse singularity to that around a stratified Morse singularity as defined in Goresky and Macpherson's work \cite{goresky}. %In order to obtain a well-defined symplectic monodromy map in this situation, we must isotope $\omega$ to a singular symplectic form with singular set along some components of the boundary. This isotopy can be defined on all of $\laf{A}$, and conjugating the monodromy map by it, we obtain the modified symplectic parallel transport map. 
Its description is that of a generalized braid about a Lagrangian submanifold which is a join of a sphere and simplex. This gives twists about Lagrangian discs and balls, as well as other interesting 
joins, and thus are generalizations of Seidel's symplectic Dehn twists about Lagrangian spheres \cite{seideldehn}. We emphasize that these generalized spherical twists come from vanishing loci which are not topological spheres, but which actually appear to be quite natural from the contributing toric geometry.

%In this setting we already observe a rich interplay between spherical twists, degeneration monodromy and finite group actions. %The tropical picture illustrates these circuits as  Morse functions related to circuits. For the most elementary cases in dimension $1$, we recover the lantern and star relations. 
We summarize the above discussion with the following abridged version of Theorem~\ref{thm:circuitrelation} in Section~\ref{sec:circuit1}.

\begin{thm}
  Let $A$ be a circuit affinely spanning $\Z^d$, $X_A$ the associated toric stack, and $\mathcal{Y}\subset X_A$  a general hypersurface in the the linear system given by $A$. Then there are symplectomorphisms $T_0, T_1, T_\infty \in \Symp (\mathcal{Y} , \partial \mathcal{Y} )$ with $T_0$ and $T_\infty$ hypersurface degeneration monodromy maps and $T_1$ the monodromy about a stratified Morse singularity. In the mapping class group $\pi_0 (\Symp (\mathcal{Y}, \partial \mathcal{Y}))$, these satisfy the relation:
\begin{equation} T_0 T_1 T_\infty = \boundflow (\mathbf{t} ) \end{equation}
where $\boundflow (\mathbf{t})$ is a rotation about the boundary $\partial \mathcal{Y}$.
\end{thm}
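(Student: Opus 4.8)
The plan is to reduce the relation $T_0 T_1 T_\infty = \boundflow(\mathbf{t})$ to a statement about $\pi_1(\mcx^\circ)$ for a circuit $A$, where $\mcx^\circ = \secon{A} - \mce$, and then push it through the parallel transport homomorphism $\partrans$ of \eqref{eq:paralleltransport}. First I would establish the combinatorial geometry of the circuit case: for $A$ a circuit affinely spanning $\Z^d$, the secondary polytope $\psec{A}$ is a segment, so $\secon{A}$ is a one-dimensional toric stack — a $\p^1$ with two stacky points corresponding to the two triangulations of $A$. The determinant locus $\mce$ meets this $\p^1$ in exactly three points: the two torus-fixed points (the boundary $\partial \secon{A}$, over which we get the toric degeneration monodromies) and one interior point of the discriminant $\disc$ (where the fiber develops a stratified Morse singularity against the boundary). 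Hence $\mcx^\circ$ is a thrice-punctured sphere, $\pi_1(\mcx^\circ) = \langle \gamma_0, \gamma_1, \gamma_\infty \mid \gamma_0 \gamma_1 \gamma_\infty = 1\rangle$, with $\gamma_0, \gamma_\infty$ the loops around the stacky/boundary points and $\gamma_1$ around the discriminant point.

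Next I would identify the images under $\partrans$. By the discussion preceding the theorem, $\partrans(\gamma_0)$ and $\partrans(\gamma_\infty)$ are the hypersurface degeneration monodromy maps $T_0, T_\infty$ — obtained by breaking $\mcy$ along the degenerating vanishing cycle associated to each of the two triangulations of $A$, applying the unitary map on components, and convolving. Here I need the identification of $\hyp{A}$ near $\partial\secon{A}$ from the Lafforgue-type construction of $\laf{A}$, so that the local monodromy is computed on an explicit toric model. Then $\partrans(\gamma_1) = T_1$ is the stratified Morse monodromy, using the isotopy of $\omega$ to the singular symplectic form supported on boundary components (defined on all of $\laf{A}$, as asserted in the excerpt) so that the transport map is well-defined; its local form is the generalized braid about the join-type Lagrangian. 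The relation $\gamma_0\gamma_1\gamma_\infty = 1$ in $\pi_1(\mcx^\circ)$ would then give $T_0 T_1 T_\infty = \one$ — except that $\partrans$ is only defined on $\Omega_*(\mcx^\circ)$, i.e. on based loops, and the basepoint fiber carries its own structure: transporting around a contractible loop in the coarse space but a nontrivial loop "at the basepoint" (e.g. a loop in the residual gerbe, or the deck-transformation ambiguity from the finite group $G$ acting at the stacky structure) contributes the boundary rotation $\boundflow(\mathbf{t})$. So the honest computation gives $T_0 T_1 T_\infty = \boundflow(\mathbf{t})$, and I would pin down $\boundflow(\mathbf{t})$ as exactly this residual transport — a rotation of $\mcy$ in collar coordinates about $\partial\mcy$, determined by the character of $\mathbf{t}$ recording how the universal section's phase winds over the loop $\gamma_0\gamma_1\gamma_\infty$.

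The main obstacle will be the careful bookkeeping of basepoints and framings: showing that the product of the three local monodromies, each computed in its own adapted coordinate chart and with respect to the isotoped (singular) symplectic form, glues to a globally defined element of $\Symp(\mcy,\partial\mcy)$ equal to the explicit boundary rotation, rather than merely being trivial up to isotopy or up to boundary behavior. Concretely I expect to need: (i) a compatibility lemma that the singular-form isotopy used to define $T_1$ does not affect $T_0, T_\infty$ up to the relevant isotopy class; (ii) an explicit trivialization of $\hyp{A}$ over a disk containing all three special points with matching collar structure on $\partial\mcy$; and (iii) a computation of the total phase monodromy of the universal section of $\mathbf{O}_A(1)$ restricted to this $\p^1$, which is where the circuit data (the vector of integer relations among the points of $A$) enters and produces the specific rotation $\boundflow(\mathbf{t})$. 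Everything else — the thrice-punctured-sphere presentation of $\pi_1$, the local models for the individual monodromies — is either standard or has been set up in the earlier sections, so the theorem follows once the gluing and the phase count are in place.
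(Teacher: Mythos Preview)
Your overall architecture is right and matches the paper: for a circuit $A$ the secondary stack $\secon{A}$ is a weighted $\p^1$ (Proposition~\ref{prop:circuitsec}), the determinant $\mce_A$ hits it in exactly three points (the two toric-degeneration boundary points and a single stratified-Morse discriminant point), so $\pi_1(\secon{A}-\mce_A)$ is free on two generators with $\gamma_0\gamma_1\gamma_\infty = 1$, and pushing this through parallel transport gives the relation. That much is exactly how the paper proceeds.

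The gap is in your account of where $\boundflow(\mathbf{t})$ comes from. You attribute it to ``a nontrivial loop at the basepoint,'' residual gerbe structure, or ``the total phase monodromy of the universal section of $\univ{A}$ restricted to this $\p^1$.'' None of these is the mechanism. The boundary rotation arises from the $\partial$-\emph{framing} change of Proposition~\ref{prop:frmechnge}: the product $\gamma_0\gamma_1\gamma_\infty$ is trivial in $\pi_1$ of the base, but its image under the framed transport $\partran$ lands in $\pi_0(\Symp^{\mbF}(\mcz,\partial\mcz))$ as the element $\varrho(\gamma) = \sum_i a_i e_i$, where each $a_i$ is the Chern number $\int_{\curve_i} s_i^*(c_1(N_{\mcy}\divi_i))$ of the normal bundle of the $i$-th \emph{rigid boundary divisor} inside the total space, restricted to a section over the base. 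This has nothing to do with stacky isotropy at the basepoint or with the phase of $s_A$; it is the twisting of the collar neighborhood of $\partial\mcy$ as the fiber moves over the closed base curve.

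Concretely, the paper's proof of Theorem~\ref{thm:circuitrelation} is a single Chern-class computation: each rigid boundary divisor corresponds to a facet $F_{ij}$ of $Q$ with primitive $b_{ij} = r_{b_{ij}}^{-1}(a_j e_{\alpha_i}^\vee + a_i e_{\alpha_j}^\vee)$ (from Proposition~\ref{prop:circuitlaf}), its star in $\laf{A}$ is $\p(a_i/r_{b_{ij}}, a_j/r_{b_{ij}}) \times D_{ij}$, and the normal bundle restricted to a $\p(a_i/r_{b_{ij}}, a_j/r_{b_{ij}})$-section is $\mco(-1)$, whose degree on a weighted $\p^1$ is $-1/(d_1 d_2)$. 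This yields the entry $-\gcd(a_i,a_j)/\mathrm{lcm}(a_i,a_j)$ of $\mathbf{t}$ for each pair $(i,j)$ with $a_i>0$, $a_j<0$. Your items (i) and (ii) are handled by the $\partial$-framed Lefschetz pencil machinery already set up in Section~\ref{subsec:flp}; your item (iii) should be replaced by this normal-bundle Chern-class computation, not a computation involving $\univ{A}$.
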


For brevity, the above theorem was only stated for the case of a circuit itself; this is a very small class of toric varieties. In order to put the generators and relations into a symplectomorphism group of any smooth hypersurface in a toric variety, we address the process of regeneration of circuits. This allows us to import relations obtained from the one-dimensional boundary strata of $\secon{A}$ into the interior, and thus to study the topology of general hypersurfaces in toric varieties. %As explained in \cite{BFS}, the one dimensional boundary strata of $\secon{A}$ correspond to circuits supported on $A$. After regenerating these circuits, 
In this way, we obtain a host of geometrically meaningful relations between generators in the subgroup $\agroup$. More specifically, by taking a general map $\phi : \p^1 \to \secon{A}$ and pulling back the universal hypersurface gives a framed Lefschetz pencil over $\p^1$. We describe a presentation of the monodromy group associated to such pencils by performing an isotopy near the boundary of $\secon{A}$ and relating the bubbled components to circuits. This gives a combinatorial description not only of the groups involved, but their action on the hypersurface.

A supplemental goal of this work is to study these ideas in the context of homological mirror symmetry, and more specifically, to give applications to the study of Landau-Ginzburg (short: LG) models and their $A$-model Fukaya-Seidel categories. The mirrors of Fano toric varieties are open subsets of certain pencils of hypersurfaces in toric varieties \cite{givental94, horivafa}. Our perspective takes a fiberwise compactification of such a LG model as a curve $i: \mathcal{C} \to \secon{A}$.  More precisely, the mirrors of Fano toric varieties which arise from the Hori-Vafa construction are obtained as compactifications of one parameter torus orbits in $\secon{A}$. Following results of \cite{KSZ}, we observe that the coarse moduli space of these LG models has a natural compactification as a toric variety whose moment polytope is the monotone path polytope of $\psec{A}$ \cite{BS1, BS2}. %By analogy to the case of $\secon{A}$ where the fixed points correspond to large volume complex structure limits, the fixed points of this toric variety  correspond to maximal degenerations of the LG model. Every such fixed point is related to a vertex of the monotone path polytope which itself gives an edge path on $\psec{A}$. As mentioned above, 
The vertices of the monotone path polytope of $\Sigma (A)$ correspond to particular sequences of circuits on $A$ or equivalently to sequence of edges on $\psec{A}$. One main application of our work is to use any such  sequence to describe an associated semi-orthogonal decomposition of the Fukaya-Seidel category of the LG model. %This is consistent with the philosophy of mirror symmetry for Calabi-Yau varieties that mirror symmetry takes place near a large complex structure limit point; here we are considering a complex moduli space of LG models. 

For the mirror description, i.e. the corresponding structure on a mirror toric variety, this semi-orthogonal decomposition complements recent developments in the study of derived categories of toric varieties. Work of Bondal-Orlov \cite{bondal} and Kawamata \cite{kawamataderived} demonstrated relations between birational transformations coming from the minimal model program and semi-orthogonal decompositions. One goal of this paper is thus to supply a mirror $A$-model interpretation of Kawamata's work.  In the toric case, the equivariant birational geometry is well-understood combinatorially, going back to the work of Reid \cite{Reid}, and is also dictated by the combinatorics of secondary polytopes. We show concretely that degenerations of LG models mirror to a toric variety $\mcx_\Sigma$ correspond bijectively to certain runs of the minimal model program for $\mcx_\Sigma$. The particular runs are those given by running the minimal model program with scaling. As a consequence we obtain a concise description of the mirrors of toric flips and toric divisorial contractions in terms of circuits. We conjecture that there is an equivalence of categories which restricts to this identification of semiorthogonal components, giving a clear picture of the geometry underlying homological mirror symmetry for toric DM stacks. We give evidence for this conjecture by computing ranks in $K$-theory, extending results of Borisov-Horja \cite{borisovhorja}. %Moreover, our work implies that one dimensional strata on the bottom of the monotone path polytope correspond to Mori fiber spaces, two dimensional strata to Sarkisov links, three dimensional strata to relations between Sarkisov links, and so on. 

We summarize the relationship between the  minimal model sequences of $\mcx_\Sigma$ and the mirror $A$-model LG degenerations on $\mcx_\Sigma^{mir}$ in Theorem~\ref{thm:mmps}, which in simple cases reduces to the following statement.

\begin{thm} The set of regular minimal model sequences for a Fano toric stack $\mcx_\Sigma$ are in bijective correspondence with the set of maximal degenerations of the LG models on the mirror stack $\mcx_\Sigma^{mir}$. Both are in bijective correspondence with vertices of a monotone path polytope $\Sigma_{\rho} (\psec{A})$.
\end{thm}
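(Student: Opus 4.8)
\emph{Proof proposal.} The plan is to prove both bijections by routing through the middle object, the vertices of the monotone path polytope $\Sigma_\rho(\psec{A})$, where $\rho$ is the linear functional on $\psec{A}$ determined by the Fano (anticanonical) polarization of $\mcx_\Sigma$. First I would recall, following Billera--Sturmfels \cite{BS1,BS2}, that the vertices of $\Sigma_\rho(\psec{A})$ are in bijection with the coherent $\rho$-monotone edge paths on the secondary polytope: such a vertex is the image of a minimal face of the fiber polytope lying over the source of the segment $\rho(\psec{A})$, and these faces are exactly the cellular strings that refine to $\rho$-monotone paths. Next, using \cite{BFS}, each edge of $\psec{A}$ carries a circuit $Z \subset A$, and traversing that edge realizes the associated bistellar flip between regular triangulations; hence a $\rho$-monotone edge path is precisely a sequence of circuits $(Z_1,\dots,Z_k)$ carrying the coherent triangulation at the $\rho$-source to the one at the $\rho$-sink, each flip increasing $\rho$.

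Then I would match such sequences with runs of the toric MMP for $\mcx_\Sigma$. By Reid \cite{Reid}, an extremal contraction of a $\Q$-factorial toric DM stack is governed by a wall in the Gelfand--Kapranov--Zelevinsky (secondary) fan, i.e.\ by a circuit, and the resulting flip or divisorial contraction is exactly the bistellar modification attached to that circuit; the Fano hypothesis guarantees the relevant polarization is proportional to $-K_{\mcx_\Sigma}$, so that the requirement "the step strictly decreases the appropriate $K$-degree" translates to "$\rho$ strictly increases along the edge." Termination, and the identification of the $\rho$-sink with a Mori fibre space, follow because the terminal one-dimensional stratum of the edge path corresponds to a bottom face of $\Sigma_\rho(\psec{A})$, which by the abridged circuit theorem of section \ref{sec:circuit1} is a Mori fibration. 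This yields the bijection between (1) and (3).

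For the bijection between (2) and (3), I would use the description of sharpened pencils as compactified one-parameter orbits $i : \mcc \to \secon{A}$, together with the result of \cite{KSZ} that the coarse moduli space of these LG models compactifies to a toric variety whose moment polytope is $\Sigma_\rho(\psec{A})$. A maximal degeneration of the superpotential is, by definition, a torus-fixed point of this compactification, hence a vertex of $\Sigma_\rho(\psec{A})$; unwinding the degeneration recovers the underlying edge path on $\psec{A}$, hence the circuit sequence, and the mirror sequence is read off term by term from the circuits via the dictionary of section \ref{sec:circuit1}. Composing the two bijections, and checking that the two constructions of the circuit sequence from a common vertex agree, proves the theorem.

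The main obstacle is the middle step: verifying that the \emph{combinatorial} monotonicity of an edge path coincides on the nose with the \emph{birational} validity of an MMP run --- that every $\rho$-increasing elementary modification is $K$-negative (a genuine flip, never a flop or an antiflip), and that divisorial contractions occur exactly where the circuit drops a vertex from the support of the triangulation. This forces one to pin down the sign conventions relating $\rho$ to $-K_{\mcx_\Sigma}$ under the mirror correspondence, and to check compatibility with the stacky structure: since one must work with toric DM stacks rather than their coarse spaces (as in Kawamata \cite{kawamataderived}), the discrepancy computations have to be carried out on the level of stacky fans, and one must confirm that the bistellar flips attached to circuits remain within the category of stacks on which the earlier symplectic and LG constructions were made.
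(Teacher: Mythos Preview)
Your roadmap matches the paper's: both route through the vertices of $\Sigma_{\rho_{\alpha_0}}(\psec{A})$, invoking Proposition~\ref{prop:mppvertices} (your KSZ step) for the bijection with maximal degenerations, and the circuit description of edges of $\psec{A}$ for the bijection with MMP runs. The difference is that the paper dispatches in one stroke what you flag as the ``main obstacle,'' and you should absorb that argument rather than leave it open.

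The point is to read $\rho_{\alpha_0}$ concretely. By the vertex formula~\eqref{eq:secvert}, $\rho_{\alpha_0}(\phi_T)=e_{\alpha_0}^\vee(\phi_T)$ is exactly the total volume of those simplices of $T$ that contain $\alpha_0$. Hence $\rho_{\alpha_0}(\phi_T)=\Vol(Q)$ iff every maximal simplex of $T$ contains $\alpha_0$ (the nef Fano endpoint), and $\rho_{\alpha_0}(\phi_T)=0$ iff $\alpha_0$ is not a vertex of $T$ at all. Along an edge of $\psec{A}$ given by a circuit $C$, strict $\rho_{\alpha_0}$-increase forces $\alpha_0\in C_-$; whether $\alpha_0$ survives as a vertex after the modification is then decided purely by $q=|C_-|$. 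At every interior node of the monotone path one has $\rho_{\alpha_0}>0$, hence $q\ge 2$, hence a divisorial contraction or flip; at the terminal node $\rho_{\alpha_0}=0$, hence $q=1$, hence a Mori fiber space. The birational dictionary ``signature $(p,1;r)$, $(p,2;r)$, $(p,q;r)$ with $q>2$ $\leftrightarrow$ Mori fibration, divisorial contraction, flip'' is set up in the paragraphs immediately preceding the theorem, already at the level of stacky fans, so no separate discrepancy or sign computation is needed: your worry about ``$K$-negative vs.\ flop vs.\ antiflip'' is resolved by the tautology that $\alpha_0\in C_-$ is precisely the $\rho_{\alpha_0}$-increasing orientation of the circuit. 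One correction: the Mori-fibration identification does not come from Section~\ref{sec:circuit1} as you write, but from this signature classification of extremal contractions.
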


%Although our main application concerns mirrors of toric stacks, recent work of Coates, Corti, Galkin, Golyshev, and Kaspryzk  \cite{corti}, \cite{cortigoly} has expanded our knowledge of LG mirrors of non-toric Fano threefolds. We note that these LG models are constructed, via mirror symmetry, from GKZ hypergeometric equations, which in turn are intimately related to secondary polytopes. We expect that a more general theory of moduli of LG models in connection with these works, would also shed light on the the birational geometry of non-toric Fano threefolds, and in particular the chamber decompositions of their effective cones. \vspace{2mm}

{\em Acknowledgements:} Support was provided by NSF Grant DMS-0901330, NSF FRG Grant DMS-0854977, FWF
Grant 24572-N25, and an ERC GEMIS Grant. We would like to thank Denis Auroux, Matthew Ballard, Alessio Corti, David Favero, Paul Horja, Maxim Kontsevich, Tony Pantev and Paul Seidel for helpful comments and conversation during the preparation of this work.

\section{The circuit relation} 

This section will give one main result of this paper which is a
detailed description of a class of relations that occur in the
symplectic mapping class groups of hypersurfaces in a toric stacks. These relations involve a combination of stable pair degeneration monodromy and twisting about a stratified Morse singularity, both of whose local models are investigated in Appendix~\ref{sec:pfsymp}. After stating the relation, we work through three examples in dimension $1$. Finally, we conclude with a brief investigation of regenerations which incorporate various relations into a finite presentation.

\subsection{\label{sec:circuit1} Circuit stacks}
This section will be concerned with establishing a relation between certain elements of the mapping class group of a hypersurface $\mcz$ in the toric stack $\mcx_Q$ where the $Q$ is the convex hull of what is known as an affine circuit $A$. The elements in this relation arise as monodromy transformations around singular values of a function $\pi$. This function appears naturally as the universal hypersurface over a moduli stack of hypersurfaces. In particular, in Appendix~\ref{sec:toricstack} we define a compactified moduli space $\secon{A}$ of hypersurfaces in $\mcx_Q$ and a total space $\laf{A}$ with a universal hypersurface $\hyp{A}$. The stacks $\secon{A}$ and $\laf{A}$ are both toric and are referred to as the secondary and Lafforgue stacks respectively. There is an equivariant toric morphism $\pi : \laf{A} \to \secon{A}$ which restricts to the universal hypersurface $\pi : \hyp{A} \to \secon{A}$. The fibers of this map represent hypersurfaces associated to sections of a natural line bundle $\mco_A (1)$ over $\mcx_Q$, and their degenerations. As we will see, there are three critical points around which symplectic parallel transport yields interesting symplectomorphisms of the fiber.  We will refer to Appendices~\ref{sec:toric} and \ref{sec:pfsymp} for the important details concerning the construction and properties of toric moduli stacks and symplectic mapping class groups respectively.

We begin by recalling the definition and basic properties of a circuit
from \cite[Chapter~7.1.B]{GKZ} and detailing the Lafforgue and secondary stack of
a circuit. The map $\pi : \laf{A} \to \secon{A}$ will be also be
reexpressed in concrete terms and its monodromy will be studied. For what follows, we will assume that $\Lambda \cong \Z^d$ is a
rank $d$ affine lattice.

\begin{defn} A circuit $A \subset \Lambda$ is an affinely dependent set such that every proper subset is affinely independent. \end{defn}

We will say that a subset $A \subset \Lambda$ has rank $r$ if $\rank (\aff_\Z (A)) = r$ where $\aff_\Z (A)$ is the integral affine span of $A$. A circuit is non-degenerate if its rank equals that of $\Lambda$. In what follows, we will consider both non-degenerate and degenerate circuits.

\begin{figure}[t]
\begin{picture}(0,0)%
\includegraphics{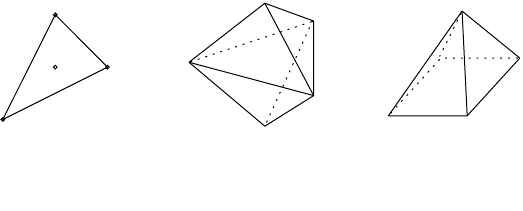}%
\end{picture}%
\setlength{\unitlength}{4144sp}%
\begingroup\makeatletter\ifx\SetFigFont\undefined%
\gdef\SetFigFont#1#2#3#4#5{%
  \reset@font\fontsize{#1}{#2pt}%
  \fontfamily{#3}\fontseries{#4}\fontshape{#5}%
  \selectfont}%
\fi\endgroup%
\begin{picture}(3974,1591)(967,-1244)
\put(789,-950){\makebox(0,0)[lb]{\smash{{\SetFigFont{10}{12.3}{\rmdefault}{\mddefault}{\updefault}{\color[rgb]{0,0,0}$\sigma_A=(1,3)$}%
}}}}
\put(789,-1178){\makebox(0,0)[lb]{\smash{{\SetFigFont{10}{12.3}{\rmdefault}{\mddefault}{\updefault}{\color[rgb]{0,0,0}$\mathbf{c}=(3,-1,-1,-1)$}%
}}}}
\put(2276,-954){\makebox(0,0)[lb]{\smash{{\SetFigFont{10}{12.3}{\rmdefault}{\mddefault}{\updefault}{\color[rgb]{0,0,0}$\sigma_A=(2,3)$}%
}}}}
\put(3847,-954){\makebox(0,0)[lb]{\smash{{\SetFigFont{10}{12.3}{\rmdefault}{\mddefault}{\updefault}{\color[rgb]{0,0,0}$\sigma_A=(2, 2; 1)$}%
}}}}
\put(3847,-1169){\makebox(0,0)[lb]{\smash{{\SetFigFont{10}{12.3}{\rmdefault}{\mddefault}{\updefault}{\color[rgb]{0,0,0}$\mathbf{c}=(-1,-1,1,1,0)$}%
}}}}
\put(2284,-1177){\makebox(0,0)[lb]{\smash{{\SetFigFont{10}{12.3}{\rmdefault}{\mddefault}{\updefault}{\color[rgb]{0,0,0}$\mathbf{c}=(3,3,-2,-2,-2)$}%
}}}}
\end{picture}%
\caption{\label{fig:circ0} Examples of extended circuits.}
\end{figure}

\begin{defn} An extended circuit is a subset $A \subset \Lambda$ such that $|A| = d + 2$ and $\rank (\aff (A)) = d$.
\end{defn}
Alternatively, an extended circuit is an affinely spanning subset
$A = \{a_0, \ldots, a_{d + 1} \}$ whose lattice of
affine relations has rank $1$, generated by $\mathbf{c} = (c_0,
\ldots, c_{d + 1} ) \in \Z^{d + 2}$ where
\begin{align} \label{eq:affrelation} 
\begin{split}
\sum_{i = 0}^{d + 1} c_i a_i & = 0 , \\
\sum_{i = 0}^{d + 1} c_i & =  0 .
\end{split}
\end{align}
Given the relation \eqref{eq:affrelation}, we may write $A$ as the disjoint union\gls{apm} $A =
A_- \cup A_0 \cup A_+$ where $a_i \in A_\pm$ if and only if $\pm c_i >
0$ and $a_i \in A_0$ if and only if $c_i = 0$. The signature of an
extended circuit is defined to be\gls{circsign} $\sigma (A) = (|A_+|, |A_-|; |A_0| )$. When $A$ is a circuit, it is clear
that $|A_0| = 0$ and we then write $\sigma (A) = (|A_+| , |A_-
|)$. The signature does depend on the sign of $\mathbf{c}$
up to transposing $|A_+|$ and $|A_-|$.

We will call a marked polytope $(Q, A)$ a circuit, or an extended circuit, if $A$ is one. Our convention is not to take $\mathbf{c}$ as a primitive element, but to force the greatest common divisor of the $c_i$ to be $|K_{{\mathcal{A}}}|$, where $K_{{\mathcal{A}}}$ is defined in equation \eqref{eq:Aex}. This then implies the volume of $Q$ is 
\begin{equation*}\gls{vola} v_A := \Vol (Q) = \pm \sum_{a_i \in A_\pm } c_i. \end{equation*}
where we normalize the volume of the standard simplex to $1$.

We note that an extended circuit is not generally a circuit unless $A_0 = \emptyset$. This motivates the following definition.
\begin{defn} \label{defn:core}\gls{corea}
	The core of an extended circuit $A$ is the circuit 
	\begin{align*} \Core (A) :=  A_+ \cup A_-. \end{align*}
\end{defn}

To any extended circuit $A$, there are precisely two regular triangulations $T_\pm$ of $(Q, A)$ as in Definition~\ref{defn:regsub}. These are given by
\begin{equation}\label{eq:triang}\gls{tpm} T_\pm = \{ \convhull (A - \{a_i\} )\}_{a_i \in A_\pm} . \end{equation}
The union of the vertices of the simplices in $T_\pm$ equals $A$ unless $|A_+| = 1$ or $|A_- | = 1$, in which case the respective triangulation is marked by
$A - A_\pm$. 

While we will deal with the geometry of extended circuits $(Q, A)$ in isolation for most of this and the next section, the primary reason for us to investigate them is how they relate to a larger marked polytope $(\mathbf{Q} , \mathbf{A})$ containing $(Q, A)$. The key fact in this regard is that every edge of the secondary polytope $\psec{\mathbf{A}}$ from equation~\eqref{eq:secondef} corresponds to a circuit modification. We recall this theorem and the necessary definitions from \cite{GKZ}. 

\begin{defn} \label{defn:cirsupp} Let $\mathbf{T}$ be a triangulation of $(\mathbf{Q} , \mathbf{A})$ and $A \subset \mathbf{A}$ a circuit with triangulations $T_\pm$. We say that $\mathbf{T}$ is positively (resp. negatively) supported on $A$ if the following conditions hold:
\begin{enumerate}[label=(\roman*), ref=\thedefn(\roman*)]
\item \label{defn:cirsupp:1} $T_+$ (resp. $T_-$ ) consists of faces of simplices in $\mathbf{T}$. 
\item \label{defn:cirsupp:2} For every $J \subset  \mathbf{A}$, if $\sigma \in T_+$ (resp. $T_-$ ) with $J \cap \sigma = \emptyset$ and $J \cup \sigma$  a maximal simplex of $\mathbf{T}$ then $J \cup \sigma^\prime \in \mathbf{T}$ for every $\sigma^\prime \in T_+$ (resp. $T_-$ ).
\end{enumerate}
For any $J$ satisfying (ii), we say that $J \cup A$ is a separating extended circuit of $\mathbf{T}$. 
\end{defn}

If $\mathbf{T}$ is positively supported on $A$, then one may define a new triangulation $m_A (\mathbf{T} ) := \mathbf{T}^\prime$ which is negatively supported on $A$ by changing the triangulations of every separating extended circuit. Such a change is referred to as a circuit modification along $A$.

\begin{figure}[t]
\begin{picture}(0,0)%
\includegraphics{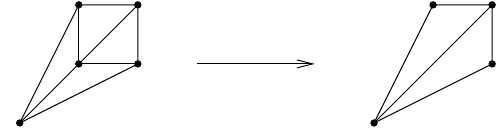}%
\end{picture}%
\setlength{\unitlength}{4144sp}%
\begin{picture}(3782,962)(2551,-1442)
\put(2566,-646){\makebox(0,0)[lb]{\smash{$\mathbf{T}$}}}
\put(5266,-646){\makebox(0,0)[lb]{\smash{$\mathbf{T}^\prime$}}}
\end{picture}%
\caption{\label{fig:triangulations} Circuit modification along $A$.}
\end{figure}

\begin{thm}\cite[Theorem~7.2.10]{GKZ}\label{thm:gkz3} Let $\mathbf{T}$ and $\mathbf{T}^\prime$ be two regular triangulations of $(\mathbf{Q} , \mathbf{A})$. The vertices $\varphi_\mathbf{T} , \varphi_{\mathbf{T}^\prime} \in \psec{\mathbf{A}}$ are joined by an edge if and only if there is a circuit $A \subset \mathbf{A}$ such that $\mathbf{T}$ is supported on $A$ and $\mathbf{T}^\prime = m_A (\mathbf{T} )$. 
\end{thm}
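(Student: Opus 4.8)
The plan is to translate the statement into the language of the secondary fan $\fans{\mathbf{A}}$, which by the cited theorem is the normal fan of $\psec{\mathbf{A}}$. Under the normal fan correspondence, $\phi_{\mathbf{T}}$ and $\phi_{\mathbf{T}'}$ are joined by an edge of $\psec{\mathbf{A}}$ exactly when the full dimensional cones $C_\R(\mathbf{T})$ and $C_\R(\mathbf{T}')$ have a common facet; such a facet is $C_\R(S)$ for a unique regular subdivision $S$, and $S$ is then refined by $\mathbf{T}$ and $\mathbf{T}'$ and by no other triangulation (equivalently $\dim C_\R(S) = \dim C_\R(\mathbf{T}) - 1$; call such an $S$ \emph{corank one}). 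So the theorem reduces to showing that the corank one subdivisions $S$ of $(\mathbf{Q},\mathbf{A})$ are exactly the coarsenings of a triangulation $\mathbf{T}$ obtained by merging the separating extended circuits of a circuit $A$ on which $\mathbf{T}$ is supported, and that for such an $S$ the only two refining triangulations are $\mathbf{T}$ and $s_A(\mathbf{T})$.

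For the implication $(\Leftarrow)$, suppose $\mathbf{T}$ is supported on a circuit $A \subset \mathbf{A}$ with affine relation $\mathbf{a} = (a_\alpha)_{\alpha \in A}$, and let $H_A \subset (\R^{\mathbf{A}})^\vee$ be the hyperplane $\sum_{\alpha \in A} a_\alpha\, \eta(\alpha) = 0$. Choosing $\eta_0$ in the relative interior of $C_\R^\circ(\mathbf{T})$, the pairing $\sum_{\alpha} a_\alpha \eta_0(\alpha)$ is nonzero, its sign recording which of the two triangulations $T_\pm$ of $(\convhull(A), A)$ from \ref{eq:triang} is induced on $A$. Using Definition \ref{defn:cirsupp} I would check that the coarsening $S$ of $\mathbf{T}$ obtained by replacing, for each separating extended circuit $J \cup A$, the simplices $\{J \cup \sigma : \sigma \in T_+\}$ by the single cell $\convhull(J \cup A)$, is a regular subdivision with $C_\R(S) = C_\R(\mathbf{T}) \cap H_A$, regularity being witnessed by the restriction of $\eta_0$. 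The same argument applied on the $\sum_\alpha a_\alpha \eta(\alpha) < 0$ side of $H_A$ shows $C_\R(S) = C_\R(\mathbf{T}') \cap H_A$ with $\mathbf{T}' = s_A(\mathbf{T})$, which is a triangulation refining $S$ by construction of the circuit modification. Hence $C_\R(S)$ is a common facet of $C_\R(\mathbf{T})$ and $C_\R(s_A(\mathbf{T}))$, giving the edge.

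For the implication $(\Rightarrow)$, let $C_\R(S)$ be a common facet of $C_\R(\mathbf{T})$ and $C_\R(\mathbf{T}')$, so $S$ is corank one and is refined by exactly the two triangulations $\mathbf{T}, \mathbf{T}'$. The key structural input is the local description of $\fans{\mathbf{A}}$ near $C_\R(S)$: refinements of $S$ are obtained by refining each cell $(Q_l, A_l)$ of $S$ compatibly along shared faces, so the star of $C_\R(S)$ in $\fans{\mathbf{A}}$ sits inside the product $\prod_l \fans{A_l}$ as the subfan cut out by the gluing conditions on shared faces. Since $\dim C_\R(S) = \dim C_\R(\mathbf{T}) - 1$ this star is one dimensional, which forces: (i) every cell $(Q_l, A_l)$ that is not a simplex has $\dim \psec{A_l} = 1$, hence $|A_l| = \dim Q_l + 2$, i.e. $A_l$ is an extended circuit with core circuit $\core{A_l}$; and (ii) the non-simplex cells cannot be refined independently, so flipping any one of them between its two triangulations $T_\pm$ forces a compatible flip in all the others. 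From (ii) I would deduce, by a local analysis at a shared facet $Q_l \cap Q_m$ of two coupled non-simplex cells, that all the core circuits $\core{A_l}$ coincide with a single circuit $A$, that each non-simplex cell has the form $\convhull(J_l \cup A)$, and that these $J_l \cup A$ are exactly the separating extended circuits of $\mathbf{T}$ relative to $A$, so $\mathbf{T}$ is supported on $A$, $S$ is the coarsening constructed above, and $\mathbf{T}' = s_A(\mathbf{T})$.

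I expect the passage from the coupling in (ii) to the common circuit $A$ to be the main obstacle: proving that when several non-simplex cells of a corank one subdivision are forced to flip simultaneously they must all be built on the same core circuit, and that the compatibility conditions of Definition \ref{defn:cirsupp} are precisely the bookkeeping of this coupling. I would handle it by the local argument indicated above — a regular triangulation refining $S$ restricts to compatible regular triangulations of every cell and every shared face, a circuit flip in a cell changes the induced triangulation on a shared facet, and matching this across $Q_l \cap Q_m$ forces the corresponding flip in $Q_m$ and pins down the common circuit — with the remaining combinatorial details following the treatment in \cite{GKZ}, Chapter 7.
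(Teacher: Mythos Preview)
The paper does not supply its own proof of this theorem: it is quoted verbatim as a result of Gelfand--Kapranov--Zelevinsky with the citation \cite{GKZ}, 7.2.10, and is then used as input for the subsequent discussion of circuit modifications. So there is nothing in the paper to compare your argument against.

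That said, your outline is essentially the argument one finds in \cite{GKZ}: pass to the secondary fan via the normal fan duality, identify edges of $\psec{\mathbf{A}}$ with corank-one regular subdivisions, and analyse the non-simplicial cells of such a subdivision. Your $(\Leftarrow)$ direction is clean. For $(\Rightarrow)$, the reduction to extended-circuit cells is right, and you have correctly flagged the genuine content: the passage from ``the flips in the non-simplex cells are coupled'' to ``all cells share a common core circuit $A$ and the coupling is exactly Definition~\ref{defn:cirsupp}''. In \cite{GKZ} this step is handled not by a facet-by-facet matching argument as you sketch, but by exhibiting an explicit linear functional (built from the single affine relation on the putative circuit) that separates $C_\R(\mathbf{T})$ from $C_\R(\mathbf{T}')$ along $C_\R(S)$; this immediately forces every non-simplex cell to flip according to the sign of the same relation, hence to share the same core. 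Your local argument can be made to work, but it is more laborious than the linear-functional approach and you should be careful that ``changes the induced triangulation on a shared facet'' is not automatic---a circuit flip in one cell need not alter the triangulation of a given facet unless that facet meets the core circuit appropriately.
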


\begin{eg} \label{eg:dplus3}
Let 
\begin{align*} \mathbf{A} =  \{ (1, 0), (0, 1), (1, 1) , (-1, -1), (0, 0) \} \end{align*} and $\mathbf{Q}$ be its convex hull. As an extended circuit in $\Z^2$ must have $4$ elements, we see that $\mathbf{A}$ contains five extended circuits, namely the $4$ element subsets of $\mathbf{A}$. However, $\mathbf{A}$ only contains four circuits, as $A := \{(1, 1) , (-1, -1), (0, 0)\}$ is the core of both $A_1 := \{ (1, 0),  (1, 1) , (-1, -1), (0, 0) \}$ and $A_2 := \{ (0, 1), (1, 1) , (-1, -1), (0, 0) \}$. Choosing $\mathbf{c} = (-1, -1, 2)$ for the affine relation of $A$, the two triangulations of the interval $A$ are given by $T_-$ which breaks $A$ into two intervals and $T_+$ which is all of $A$, but with the marked set $\{  (-1, -1), (1, 1) \}$.
Consider the regular triangulations $\mathbf{T}$ and $\mathbf{T}^\prime$ illustrated in  Figure~\ref{fig:triangulations}. The triangulation $\mathbf{T}$ is negatively supported on $A$, while the triangulation $\mathbf{T}^\prime$ is positively supported on $A$. Clearly $J_1 := \{(1, 0)\}$ and $J_2 := \{(0, 1)\}$ satisfy Definition \ref{defn:cirsupp:2} so that both extended circuits $A_1$ and $A_2$ are separating. To see the secondary polytope of $\mathbf{A}$, the remaining circuits and their modifications, we refer the reader to  Figure~\ref{fig:secF1}.
\end{eg}
In equation~\eqref{eq:princadet}, we defined the principal $A$-determinant $E_A$ on the linear system of sections $\linsys{A} \subset \Gamma (\mcx_Q, \mco_A (1))$. This polynomial vanishes on elements of $\linsys{A}$ whose zero locus intersects an orbit $\orb{F}$ non-transversely for some face $F$ of $Q$. In Definition~\ref{defn:secstack}, we extend $E_A$ to a section $E^s_A$ of a line bundle over the secondary stack $\secon{A}$ with zero loci $\mce_A$. Now, the edges of the secondary polytope correspond to one dimensional orbits of $\secon{A}$. Thus Theorem~\ref{thm:gkz3} indicates that if we aim to understand the symplectic monodromy of a hypersurface as we loop around $\mce_\mathbf{A} = \{E^s_\mathbf{A} = 0\}$, it is a reasonable first step to understand the monodromy around circuits, extended circuits and, more generally, circuit modifications. 

We now take a moment to study basic properties of the toric stack $\mcx_Q$ associated to an extended circuit by investigating the normal fan to $Q$.
\begin{defn} Suppose $\Gamma =
\Gamma_1 \oplus \Gamma_2$ where the rank of $\Gamma_i$ is $d_i$
and $(Q_i , A_i)$ are marked polytopes in $\Gamma_i \otimes \R$. If $(Q_1, A_1)$ is a $(d_1 -1)$-dimensional simplex which does not contain $0$, we say
that 
\begin{align*} \left(\convhull\left( ( Q_1 \times \{0\}) \cup (\{0 \} \times Q_2 ) \right) , (A_1 \times \{0\} )\cup (\{0 \} \times A_2 ) \right)
\end{align*} is a $d_1$-simplicial extension of $(Q_2, A_2)$. 
\end{defn}
Combinatorially, a $d_1$-simplicial extension of $(Q_2, A_2)$ is the same as the join of $Q_2$ with a $(d_1 -1)$-simplex. Note that an extended circuit $A$ of signature $(p, q; r)$ is an $r$-simplicial extension of its core. 

\begin{eg}
Take $A_1 = \{-1, 0, 1\}$ and $A_2 = \{(1,0), (0,1)\}$ with $Q_1$ and $Q_2$ their respective convex hulls. Then the tetrahedron illustrated in Figure~\ref{fig:simpext} is a simplicial extension of the interval $(Q_1, A_1)$.
\end{eg}
\begin{figure}[t]
\begin{picture}(0,0)%
\includegraphics{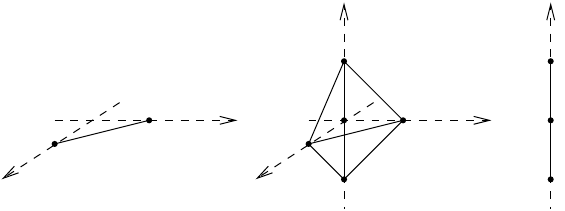}%
\end{picture}%
\setlength{\unitlength}{4144sp}%
\begin{picture}(4302,1599)(1204,-1873)
\put(1756,-916){\makebox(0,0)[lb]{\smash{$(Q_1, A_1)$}}}
\put(4096,-646){\makebox(0,0)[lb]{\smash{Simplical}}}
\put(4096,-871){\makebox(0,0)[lb]{\smash{Extension}}}
\put(5491,-916){\makebox(0,0)[lb]{\smash{$(Q_2, A_2)$}}}
\end{picture}%
\caption{\label{fig:simpext} A $2$-simplicial extension of $Q_2$.}
\end{figure}
If both $(Q_1, A_1)$ and $(Q_2, A_2)$ are $d_1$ and $d_2$-dimensional simplices in $\Gamma_\R$ which span complementary affine subspaces, we say $(Q_1 + Q_2 , A_1 + A_2)$ is a $(d_1, d_2)$-prism. We also recall some terminology from convex polytopes. Given a polytope $P \subset \Gamma_\R$ which contains $0$ in its interior, its polar dual is the polytope 
\begin{align*}
P^\circ := \left\{ u \in \Gamma_\R^\vee : \left< u, v \right> \geq -1 \text{ for all } v \in P \right\}.
\end{align*}

\begin{prop} \label{prop:dualfan1} 
 If $(Q,A)$ is an extended circuit with signature $(p, q; r)$, there exists $u_A \in \Gamma_\R$ such that the polar dual polytope $(Q - u_A)^\circ$ to the translation $(Q - u_A)$ of $Q$ is an $r$-simplicial extension of a $(p - 1, q - 1)$-prism.
\end{prop}
\begin{proof} We begin with the case of a non-degenerate circuit $A$. We claim that, in this case, any facet  of $Q$ arises as the convex hull $F_{ij} = \convhull (A - \{a_i , a_j\})$ where $a_i \in A_+$ and $a_j \in A_-$. To see this, first observe that every such $F_{ij}$ is a facet which is clear from the description of the triangulations $T_\pm$ in equation~\eqref{eq:triang}. Conversely, observe that the element
\begin{equation*} u_A := \frac{1}{v_A} \sum_{a_i \in A_+} c_i a_i =  - \frac{1}{v_A} \sum_{a_j \in A_-} c_j a_j \end{equation*}
lies in the convex hull of both $A_+$ and $A_-$ and the interior of $Q$. Thus no facet $F$ can contain $A_+$ or $A_-$ which implies that there exists an $i$ and $j$ with $F_{ij} \subset F$. As every boundary facet of $Q$ is a simplex with vertices in $A$, this implies $F = F_{ij}$ for some $a_i \in A_+$ and $A_j \in A_-$.

Let $\tilde{A}_\pm = \{a - u_A : a \in A_\pm\}$ and $\Lambda_\pm = \lin_\R (\tilde{A}_\pm )$. It is obvious that the convex hulls $(\tilde{Q}_+, \tilde{A}_+)$ is a $(p - 1)$-simplex and $(\tilde{Q}_-, \tilde{A}_-)$ is a $(q - 1)$-simplex. We write $B_\pm = \{v : v (w) \geq -1 \text{ for } w \in \tilde{A}_\pm \}\subset \Lambda_\pm^\vee \otimes \R$ for their polar duals. Since $A$ affinely spans $\Lambda_\R$, we have that $\tilde{A}_\pm$ affinely span $\Lambda_\pm$ and that $\Lambda_+ + \Lambda_- = \Lambda_\R$. If $u \in \Lambda_+ \cap \Lambda_-$,  we have that there exists coefficients $r_i \in \R$ for $a_i \in A$ such that $\sum_{a_i \in A_\pm} r_i = 1$ and
\begin{align*}
\sum_{a_i \in A_+} r_i (a_i - u_A) & = u = \sum_{a_j \in A_-} r_{j} (a_j - u_A).
\end{align*} 
This implies
\begin{align*}
\sum_{a_i \in A_+} r_i  a_i - \sum_{a_j \in A_-} r_j  a_j = 0
\end{align*}
where the coefficients can be seen to add to zero. Since the affine relations of $A$ are generated by those  in equation \eqref{eq:affrelation}, this implies that there exists $\lambda \in \R$ such that $r_i = \pm \lambda c_i$ for every $a_i \in A_\pm$. Furthermore, the fact that $\sum_{a_i \in A_\pm} r_i = 1$ implies that $\lambda = 1/ v_A$.  But then 
\begin{align*} u  =  \sum_{a_i \in A_+}  \frac{c_i}{v_A} a_i - \left( \sum_{a_i \in A_+} r_i \right)u_A = 0. \end{align*}
Thus $\Lambda_\R = \Lambda_+ \oplus \Lambda_-$.

For $a_i \in A_+$ and $a_j \in A_-$, take $\tilde{F}_i^+$ and $\tilde{F}_j^-$ for the convex hulls of $\tilde{A}_+ - \{a_i - u_A\}$ and $\tilde{A}_- - \{a_j - u_A\}$ respectively. These form the facets of $\tilde{Q}_\pm$ and, from the description of the facets of $Q$ as $F_{ij}$, we see that  
\begin{align*}
F_{ij} - u_A = \{r v + s w : v \in \tilde{F}_i^+, w \in \tilde{F}_j^-, r + s = 1  \}.
\end{align*}
Now, if $b_i \in B_+$ and $b_j \in B_-$ are vertices dual to $\tilde{F}_i^+$ and $\tilde{F}_j^+$ respectively, then one easily sees that $b_i + b_j$ is constantly equal to $-1$ on $F_{ij} - u_A$. Thus the vertices of $B_- \oplus B_+$ are contained in the set of those of the polar polytope of $Q - u_A$, but as these define all facets of $Q - u_A$, their convex hull must equal $(Q - u_A)^\circ$.

Now, if $A$ has signature $(p, q; r)$ with $r \ne 0$, we take $\tilde{A}_0 = \{ a - u_A : a \in A_0\}$ and $\Lambda_0 = \lin_\R \{ a - u_A : a \in A_0\}$. Since $\tilde{A}_0$ is not full dimensional, there is no dual polytope in $\Lambda_0^\vee$, but we still have that $\Lambda \otimes \R = \Lambda_+ \oplus \Lambda_- \oplus \Lambda_0$ and we have that $\tilde{A}_0$ is a basis for $\Lambda_0$. If $B_0 \subset \Lambda_0^\vee$ denotes the negatives of the linear duals to $\tilde{A}_0$, then  the polar dual for $A$ is the simplicial extension $(B_+ \oplus B_- ) + B_0$.
\end{proof}

For later reference, we utilize the previous proposition to index the boundary facets of $Q$.

\begin{cor} If $(Q, A)$ is an extended circuit of signature $(p, q; r)$, then it has $pq + r$ facets $\du{Q} = \{b_{ij} : \alpha_i \in A_- , \alpha_j \in A_+\} \cup \{b_k : \alpha_k \in A_0 \}$.
\end{cor}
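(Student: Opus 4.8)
The plan is to enumerate the supporting hyperplanes of $Q$ by hand, using the affine relation $\sum_i a_i\alpha_i=0$, $\sum_i a_i=0$ to constrain which subsets of $A$ can lie on a common hyperplane. Since $A$ affinely spans $\R^d$, every facet $F$ of $Q$ equals $\convhull(A\cap H)$ for a supporting hyperplane $H$, and $A\cap H$ must affinely span the $(d-1)$-dimensional affine space $H$; hence at most two points of $A$ lie off $F$. So the only candidate facets are $\convhull(A\setminus\{\alpha_k\})$ and $\convhull(A\setminus\{\alpha_i,\alpha_j\})$, and the whole argument reduces to deciding, for each candidate, whether the omitted points lie strictly on one side of the affine span $H$ of the remaining points.

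The key computation is: if $\ell$ is an affine functional vanishing on a subset $S\subseteq A$ with complement $A\setminus S=\{\alpha_i,\alpha_j\}$, then pairing $\ell$ against the relation $\sum_l a_l\alpha_l=0$, $\sum_l a_l=0$ makes the constant term drop out and yields $a_i\ell(\alpha_i)+a_j\ell(\alpha_j)=0$. From this I read off: (i) if $\alpha_i\in A_-$ and $\alpha_j\in A_+$, then $\ell(\alpha_i)$ and $\ell(\alpha_j)$ have the same nonzero sign, so after rescaling $\ell$ we get $\ell\le 0$ on all of $A$ with equality exactly on $A\setminus\{\alpha_i,\alpha_j\}$; since removing the point $\alpha_j$ with $a_j\neq 0$ destroys every affine relation on $A$, the set $A\setminus\{\alpha_i,\alpha_j\}$ is affinely independent of size $d$ and thus spans a hyperplane, so $\convhull(A\setminus\{\alpha_i,\alpha_j\})$ is a genuine facet $b_{ij}$, and there are $|A_-|\,|A_+|=qp$ of these; (ii) if $\alpha_i,\alpha_j$ both lie in $A_+$ (or both in $A_-$) the equation forces opposite signs, so $\ell$ changes sign on $A$ and no supporting hyperplane exists; (iii) if exactly one omitted point lies in $A_0$, its coefficient is $0$, so the equation forces $\ell$ to vanish on the other omitted point too, contradicting $A\setminus S=\{\alpha_i,\alpha_j\}$; (iv) if both omitted points lie in $A_0$, the relation $\mathbf{a}$ survives on $A\setminus\{\alpha_i,\alpha_j\}$, dropping its affine dimension to $d-2$, so again no facet. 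This settles all two-point deletions.

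For one-point deletions, if $\alpha_k\in A_\pm$ then $A\setminus\{\alpha_k\}$ is affinely independent of size $d+1$, hence spans all of $\aff(A)=\R^d$, so it is not a facet; if $\alpha_k\in A_0$ then $\mathbf{a}$ still holds on $A\setminus\{\alpha_k\}$, which therefore has affine dimension $d-1$, and since $A$ spans $\R^d$ the point $\alpha_k$ cannot lie on that hyperplane, so $\convhull(A\setminus\{\alpha_k\})$ is a facet $b_k$ — giving $|A_0|=r$ more facets. Assembling the two types yields exactly $pq+r$ facets with the stated labels, and hence $\du{Q}$, the set of primitive facet normals, is as claimed. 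I expect the only genuine subtlety to be step (i): one must verify that the hyperplane found there actually bounds a face of $\convhull(A)$ rather than merely passing through $d$ of its points, which is exactly what the sign consequence of $a_i\ell(\alpha_i)+a_j\ell(\alpha_j)=0$, together with the affine independence of $A\setminus\{\alpha_i,\alpha_j\}$, supplies; this is essentially the facet description already extracted in the proof of Proposition \ref{prop:dualfan1}, so the corollary can equally be read off from that proposition by observing that the $r$-simplicial extension contributes precisely one new facet for each point of $A_0$.
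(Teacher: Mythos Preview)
Your argument is correct. The paper itself gives no separate proof of the corollary: it is meant to be read off immediately from Proposition~\ref{prop:dualfan1}, whose proof already shows (in the non-degenerate case) that every facet is of the form $F_{ij}=\convhull(A\setminus\{\alpha_i,\alpha_j\})$ with $\alpha_i\in A_+$, $\alpha_j\in A_-$, and then handles the degenerate case by splitting $\Lambda_\R=\Lambda_+\oplus\Lambda_-\oplus\Lambda_0$ and identifying the polar polytope as $(B_+\oplus B_-)+B_0$, from which the facet count $pq+r$ follows by counting vertices of the polar.

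Your route is essentially the same computation dressed differently: instead of passing through the polar polytope and the centroid point $u_A$, you enumerate candidate facets directly and use the pairing $a_i\ell(\alpha_i)+a_j\ell(\alpha_j)=0$ to decide which candidates are supporting. This is slightly more elementary---it avoids the polar-dual language and the auxiliary point $u_A$---and has the minor advantage of handling the degenerate and non-degenerate cases uniformly in a single case analysis, whereas the paper treats them in two stages. Conversely, the paper's approach yields the stronger structural statement (the polar polytope is an $r$-simplicial extension of a $(p,q)$-prism), from which the facet count is one of several consequences. You correctly note at the end that your argument is equivalent to extracting the facet description from the proof of Proposition~\ref{prop:dualfan1}.
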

\begin{proof}
Suppose $A^\prime \subset \Gamma_1 \oplus \Gamma_2$ and $(Q^\prime, A^\prime)$ is a $d_1$-simplicial extension of $(Q_2, A_2)$ by $(Q_1, A_1)$. The vertices of $Q^\prime$ consist of the $d_1$ points in $A_1$ along with the vertices of $Q_2$. Thus the number of vertices of $Q^\prime$ equals $d_1$ plus the number of vertices of $Q_2$. Of course, as a $(p,q)$ prism is the Minkowski sum of a $(p - 1)$-simplex and $(q - 1)$-simplex on complementary subspaces, it has precisely $pq$ vertices. As the vertices of the polar polytope $Q^\circ$ index the facets of $Q$, we have the result.
\end{proof}

One important consequence of the proposition is that $\mcx_Q$ fails to be smooth as a stack unless the signature of $A$ has $p = 1$, $q = 1$ or is $(2, 2; r)$. Indeed, for a circuit $(Q, A)$, the maximal normal cones to $Q$ are cones over products of simplices, and are therefore not simplicial. Nevertheless, as $\mcx_Q$ is toric, the normal fan of $Q$ has a simplicial refinement. This follows from the elementary fact that any rational convex polyhedral cone supports a simplicial fan. Indeed, intersecting the cone with a hyperplane to obtain a codimension $1$ polytope, one can triangulate this polytope and take the fan which consists of cones over the simplices in this triangulation. This implies that $(\mcx_Q , \partial \mcx_Q)$ is a standard symplectic stack as discussed in  Definition~\ref{defn:resolvcol}. 

We now examine the secondary and Lafforgue stacks associated to
$A$ as defined in Appendix~\ref{sec:moduli}. The key ingredient leading to the definition of these stacks is the fundamental sequence \eqref{eq:Aex}. For the circuit $A \subset \Lambda$ and $\mathcal{A} = \{(a, 1): a \in A\}$, this is the exact sequence
\begin{align}\label{eq:circex} 0 \to \Z \stackrel{\alpha_{{\mathcal{A}}}}{\longrightarrow} \Z^{{\mathcal{A}}} \stackrel{\beta_{{\mathcal{A}}}}{\longrightarrow} \Lambda \oplus \Z \to K_{{\mathcal{A}}} \to 0 .
\end{align}
Here we have that $\beta_{\mathcal{A}} \left( e_{(a,1)} \right) = (a,1)$ for $a \in A$ and  
\begin{align} \alpha_{\mathcal{A}} (1) = \frac{1}{|K_{\mathcal{A}}|} \sum_{a_i \in A} c_i e_{(a_i,1)}. \end{align}

In concert with this sequence, we must examine the polyhedra $\psec{A}$, $\ptlaf{A}$ and $\plaf{A}$, all of which lie in $\mathbb{R}^{\mathcal{A}}$. Applying equation \eqref{eq:secvert}, the triangulations $T_\pm$ correspond to the vertices $\varphi_\pm \in \psec{A}$ given by
\begin{equation} \label{eq:circtrian}
\varphi_\pm  = v_A \sum_{i = 0}^{d + 1} e_i \mp \sum_{a_i \in A_\pm}  c_i e_i .
\end{equation}
So that, by equation \eqref{eq:secondef}, $\psec{A} = \convhull (\{\varphi_- , \varphi_+ \} )$. Thus the coarse space $\mathcal{X}_{\psec{A}}^r$ is isomorphic to $\p^1$. To obtain the secondary stack $\secon{A}$, we first study $\laf{A}$ and $\tlaf{A}$.
From Definition \ref{defn:moduli} the stacks $\laf{A}$ and $\tlaf{A}$ arise as modifications of the toric stacks defined from the polyhedra $\ptlaf{A}$ and $\plaf{A}$. In particular, $\tlaf{A}$ is given by the stacky fan
\begin{align} \widetilde{\sfan}_\plaf{A} = \left( \Z^{\dul{\plaf{A}}} , (\Z^A)^\vee , \tilde{\beta}_{\dul{\plaf{A}}} , \fan_{\plaf{A}} \right) \end{align}
where $\tilde{\beta}_{\dul{\plaf{A}}}$ is defined in equation \eqref{eq:deftildbeta}.

By equation \eqref{eq:morphtoproj}  and Definition \ref{defn:moduli}, the Lafforgue stack $\laf{A}$ comes equipped with a map to $\p^{|A| - 1}$ and the universal hyperplane section $\hyp{A} \subset \laf{A}$ is the pullback of $s = \sum_{i = 0}^{|A|} Z_i$. In the next proposition, we will see that this morphism can be thought of as the coarsening map from a weighted projective space along with a blow-down along codimension $2$-planes. To state the proposition, we first introduce some notation. Let 
\begin{align} \gls{lpm}
\ell_\pm & = \textnormal{lcm} \{c_i : a_i \in A_\pm \}, \\
\ell & = \textnormal{lcm} \left\{\frac{\ell_\pm}{c_i} : a_i \in A_\pm \right\} ,
\end{align}
and define the constants
\begin{align}\gls{tilci}
\tilde{c}_i & := \begin{cases} \frac{|c_i| \ell}{\ell_{\pm}} & \textnormal{ if } a_i \in A_\pm, \\ \ell & \textnormal{ if } a_i \in A_0. \end{cases}
\end{align}
To simplify our exposition, we will assume $K_{{\mathcal{A}}} = 0$ for the remainder of the section. For convenience, we also index the elements of $A$ so that $A = \{a_0, \ldots, a_{d + 1} \}$. 

\begin{prop} \label{prop:circuitlaf} Given an extended circuit
$A \subset \Lambda$ of signature $(p, q; r)$ for which $K_{{\mathcal{A}}} = 0$, $\laf{A}$ is a stacky blow-up of $\p (\tilde{c}_0, \ldots, \tilde{c}_{d + 1})$ along $pq$ codimension $2$ projective subspaces.
The universal line bundle $\univ{A}$ and section are the pullbacks of $\mco (\ell )$ and $s_A = \sum_{a_i \in A_\pm} Z_i^{\ell_\pm / |c_i|}  + \sum_{a_j \in A_0} Z_j$. 
\end{prop}
\begin{proof} We recall that $\plaf{A} \subset \R^A$ is a polyhedron of dimension $|A|$.
By Lemma \ref{lem:lafhyppart}, the supporting primitives defining the facets of $\plaf{A}$ can be partitioned
\begin{align*} \dul{\plaf{A}} = \{\varrho_A\} \cup \dul{\plaf{A}}^v \cup \dul{\plaf{A}}^h .\end{align*} \gls{vrho} 
Here $\varrho_A = \sum e_a^\vee$, the elements of $\dul{\plaf{A}}^v$ correspond to vertical hyperplanes and those of $\dul{\plaf{A}}^h$ correspond to horizontal hyperplanes. The former are indexed by pointed subdivisions $(S, A_p)$ for which $S$ is a coarse subdivision of $(Q,A)$. Since $A$ is an extended circuit, these are given by $\{(T_\pm, A - \{a\}): a \in A_\pm\}$. By Lemma \ref{lem:lafnormal:2} the primitive $\eta_{(T_\pm , A- \{a\})}$ in $\dul{\plaf{A}}^v $ corresponding to $(T_\pm, A - \{a\})$ must vanish on $A_\pm - \{a\}$. It is then simple to see that $\eta_{(T_\pm , A- \{a\})} = e_a^\vee$ and $\{e_a^\vee \}_{a \in A_+ \cup A_-} = \dul{\plaf{A}}^v$. 

While this gives the vertical primitives, equation \eqref{eq:deftildbeta} for $\tilde{\beta}_{\dul{\plaf{A}}}$ takes the basis element corresponding to $\eta_{(T_\pm , A- \{a_i\})}$ and sends it to $\bar{\eta}_{(T_\pm , A- \{a_i\})}$. The latter element can be expressed as $m_i e_i^\vee$ and must be a primitive $\Lambda$-defining function for the triangulation $T_\pm$ as defined in equation~\eqref{eq:lambdadef} (here we denote $e_{a_i} \in \Z^{{\mathcal{A}}}$ by $e_i$).  To obtain the coefficient $m_i$, first note that since $K_{{\mathcal{A}}} = 0$, ${\mathcal{A}}$ spans $\Lambda$. Thus if $a_j \in A_+ \cup A_-$ is not equal to $a_i$ and $\Lambda_{i,j} = \textnormal{Lin}_\Z \{(a,1) : a \in A - \{a_i, a_j\}\}$, then $(a_i, 1)$ and $(a_j, 1)$ generate $(\Lambda \oplus \Z) / \Lambda_{i,j} \cong \Z$. Using the isomorphism with $\Z$, denote the equivalence class $[(a_i, 1)]$  and $[(a_j , 1)]$ by $t_{i}$ and $t_{j}$ respectively and note that the $\Lambda$-defining function $\bar{\eta}_{(T_\pm , A- \{a_i\})} = m_i e_i^\vee$ for $T_\pm$ must satisfy $t_i \mid m_i$. Since they generate $\Z$, we have that $\gcd (t_i, t_{j}) = 1$. Also, since $|c_i|$ and $|c_j|$ are the normalized volumes of $\convhull (A - \{a_i\})$ and $\convhull (A - \{a_j\})$ respectively, it follows that the volume of $\convhull (A - \{a_i, a_j\})$ in $\Lambda_{i,j}$ is $d_{i,j} := \textnormal{gcd} (c_i, c_j)$. Consequently, $t_i = \pm c_i / d_{i, j} = \pm \textnormal{lcm} (c_i, c_{j}) /c_i$ and, as $m_i e_i^\vee$ is a primitive $\Lambda$-defining function for $T_\pm$, we have that $m_i = \ell_\pm / |c_i|$.

By Proposition \ref{prop:dualfan1}, we have that the primitive hyperplane supporting functions in $\du{Q}$ correspond to the facets $F_{ij} := \convhull (A - \{a_i, a_j\})$ where $a_i \in A_+$ and $a_j \in A_-$ along with the facets $F_k := \convhull (A - \{a_k\})$ where $a_k \in A_0$. Writing $b_{ij}$ and $b_k$ for the corresponding hyperplane primitives and appealing to Proposition \ref{lem:lafnormal:1} gives \begin{align*} \dul{\plaf{A}}^h =  \left\{c_{b_{ij}}^{-1} \beta_{\mathcal{A}}^\vee (b_{ij} , n_{b_{ij}} )  : a_i \in A_- , a_j \in A_+ \right\}  \cup \left\{c_{b_{k}}^{-1} \beta_{\mathcal{A}}^\vee  (b_k , n_{b_k}) : a_k \in A_0 \right\}. \end{align*} 
To compute $\tilde{\beta}_{\dul{\plaf{A}}}$, it suffices to find $c_{b_{ij}}$ and $c_{b_k}$. We first evaluate $c_{b_k}$ where $a_k \in A_0$. Let $\Lambda_k = \textnormal{Lin}_{\Z} ({\mathcal{A}} - \{(a_k , 1)\})$ and note that, due to the fact that $K_{{\mathcal{A}}} = 0$, $[(a_k, 1)]$ generates $\Lambda \oplus \Z / \Lambda_k$.  This implies that, while $b_k |_{F_k} = -n_k$ by definition, $b_k (a_k) = 1 - n_k$ so that the evaluation pairing $\left<(b_k , n_{b_k}) , (a_k, 1)\right> = 1$ and $c_{b_k} = 1$. Moreover, note that $\left< (b_k, n_{b_k}), (a, 1) \right> = 0$ for all $a \in A$ not equal to $a_k$ so that $\beta_{{\mathcal{A}}}^\vee (b_k, n_{b_k}) = e_k^\vee \in (\Z^{{\mathcal{A}}})^\vee$. 

Before proceeding to the constants $c_{b_{ij}}$, we observe that the morphism $\tilde{G} : \tlaf{A} \to \mco_{\p^{|A| - 1}} (-1)$ in equation \eqref{eq:morphtoproj} factors through a morphism to the equivariant line bundle $\mco (-1)$ over $\p (\tilde{c}_0, \ldots, \tilde{c}_{d + 1})$. Indeed, coarsening the Lafforgue fan by considering only  
\begin{align} \label{eq:raygens} B = \{\varrho_A \} \cup \{\ell_{\pm}/ |c_i| e_i^\vee : a_i \in A_\pm \} \cup \{e_k^\vee : a_k \in A_0\} \subset \dul{\plaf{A}}, \end{align}
gives the stacky fan 
\begin{align*}
\left( \Z^B, (\Z^{{\mathcal{A}}})^\vee , \tilde{\beta}_{\dul{\plaf{A}}}|_{\Z^B} , \Sigma_{\mco (-1)} \right),
\end{align*}
where $\Sigma_{\mco (-1)}$ is the same fan in $\R^B$ as that for $\mco_{\p^{|A| - 1}} (-1)$. Note that the stack associated to this fan is $\mco_{\p (\tilde{c}_0, \ldots, \tilde{c}_{d + 1})} (-1)$.  Quotienting by $\varrho_A$ leads to the factorization $G : \laf{A} \to \p^{|A| - 1}$ via
\begin{align} \label{eq:f1f2}
\begin{CD}
 \laf{A} @>{f_1}>> \p (\tilde{c}_0, \ldots, \tilde{c}_{d + 1} ) @>{f_2}>> \p^{|A| -1} .
\end{CD}
\end{align}
If $b = \ell_\pm / |c_i| e^\vee_i$, map $f_2$ takes $Z_b$ to $Z_b^{|c_i| / \ell_\pm}$ implying that 
\begin{align} \label{eq:f2pullback} f_2^* (\mco_{\p^{|A| - 1}} (1)) = \mco_{\p (\tilde{c}_0, \ldots, \tilde{c}_{d + 1})} (\ell). \end{align}

We now interpret the map $f_1$ as a weighted blowdown by considering the elements $c_{b_{ij}}^{-1} \beta_{\mathcal{A}}^\vee (b_{ij} , n_{b_{ij}} )  \in \dul{\plaf{A}}^h$ where $b_{ij} \in \bar{Q}$ is the supporting primitive for $F_{ij}$.  By definition, for any  $a \in A - \{a_i, a_j\}$ with $a_i \in A_+$ and $a_j \in A_-$ we have $\left< b_{ij} , a \right> = - n_{b_{ij}}$, or $\left< (b_{ij}, n_{b_{ij}}) , (a, 1) \right> = 0$. Taking $s_i = \left< (b_{ij}, n_{b_{ij}}) , (a_i, 1) \right>$ and $s_j = \left< (b_{ij}, n_{b_{ij}}) , (a_j, 1) \right>$  we then have $\beta^\vee_{{\mathcal{A}}} (b_{ij}, n_{b_{ij}}) = s_i e_i^\vee + s_j e_j^\vee$. Letting $r_{ij}$ be the volume of $F_{ij}$, we have that $c_i = \Vol (A - \{a_i\}) = r_{ij} s_j$ and $- c_j = \Vol (A - \{a_j\}) = r_{ij} s_i$ so that 
\begin{equation*} \overline{b}_{ij} := \beta^\vee (b_{ij}  , n_{b_k}) = \frac{1}{r_{{ij}}} \left(   c_i e_{a_j}^\vee  - c_j e_{a_i}^\vee\right) .
\end{equation*}
%Stopped here
Thus the stacky fan for $\laf{A}$ is obtained by refining the fan for $\p (\tilde{c}_0, \ldots, \tilde{c}_{d + 1})$ by subdividing it along one-cones contained in the two-cones $\lin_{\R_{\geq 0}} (e_i^\vee,  e_j^\vee)$ for every $a_i \in A_+$ and $a_j \in A_-$. This implies that the divisor corresponding to $b_{ij}$ contracts to 
\begin{align} \label{eq:vijdefn} V_{ij} := \{Z_i = 0 = Z_j\} \end{align}
under $f_1$.  From the factorization of $G$ through $f_1$ and $f_2$ and equation~\eqref{eq:f2pullback}, we see that  $\univ{A}$ is $\mco (\ell )$ and $s_A = \sum_{a_i \in A_\pm} Z_i^{\ell_\pm / |c_i|}  + \sum_{a_j \in A_0} Z_j$. 
\end{proof}
Recall that the hypersurface $\hyp{A} \subset \laf{A}$ is defined as the zero
locus of $s_A \in H^0 (\laf{A} , \univ{A} )$ which implies that $\hyp{A}$ is the proper transform of the zero locus
\begin{equation*} Z_0 + \cdots + Z_{d + 1} = 0 \end{equation*} on $\p^{d + 1}$ along $G : \laf{A} \to \p^{d + 1}$. Using the previous proposition, we easily obtain the secondary stack associated to an extended circuit. For this, let $r = \textnormal{gcd} (\ell_+, \ell_-)$ and\gls{tlpm} $\tilde{\ell}_\pm = \ell_\pm / r$. 

\begin{prop} \label{prop:circuitsec} Assume $A \subset \Lambda$ is an extended circuit and $K_{{\mathcal{A}}} = 0$. Then  
	\begin{align*} \secon{A} \cong \frac{\p \left( \tilde{\ell}_+ , \tilde{\ell}_- \right)}{\Z / r \Z} . \end{align*} \end{prop}

\begin{proof} By Lemma \ref{lem:KtoXi} and the assumption that $K_{{\mathcal{A}}} = 0$, we have that $\Xi_{{\mathcal{A}}} = \Lambda_{{\mathcal{A}}^\vee} = L_{\mathcal{A}}^\vee \cong \Z$. From Lemma \ref{lem:stfansecon}, a stacky fan for $\secon{A}$ is given by
	\begin{align}
	\widetilde{\sfan}_{\psec{A}} = \left( \Z^{\dul{\psec{A}}}, \Xi_{{\mathcal{A}}}, \tilde{\beta}_{\dul{\psec{A}}}, \fan_{\mcb}  \right).
	\end{align}
Since $\Xi_{{\mathcal{A}}} = L_{\mathcal{A}}^\vee$, diagram~\eqref{diag:secondef}
is a colimit diagram and $\tilde{\beta}_{\dul{\psec{A}}}$ can be identified with $\tilde{\beta}_{\dul{\psecv{A}{v}}}$. In the case of a circuit, this reduces to
\begin{align}\label{diag:circpi}
	\begin{CD}
		\Z^{\dul{\plaf{A}}} @>{\tilde{\beta}_{\dul{\plaf{A}}}}>> (\Z^A)^\vee \\
		@V{p_1}VV  @V{\alpha^\vee_A}VV \\
		\Z^2 @>{\tilde{\beta}_{\dul{\psec{A}}}}>> \Z .
	\end{CD}
\end{align}
where $\alpha^\vee_A (e^\vee_{a_i} ) = c_i$ and, by equation \eqref{eq:defp1},
\begin{align*}
p_1 (e_b) & = \begin{cases}
	e_{1} & \text{ if } b = \eta_{(T_+,A - \{a_i\} )} , \text{ for } a_i \in A_+ , \\
	e_{2} & \text{ if } b = \eta_{(T_-,A - \{a_i\} )} , \text{ for } a_i \in A_-
	, \\
	0 & \text{ otherwise}. \end{cases} 
\end{align*} 	
By the second paragraph of the proof of Proposition \ref{prop:circuitlaf}, we have that 
\begin{align*}\tilde{\beta}_{\dul{\plaf{A}}} \left( \eta_{(T_\pm,A - \{a_i\} )} \right) = \ell_\pm / |c_i|. \end{align*} 
Thus, using the commutativity of diagram \ref{diag:circpi}, we conclude $\tilde{\beta}_{\dul{\psec{A}}}  = \ell_+ e_1^\vee - \ell_- e_2^\vee$ and $\widetilde{\sfan}_{\psec{A}}= (\Z^2, \Z, \ell_+ e_1^\vee - \ell_- e_2^\vee, \Sigma_{\mcb})$. This is the stacky fan for the toric stack $\p (\tilde{\ell}_+, \tilde{\ell}_- ) / (\Z / r \Z )$.
\end{proof}

We now give an explicit description of the map $\pi : \laf{A} \to \secon{A}$ from Definition \ref{defn:secstack}. Write\gls{hordiv} $\mcd^h \subset \laf{A}$ for the union of horizontal divisors in $\laf{A}$, $\laf{A}^\circ = \laf{A} - \mcd^h$ and $\hyp{A}^\circ = \hyp{A} - (\hyp{A} \cap \mcd^h)$. From Lemma \ref{lem:lafhyppart} the components of $\mcd^h$ are indexed by the facets of $Q$ which are in bijection with the set $(A_- \times A_+) \cup A_0$. By Proposition \ref{prop:circuitlaf}, restricting $f_1$ in equation \eqref{eq:f1f2} gives an isomorphism 
\begin{align} \laf{A}^\circ = \p (\tilde{c}_0, \ldots, \tilde{c}_{d + 1}) - \left[ \left( \bigcup_{a_i \in A_-, a_j \in A_+} V_{ij}\right) \cup \left( \bigcup_{a_k \in A_0 } \{Z_k = 0\} \right) \right]. 
\end{align} where $V_{ij}$ is defined in equation \eqref{eq:vijdefn}. Now, the map $p_1$ in diagram \eqref{diag:circpi} yields the expression for $\pi : \laf{A} \to \secon{A}$ from the homogeneous coordinates of $\laf{A}$ to those of $\secon{A}$. Including only those coordinates associated to the vertical divisors then gives $\pi^\circ : \laf{A}^\circ \to \secon{A}$ as a weighted pencil on $\p (\tilde{c}_0, \ldots, \tilde{c}_{d + 1})$ given by
\begin{equation*} \left[\prod_{a_i \in A_+} Z_i : \prod_{a_i \in A_-} Z_i \right].
\end{equation*}
The base locus of the pencil is the union $\cup V_{ij}$ of cycles
that are blown up in Proposition \ref{prop:circuitlaf}, which give some of the components of $\mcd^h$ (and all of them when $A$ is a circuit). Passing to coarse spaces, $\p^{d +1}$ for $\laf{A}^\circ$ and $\p^1$ for $\secon{A}$, leads to the diagram 
\begin{align}
\begin{CD}
\laf{A}^\circ @>{f_2}>> \p^{d + 1} - \cup V_{ij} , \\
@V{\pi}VV @V{\bar{\pi}}VV \\
\secon{A} @>>> \p^1. 
\end{CD}
\end{align}
Here the map $\bar{\pi}$ has the especially simple form as the pencil 
\begin{equation} \label{eq:circuitpencil} [s_0 : s_\infty] = \left[\prod_{a_j \in A_+}  Z_j^{c_j} :  \prod_{a_i \in A_-} Z_i^{-c_i}  \right].
\end{equation}
As neither $\laf{A}$ and $\secon{A}$ have generic stabilizers, this pencil describes the map $\pi$ up to isomorphism on the maximal torus. Moreover, from the description of the components of $\mcd^h$ indexed by $A_0$ in Proposition \ref{prop:circuitlaf}, $\bar{\pi}$ is isomorphic to $\pi$ when we include these divisors as well. This pencil also describes $\pi$ restricted to the universal hypersurface $\hyp{A}^\circ$ away from its degenerations at $0$ and $\infty$. We note that these fibers of the pencil give toric degenerations of $\mcx_Q$ corresponding to $T_+$ and  $T_-$. These are both singular as stacks unless $p = 1$ or $q = 1$. 

Our main interest is not in the morphism $\pi$, but rather its restriction to $\hyp{A} =  \left\{ s_A = 0 \right\} \subset \laf{A}$. Abusing notation, we will also denote this restriction as $\pi$. We note that, off of $\partial \hyp{A} = \hyp{A} \cap \partial \laf{A}$, the map $\pi$ is described by the pencil in equation \eqref{eq:circuitpencil}. We let\gls{critvala} $c_A \in \secon{A}$ be the point whose coarse point is represented by $  [ \prod_{j \in A_+} c_j^{c_j} : \prod_{i \in A_-} c_i^{-c_i}] \in \p^1$. Using notation established in Definitions~\ref{defn:degfamily} and \ref{defn:framedpencil}, we establish the following proposition.

\begin{prop} \label{prop:circflp} Let $A$ be an extended circuit of signature $(p, q; r)$ with $K_{{\mathcal{A}}} = 0$. The morphism $\pi : (\hyp{A} , \partial \hyp{A}) \to \secon{A}$ is a $\partial$-framed pencil. The critical values of $\pi$ consist of a unique stratified Morse singularity  over $c_A$  and 
\begin{enumerate}
	\item if $p > 1$ the fiber over $0$ is a stable pair degeneration, 
	\item if $q > 1$ the fiber over $\infty$ is a stable pair  degeneration.
\end{enumerate} 
\end{prop}

\begin{proof} We first address the statements concerning the critical values of $\pi$. If $p > 1$ (resp. $q > 1$), then $T_+$ (resp. $T_-$)is a triangulation of $(Q,A)$ with more than one simplex. Then $[0:1] \in \partial \secon{A}$ (resp. $[1:0]$) does not represent a full section implying it is contained in the compactifying divisor of the moduli of full sections $\mcv_A \subset \secon{A}$. By Theorem \ref{thm:toric2}, it then represents a stable pair  degeneration. 
	
Now, let $\hyp{A}^\prime = \hyp{A}^\circ - (F_0 \cup F_\infty)$ be the universal hypersurface away from the fibers over $0$ and $\infty$. The function $\pi : \hyp{A}^\prime \to \C^*$ is represented by the pencil in equation \eqref{eq:circuitpencil} restricted to $\hyp{A}^\prime := \{\sum_{i = 0}^{d + 1} Z_i  = 0\}$. The critical points of this function then can be calculated to be $\C^*$-orbits in the zero locus of $\lambda := d (\sum Z_i) \wedge d (s_0 / s_\infty)$. Writing $f = s_0 / s_\infty$ and computing, we obtain
\begin{align*}
 \lambda = d \left(\sum Z_i \right) \wedge d (s_0 / s_\infty) & = \left( \sum_{i = 0}^{d + 1} Z_i \right)  \wedge f \left( \sum_{i = 0}^{d + 1} c_i Z_i^{-1} \, dZ_i \right) , \\ & = f \sum_{i < j} \left( c_i Z_{i}^{-1} - c_j Z_j^{-1} \right) dZ_i \wedge dZ_j.
\end{align*} 
Note that the functions $Z_i^{-1}$ are well defined on $\hyp{A}^\prime$ for $a_i \in A_\pm$ while when $a_i \in A_0$, the coefficients $c_i = 0$ renders a zero term for $c_i Z_i^{-1}$. This two-form is zero if and only if $c_i Z_{i}^{-1} = c_j Z_j^{-1} $ for all $0 \leq i , j \leq d + 1$. If $r \ne 0$, then there are no zeros of $\lambda$. Indeed, if $A_0 = \{a_{d + 1 - r}, \ldots, a_{d + 1} \}$, then the $c_0 Z_0^{-1} dZ_0 \wedge dZ_{d + 1}$ will always be a non-zero summand of $\lambda$. One checks that for any proper subset $I \subsetneq \{d + 1 - r, \ldots, d + 1\}$, taking $C_I = \cap_{i \in I} \{Z_i = 0\}$ and restricting $\lambda|_{C_I}$, we still obtain a non-zero two-form. However, when $I = \{d + 1 - r, \ldots, d + 1\}$, 
\begin{align*} \lambda|_{C_I} = f \sum_{0 \leq i < j \leq d + 1 - r} \left( c_i Z_{i}^{-1} - c_j Z_j^{-1} \right).
\end{align*} 
This is zero if and only if $c_i Z_j = c_j Z_i$ for all $0 \leq i < j \leq d + 1 - r$ which holds precisely when $[Z_0: \cdots :Z_{d + 1}] = [c_0: \cdots :c_{d + 1}]$. Evaluating $f$ at this point gives $c_A$.

To see that this is a stratified Morse singularity, we restrict the Hessian of $f$ at $[c_0: \cdots :c_{d + 1}]$ to $\{ \sum Z_i = 0 \} \cap C_{d + 1 - r, \ldots , d + 1}$. One computes \begin{align*} \Hess_{(c_0, \ldots, c_{d + 1})} (f) = f (c_0, \ldots, c_{d + 1}) \left(h_{i,j} \right)_{i,j}\end{align*} where $h_{i,j} = c_i c_j$ if $i \ne j$ and $c_i^2 - c_i$ otherwise. As we are restricting to $C_I$, we may assume that $r = 0$ so that $c_i \ne 0$ for all $i$. From the expression for $\Hess (f)$, we see that it can be written as $H_1 - H_2$ where $H_1$ is a rank $1$ matrix with image $\textnormal{Lin}_\R \{(c_0, \ldots, c_{d + 1}) \}$ and $H_2$ is the diagonal matrix $\textnormal{Diag} (c_0, \ldots, c_{d + 1} )$. As $H_2$ is invertible, a tangent vector $v \in T \hyp{A}$ will be in the kernel of this difference only if $H_1 (v) \in \image (H_2)$. This implies $v$ is a multiple of $\sum_{i =0}^{d + 1} \partial_{Z_i}$ and, as this vector does not pair with  $\sum d Z_i $ to equal zero, it is not tangent to $\hyp{A}$ and we must have $v = 0$. Thus $\Hess_{(c_0, \ldots, c_{d + 1})} (f)$ restricted to $\{ \sum Z_i = 0 \} \cap C_{d + 1 - r, \ldots , d + 1}$ is non-degenerate and, by Proposition \ref{prop:str}, $\pi$ has a stratified Morse singularity at $[c_0: \cdots : c_{d + 1}]$. The statement that $\pi$ is a $\partial$-framed pencil then follows immediately from Definition \ref{defn:framedpencil}.
\end{proof}

\begin{figure}[t]
\begin{picture}(0,0)%
\includegraphics{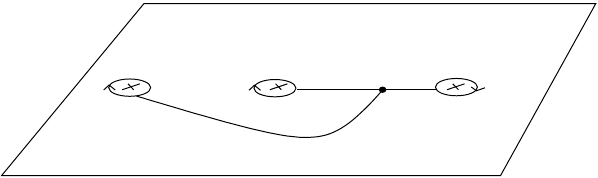}%
\end{picture}%
\setlength{\unitlength}{4144sp}%
\begin{picture}(4552,1335)(730,-949)
\put(2873,175){\makebox(0,0)[lb]{\smash{$\secon{A}$}}}
\put(2801,-101){\makebox(0,0)[lb]{\smash{$c_A$}}}
\put(1721,-101){\makebox(0,0)[lb]{\smash{$0$}}}
\put(4151,-101){\makebox(0,0)[lb]{\smash{$\infty$}}}
\put(3826,-466){\makebox(0,0)[lb]{\smash{$\delta_\infty$}}}
\put(1981,-691){\makebox(0,0)[lb]{\smash{$\delta_0$}}}
\put(3126,-466){\makebox(0,0)[lb]{\smash{$\delta_1$}}}
\put(3626,-196){\makebox(0,0)[lb]{\smash{$t_0$}}}
\end{picture}%
\caption{\label{fig:example0} Distinguished basis on $\secon{A}$}
\end{figure}

We write $\pi_0 : (\C^*)^{d + 1} \to \C^*$ for the restriction of $\pi$ to the complement of the coordinate divisors on $\p^{d + 1}$. We now fix a point $t_0 \in \secon{A} (\R )$ near infinity and let $\delta_0, \delta_1$ and $\delta_\infty$ be paths, based at $t_0$, around $0$, $c_A$ and $\infty$. Here $\delta_1$ and $\delta_\infty$ are straight line paths and $\delta_0$ is a concatenation of a straight line path to an $\varepsilon$ neighborhood of $c_A$, a clockwise semicircle around $c_A$ and a straight line path to $0$. These are pictured in Figure \ref{fig:example0}. 

Our main theorem now appears as a consequence of the Proposition~\ref{prop:frmechnge}.

\begin{thm} \label{thm:circuitrelation} Let $(Q, A)$ be an extended circuit with $K_{\mathcal{A}} = 0$, $T_i = \partrans (\delta_i )$ and \begin{equation*} \mathbf{x} = \left( -  \frac{ 2 \pi \textnormal{ gcd} (c_i, c_j)}{\textnormal{lcm} (c_i,  c_j)} : c_i > 0 , c_j < 0 \right) . \end{equation*} 
	Then
	\begin{equation*} T_0 T_1 T_\infty = \boundflow (\mathbf{x} ) \end{equation*}
	in $\pi_0 (\Symp^\mathbf{F} (\fib{A}{t_0}, \partial \fib{A}{t_0}))$.
\end{thm}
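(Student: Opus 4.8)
The plan is to realize the relation as the image under $\partrans$ of the obvious relation $\delta_0 \delta_1 \delta_\infty = \gamma$ in $\pi_1$ of the thrice-punctured sphere $\p(d_-, d_+) - \{0, c_A, \infty\}$, where $\gamma$ is a small loop near the base point $t_0$. Since the base point $t_0$ lies near $\infty$ and $c_A$ is the unique finite critical value in the chart $U$, the concatenation $\delta_0 \delta_1 \delta_\infty$, chosen as described (straight-line legs plus the detour of $\delta_0$ around $c_A$), is homotopic rel endpoints to a small loop $\gamma$ encircling $t_0$ — this is just the ``lantern-type'' cancellation of a pencil over $\p^1$ with three special values. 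Consequently $\partrans(\delta_0) \partrans(\delta_1) \partrans(\delta_\infty) = \partrans(\gamma)$ in $\Symp(\fib{A}{t_0}, \partial \fib{A}{t_0})$, hence in $\pi_0$. So the real content is to identify $\partrans(\gamma)$ with $\boundflow(\mathbf{t})$ for the stated $\mathbf{t}$.

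To compute $\partrans(\gamma)$, I would use Proposition \ref{prop:frmechnge}. Since $A$ is an extended circuit, the boundary $\partial \hyp{A}$ consists of the proper transforms of $V_{ij} = \{Z_i = Z_j = 0\}$ (for $\alpha_i \in A_-$, $\alpha_j \in A_+$) together with the $D_k = \{Z_k = 0\}$ for $\alpha_k \in A_0$. The $D_k$ with $k \in A_0$ are exactly the divisors that support the stratified Morse singularity over $c_A$, so they lie in $\cup_i S_i$ and contribute nothing to the framing; the relevant rigid components contributing to $\varrho(\gamma)$ are the $V_{ij}$, which are (stacky) $\p^{d-1}$-bundles and whose restriction to the pencil $\pi$ is trivial in the sense of the rigidity definition. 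For each such $V_{ij}$, Proposition \ref{prop:frmechnge} gives the coefficient $a_{ij} = \int_{\curve_{ij}} s_{ij}^*(c_1(N_{\hyp{A}} V_{ij}))$, where $\curve_{ij} \subset \p(d_-, d_+)$ is the image of the trivial family. The normal bundle of $V_{ij}$ in the blow-up, restricted to a section, has degree governed by the local weights of the pencil $[\prod Z_j^{a_j} : \prod Z_i^{-a_i}]$ along the exceptional locus over $V_{ij}$; after accounting for the stacky structure of $\p(d_-, d_+)$, the winding number works out to $-2\pi \gcd(a_i, a_j)/\operatorname{lcm}(a_i, a_j)$, which is precisely the $(i,j)$ entry of $\mathbf{t}$.

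The main obstacle I expect is this last Chern-number computation: correctly bookkeeping the interaction between the stacky blow-up structure of $\laf{A}$ from Proposition \ref{prop:circuitlaf}, the weighted-projective base $\p(d_-, d_+)$ with its gerbe/orbifold points, and the normalization conventions for $\mathbf{a}$ (with $\gcd(a_i) = |K_A| = 1$ here). Concretely, one must choose a section $s_{ij}: \curve_{ij} \to V_{ij}$, trivialize $N_{\hyp{A}} V_{ij}$ over the two charts $U_0$, $U_1$ of $\curve_{ij}$ around $q_\infty$, and read off the transition function from the defining monomials $\prod_{\alpha_j \in A_+} Z_j^{a_j}$ and $\prod_{\alpha_i \in A_-} Z_i^{-a_i}$; the exponent of the transition cocycle restricted to $V_{ij}$ is $\gcd(a_i, a_j)/\operatorname{lcm}(a_i,a_j)$ once one divides by the ramification $d_\pm$ inherent in $\p(d_-, d_+)$. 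I would organize this as: (1) establish the $\pi_1$ relation and push forward; (2) invoke Propositions \ref{prop:flp} and \ref{prop:frame} to place everything in $\pi_0(\Symp^{\mathbf{F}})$ and identify $\varrho(\gamma)$ with $\partrans(\gamma)$ modulo the $\relaxed{\mathbf{F}}$-framing; (3) apply Proposition \ref{prop:frmechnge}, reducing to the per-divisor Chern numbers; (4) carry out the local computation of $a_{ij}$ using the explicit pencil, checking against the discriminant point $c_A = [\prod a_j^{a_j} : \prod a_i^{-a_i}]$ already computed in equations \eqref{eq:circ1}–\eqref{eq:circ2}; and (5) assemble $\mathbf{t} = \sum a_{ij} e_{ij}$ to match the stated formula.
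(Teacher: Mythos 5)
Your proposal is correct and follows essentially the same route as the paper: everything reduces, via Proposition \ref{prop:frmechnge}, to the Chern numbers of the normal bundles of the rigid boundary divisors indexed by the pairs $(i,j)$ with $a_i a_j < 0$. The paper carries out your step (4) slightly more cleanly than by chasing transition cocycles of the pencil: it observes that the star of the ray $b_{ij} = \frac{1}{r_{b_{ij}}}(a_i e_{\alpha_j}^\vee + a_j e_{\alpha_i}^\vee)$ in the Lafforgue fan is $\p\bigl(\tfrac{a_i}{r_{b_{ij}}},\tfrac{a_j}{r_{b_{ij}}}\bigr)$ times $D_{ij}$, with normal bundle $\mco(-1)$ of degree $-r_{b_{ij}}^2/(a_i a_j)$, which is the stated $\gcd/\mathrm{lcm}$ factor when $|K_A|=1$.
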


\begin{proof} By Proposition~\ref{prop:frmechnge}, the only result needed for the theorem is the computation of the Chern numbers for the rigid boundary divisors associated to $\overline{b}_{ij}$. In the proof of Proposition~\ref{prop:circuitlaf}, we saw that $b_{ij} = \frac{1}{r_{{ij}}} \left(   c_i e_{a_j}^\vee  - c_j e_{a_i}^\vee\right)$. By Proposition~\ref{prop:rigid},  the divisor $D_{ij} \subset \laf{A}$ corresponding to $\overline{b}_{ij}$ is isomorphic to the products of $\secon{A}$ and the boundary divisor $\tilde{D}_{ij} \subset \mcx_{Q}$ corresponding to the facet $F_{ij} = \convhull ( A - \{a_i, a_j\})$. Let $\sfan_{ij}$ be the stacky fan in $(\Z^3, \Z^2, \beta_{ij}, \Sigma_{ij})$ where $\{e_i, e_j, e_D\}$ is the standard basis for $\Z^3$. Define $\beta_{ij}$ to be the map $\beta_{ij} (e_i) = \left( \ell_+ / c_i \right) e_1$, $\beta_{ij} (e_j) = - \left( \ell_- / c_j \right) e_2$ and $\beta_{ij} (e_D ) = \frac{1}{r_{{ij}}} \left(   c_i e_2  - c_j e_1 \right)$. Take the fan $\Sigma_{ij}$ to consist of two maximal cones $\text{Lin}_{\R_{\geq 0}}\{e_i, e_D\}$ and  $\text{Lin}_{\R_{\geq 0}}\{e_j, e_D\}$ whose image under $\beta_{ij}$ gives the fan pictured in Figure~\ref{fig:sfij}.

\begin{figure}
\begin{picture}(0,0)%
\includegraphics{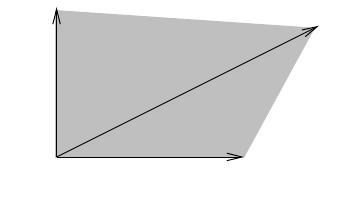}%
\end{picture}%
\setlength{\unitlength}{4144sp}%
\begin{picture}(2595,1533)(3856,-3505)
\put(3521,-2131){\makebox(0,0)[lb]{\smash{$\left(0, -\frac{\ell_-}{c_j} \right) $}}}
\put(6436,-2401){\makebox(0,0)[lb]{\smash{$\left(-\frac{c_j}{r_{ij}}, \frac{c_i}{r_{ij}} \right) $}}}
\put(5491,-3436){\makebox(0,0)[lb]{\smash{$\left(\frac{\ell_+}{c_i},0 \right) $}}}
\end{picture}%
	\caption{\label{fig:sfij}The stacky fan $\sfan_{ij}$}
\end{figure}

Using Proposition~\ref{prop:circuitlaf} and equation~\eqref{eq:raygens}, the star of $\overline{b}_{ij}$ is the product of $\sfan_{ij}$ and the fan for $\tilde{D}_{ij}$. Thus the toric stack associated to the fan $\sfan_{ij}$ is isomorphic to the normal bundle of a section of $\pi$ lying on $D_{ij}$. The Chern number of the normal bundle of the divisor $D$ corresponding to $e_D$ is then computed as $D  \cdot D = - \frac{r_{{ij}}^2}{c_i c_j}$ which equals the indicated factor under the assumption $K_{\mathcal{A}} = 0$.
\end{proof}

\subsection{Examples in dimension $1$}

In this section we explore three examples in dimension $1$ of the circuit relation in full detail. These circuits are illustrated in Figure \ref{fig:exampleset}. The first relation is known as the lantern relation for mapping class groups of marked curves and, to a large degree, is the case that inspired this paper. The next example yields the star relation. We observe that the circuit stack in this example, as well as its higher dimensional generalizations, arises naturally in the context of homological mirror symmetry. We refer to \cite[Chapter~2]{farb} for general background on the mapping class groups of marked curves and classical proofs of these relations.

For every example, we take a fiber $t_0 \in \R_{> 1}$ near $\infty$ and choose the distinguished basis of paths $\delta_0$, $\delta_1$ and $\delta_\infty$ on $\secon{A}$ as in Theorem \ref{thm:circuitrelation} and Figure \ref{fig:example0}.

\subsubsection{Circuit of signature $(2, 2)$}

\begin{figure}[t]
\includegraphics{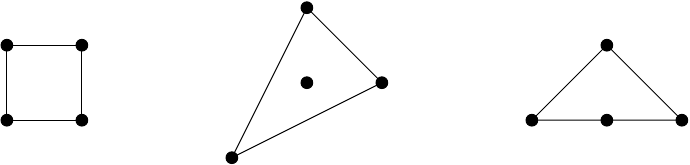}%
\caption{\label{fig:exampleset} Examples in dimension $1$}
\end{figure}
Here we take the set $A = \{ (0, 0), (1, 0), (1, 1), (0, 1) \}$ and fix the orientation of $A$ as $\mathbf{c} = (1, -1, 1, -1)$. We have that $\hyp{A} = \{ Z_0 + Z_1 + Z_2 + Z_3 = 0 \} \subset \p^3$ and $\pi$ is defined as the pencil $[Z_0 Z_2 : Z_1 Z_3]$. Taking the coordinate $t$ for the point $[t:1] \in \p^1$ we utilize equation \eqref{eq:circuitpencil} to find $t = \pi ([Z_0:Z_1:Z_2:Z_3]) = \frac{Z_0 Z_2}{Z_1 Z_3}$ so that every fiber $\fib{A}{t} = \hyp{A} \cap \pi^{-1} (t)$ for $t \in \C^* - \{1\}$ is isomorphic to $\p^1$. The boundary divisor $\partial \fib{A}{t}$ consists of four points given as an intersection with $\cup_{i = 0}^3 D_i$ where $D_i = \{Z_i = 0 = Z_{i + 1} \}$ using an index in $\Z / 4 \Z$. Thus, using a M\"obius transformation, we can find a coordinate $x$ for each fiber so that 
\begin{align} \label{eq:divisorcond}
\begin{split}  q_1 = D_1 \cap \fib{A}{t} & = \{x  = 0 \} , \\ q_2 = D_2 \cap \fib{A}{t} & = \{x  =  1\} , \\ q_3 = D_3 \cap \fib{A}{t} & = \{x  = \infty \}.
\end{split} \end{align}
Parameterizing $\fib{A}{t}$ so that $Z_i$ is at most quadratic in $x$ and which satisfies equations \eqref{eq:divisorcond} gives
\begin{equation*}  \fib{A}{t} = \{ [1 - tx : ( tx - 1) x : tx (1 - x): x - 1 ] : x \in \C  \} \end{equation*}
with remaining boundary divisor component 
\begin{align*}
q_0 = D_0 \cap \fib{A}{t} & = \{x  = t^{-1} \}.
\end{align*}
Over the limiting degeneration values of $t = 0$ and $\infty$, one sees that this converges to give parameterizations of the intersections $\{Z_2 = 0 \} \cap \hyp{A}$ and $\{Z_3 = 0 \} \cap \hyp{A}$, respectively.  

As $t_0 > 0$ was chosen close to $\infty$, we have that $q_0 > 0$ is close to zero and indeed tends to $q_1$ as $t$ tends to $\infty$. This reflects the bubbling of the intersection $\hyp{A} \cap \{Z_0 = 0 \}$ off in the limit and we see that the vanishing cycle of $\delta_\infty$ is a loop $\gamma_\infty$ encircling $q_0$ and $0$ in the $x$-plane. In a similar vein, we may follow the path $\delta_1$ from $t_0$ to $1$ and observe that the point $q_0$ follows the straight line path to $q_2$. Thus the vanishing cycle associated to $\delta_1$ is isotopic to $\gamma_1$ illustrated in Figure \ref{fig:example1}. Finally, as $t$ tends from $t_0$ to $0$ along the path $\delta_0$, $q_0$ passes above $q_2$ and towards $q_3$. The vanishing cycle may be pulled back along this path and is seen to be equivalent to $\gamma_0$ which, up to isotopy, is illustrated in Figure \ref{fig:example1}. 

\begin{figure}[b]
\begin{picture}(0,0)%
\includegraphics{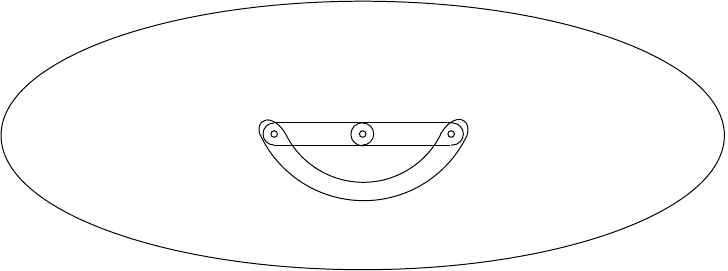}%
\end{picture}%
\setlength{\unitlength}{4144sp}
\begin{picture}(5528,2062)(-7,-1575)
\put(5118,-477){\makebox(0,0)[lb]{\smash{$q_3$}}}
\put(1676,-561){\makebox(0,0)[lb]{\smash{$q_1$}}}
\put(3628,-561){\makebox(0,0)[lb]{\smash{$q_2$}}}
\put(2658,-756){\makebox(0,0)[lb]{\smash{$q_0$}}}	\put(2216,-376){\makebox(0,0)[lb]{\smash{$\gamma_\infty$}}}	\put(2971,-376){\makebox(0,0)[lb]{\smash{$\gamma_1$}}}
\put(2656,-1186){\makebox(0,0)[lb]{\smash{$\gamma_0$}}}
\end{picture}%	
\caption{\label{fig:example1} The $(2, 2)$ circuit relation or the lantern relation}
\end{figure}

Applying Theorem \ref{thm:circuitrelation} in this example yields the well known lantern relation arising in mapping class groups. 

\subsubsection{Circuit of signature $(1, 3)$}

In our example of a $(1, 3)$ circuit, we take the set $A = \{(0, 0), (1, 0), (0, 1), (-1, -1)\}$ and fix $\mathbf{c} = (3, -1, -1, -1)$. We have the same hypersurface $\hyp{A} \subset \p^3$ as before, but with $\pi ([Z_0: Z_1: Z_2: Z_3]) = [Z_0^3 : Z_1 Z_2 Z_3 ]$. The smooth fibers $\fib{A}{t}$ of $\pi$ are elliptic curves with boundary points indexed by the divisors $D_i = \{Z_i = 0 = Z_0\} = \{q_i\}$ for $i = 1, 2, 3$. Near $t = \infty$, we have $\fib{A}{t}$ approaches an intersection of $\hyp{A}$ with the three divisors in $\{Z_1 = 0\}$, $\{Z_2 = 0 \}$ and $\{Z_3 = 0 \}$ which subdivides it into three pairs of pants. On the other hand, at $t = 0$, $\fib{A}{t} - \partial \fib{A}{t}$ is the quotient of the elliptic curve $\{(x,y): x + y + x^{-1}y^{-1} = 0 \} \subset (\C^*)^2$ by a $\Z / 3 \Z$ action. We note that there is $1$ component of $\fib{A}{0}$ and $3$ components of $\fib{A}{\infty}$. This occurs generally as the signature corresponds to $(|A_+|, |A_-|)$ and the number of simplices in the triangulation $T_\pm$ is $|A_\pm|$. The fiber over $0$ (resp. $\infty$) is a stable pair degeneration corresponding to the triangulation $T_+$ (resp. $T_-$) and the number of components of this degeneration equals the number of simplices in the triangulation.

For the moment, we consider the case of a more general signature $(1, d + 1)$ circuit with $A_+ = \{a_0\}$ and let
\begin{equation*} \hyp{A}^+ (\R ) := \{[r_0: \cdots :r_{d + 1} ] \in \hyp{A} : r_i \in \R^*,  r_i r_j < 0 \text{ with }i < j \text{ iff } i = 0 \}. \end{equation*}
In particular, $\hyp{A}^+ (\R)$ is isomorphic to the positive simplex in $\R^{d + 1}_{> 0}$ using the coordinates $\{[-1 : r_1: \cdots :r_{d +1}] : \sum r_i = 1\} \in \hyp{A}^+ (\R )$ . One checks that the assumption on the signature of $A$ gives $[c_0: \cdots : c_{d + 1}] \in \hyp{A}^+ (\R )$. Furthermore, following the computations of the critical points and Hessian of $\pi$ in the proof of Proposition \ref{prop:circflp}, which do not rely on whether we work over $\R$ or $\C$, shows that $\pi|_{\hyp{A}^+ (\R )} : \hyp{A}^+ (\R ) \to \p_\R^1$ has a unique Morse singularity at $[c_0: \cdots : c_{d + 1}]$ with critical value $c_A \in \p_\R^1$. Furthermore, along the boundary of the closure of $\hyp{A}^+ (\R)$ (where one of the coordinates equals zero), $\pi$ evaluates to $\infty = [1:0]$. Finally, since $\pi$ does not take the value of $[0:1]$ on $\hyp{A}^+ (\R )$, we can conclude that the unique critical point is a maximum (resp. a minimum ) if $a_0$ is odd (resp. even) and that $\hyp{A}^+ (\R)$ is the stable (resp. unstable) manifold associated to $c_A$. As such a manifold is obtained by gradient flow using the Hermitian metric, this flow equals that of the symplectic parallel transport map along the real line. Thus $\hyp{A}^+ (\R)$ is contained in the vanishing thimble of $\delta_1$ and, as it is a smooth manifold of the correct dimension, it must equal the vanishing thimble. Alternatively, one could observe this fact by considering $\hyp{A}^+ (\R)$ as the fixed locus of an anti-holomorphic involution which is equivariant with respect to $\pi$.

The boundary of $\hyp{A}^+ (\R)$ is the union of three arcs contained in the three components $\fib{A}{\infty}$. Taking symplectic transport to $t_0$ near $\infty$, these arcs lie in three pairs of pants which converge to the degenerated components giving $\gamma_1$ in Figure \ref{fig:example2}. The three circles depicting  $\gamma_\infty$ are the vanishing cycles associated to the degeneration. The circuit relation in this example is known as the star relation.

\begin{figure}
\begin{picture}(0,0)%
\includegraphics{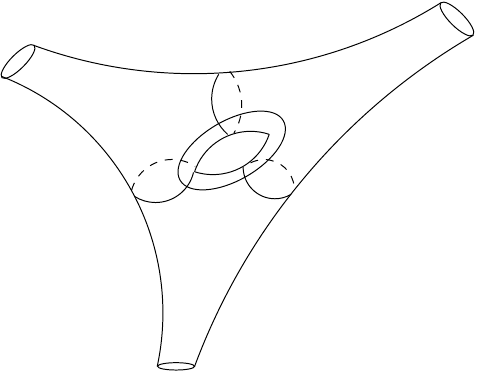}%
\end{picture}%
\setlength{\unitlength}{4144sp}%
\begingroup\makeatletter\ifx\SetFigFont\undefined%
\gdef\SetFigFont#1#2#3#4#5{%
  \reset@font\fontsize{#1}{#2pt}%
  \fontfamily{#3}\fontseries{#4}\fontshape{#5}%
  \selectfont}%
\fi\endgroup%
\begin{picture}(3620,2850)(1050,-3492)
\put(2670,-1110){\makebox(0,0)[lb]{\smash{{\SetFigFont{8}{9.6}{\rmdefault}{\mddefault}{\updefault}{\color[rgb]{0,0,0}$\gamma_\infty$}%
}}}}
\put(3231,-1554){\makebox(0,0)[lb]{\smash{{\SetFigFont{8}{9.6}{\rmdefault}{\mddefault}{\updefault}{\color[rgb]{0,0,0}$\gamma_1$}%
}}}}
\put(2662,-3436){\makebox(0,0)[lb]{\smash{{\SetFigFont{8}{9.6}{\rmdefault}{\mddefault}{\updefault}{\color[rgb]{0,0,0}$q_0$}%
}}}}
\put(1165,-904){\makebox(0,0)[lb]{\smash{{\SetFigFont{8}{9.6}{\rmdefault}{\mddefault}{\updefault}{\color[rgb]{0,0,0}$q_1$}%
}}}}
\put(4643,-1115){\makebox(0,0)[lb]{\smash{{\SetFigFont{8}{9.6}{\rmdefault}{\mddefault}{\updefault}{\color[rgb]{0,0,0}$q_\infty$}%
}}}}
\put(1791,-2173){\makebox(0,0)[lb]{\smash{{\SetFigFont{8}{9.6}{\rmdefault}{\mddefault}{\updefault}{\color[rgb]{0,0,0}$\gamma_\infty$}%
}}}}
\put(3257,-2258){\makebox(0,0)[lb]{\smash{{\SetFigFont{8}{9.6}{\rmdefault}{\mddefault}{\updefault}{\color[rgb]{0,0,0}$\gamma_\infty$}%
}}}}
\end{picture}%

\caption{\label{fig:example2} The $(1, 3)$ circuit relation or the star relation}
\end{figure}

In higher dimensions, we may consider the signature $(1, d + 1)$ case with $\mathbf{c} = (c_0 ,  c_1 , \ldots,  c_{d + 1} )$ where $c_0 = v_A > 0$. Again we have that the hypersurface $\hyp{A} $ is $ \{\sum_{i = 0}^{d + 1} Z_i = 0 \}$ in $\p^{d + 1}$ and
\begin{equation*}
\pi ([Z_0: \cdots : Z_{d + 1}] ) = [ Z_0^{v_A} : Z_1^{-c_1} \ldots Z_{d + 1}^{-c_{d + 1}} ] .
\end{equation*}
If $K_{\mathcal{A}} = 0$, the secondary stack $\secon{A}$ is  $ \p (v_A / r , \tilde{\ell}_-) / (\Z / r \Z)$ and we may take an orbifold chart around zero to be the map $z^{a_0}$. Pulling $\pi$ back along this chart we obtain a map $w : (\C^*)^{d} \to \C$. Indeed, taking $\pi = [z^{a_0} : 1]$ and restricting to $Z_1^{a_1} \ldots, Z_{d + 1}^{c_{d + 1}} = 1$ yields $Z_0 = t$ so that we may express $w$ as
\begin{equation*} w (Z_1, \ldots, Z_d) = Z_0 = - \sum_{i = 1}^d Z_i - \frac{1}{Z_1^{c_1/c_{d + 1}} \cdots Z_d^{c_d / c_{d + 1}}} . \end{equation*}
Referring to \cite[Equation~1.4]{horivafa} we have that, up to a scale, the map $\pi$ is the equivariant quotient of the homological mirror LG model of the weighted projective space $\p (c_1, \ldots, c_{d + 1})$. This will appear again as one piece of a general conjectural program for homological mirror symmetry  in the final section.

\subsubsection{\label{sec:exampledeg} Circuit of signature $(1, 2; 1)$}

In our only degenerate example, we observe a relation between braids and Dehn twists. We take $A  = \{ (0, 0), (1, 0), (-1, 0), (0, 1)\}$, $\mathbf{c} = (2, -1, -1, 0)$. Here $\laf{A}$ is the blow-up of $\p^3$ along the two coordinate lines $L_1 = \{Z_0 = 0 = Z_1\}$ and $L_2 = \{Z_0 = 0 = Z_2\}$ which are the base locus of the pencil $\tilde{\pi}$ given as
\begin{equation*}
\tilde{\pi} ([Z_0: Z_1:Z_2:Z_3] ) = [Z_0^2 : Z_1 Z_2].
\end{equation*}
The secondary stack of $A$ is $\secon{A} = \p (2, 1)$. 

Since $A$ is a degenerate circuit, the divisor $\{Z_3 = 0 \}$ is not contained in a fiber over $0$ or infinity, but rather intersects $\fib{A}{t}$ in two points everywhere except over the degenerate point $[2:-1:-1:0]$ with value $c_A = 4$.  We give $\fib{A}{t}$ coordinates,
\begin{equation*}
\fib{A}{t} = \{[tx:x^2:t :-tx - x^2 - t] : x \in \C \} .
\end{equation*}
The boundary points on $\fib{A}{t}$ are then,
\begin{align*} q_1 & =  \fib{A}{t} \cap \{Z_1 = 0 \} = \{x= 0 \}  ,\\
q_2  & = \fib{A}{t} \cap \{Z_2 = 0 \} = \{t x = \infty\} ,\\
q_{3, \pm} & = \{x = -t \pm \sqrt{t^2 - 4t}/2 \} .
\end{align*}
As $t$ tends from $c_A$ to $t_0$, we see that $q_{3, \pm}$ splits along the real axis. The vanishing cycle $\gamma_1$ for $\delta_1$ thus forms an interval stretching between $q_{3, \pm}$. This can be seen from the local description of vanishing cycles for stratified Morse singularities given in Proposition \ref{prop:vcvt} and its proof.  Tending from $t_0$ to $\infty$, one observes $q_{3,+}$ converging to $-1$ and $q_{3, -}$ bubbling off with $\infty$. This parameterization converges to the component $\hyp{A} \cap \{Z_1 = 0\}$.  Thus we may draw a vanishing cycle $\gamma_\infty$ around $\infty$ and $q_{3, -}$ corresponding to $\delta_\infty$.

\begin{figure}
\begin{picture}(0,0)%
\includegraphics{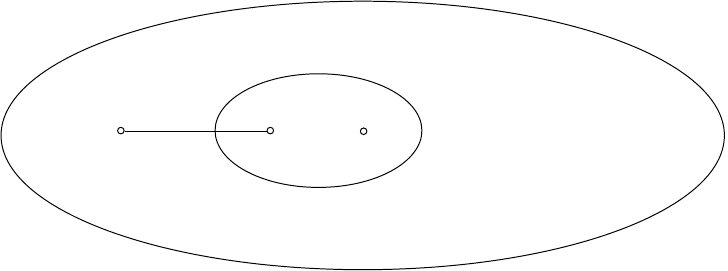}%
\end{picture}%
\setlength{\unitlength}{4144sp}%
\begingroup\makeatletter\ifx\SetFigFont\undefined%
\gdef\SetFigFont#1#2#3#4#5{%
  \reset@font\fontsize{#1}{#2pt}%
  \fontfamily{#3}\fontseries{#4}\fontshape{#5}%
  \selectfont}%
\fi\endgroup%
\begin{picture}(5528,2062)(-7,-1575)
\put(5118,-685){\makebox(0,0)[lb]{\smash{{\SetFigFont{7}{8.4}{\rmdefault}{\mddefault}{\updefault}{\color[rgb]{0,0,0}$q_2$}%
}}}}
\put(1081,-421){\makebox(0,0)[lb]{\smash{{\SetFigFont{7}{8.4}{\rmdefault}{\mddefault}{\updefault}{\color[rgb]{0,0,0}$\gamma_1$}%
}}}}
\put(2277,  8){\makebox(0,0)[lb]{\smash{{\SetFigFont{7}{8.4}{\rmdefault}{\mddefault}{\updefault}{\color[rgb]{0,0,0}$\gamma_\infty$}%
}}}}
\put(808,-685){\makebox(0,0)[lb]{\smash{{\SetFigFont{7}{8.4}{\rmdefault}{\mddefault}{\updefault}{\color[rgb]{0,0,0}$q_{3,-}$}%
}}}}
\put(1991,-685){\makebox(0,0)[lb]{\smash{{\SetFigFont{7}{8.4}{\rmdefault}{\mddefault}{\updefault}{\color[rgb]{0,0,0}$q_{3,+}$}%
}}}}
\put(2686,-685){\makebox(0,0)[lb]{\smash{{\SetFigFont{7}{8.4}{\rmdefault}{\mddefault}{\updefault}{\color[rgb]{0,0,0}$q_1$}%
}}}}
\end{picture}%

\caption{\label{fig:example3} The $(1, 2; 1)$ circuit relation}
\end{figure}

At $t = 0$ we have a $\Z / 2 \Z$ orbifold point where, in the coordinates given by $x$, we have quotiented by the action. This implies that the monodromy map $T_0^2 = 1$. As $T_{\partial \fib{A}{t_0}}$ is supported near the boundary $\{q_1, q_2, q_{3, \pm}\}$, it commutes with $T_0, T_1$ and $T_\infty$.  In fact, ignoring the framing on the endpoints $q_{3, \pm}$ of the braid, $T_{\partial \fib{A}{t_0}}$ is a half twist about $q_1$ and $q_2$. Thus we may take the relation $T_0 T_1 T_\infty = T_{\partial \fib{A}{t_0}}$ from Theorem \ref{thm:circuitrelation} and rewrite it as $T_1 T_\infty =  T_0^{-1} T_{\partial \fib{A}{t_0}} $. Squaring both sides gives the relation $(T_1 T_\infty )^2 = T_{\partial \fib{A}{t_0}}^2$. This does not seem to have a direct analog in the literature, but can be thought of as a hyperelliptic relation for a braid and a loop.

\subsection{\label{sec:applications1} Regeneration}
In contrast to the topology of discriminant complements (see \cite{DolgachevLibgober}), the geometry of the principal $A$-determinant complement seems relatively unexplored.  For extended circuits, we have completed the project of understanding $\secon{A} - \mce_A$ in Proposition~\ref{prop:circflp} as the once punctured quotient of a weighted projective line. On the other hand, as one considers more complicated sets $A$, the complexity of the topology of their determinant complements grows rapidly. In order to retain the information obtained from more basic cases of $A^\prime \subset A$ such as circuits, we require a method of regeneration. In large measure, the toric and symplectic preliminaries in Appendices~\ref{sec:toric} and \ref{sec:pfsymp} are designed to make such a method possible and accessible. 

%Would like to refer to better notation on the strata of $\secon{A}$ corresponding to subdivisions!
Let $A \subset \Z^d$ and $A^\prime \subset A$ be finite subsets and $S = \{(Q_i, A_i): i \in I \}$ a regular subdivision of $Q$ such that $(Q_i , A_i )$ is a marked simplex for all $A_i$ not containing $A^\prime$ and $A_i$ is a simplicial extension of $A^\prime$ otherwise.  Call such a subdivision a \emph{triangular extension} of $A^\prime$. Such a subdivision induces an map of affine polytopes $\psec{A^\prime} \to \psec{A}$ which is obtained by taking the vertex $\varphi_{T^\prime}$ corresponding to the regular triangulation $T^\prime$ of $(Q^\prime , A^\prime)$ to the vertex $\varphi_{\bar{T}^\prime}$ where $\bar{T}^\prime$ is the unique refinement of $S$ which restricts to $T^\prime$ on $(Q^\prime, A^\prime)$. This map of secondary polytopes induces a natural inclusion $i_{S} : \secon{A^\prime} \to \secon{A}$ of secondary stacks. By the definition of triangular extensions and  \cite[Theorem~10.1.12]{GKZ}, we have that $i_S (\mce_{A^\prime}) = \mce_A \cap i_S (\secon{A^\prime})$. Let $\secon{A}^\circ$ be the maximal torus orbit of $\secon{A}$ and $\mce_A^\circ$ be the intersection $\mce_A \cap \secon{A}^\circ$.  Given $\varepsilon > 0$, let $\interior{A^\prime}{\varepsilon} \subset \secon{A^\prime}^\circ - \mce^\circ_{A^\prime}$ be the complement of an $\varepsilon$ neighborhood of $\mce_{A^\prime}^\circ$. For sufficiently small $\varepsilon$, $\interior{A^\prime}{\varepsilon}$ is diffeomorphic to $\secon{A^\prime}^\circ - \mce^\circ_{A^\prime}$.

\begin{defn} Let $\disc \subset \C$ be a disc around the origin and $\mci$ a complex manifold. A regeneration of ${A^\prime}$ relative to ${A}$ is a pair $(\mci , \psi )$ where $\psi_{\_} : \disc \times \mci \to \secon{A}$ is holomorphic with $\psi_0 : \mci \to i_S (\interior{A^\prime}{\varepsilon})$ a covering map onto its image and $\psi_t : \mci \to \secon{A}^\circ$ injective for all $t \ne 0$.
\end{defn}

%The following proposition follows from Theorem \ref{thm:toric2} and \cite{GKZ}, section 10.1.12. First we establish the following notion of a relative isotropy group. If $\mcy$ is a substack of $\mcx$ such that $G_{\mcx}$ and $G_{\mcy}$ are the isotropy groups of $\mcx$ and $\mcy$, and if $G_{\mcx}$ is a normal subgroup of $G_{\mcy}$, we say that the quotient $G_{\mcy} / G_{\mcx}$ is the relative isotropy group of $\mcy \subset \mcx$. As all isotropy groups of toric stacks are abelian, the relative isotropy group of $\mcy \subset \mcx$ is always well defined.  %Need notion of relative isotropy subgroup, i.e. the difference between the very generic subgroup and the isotropy group at a point.
The following proposition shows that there exists many distinct regenerations of $A^\prime$ relative to $A$.

\begin{prop} Let $S$ be a triangular extension of $A^\prime \subset A$ and $n \in \Z$. There exists a regeneration $(\mci , \psi)$ of ${A^\prime}$ with $\psi_0$ an $(\Z / n \Z)$-cover. 
\end{prop}
\begin{proof}
This result follows from general facts about stacky fans. In particular, suppose $\sfan = \left( \Z^r, \Lambda , \beta , \Sigma \right)$ is a canonical stacky fan for a complete toric stack where the rank of $\Lambda$ is $d$. Let $\sigma = \textnormal{Lin}_{\R_{\geq 0}} (e_{1}, \ldots, e_s)$ be an $s$-dimensional cone in $\Sigma$ where $s < d$ and $e_1, \ldots, e_r$ is the standard basis of $\Z^r$. The stacky subfan $\sfan_\sigma = \left( \Z^r, \Lambda, \beta, \sigma \right)$ gives the normal neighborhood of the orbit corresponding to $\sigma$. 

Now suppose $\tau = \textnormal{Lin}_{\R_{\geq 0}} (e_{1}, \ldots, e_s, e_{s + 1}) \in \Sigma$ and let $\Gamma = \textnormal{Lin}_{\Z} (e_1, \ldots, e_{s + 1})$. Take $(e_s)$ to be the ray in $\R^r$ generated by $e_s$ and define the stacky fan $\sfan_{s + 1} = \left( \Z^{s + 1} , \Gamma, \beta, (e_{s + 1}) \right)$. The stack associated to $\sfan_{s + 1}$ is clearly isomorphic to $\C \times (\C^*)^{s}$. Define $(g_1,g_2) : \sfan_{s + 1} \to \sfan_{\sigma}$ by
\begin{align} g_1 (e_i) & = \begin{cases}  e_i & \text{ if } i < s, \\ n e_s & \text{ if } i = s, \\ e_s + e_{s + 1}, \text{ if } i = {s + 1}. \end{cases} \end{align}
and take $g_2$ to be the unique map satisfying $\beta \circ g_1 = g_2 \circ \beta$. The associated map on stacks $g : \C \times (\C^*)^s \to \mcx_{\sfan_{\sigma}}$ is an $n$-fold cover at $0 \times (\C^*)^s$ and is injective otherwise. Composing with the inclusion $\mcx_{\sfan_\sigma} \hookrightarrow \mcx_\sfan$ and taking $\sfan$ to be the stacky secondary fan $\widetilde{\sfan}_{\psec{A}}$ from Lemma~\ref{lem:stfansecon} with $\sigma$ the cone $C_S$  gives the result.
\end{proof}

The next proposition gives a functorial viewpoint on symplectic parallel transport and regeneration. As in Appendix~\ref{subsec:flp}, we take $\Pi (\mcx)$ to be the path category of the stack $\mcx$ and $\mathbf{Symp}$ to be the category of symplectic manifolds.  If $\mcx \subset \secon{A} - \left( \mce_A \cup \partial \secon{A} \right)$, then we take $\mathbf{P}_\mcx : \Pi (\mcx) \to \mathbf{Symp}$ to be the parallel transport functor taking $p$ to $\fib{A}{p} - \partial \fib{A}{p} = \pi|_{\hyp{A} - \partial \hyp{A}}^{-1} (p)$ and a path to symplectic parallel transport. Denote the essential image of $\mathbf{P}_\mcx$ by $\mathbf{C} (\mcx )$. 
%Coisotropic type boundary interpretation with corners would be helpful

\begin{prop}  \label{prop:regrel} Assume $A^\prime$ affinely spans $\R^d$ and let $(\mci , \psi )$ be a regeneration of $A^\prime$ relative to $A$ and $\mcx = i_S^{-1}(\psi_0 (\mci)) \subset \secon{A^\prime}$. Then for any $t \ne 0$, there is a functor $F_{A^\prime} : \mathbf{C} (\psi_t (\mci) ) \to \mathbf{C} (\mcx )$ which the completes diagram
\begin{equation*}
\begin{CD} \Pi (\mci ) @>{\partrans}>> \mathbf{C} (\psi_t (\mci) )  \\ @V{i_S^{-1} \circ \psi_0}VV  @V{F_{A^\prime}}VV 
\\ \Pi (\mcx ) @>{\partrans}>> \mathbf{C} (\mcx)
\end{CD}
\end{equation*}
Furthermore, this diagram commutes up to isotopy.
\end{prop}
\begin{proof} Let $S = \{(Q_i, A_i) : i \in I \}$  and 
consider the singular symplectic fiber bundle $\mcf = \psi^* (\hyp{A} - \partial \hyp{A})$ over $B \times \mci$. Note that $\mcf$ is smooth over $(B - \{0\}) \times \mci$, but that over $\{0\} \times \mci$, the fibers of $\mcf$ are singular unions $\cup_{i \in I} \mcz_i$ where $\mcz_i \subset \mcx_{Q_i}$. By Proposition \ref{prop:hypdeg}, these fibers are stable pair degenerations. After excising the intersections $\mcz_i \cap \mcz_j$, this decomposition can be made global on $\mcf$ by taking symplectic parallel transport along rays in $B$ to the origin and removing the vanishing cycle $W$. From Proposition \ref{prop:hypdeg}, we have that $W$ is the singular coisotropic hypersurface consisting of all points that flow into the critical locus  of $\psi_0^* \hyp{A}$. Taking $\mcf^\prime = \mcf - W$, we have that $\mcf^\prime = \sqcup_{i \in I} \mcf_i^\prime$ is a smooth symplectic bundle over $B \times \mci$ whose connected components are indexed by the polytopes $(Q_i , A_i)$ in $S$. Using symplectic parallel transport along rays in $B$, the fiber of $\mcf^\prime_i$ over $(t, p)$ for any $p \in \mci$ is symplectomorphic to $\mcz_i - \partial \mcz_i$ over $(0,p)$.  As $A^\prime$ affinely spans $\R^d$, we have that $(Q^\prime, A^\prime) =(Q_{i_0}, A_{i_0})$ for some $i_0 \in I$. For $p \in \mci$, take $F_{A^\prime} (p)$ to be the fiber of $\mcf^\prime_{i_0}$ over $(0,p)$. While parallel transport along $\mcf^\prime / \{t\} \times \mci$ may not strictly commute with the parallel transport along the rays $[0, t] \times p$ for $p \in \mci$, they do commute up to isotopy yielding the homotopy commutative diagram in the proposition.
\end{proof}

Proposition~\ref{prop:regrel} suggest a general method of approaching the symplectomorphism group of a a hypersurface in a toric stack through an analysis of the groups on degenerated pieces. Of course, the general case of $A$ is exceptionally complex as it requires an understanding of groups for all smaller sets $A^\prime \subset A$. In this section we will see to what extent this approach is accessible in an example where $A$ is minimally more complicated, namely $A$ contains $(d + 3)$ points. 

The general case of $(d + 3)$ points has been studied and explicit formulas for $E_A$ are known \cite{SD}. At this level of generality, the formulas do not immediately render the geometry of the principal $A$-determinant or its complement accessible. However, it is worth mentioning that the  $A$-discriminant component is always a rational curve in a $\secon{A}$, usually with complicated singularities \cite{horn}. 

\begin{eg} We continue to explore Example~\ref{eg:dplus3} and take
\begin{align} \label{eq:regex} A & =  \{ (1, 0), (0, 1), (1, 1) , (-1, -1), (0, 0) \}. \end{align}
Any non-degenerate hypersurface $\fib{A}{p}$ is an elliptic curve with $4$ boundary points. By writing out the set of regular triangulations of $(Q,A)$ and applying equation \eqref{eq:secvert}, one obtains the vertices of $\psec{A}$ in $\R^{{\mathcal{A}}}$. Translating and pulling back to $L_{{\mathcal{A}}}$ via $\alpha_{{\mathcal{A}}}$ gives the secondary polytope $\psecv{A}{v}$ on the right hand side of Figure \ref{fig:secF1}. 
To obtain the stacky fan of the secondary stack, first observe that, for each coarse subdivision $S = \{(Q_i, A_i) : i \in I\}$ of $(Q,A)$ and pointing set $A_i$, the unique primitive function defining $S$ and zero on $A_i$ is a $\Lambda$-defining function so that  $\du{\eta}_{(S,A_i)} = \eta_{(S,A_i)}$. Thus, using equation \eqref{eq:deftildbeta},  $\tilde{\beta}_{\dul{\plaf{A}}} (e_{\eta_{(S,A_i)}}) = \eta_{(S,A_i)} $. Also, since $K_{{\mathcal{A}}} = 0$, Lemma \ref{lem:KtoXi} implies that $\Xi_{{\mathcal{A}}} = L_{{\mathcal{A}}}^\vee$. Finally, applying the Lemma \ref{lem:stfansecon} shows that the stacky fan for $\secon{A}$ equals the normal fan of $\psecv{A}{v}$ which has one-cone generators 
\begin{equation*} \fans{A} =  \{v_1, \ldots, v_4\} = \{(1, 1), (0, 1), (-2, -3), (1, 0)\} \subset \Z^2 .\end{equation*}
The secondary fan and polytope are illustrated in Figure \ref{fig:secF1}. 

To simplify the cumbersome notation, we order $A$ as in equation \eqref{eq:regex} and write $y_i$ for the monomial which evaluates the $i$-th coefficient. For example, $y_4 = x_{(-1,-1)}$ is regarded as the projection $\linsys{A} = \C^A$ to the $(-1,-1)$-coordinate. Then, utilizing \cite[Theorem~10.1.2]{GKZ}, one can compute the $A$-discriminant and the principal $A$-determinant to be
\begin{align*}
\Delta_A & = y_1 y_2 y_4 y_5^3 - y_3 y_4 y_5^4 + 27 y_1^2 y_2^2 y_4^2 - 36 y_1 y_2 y_3 y_4^2 y_5 + 8 y_3^2 y_4^2 y_5^2 - 16 y_3^3 y_4^3, \\
E_A & = y_1^2 y_2^2 y_3 y_4 \Delta_A.
\end{align*}
From Definition~\ref{defn:secstack}, the principal $A$-determinant induces a section of $\mco_{\secon{A}} (1)$ denoted $E^s_A$. By taking the unique interior point of $\psecv{A}{v}$ to be zero, and using the right side of Figure~\ref{fig:secF1} as the exponents of the Laurent monomials, we obtain coordinates $(u_1, u_2)$ of the maximal $(\C^*)^2$-orbit of $\secon{A}$ over which $\mco_{\secon{A}} (1)$ is trivialized.  Then the principal $A$-determinant restricts to the Laurent polynomial
\begin{align*}
u_2^{-1} - u_1^{-1} + 27 u_1^{-1} u_2 - 36 + 8u_1 u_2^{-1} - 16 u_1^2 u_2^{-1}.
\end{align*}

\begin{figure}[t]
\begin{picture}(0,0)%
\includegraphics{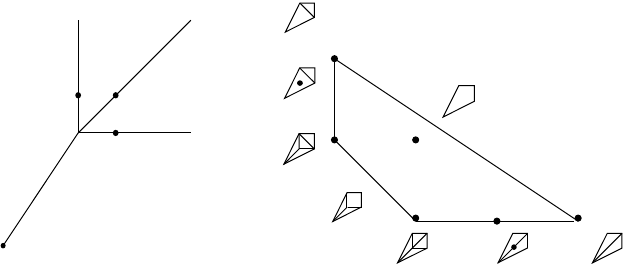}%
\end{picture}%
\setlength{\unitlength}{4144sp}%
\begingroup\makeatletter\ifx\SetFigFont\undefined%
\gdef\SetFigFont#1#2#3#4#5{%
  \reset@font\fontsize{#1}{#2pt}%
  \fontfamily{#3}\fontseries{#4}\fontshape{#5}%
  \selectfont}%
\fi\endgroup%
\begin{picture}(4751,2005)(797,-1739)
\put(3646,-1051){\makebox(0,0)[lb]{\smash{{\SetFigFont{8}{9.6}{\rmdefault}{\mddefault}{\updefault}{\color[rgb]{0,0,0}$D_1$}%
}}}}
\put(3376,-601){\makebox(0,0)[lb]{\smash{{\SetFigFont{8}{9.6}{\rmdefault}{\mddefault}{\updefault}{\color[rgb]{0,0,0}$D_2$}%
}}}}
\put(4141,-1006){\makebox(0,0)[lb]{\smash{{\SetFigFont{8}{9.6}{\rmdefault}{\mddefault}{\updefault}{\color[rgb]{0,0,0}$D_3$}%
}}}}
\put(4366,-1321){\makebox(0,0)[lb]{\smash{{\SetFigFont{8}{9.6}{\rmdefault}{\mddefault}{\updefault}{\color[rgb]{0,0,0}$D_4$}%
}}}}
\put(1756,-511){\makebox(0,0)[lb]{\smash{{\SetFigFont{8}{9.6}{\rmdefault}{\mddefault}{\updefault}{\color[rgb]{0,0,0}$v_1 = (1,1)$}%
}}}}
\put(1666,-916){\makebox(0,0)[lb]{\smash{{\SetFigFont{8}{9.6}{\rmdefault}{\mddefault}{\updefault}{\color[rgb]{0,0,0}$v_2 = (1,0)$}%
}}}}
\put(946,-1636){\makebox(0,0)[lb]{\smash{{\SetFigFont{8}{9.6}{\rmdefault}{\mddefault}{\updefault}{\color[rgb]{0,0,0}$v_3 = (-2, -3)$}%
}}}}
\put(706,-511){\makebox(0,0)[lb]{\smash{{\SetFigFont{8}{9.6}{\rmdefault}{\mddefault}{\updefault}{\color[rgb]{0,0,0}$v_4 = (0,1)$}%
}}}}
\end{picture}%
\caption{\label{fig:secF1} The secondary fan and polytope of $A$}
\end{figure}

As was pointed out in Example~\ref{eg:dplus3}, there are five extended circuits contained in $A$ and four circuits $\{C_1, C_2, C_3, C_4\}$, in this case they correspond bijectively to the four boundary divisors of $\secon{A}$. Denote the facet of $\Sigma (A)$ corresponding to $v_i$ by $F_i$, the subdivision defining the facet by $S_i$ and the divisor in $\secon{A}$ by $D_i$. The divisor $D_4$ corresponds to the degenerate circuit $C_4 = \{ (-1,-1), (0,0), (1,1)\}$ supporting two extended circuits. To each circuit $C_i$ there is a unique triangular extension given by the subdivision associated to the facet defined by $v_i$. Let us first examine regenerations of $\secon{C_1} \cong D_1$ and $\secon{C_2} \cong D_2$. 
\begin{figure}[b]
\begin{picture}(0,0)%
\includegraphics{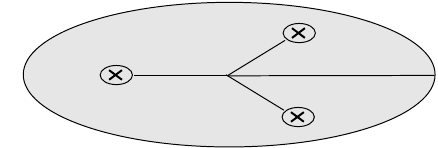}%
\end{picture}%
\setlength{\unitlength}{4144sp}%
\begingroup\makeatletter\ifx\SetFigFont\undefined%
\gdef\SetFigFont#1#2#3#4#5{%
  \reset@font\fontsize{#1}{#2pt}%
  \fontfamily{#3}\fontseries{#4}\fontshape{#5}%
  \selectfont}%
\fi\endgroup%
\begin{picture}(3324,1118)(910,-846)
\put(2570,-97){\makebox(0,0)[lb]{\smash{{\SetFigFont{7}{8.4}{\rmdefault}{\mddefault}{\updefault}{\color[rgb]{0,0,0}$\delta_3$}%
}}}}
\put(1945,-441){\makebox(0,0)[lb]{\smash{{\SetFigFont{7}{8.4}{\rmdefault}{\mddefault}{\updefault}{\color[rgb]{0,0,0}$\delta_2$}%
}}}}
\put(2570,-561){\makebox(0,0)[lb]{\smash{{\SetFigFont{7}{8.4}{\rmdefault}{\mddefault}{\updefault}{\color[rgb]{0,0,0}$\delta_1$}%
}}}}
\put(925,-758){\makebox(0,0)[lb]{\smash{{\SetFigFont{7}{8.4}{\rmdefault}{\mddefault}{\updefault}{\color[rgb]{0,0,0}$\gamma_1$}%
}}}}
\end{picture}%
\caption{\label{fig:regd2} Paths for the regenerated circuit of $C_2$}
\end{figure}

We first will find the intersection numbers $\mce_A \cdot D_1$ and $\mce_A \cdot D_2$. For this, we compute in the homogeneous coordinate ring $\C [x_1, x_2, x_3, x_4]$ of $\secon{A}$ given in equation \eqref{eq:coxring} which is graded by $\Pic (\secon{A}) \cong L_{\dul{\psecv{A}{v}}}^\vee \cong \Z^2$. To obtain the degree of the monomial $x_i$ which defines $D_i$, apply $\alpha_{\dul{\psecv{A}{v}}}^\vee$ to $e_i^\vee \in \Z^{\dul{\psecv{A}{v}}}$. After a choice of basis, we obtain $\deg (x_1) = (1, 2) $, $\deg (x_2 ) = (1, 0)$, $\deg (x_3) = (1, 1)$ and $\deg (x_4) = (2, 1)$. With this choice of basis, a straightforward computation in intersection theory of toric varieties (see \cite[Section 5.1]{fulton}) gives the intersection pairing
\begin{align*}
\left[ \begin{matrix}
- \frac{1}{3} & \frac{2}{3} \\[6pt] \frac{2}{3} & -\frac{5}{6}
\end{matrix} \right] .
\end{align*}
We also calculate that $\mco_{\secon{A}} (1) = \mco (D_1 + D_2 + D_3 + D_4)$ which corresponds to $(5, 4)$ so that  $\mce_A \cdot D_1 = 1 = \mce_A \cdot D_2$. 

Starting with $C_1$ we observe that $N_{\secon{A}} \secon{C_1}$ is isomorphic to $\mco (-1 )$ over $ \p^1$ and trivializes over $\orb{F_1}$ where $\orb{F}$ is the maximal torus orbit associated to a face of $Q$. The circuit $C_1 = \{(0,0), (1, 0), (0,1), (1,1)\}$ affinely generates $\Z^2$ which, by \cite[Theorem 1.12]{GKZ} and the computation $\mce_A \cdot D_1 = 1$ implies that the restriction of $E^s_A$ to $\orb{F_1}$ equals $E^s_{C_1}$ with multiplicity $1$. Since $\mce_{C_1}$ is a point, this implies that in a tubular neighborhood $U$ of $\orb{F_1}$, $\mce_A \cap U$ is a disc transversely intersecting $\orb{F_1}$. Removing an $\varepsilon$-neighborhood of this disc gives a regeneration $\mci$ of $C_1$ in $A$. Utilizing $i_{S_1} : \secon{C_1} \to \secon{A}$, we choose a basepoint $p_1 \in \mci - \orb{F_1}$ close to $i_{S_1} (t_0)$ where $t_0$ appears in Theorem~\ref{thm:circuitrelation}. Then Proposition \ref{prop:regrel} gives us that the regenerated circuit relation is simply the $(2, 2)$ circuit relation restricted to the region $V_1 \subset \fib{A}{p_1}$. Here $V_1$ is the open subset in $\fib{A}{p_1}$ which converges via symplectic parallel transport to the degenerated component of $\mcx_Q$ corresponding to $C_1$ as $p_1 \in U$ tends towards the boundary $D_1$.

The divisor $D_2$ is $\p (1, 3)$ with normal bundle $\mco_{\p (1, 3)} (- 1)$ where $\mco_{\p (1, 3)} (d)$ corresponds to the equivariant line bundle over $\C^2 - \{(0,0)\}$ with character $z^d \in \Hom (\C^* , \C^*)$. Even after deleting the point at infinity, we can not regenerate $D_1$ using sections of this bundle because of the stacky point at the origin, so we must consider a covering. There is only one non-trivial covering in this case, namely the \'etale cover $z^3$ of $\interior{C_2}{\varepsilon} \subset \p (1, 3) - D_1 \cap D_2 \approx \C / \mu_3$. To find the regeneration which extends this cover, one simply takes the stacky chart of a neighborhood $U$ of the point $D_2 \cap D_3$ which is $\C^2 / \mu_3$ where $\zeta (t, x) = (\zeta^{-1} t , \zeta x)$. The map $\psi : \C^2 \to U$ is obviously \'etale  and at $t = 0$ gives the covering above, so restricting $\psi$ to $\psi^{-1} (U - V)$ where $V$ is an $\varepsilon$ neighborhood of $E_A$ gives a regeneration of $C_2$. Applying Proposition \ref{prop:regrel} to this situation, we observe that $\psi^{-1} (U - V) \cap \{t \} \times \C$ is a disc with three discs removed near the third roots of unity as in Figure \ref{fig:regd2}. Using the proposition and Theorem \ref{thm:circuitrelation}, composing the parallel transport $T_i$ along the three paths $\delta_i$ gives the cube of parallel transport along $\gamma$ as well as a full boundary twist. Taking the composition of these two operations as $T_4$ we write simply $T_1 T_2 T_3 = T_4$ and observe this as a relation in $\fib{A}{p_2}$ where we choose $p_2$ in the interior of $\secon{A}$ and close to $i_{S_2} (t_0)$.
\begin{figure}[b]
\begin{picture}(0,0)%
\includegraphics{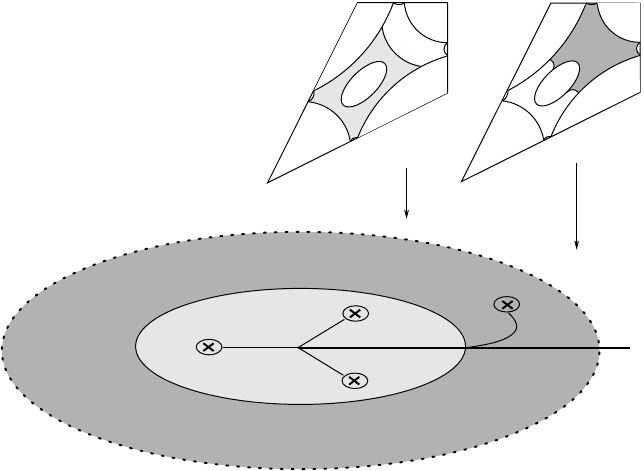}%
\end{picture}%
\setlength{\unitlength}{4144sp}%
\begingroup\makeatletter\ifx\SetFigFont\undefined%
\gdef\SetFigFont#1#2#3#4#5{%
  \reset@font\fontsize{#1}{#2pt}%
  \fontfamily{#3}\fontseries{#4}\fontshape{#5}%
  \selectfont}%
\fi\endgroup%
\begin{picture}(4891,3579)(945,-2998)
\put(3159,-1901){\makebox(0,0)[lb]{\smash{{\SetFigFont{5}{6.0}{\rmdefault}{\mddefault}{\updefault}{\color[rgb]{0,0,0}$\delta_3$}%
}}}}
\put(2658,-2177){\makebox(0,0)[lb]{\smash{{\SetFigFont{5}{6.0}{\rmdefault}{\mddefault}{\updefault}{\color[rgb]{0,0,0}$\delta_2$}%
}}}}
\put(4622,-1914){\makebox(0,0)[lb]{\smash{{\SetFigFont{5}{6.0}{\rmdefault}{\mddefault}{\updefault}{\color[rgb]{0,0,0}$\delta_4$}%
}}}}
\put(3159,-2273){\makebox(0,0)[lb]{\smash{{\SetFigFont{5}{6.0}{\rmdefault}{\mddefault}{\updefault}{\color[rgb]{0,0,0}$\delta_1$}%
}}}}
\put(5595,-1998){\makebox(0,0)[lb]{\smash{{\SetFigFont{5}{6.0}{\rmdefault}{\mddefault}{\updefault}{\color[rgb]{0,0,0}$\mathbf{r}$}%
}}}}
\put(1962,-2377){\makebox(0,0)[lb]{\smash{{\SetFigFont{5}{6.0}{\rmdefault}{\mddefault}{\updefault}{\color[rgb]{0,0,0}$\gamma_1$}%
}}}}
\put(1137,-2691){\makebox(0,0)[lb]{\smash{{\SetFigFont{5}{6.0}{\rmdefault}{\mddefault}{\updefault}{\color[rgb]{0,0,0}$\gamma_2$}%
}}}}
\put(3769,-531){\makebox(0,0)[lb]{\smash{{\SetFigFont{5}{6.0}{\rmdefault}{\mddefault}{\updefault}{\color[rgb]{0,0,0}$\fib{A}{p}$}%
}}}}
\put(5262,-531){\makebox(0,0)[lb]{\smash{{\SetFigFont{5}{6.0}{\rmdefault}{\mddefault}{\updefault}{\color[rgb]{0,0,0}$\fib{A}{q}$}%
}}}}
\put(5183,-2180){\makebox(0,0)[lb]{\smash{{\SetFigFont{5}{6.0}{\rmdefault}{\mddefault}{\updefault}{\color[rgb]{0,0,0}$q$}%
}}}}
\put(4044,-2180){\makebox(0,0)[lb]{\smash{{\SetFigFont{5}{6.0}{\rmdefault}{\mddefault}{\updefault}{\color[rgb]{0,0,0}$p$}%
}}}}
\end{picture}%
\caption{\label{fig:regpath}Generating paths for $\mathbf{G}_{\curve_t}$ with trivialized fiber over $\mathbf{r}$} 
\end{figure}

One can often regenerate several subsets of $A$ simultaneously, thereby incorporating the symplectomorphisms of the regenerated pieces into those of the hypersurface $\fib{A}{t}$. We give a more systematic account of this method in the next section for extended circuits, but for now we consider sections of the ample line bundle $\mcl = \mco (D_1 + 3 D_2)$ on $\secon{A}$. Consider the pencil 
\begin{align*} f(x_1, x_2, x_3, x_4) = [s_0 : s_\infty] := [x_1 x_2^3 : x_4^2]. \end{align*} Taking $\curve_t = \{s_0 - t s_\infty \}$, one observes that for small $t$, we obtain a smooth curve which approximates $D_1 + 3D_2$. We wish to understand the $\curve_t$ subgroup $\mathbf{G}_{\curve_t} \subset \Symp (\fib{A}{p} , \partial \fib{A}{p} )$ from Definition \ref{defn:curvesubgroup} by viewing $\curve_t$ as a simultaneous regeneration of $C_1$ and $C_2$. We trivialize the fibers $\fib{A}{p}$ along the ray $\mathbf{r} = \R_{\geq 0} \subset \C$ and consider parallel transport $\{T_1, \ldots, T_4 , \tilde{T}_1, \tilde{T}_2 \}$ along the paths $\{ \delta_1, \delta_2, \delta_3, \delta_4, \gamma_1, \gamma_2\}$ as in Figure \ref{fig:regpath}.

 Utilizing Proposition \ref{prop:regrel}, the monodromy symplectomorphisms $T = \mathbf{P} (\delta)$ on the degenerated hypersurfaces can be regenerated to monodromy transformations on the smooth hypersurfaces. These are the compositions of disjoint Dehn twists 
\begin{eqnarray*} T_1 & = & T_{ k_1 } ,\\
T_2 & = & T_{k_2} ,\\
T_3 & = & T_{k_3} , \\
T_4 & = & T_{a } , \\
\tilde{T}_1 & = & T_b T_d^3 T_e^3 T_f^3 ,  \\
\tilde{T}_2 & = & T_c T_d^2 T_e^2 T_f^3 .
\end{eqnarray*}
 
Those associated to $\gamma_1$ and $\gamma_2$ correspond to monodromy around the hypersurface degeneration associated to the points $D_1 \cap D_2$ and $D_1 \cap D_4$. The vanishing cycles for the twists $T_i$ are given in Figure \ref{fig:regvc}.
 
One can calculate that $E^s_A$ has precisely one cusp in the interior of $\secon{A}$. This cusp yields the braid relations between $T_4$ and $T_i$ for $i = 1, 2, 3$. Adding these to the circuit relations, we obtain a finite presentation of $\mathbf{G}_{\curve_t}$.
\begin{equation*} \mathbf{R} \to <T_1, \ldots, T_4 , \tilde{T}_1, \tilde{T}_2 > \to \mathbf{G}_{\curve_t} \to 1 \end{equation*}
% CAN FORGET BOUNDARY IN THIS CASE BECAUSE WE HERE WE VIEW \mathbf{G}_{\curve_t} as unframed.
\begin{figure}
\begin{picture}(0,0)%
\includegraphics{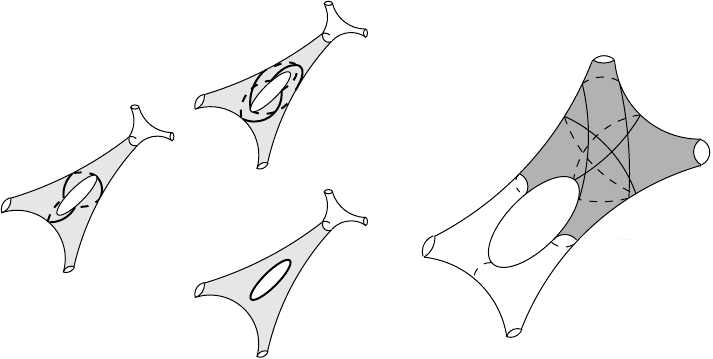}%
\end{picture}%
\setlength{\unitlength}{4144sp}%
\begin{picture}(5417,2728)(133,-2190)
\put(4632,-227){\makebox(0,0)[lb]{\smash{$a$}}}
\put(4306,-377){\makebox(0,0)[lb]{\smash{$b$}}}
\put(5072,-359){\makebox(0,0)[lb]{\smash{$c$}}}
\put(3951,-821){\makebox(0,0)[lb]{\smash{$d$}}}
\put(4540,-1411){\makebox(0,0)[lb]{\smash{$e$}}}
\put(3624,-1683){\makebox(0,0)[lb]{\smash{$f$}}}
\put(4650,162){\makebox(0,0)[lb]{\smash{$\partial_1$}}}
\put(3195,-1283){\makebox(0,0)[lb]{\smash{$\partial_2$}}}
\put(4152,-2009){\makebox(0,0)[lb]{\smash{$\partial_3$}}}
\put(5411,-878){\makebox(0,0)[lb]{\smash{$\partial_4$}}}
\put(2409,-1727){\makebox(0,0)[lb]{\smash{$k_1$}}}
\put(923,-1067){\makebox(0,0)[lb]{\smash{$k_2$}}}
\put(2453,-212){\makebox(0,0)[lb]{\smash{$k_3$}}}
\end{picture}%
\caption{\label{fig:regvc} }
\end{figure}
One can use this method for higher dimensions as well, but understanding the singularities of $E_A^s$ for $(d + 3)$-sets is necessary to the completion of this project, as these generate additional relations.

As a final remark, we observe that near $\curve_\infty$, we obtain a regeneration of the circuit $C_4$. Observing that $D_4 \cdot E_A^s = 2$, we see that the critical value in $\curve_\infty$ splits into two values for each of the branches of the $2$-fold \'etale cover, yielding a total of four critical values. To see the effect on the vanishing cycles, observe that the family $\curve_{1 / t}$ regenerates two extended circuits, each of which has a relation as given in Section~\ref{sec:exampledeg}. This has the effect of gluing the degenerate vanishing cycles together to obtain two vanishing cycles, for each  branch of the \'etale cover, while parallel transport from one branch to the other yields a regenerated version of the involutions $T_1 T_\infty$ on each regenerated circuit as given in Section \ref{sec:exampledeg}. However, to obtain the correct gluing formulas for these cycles requires a more nuanced control over the boundary framing in the degenerate case. \end{eg}

\section{Homological mirror symmetry applications}
In this subsection we outline a strategy to decompose the Fukaya-Seidel category associated to a pencil of hypersurfaces in a toric stack. After giving a combinatorial description of the decompositions, we discuss applications to the homological mirror symmetry conjecture for Fano toric stacks. The original conjecture has been settled in the case of toric del Pezzo surfaces in \cite{ueda} and weighed projective planes in \cite{ako08}. There are also several variants of the conjecture that have been proven, where the Fukaya-Seidel category is replaced with a different category (see \cite{abouzaid}, \cite{fltz14}). However, our strategy is to consider the original Fukaya-Seidel category as constructed in \cite{SeidelFPL} and produce more detailed information on the structure of the equivalent categories. We conjecture a refined correspondence leading to a variety of equivalences associated to different degenerations of the LG mirrors. In particular, we will observe a finite collection of semi-orthogonal decompositions arising from edge paths in the secondary polytope. To each decomposition we formulate a conjectural homological mirror collection resulting from birational moves in the $B$-model setting.

\subsection{Landau-Ginzburg Degenerations}

We begin by considering the toric stack $\mcx_Q$ associated to the marked polytope $(Q, A)$, the line bundle $\mco_A (1)$ and the linear system $\linsys{A} \subset H^0 (\mcx_Q, \mco_A (1) ) $ from Definition~\ref{defn:polystack}. By the support of a section $s \subset \linsys{A} \cong \C^A$, we mean the subset $A^\prime \subseteq A$ whose monomials have non-zero coefficients as summands of $s$. Referring to Definition~\ref{defn:sections}, $s$ is called a very full section if its support equals  $A$ and a full section if its support contains the vertices of $Q$.  Given any subset $A^\prime \subset A$ and a section $s = \sum_{a \in A} c_a e_a \in \C^A$, we say the restriction of $s$ to $A^\prime$ is $s||_{A^\prime} = \sum_{a \in A^\prime} c_a e_a$. By an $A$-pencil, we mean a pencil in $\linsys{A}$. If it is clear from the context, we will simply write pencil for $A$-pencil. For what follows, we will consider $A$-pencils satisfying a strong, but common, property.

\begin{defn} \label{defn:shp}\gls{lgmod} \begin{itemize} \item[(i)] Given $A \subset \Lambda$ and $A^\prime \subset A$, a pencil $W \subset \linsys{A}$ is $A^\prime$-sharpened if it contains a full section $s$ with $0 \ne s||_{A^\prime} \in W$. 
\item[(ii)] The Landau-Ginzburg, or LG model associated to an $A^\prime$-sharpened pencil $W$ is the induced map $\mathbf{w} : \mcx_Q - D_W \to \C$ where $D_W = \textnormal{Zero} (s ||_{A^\prime})$ is the fiber over infinity of the pencil.
\end{itemize}
\end{defn}

Our motivation to consider such pencils comes from homological mirror symmetry of Fano toric varieties (see \cite{givental94}, \cite[Section~3]{horivafa}). Given a $d$-dimensional Fano toric stack specified by a fan $\fan$, the Batyrev mirror is defined as $\mcx_Q$ (or a partial crepant resolution thereof) with $A$ equal to the union of $0$ and the primitive generators of the one-cones $\fan (1)$. A symplectic structure on the original variety then specifies a superpotential $\mathbf{w}$ on $(\C^*)^d \subset \mcx_Q$. From \cite{givental94}, one observes that $\mathbf{w}$ is the LG model associated to a $\{0\}$-sharpened pencil $W \subset \linsys{A}$ on $\mcx_Q$.  In fact, the case where $A^\prime = \{a\}$ is a single element of $A$ can simplify the discussion because, in such cases, a pencil is $A^\prime$-sharpened if and only if it contains $e_a$. For now, though, we keep the exposition general.

%Start here after fix in toric section... use notation $\Lambda_{\gdual{A}} = L_A \oplus \ext^1 (K_A , \Z)$
Given an $A^\prime$-sharpened pencil we associate a rank $1$ sublattice $\tilde{\Gamma}_{A^\prime} \subset (\Z^A)^\vee$ generated by the cocharacter $e_{A^\prime}^\vee := \sum_{a \in A^\prime} e_a^\vee$. This induces a one parameter subgroup which we denote by\gls{tilgap} $\tilde{G}_{A^\prime} \subset (\C^*)^A$.
\begin{lem} \label{lem:orb}
	A pencil $W \subset \C^A$ is $A^\prime$-sharpened if and only if it contains a full section and is stable under the action of $\tilde{G}_{A^\prime}$.
\end{lem}
\begin{proof}
Suppose that $W$ is an $A^\prime$-sharpened pencil. It is elementary to check that there exists a full section $s \in W$ for which $s_\infty := s||_{A^\prime} \ne s$. Then the support of $s_0 := s - s||_{A^\prime}$ is non-empty and disjoint from $A^\prime$. As $W$ is a pencil, $W = \lin_{\R} \{s_0, s_\infty\}$. The cocharacter $e^\vee_{A^\prime}$ gives the one-parameter subgroup $\tilde{G}_{A^\prime} \subset  \C^* \otimes (\Z^{{\mathcal{A}}})^\vee$ which acts by 
\begin{align*} \lambda \cdot \left( \sum_{a \in A} c_a e_a \right) = \sum_{a \in A^\prime} \lambda\, c_a e_a + \sum_{a \not\in A^\prime} c_a e_a . \end{align*}
Thus $\lambda \cdot s_\infty = \lambda s_\infty$ and $\lambda \cdot s_0 = s_0$ which implies that $\tilde{G}_{A^\prime} (W) = W$. 

Conversely, if $\tilde{G}_{A^\prime} (W) = W$ and $s \in W$ then $s - \lim_{\lambda \to 0} \lambda \cdot s = s||_{A^\prime} \in W$ which implies that it is an $A^\prime$-sharpened pencil.
\end{proof} 

We now wish to consider $A^\prime$-sharpened pencils up to toric equivalence. This involves passing from closures of $\tilde{G}_{A^\prime}$-orbits in the space of sections $\linsys{A} = \C^A$ to their counterparts in the stack  $\secon{A}$. We first note that the stacky fan for $\secon{A}$ is given in Lemma~\ref{lem:stfansecon} as
\begin{align*}
\widetilde{\sfan}_{\psec{A}} = \left( \Z^{\dul{\psec{A}}}, \Xi_{{\mathcal{A}}}, \tilde{\beta}_{\dul{\psec{A}}}, \fan_{\mcb}  \right).
\end{align*}
The group $\Xi_{{\mathcal{A}}}$ is realized as the colimit of the diagram \eqref{eq:colimdiag} so that there is a map $\tilde{\alpha}_{{\mathcal{A}}}: \left( \Z^{{\mathcal{A}}} \right)^\vee \to \Xi_{{\mathcal{A}}}$ and we take\gls{rhoaprime} $\rho_{A^\prime} = \tilde{\alpha}_{{\mathcal{A}}} (e^\vee_{A^\prime})$ and $G_{A^\prime} = (\tilde{\alpha}_{{\mathcal{A}}} \otimes \C^*) (\tilde{G}_{A^\prime})$. Now, there is a quotient map 
\begin{align}
F : (\mathbb{C}^*)^{{\mathcal{A}}_v} \times \C^{{\mathcal{A}}_{nv}} \to \mcv_A
\end{align} 
from the space of full sections $(\mathbb{C}^*)^{{\mathcal{A}}_v} \times \C^{{\mathcal{A}}_{nv}} \subset \C^{{\mathcal{A}}}$ to its moduli space $\mcv_A$ defined in equation \eqref{eq:fullsec}. It follows from the definition of $\mcv_A$ that $F$ is equivariant with respect to the groups $\tilde{G}_{A^\prime}$ and $G_{A^\prime}$ for any $A^\prime \subset A$. Using the notation of the proof of Lemma~\ref{lem:orb}, if $W$ is an $A^\prime$-sharpened pencil and $A^\prime$ does not contain the vertices $A_v$, then its intersection with the space of full sections is $W - \lin_\R (s_0)$. This implies that  $F (W \cap (\mathbb{C}^*)^{{\mathcal{A}}_v} \times \C^{{\mathcal{A}}_{nv}})$ is $\C \subset \mcv_A$ or a finite quotient thereof and is the closure of a $G_{A^\prime}$-orbit in $\mcv_A$.  By Theorem~\ref{thm:toric1}, there is an open embedding of toric stacks from $\mcv_A$ to $\secon{A}$. Thus we may view $W$, up to toric equivalence, as a the closure of a $G_{A^\prime}$-orbit contained in the substack $\mcv_A$ of $\secon{A}$. 

Were we to consider only those orbits intersecting the maximal torus in $\secon{A}$, its space would be easily described as the quotient of the maximal torus in $\secon{A}$ by $G_{A^\prime}$, namely $\mathbb{G}_{\psec{A}} / G_{A^\prime}$ where $\mathbb{G}_{\psec{A}} = (\Xi_{{\mathcal{A}}} \otimes \C^*) \cong (\C^*)^{|A| - d - 1}$ is the torus acting on $\secon{A}$. To gain a better understanding of this space, we consider a natural compactification. At this point, we simplify by moving to the coarse space of $\secon{A}$ which we denote $X_{\psec{A}}$. Choose $x$ to be a point in the maximal orbit of $X_{\psec{A}}$ and $\phi = \overline{G_{A^\prime} \cdot x}$ to be the closure of its orbit.  Let $CV_{A^\prime}$ be the relative Chow variety of $1$-dimensional cycles of degree $[\phi]$. Then the maximal torus $\mathbb{G}_{\psec{A}}$ acts on $CV_{A^\prime}$ and, following the definition of Chow quotients, we take\gls{lgmoduli} $\mlg{A}{A^\prime}$ to be the closure of the orbit $\mathbb{G}_{\psec{A}} \cdot [\phi]$ in $CV_{A^\prime}$. It is not hard to see that the $\mathbb{G}_{\psec{A}}$ torus action on $X_{A}$  induces an action on $\mlg{A}{A^\prime}$ (which is the trivial action when restricted to $G_{A^\prime}$).
\begin{defn} \label{defn:maxdeg}
A fixed point $\xi \in \mlg{A}{A^\prime}$ under the $\mathbb{G}_{\psec{A}}$ action will be called a maximal degeneration of $W$.
\end{defn}

The first result we need is a combinatorial description of the maximal degenerations. For this, we review some terminology from \cite{BS1}, \cite{BS2} and \cite{KSZ}. Let $P \in \R^n$ be a $n$-dimensional polytope with vertices $\{p_1, \ldots, p_m\}$ and $\gamma: \R^n \to \R$ a linear map. We order the vertices so that if $q_i := \gamma (p_i )$, then $q_i \leq q_j$ if $i < j$ and write $Q = \gamma (P)$. Let $\theta \in (\R^n)^\vee$ be linearly independent from $\gamma$ and\gls{vthfth} $V_\theta$ the subspace spanned by $\gamma$ and $\theta$. We take $\mcf_\theta$ to be the fan in $V_\theta$ whose cones are intersections of cones in the normal fan of $P$ with $V_\theta$. Assume that the half-plane $H_\theta = \R \cdot \gamma  \oplus \R_{>0} \cdot \theta$ intersects the normal fan of $P$ transversely, by which we mean that every $k$-dimensional cone in $\mcf_{\theta}$ lying in $H_\theta$ is the intersection of $H_\theta$ with an $(n - 2 + k)$-dimensional cone in the normal fan of $P$. Ordering the $2$-cones $\mcf_\theta (2) = \{ \sigma_0, \ldots, \sigma_r\}$ clockwise, one obtains the increasing sequence, $p_{i_0} < \cdots < p_{i_r}$ of points on $P$ where $p_{i_j}$ is the vertex dual to $\sigma_{j}$. From the construction, it is clear that $\{p_{i_j}, p_{i_{j + 1}}\}$ lie on an edge of $P$ for any $0 \leq j < r$, $q_{i_0} = q_0$ and $q_{i_r} = q_m$. Any path 
\begin{align} \label{eq:defnparsimpath} \left< p_{i_0}, \ldots , p_{i_r} \right> \end{align}
obtained in this way is known as a \emph{parametric simplex path} relative to $\gamma$.

In \cite{BS1}, these paths were realized as the vertices of the fiber polytope\gls{mppol} $\Sigma_\gamma (P) := \Sigma (P , Q)$ called the monotone path polytope of $P$. Leaving a detailed review of fiber polytopes to the references above, we content ourselves to describe a theorem from \cite{KSZ}. Let $G \approx (\C^*)^n$ be a complex torus acting on a projective toric variety $X_\fan$ with fan $\fan \subset G^\vee_\R$ where\gls{gvgv} $G^\vee = \Hom (\C^* , G)$ and $G^\wedge = \Hom (G , \C^*)$ are the lattice of one parameter subgroups and characters respectively.
We recall from \cite[Section~9.4]{dolgachev} that if $G$ acts on a vector space $V$ and $\textnormal{wt} (V) \in G^\wedge$ are the set of characters which have non-trivial eigenspaces in $V$, then the convex hull of $\textnormal{wt} (V)$ in $G^\wedge_\R$ is called the weight polytope of $V$.
Assume that $L$ is an equivariant ample line bundle on $X_\fan$ and $P \subset G^\wedge_\R$ is the weight polytope for the action on $H^0 (X_\fan , L)$.  Elementary toric geometry gives $\fan$ as the normal fan of $P$. 

Suppose $H \subset G$ is a subgroup and take $E = H\cdot x$ for a non-boundary point $x \in X_\fan$. The Chow quotient $X_\fan // H$ is defined as the closure of the orbit $G \cdot E$ in the relative Chow variety of $\dim (H)$ cycles of degree $[E]$ in $X_\fan$. Write $\pi_H : G^\wedge_\R \to H^\wedge_\R$ for the associated projection and take $Q = \pi_H (P)$. 

\begin{thm}{\cite[Lemma~2.6]{KSZ}} \label{thm:fiberquotient} The Chow quotient $X_\fan // H$ is a projective toric variety with $G$ action and ample line bundle weight polytope equal to the fiber polytope $\Sigma (P, Q)$.
\end{thm}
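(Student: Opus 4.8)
The plan is to derive the statement by assembling two ingredients from the cited literature: the description of $X_\fan // H$ as a projective toric variety with a distinguished equivariant polarization, which is due to Kapranov--Sturmfels--Zelevinsky \cite{KSZ}, and the identification of the associated weight polytope with the fiber polytope $\Sigma (P, Q)$, which is the combinatorial content of Billera--Sturmfels' theory \cite{BS2}. Since the assertion is advertised as an immediate corollary of results in those references, the work in the proof is confined to packaging them in the stated form.

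First I would recall the Chow quotient construction and extract its toric structure. The torus $G$ acts on $X_\fan$, hence on the relative Chow variety of $\dim(H)$-cycles, and for a non-boundary point $x$ the cycle $[E] = [\overline{H\cdot x}]$ has stabilizer in $G$ equal to $H$ up to the (finite) generic stabilizer of the $G$-action; here genericity of $x$ is used, together with the fact that $G$ is abelian so that $H$ automatically acts trivially on $\overline{G\cdot[E]}$. Thus $X_\fan // H := \overline{G\cdot[E]}$ is a normal projective toric variety on which $G$ acts through $G/H$, as in the statement. By \cite{KSZ}, the fan of $X_\fan // H$, in the cocharacter space of $G/H$, is the fan whose cones are indexed by the $\pi_H$-coherent (regular) subdivisions of $Q$; equivalently, it is the normal fan of the fiber polytope $\Sigma(P, Q)$, by the description of $\Sigma(P,Q)$ via coherent sections in \cite{BS2}. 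In particular the torus-fixed points of $X_\fan // H$ correspond to the finest $\pi_H$-coherent subdivisions of $Q$, and the limit cycle lying over such a fixed point is an explicit union $\sum_\sigma m_\sigma\,\overline{\orb{\sigma}}$ of orbit closures in $X_\fan$ with combinatorially determined multiplicities $m_\sigma$.

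Second I would pin down the polarization. The Chow variety carries a natural very ample line bundle, the restriction of $\mco(1)$ under the Chow-form embedding into a projective space of forms, and its restriction $L'$ to $X_\fan // H$ is $G$-equivariant and ample. Its weight polytope $R$ is the convex hull of the $(G/H)$-weights of the fixed points, and for each degenerate cycle $Z = \sum_\sigma m_\sigma\,\overline{\orb{\sigma}}$ this weight is computed by summing, with multiplicities $m_\sigma$, the leading Chow-form weights of the pieces $\overline{\orb{\sigma}}$; the outcome is exactly the coefficient vector that Billera--Sturmfels attach to the corresponding $\pi_H$-coherent subdivision in their description of the vertices of $\Sigma(P,Q)$. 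Since, by the previous step, $R$ and $\Sigma(P,Q)$ already have the same normal fan, the agreement of the vertex sets upgrades to an equality of polytopes $R = \Sigma(P,Q)$, which is the assertion.

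The step I expect to be the main obstacle is the last one: verifying that the multiplicities $m_\sigma$ in the degenerate limit cycles, weighted by the degrees occurring in the Chow form, reproduce the \emph{exact} coefficients of $\Sigma(P,Q)$ rather than a dilate or a merely combinatorially equivalent polytope. This is the nontrivial but standard bookkeeping carried out in \cite{KSZ}, and for our purposes it is enough to invoke it. A secondary technical point requiring a remark is the finite cyclic quotient that appears when $H$ is not saturated in $G$ (or when the $G$-action has nontrivial generic stabilizer): this affects only the lattice on which $\Sigma(P,Q)$ sits, not the polytope itself, so it leaves the conclusion unchanged.
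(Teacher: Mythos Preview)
Your proposal is correct and follows essentially the same approach as the paper: the paper cites this as an immediate corollary of \cite{KSZ} and \cite{BS2}, adding only the single remark that $\Sigma(P,Q)$ was shown there to be the Newton polytope of the Chow form of $E$, which is exactly the polarization computation you unpack in your second step. Your write-up is simply a more detailed unpacking of what the paper compresses into one sentence.
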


Indeed, it was shown that $\Sigma (P, Q)$ is the Newton polytope of the Chow form of $E$. We now utilize this theorem.

\begin{cor} \label{prop:mppvertices} Suppose $W$ is an $A^\prime$-sharpened pencil. The maximal degenerations of $W$ are in bijective correspondence with the vertices of the monotone path polytope $\Sigma_{\rho_{A^\prime}} (\psec{A})$. 
\end{cor}

The iterated fiber polytope $\Sigma_{\rho_{A^\prime}} (\psec{A})$ in this proposition was initially examined in \cite{BS2}. 

\begin{proof} Since $\mlg{A}{A^\prime}$ is defined as the Chow quotient of $X_{\psec{A}}$ by $G_{A^\prime}$, we need only apply Theorem \ref{thm:fiberquotient} which gives that $\mlg{A}{A^\prime}$ is equivariantly homeomorphic to the toric variety $X_{\Sigma_{\rho_{A^\prime}} (\psec{A})}$ associated to the monotone path polytope $\Sigma_{\rho_{A^\prime}} (\psec{A})$. This confirms that the fixed points correspond bijectively to the vertices and proves the claim.
\end{proof}

\begin{figure}[b]
\begin{picture}(0,0)%
\includegraphics{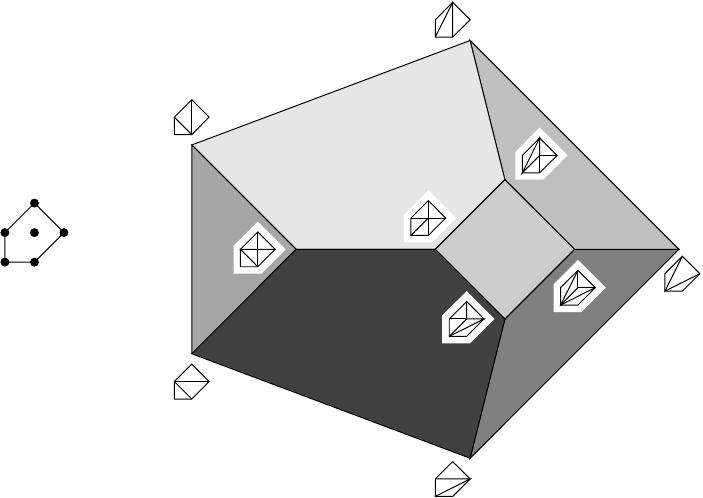}%
\end{picture}%
\setlength{\unitlength}{4144sp}%
\begingroup\makeatletter\ifx\SetFigFont\undefined%
\gdef\SetFigFont#1#2#3#4#5{%
  \reset@font\fontsize{#1}{#2pt}%
  \fontfamily{#3}\fontseries{#4}\fontshape{#5}%
  \selectfont}%
\fi\endgroup%
\begin{picture}(5345,3789)(1553,-3913)
\put(1606,-2521){\makebox(0,0)[lb]{\smash{{\SetFigFont{12}{14.4}{\rmdefault}{\mddefault}{\updefault}{$A$}%
}}}}
\put(5806,-3286){\makebox(0,0)[lb]{\smash{{\SetFigFont{12}{14.4}{\rmdefault}{\mddefault}{\updefault}{$\psec{A}$}%
}}}}
\end{picture}%
\caption{\label{fig:secbp1p1} $A$ and its secondary polytope}
\end{figure}

We now  study the fixed points of $\mlg{A}{A^\prime}$. Given a maximal degeneration $\xi \in \mlg{A}{A^\prime}$ associated to the parametric simplex path\gls{txi}  
\begin{align*} T_\xi = \left< t_{i_0}, \ldots , t_{i_r} \right> \end{align*}
defined in equation \eqref{eq:defnparsimpath}, we will write $C_1, \ldots, C_r$ for the irreducible components of the cycle $\xi$ in $\secon{A}$.  We will say that $\xi$ has length $r$ and for each $1 \leq j \leq r$, we will associate the pair of natural numbers\gls{dmj} $(d_j, m_j)$ where $[\xi] = \sum_{j = 1}^r d_j [C_j]$, and $m_j$ is the intersection number $\mce_A \cdot (d_j C_j )$. The total intersection number of $\mce_A$ with $\xi$ is then written as $m_\xi = \sum_{j = 1}^r m_j$. Note that this yields the intersection degree of  $\mce_A$ with any cycle in $\mlg{A}{A^\prime}$.
%Note: m_i is the degree determined on the stack stratum, this is not the same as the multiplicity of the singular secondary variety, but rather is that multiplicity divided by index.$\ddata{\psi} = (c_i, d_i, m_i, t_i)$

\begin{defn} \label{defn:decsimppath} Given a parametric simplex path $T_\xi$ associated to the fixed point  $\xi \in \mlg{A}{A^\prime}$, we  call the data\gls{ddatxi} $\ddata{\xi} = (T_\xi , \{(d_j, m_j)\})$ a \emph{decorated simplex path}. \end{defn} 

\begin{figure}[h]
	\begin{picture}(0,0)%
	\includegraphics{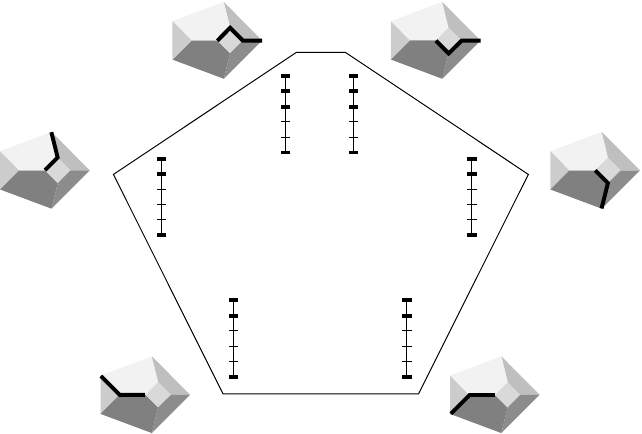}%
	\end{picture}%
	\setlength{\unitlength}{4144sp}%
	\begin{picture}(4794,3264)(1969,-3313)
	\end{picture}%
	\caption{\label{fig:mppbp1p1} The monotone path polytope defined by the $A^\prime$-sharpened pencil} 
\end{figure}

\begin{eg} \label{eg:msexample} As we give our next construction and other results, it will be useful to have an example to reference. We choose a sufficiently rich, but simple one arising as the homological, or Batyrev, mirror of $\p^1 \times \p^1$ blown up at one point. More explicitly, we let $A = \{ (-1, 0), (0, -1), (0, 1), (1, 0), (-1, -1), (0, 0)\}$ and we consider $A^\prime$-sharpened pencils where $A^\prime = \{(0, 0)\}$. Recall from Definition \ref{defn:shp} that an $A^\prime$-sharpened pencil must contain $e_{(0, 0)} \in \C^A$ as a section. The secondary polytope is illustrated in Figure \ref{fig:secbp1p1}. The function $\rho_{A^\prime} : \psec{A} \to \R$ is given by the restriction of $\rho_{A^\prime} : \R^{{\mathcal{A}}} \to \R$ which takes $\sum_{a \in A} c_a e_a$ to $c_{(0,0)}$. This defines the monotone path polytope $\Sigma_{\rho_{A^\prime}} (\psec{A})$ which is a hexagon represented in Figure \ref{fig:mppbp1p1}.  Each vertex of the monotone path polytope corresponds to a distinct parametric simplex path $T_\xi$. They are labeled with their  corresponding coherent tight subdivision of the interval $\rho_{A^\prime} (\psec{A} )$ inside the hexagon and the parametric simplex path on $\psec{A}$ outside of the hexagon. \end{eg}

Having decomposed the cycle $[\xi]$ representing the base of a LG model $\mathbf{w}$, we now will use this decomposition to partition the critical values of $\mathbf{w}$. We construct a decomposition of $\C$ based on the decorated simplex path $\ddata{\xi} = (T_\xi, \{(d_i, m_i)\})$ which will lead to the notion of a \textit{radar screen}.  To align the asymptotics correctly later, we define this decomposition in a fairly flexible fashion. Fix an increasing function $g: \{t_{i_0}, \ldots, t_{i_{r}}\} \to \R \cup \{\infty\}$ with $g(t_{i_0} ) = 0$ and $g (t_{i_r} ) = \infty$. For any $1 \leq j \leq r$ and any $0 \leq k <  d_{j}$ we take 
\begin{equation*} C_{j, k} = \{z \in \C : g(t_{i_j} ) \leq |z| < g( t_{i_{j + 1}} ) , 2 \pi k / d_{j}  \leq \arg (z) <  2 \pi (k + 1) / d_{j}  \} .
\end{equation*}

We totally order the collection $\{C_{j, k}\}$ of regions so that $C_{j, k} < C_{j^\prime , k^\prime}$ if and only if $j < j^\prime$ or $j = j^\prime$ and $k < k^\prime$. We now define a distinguished basis of paths $\mcb_{\ddata{\xi}} = \{ \gamma_1, \ldots, \gamma_{m_\xi} \}$ as in Appendix~\ref{subsec:flp} based at infinity and ordered so that if $\gamma_l (1) \in C_{j, k}$ and $\gamma_{l^\prime} (1) \in  C_{j^\prime , k^\prime}$ with $C_{j, k} < C_{j^\prime , k^\prime}$ then $l < l^\prime$. In order to make this collection precise, we fix a sufficiently small $\varepsilon > 0$ and, for every $j$ take $s_{i_j} := m_{i_j} / d_{i_j}$. For each $0 \leq k < d_j$, choose $s_{i_j}$ ordered points $\{p^{j,k}_1, \ldots, p^{j, k}_{s_{i_j}}\}$ in $C_{j, k}$ which are at least a distance $2\varepsilon$ from the boundary of $C_{j, k}$. Let $P = \cup_{j, k} \{p^{j,k}_1, \ldots, p^{j, k}_{s_{i_j}}\}$ be the ordered set of all such points.

For any $1 \leq j \leq r$, $0 \leq k <  d_{j}$ and any 
\begin{align*} \sum_{i = 1}^{j - 1} m_i + k m_{j} / d_{j}  < l \leq \sum_{i = 1}^{j - 1} m_i + (k + 1) m_{j} / d_{j} \end{align*} we define the path $\gamma_l^\prime$ to be a horizontal line with $\textnormal{Im} (\gamma_l^\prime ) = \frac{l }{m_\xi}\varepsilon $, $\textnormal{Re} (\gamma_l^\prime (0)) = \infty$ and $| \gamma_l^\prime (1) | = g (t_{i_j}) - \varepsilon +  {l \varepsilon / m_\xi}$. We take $\gamma_l^{\prime \prime}: [0, 1] \to \C$ to be a path with $\gamma_l^{\prime \prime} (t) = e^{ 2 \pi (k + \varepsilon)  t  / d_j } \gamma^\prime_l (1)$. Let $\tilde{\gamma_l} : [0, 1] \to \p^1$ be a rescaled concatenation of $\gamma_l^{\prime }$ with $\gamma_l^{\prime \prime}$ and note that, for sufficiently small $\varepsilon$, $\tilde{\gamma}_l (1) \in C_{j, k}$. We may then choose a set of $s_{i_j}$ arbitrary non-intersecting paths $\tilde{\gamma}_l^\prime$ in $C_{j, k}$ from $\tilde{\gamma}_l (1)$ to $p^{j, k}_n$ where $n = l - (\sum_{i = 1}^{j - 1} m_i + k m_{j} / d_{j})$. Finally, 
define $\gamma_l$ to be the concatenation of $\tilde{\gamma}$ with $\tilde{\gamma}^\prime$ to give a distinguished basis of paths from $\infty$ to the set $P$.

\begin{figure}\begin{picture}(0,0)%
	\includegraphics{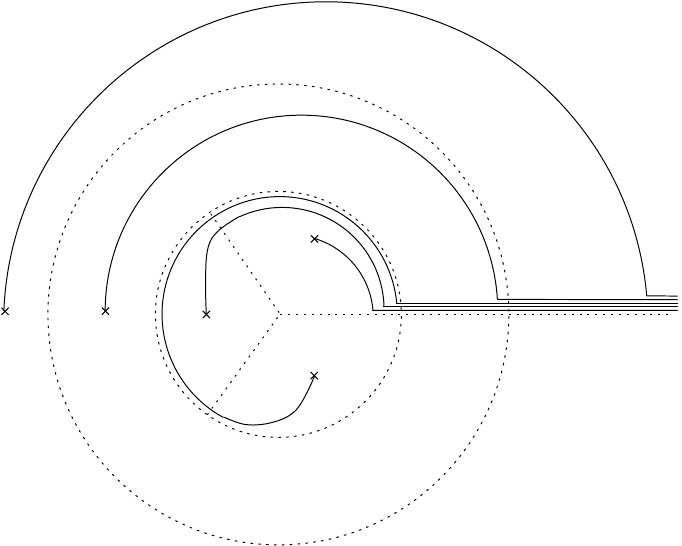}%
	\end{picture}%
	\setlength{\unitlength}{4144sp}%
	\begin{picture}(5176,4155)(1037,-4255)
	\put(2638,-2203){\makebox(0,0)[lb]{\smash{$\gamma_2$}}}
\put(3415,-2261){\makebox(0,0)[lb]{\smash{$\gamma_1$}}}
\put(2853,-3233){\makebox(0,0)[lb]{\smash{$\gamma_3$}}}
\put(3109,-1155){\makebox(0,0)[lb]{\smash{$\gamma_4$}}}
\put(3316,-309){\makebox(0,0)[lb]{\smash{$\gamma_5$}}}
\end{picture}%
\caption{\label{fig:radarscreen} Radar screen for top vertices of monotone path polytope for $A$}
\end{figure}

To apply this construction, we examine a one parameter degeneration in $\mlg{A}{A^\prime}$ to $\xi$. We need only choose a lattice point $\theta \in (\Z^A)^\vee$ which is in the normal cone of the vertex in $\Sigma_{\rho_{A^\prime}} (\psec{A})$ corresponding to $\xi$. Following the discussion after Definition \ref{defn:maxdeg}, this gives a fan $\mcf_\theta$ supported in the half-plane $H_\theta$ which lies in the two dimensional vector space $V_\theta \subset \R^{{\mathcal{A}}}$,  as well as an embedding $i: \mcf_\theta \to \fans{A}$. If $\theta \in \Z^{\mathcal{A}}$, then $\mcf_\theta$ is a rational polyhedral fan and $i$ induces a map of toric stacks $\iota :\mcx_{\mcf_\theta} \to \secon{A}$. Let $\mcx_\theta$ be the stack associated to $\mcf_\theta$.  Quotienting $V_\theta$ by $\lin_{\R} (\rho_{A^\prime})$ gives a map from $V_\theta$ to $\R$ and a map of fans from $\mcf_\theta$ to $\R_{\geq 0}$. This induces a map $F_{\rho_{A^\prime}} :\mcx_{\mcf_\theta} \to \C$ which is a toric degeneration of $\p^1$. It is clear that the zero fiber of $F_{\rho_{A^\prime}}$ is sent to $\xi$ by $\iota$ and that $F_{\rho_{A^\prime}}^{-1} (t)$ is isomorphic to $\p^1$ for $t \ne 0$. 

Now, $\xi$ corresponds to the parametric simplex path $T_\xi = <t_{i_0} , \ldots, t_{i_r} >$ on $\psec{A}$. Letting $s_{j} = \rho_{A^\prime} (t_{i_j})$, $\rho_{A^\prime} (T_\xi)$ is a tight coherent subdivision $\{[s_{j - 1}, s_j] : 1 \leq j \leq r\}$ of the marked interval $[s_0, s_r]$.  In other words, each subinterval $[s_{j - 1}, s_j]$ corresponds to its image, under $\rho_{A^\prime}$ of an edge on $\psec{A}$. 
%Stopped here
We may fill in all additional lattice points lying on $\psec{A}$ along the path $T_\xi$ to obtain a modified sequence 
\begin{align*} 
\tilde{T}_\xi = \left<\tilde{t}_{1}, \ldots, \tilde{t}_n \right> .
\end{align*}
Write their images under $\rho_{A^\prime}$ as the sequence
\begin{equation} \label{eq:monpath} \tilde{S} = <\tilde{s}_{1} ,\ldots, \tilde{s}_{n} >
\end{equation}
where $\tilde{s}_j = \rho_{A^\prime} (\tilde{t}_j )$.
It follows directly from  \cite[Section~10.1.G]{GKZ}  that $m_j = s_j - s_{j - 1}$ and $d_j = m_j / e_j$ where $e_j + 1$ is the number of lattice points on the interior of the edge $\{t_{i_j} , t_{i_{j + 1}}\}$. 
For any $1 \leq j \leq n$, we define $b_{j} = \theta (\tilde{t}_{j} )$ and $\mathbf{b} = (b_1, \ldots, b_n)$. Choosing another $\theta$ if necessary, we may assume that $b_1 = \cdots = b_k = 0$ where $\tilde{s}_k = s_1$. Then $\mathbf{b}$ defines the degeneration as in Appendix~\ref{sec:torichd} for the marked polytope $([s_0 , s_r], \tilde{S})$. By this we mean that we consider $\mathbf{b}$ as the function $\mathbf{b} :  \tilde{S} \to \R$ taking $\tilde{s}_i$ to $b_i$ and observe that it induces the convex function $\tilde{\mathbf{b}}$ as in equation \eqref{eq:tildeeta}. 

Working on the level of coarse toric varieties as opposed to stacks, we may parameterize the degeneration using $\mathbf{b}$ as follows. Identify $V_\theta \cap (\Z^A)^\vee$ with $(\Z^2)^\vee$ so that $\mcf_\theta \subset (\Z^2 )^\vee$ is dual to the upper convex hull \begin{align*}
B^u_\theta = \convhull  \{(\tilde{s}_j, b_j + r) : 0 \leq j \leq n , r \in \R_{\geq 0}\}
\end{align*} of $B_\theta = \{(\tilde{s}_j , b_j )\} \subset \Z^2$. For toric varieties, we obtain a map $\beta: \C \times \C^* \to \C \times \p^{n - 1}$ given by
\begin{equation*} \beta (t, z) = (t , [t^{b_0} z^{\tilde{s}_0} : \cdots : t^{b_n} z^{\tilde{s}_n}]) .
\end{equation*}
The coarse variety $X_\theta$ associated to $\mcx_\theta$ is the closure of $\image (\beta )$ with coarse zero fiber $\overline{F_\theta^{-1} (0)} := \overline{X}_\theta (0) = \cup_{j = 1}^r C_j$. Here $C_j$ has moment polytope equal to the line segment from $(\tilde{s}_{k_{j  -1}} , b_{k_{j - 1}} )$ to $(\tilde{s}_{k_{j}} , b_{k_j})$ where $\tilde{s}_{k_j} = s_j$. Let 
\begin{align*} \mu_j = (b_{k_j} - b_{k_{j - 1}} ) / (\tilde{s}_{k_{j}} - \tilde{s}_{k_{j  -1}}) \end{align*} 
be the slope of this line segment and define the map $\alpha_j : \R_{\geq 0} \times \C^* \to \C \times \C^*$ via
\begin{equation*}
\alpha_j (t, z) = (t, t^{-\mu_j} z) .
\end{equation*}
Then we have the following proposition:
%STOPPED HERE
\begin{lem} \label{lem:approx} The  parameterization  $(\beta \circ \alpha_j )|_{\{t \} \times \C^* } : \C^* \to \overline{X}_\theta$ of the $\C^*$-orbit $\xi_t$ uniformly converges on compact sets to a $d_j$-fold covering of $C_j$ as $t$ tends to $0$.
\end{lem}
\begin{proof} We simply compute
\begin{eqnarray*}  (\beta \circ \alpha_j ) (t, z) & = & (t , [t^{b_0} (t^{-\mu_j} z)^{\tilde{s}_0} : \cdots : t^{b_n} (t^{-\mu_j} z)^{\tilde{s}_n}]) , \\ 
& = &  (t , [t^{b_0 -\mu_j \tilde{s}_0 } z^{\tilde{s}_0} : \cdots :t^{b_n -\mu_j \tilde{s}_n } z^{\tilde{s}_n}]) , \\ 
& = &  (t , [t^{(b_0 - b_{k_{j  -1}})  -\mu_j (\tilde{s}_0 - \tilde{s}_{k_{j  -1}}) } z^{\tilde{s}_0 - \tilde{s}_{k_{j  -1}}} : \cdots \\ & & \hspace*{.5in} \cdots : t^{(b_n - b_{k_{j  -1}})  -\mu_j (\tilde{s}_n - \tilde{s}_{k_{j  -1}}) } z^{\tilde{s}_n - \tilde{s}_{k_{j  -1}}}]).
\end{eqnarray*}
By convexity, we have that the slope of the line segment connecting $(\tilde{s}_{i} , b_{i})$ to $(\tilde{s}_{k_{j  -1}} , b_{k_{j  -1}})$ is strictly less than $\mu_j$ for all $i < k_{j  -1}$ and strictly greater than $\mu_j$ for all $i > k_j$. This implies that $\kappa_i := (b_i - b_{k_{j  -1}})  -\mu_j (\tilde{s}_i - \tilde{s}_{k_{j  -1}}) \geq 0$ for all $i$ with equality if and only if $k_{j - 1} \leq i \leq k_j$. Utilizing this notation we have that
\begin{equation*} (\beta \circ \alpha_j ) (t, z) = \left(t, [t^{\kappa_0} z^{\tilde{s}_0 - \tilde{s}_{k_{j  -1}}}: \cdots : 1 : \cdots : z^{\tilde{s}_{k_j} - \tilde{s}_{k_{j  -1}}}: \cdots : t^{\kappa_n} z^{\tilde{s}_n - \tilde{s}_{k_{j  -1}}}] \right). \end{equation*}
It is then clear that as $t$ tends to $0$, $(\beta \circ \alpha_j) (t, z)$ converges pointwise to the map sending $z$ to $(0, [ 0: \cdots :0 : 1 : \cdots : z^{\tilde{s}_{k_j} - \tilde{s}_{k_{j - 1}}} : 0 : \cdots 0])$ which is a degree $d_j$ cover of $C_j$. Uniform convergence on compact sets then follows.
\end{proof}
We utilize this in the proof of the following theorem:

\begin{thm} \label{thm:radar} Let $\xi$ be a maximal degeneration of a LG model associated to $A$. If $\xi_t \in \mlg{A}{A^\prime}$ is sufficiently close to $\xi$, there exists a radar screen $\ddata{\xi}$ decomposition of the domain of $\xi_t$ such that the paths of the distinguished basis $\{\gamma_1, \ldots, \gamma_m\}$ end on the critical values of the LG model associated to $\xi_t$. 
\end{thm}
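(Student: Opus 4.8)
The plan is to combine three ingredients already in place: the identification of $\mcm_W$ with the toric variety $X_{\Sigma_{f_{A'}}(\psec{A})}$ from Proposition~\ref{prop:mppvertices}, the explicit one-parameter degeneration toward a fixed point $\psi$ constructed just before the theorem (via a lattice point $\theta$ in the appropriate normal cone, giving $F_{f_{A'}}:\mcx_\theta\to\C$ with zero fiber $\cup C_j$), and the asymptotic estimate of Proposition~\ref{prop:approx} describing how the coarse map $\beta\circ\alpha_j$ converges to a $d_j$-fold cover of $C_j$. First I would fix $\psi$ with its decorated simplex path $\ddata{\psi}=(T,\{(d_j,m_j)\})$, where $m_j = s_j - s_{j-1}$ is the $E_A$-degree on $c_j$ and $d_j = m_j/e_j$ as recorded after \eqref{eq:monpath} using \cite{GKZ} 10.1.G, and take $\psi_t = F_{f_{A'}}^{-1}(t)$ for small $t$, an honest $\p^1$ mapping to $\secon{A}$ whose pullback of $\hyp{A}$ is a $\partial$-framed Lefschetz pencil by Theorem~\ref{thm:fpsec}. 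The critical values of this pencil are exactly the points of $\psi_t \cap \mce_A$, counted with the multiplicities $m_\psi = \sum m_j$.

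The core of the argument is a localization-of-critical-values statement: as $t\to 0$, the $m_\psi$ intersection points of $\psi_t$ with $E_A^s$ group themselves, $m_j/d_j$ clusters of $d_j$ points each, around the pieces $C_j$, with each cluster spreading out angularly by the $d_j$-fold covering. I would make this precise by working in the coordinate $z$ on the $\C^*$ chart of $\psi_t$ (the degeneration parameter of $\p^1$) together with the rescaling $\alpha_j$, so that Proposition~\ref{prop:approx} applies: on the annulus $\{g(i_j)\le |z| < g(i_{j+1})\}$ the map $\psi_t$ factors approximately through a $d_j$-fold cover of $c_j\subset\secon{A}$, and the section $i^*E_A$ has its $m_j$ zeros (pulled back from the $m_j$ zeros of $E_A|_{c_j}$, which by the last remark before \eqref{eq:monpath} are supported at the endpoints $s_{j-1}, s_j$ of the edge) lying near the appropriate radii. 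Pulling these back through the $d_j$-fold cover produces precisely the $s_{i_j}=m_{i_j}/d_{i_j}$ points per angular sector $C_{j,k}$ that the radar-screen definition prescribes; the angular sectors $2\pi k d_j/m_j \le \arg z < 2\pi(k+1)d_j/m_j$ are exactly the preimages of a single sheet under the $d_j$-cover, which is why the decorated path data determines the combinatorics. Then I would invoke the freedom in the increasing function $g:T\to\R\cup\{\infty\}$ and in the choice of the points $P$ built into the radar-screen construction: for $t$ small enough one may choose $g$ and the representative points $p^{j,k}_n$ so that the actual critical values of $\psi_t$ land in the prescribed small neighborhoods of the $p^{j,k}_n$, and then isotope (by a Hamiltonian-admissible isotopy of the base, as in \S\ref{subsec:flp}) to make them coincide exactly. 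The distinguished basis $\{\gamma_1,\dots,\gamma_{m_\psi}\}$ is then the one constructed in the paragraph defining $\gamma_l$ out of $\gamma_l'$, $\gamma_l''$, and the auxiliary paths $\tilde\gamma_l'$, and its ordering matches the total order on $\{C_{j,k}\}$ by construction.

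The main obstacle I expect is controlling the error terms in Proposition~\ref{prop:approx} uniformly enough near the \emph{nodes} of $\psi$ — i.e.\ on the overlap radii $|z|\approx g(i_j)$ where two components $C_{j-1}, C_j$ meet and the convexity estimate $\kappa_i\ge 0$ degenerates to $\kappa_i=0$ on a longer range of indices. One must show that no critical value of $\psi_t$ escapes to such an overlap region (equivalently, that the $m_j$ zeros of $i^*E_A$ on the $j$-th annulus stay a definite distance $>2\varepsilon$ from its boundary circles), which requires a quantitative version of the statement that $E_A$ pulls back to a section supported on $\{s_0,\dots,s_r\}$: the zeros are genuinely ``at'' the vertices $t_{i_j}$ of the path, not merely near the edges, and one needs the leading-order behavior of $E_A$ along $c_j$ near its two torus-fixed points. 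I would handle this by using Theorem~\ref{thm:GKZ1}(i)--(ii) — the Newton polytope of $E_A$ is $\psec{A}$ and its factorization into $\delta_{A\cap Q'}$ over faces — to pin down which boundary strata of $c_j$ the divisor $\mce_A$ meets and with what order, then translating this into the radial positions of the critical values via the moment-map description of $C_j$ (moment polytope the segment from $(\tilde s_{k_{j-1}},b_{k_{j-1}})$ to $(\tilde s_{k_j},b_{k_j})$). The remaining steps — smoothness and transversality of $\psi_t$ against $E_A^s$ for $t\ne 0$, so that the pencil is genuinely $\partial$-framed and the count is $m_\psi$ — follow from Theorem~\ref{thm:fpsec} together with the fact that $\psi_t$ is a generic translate $g\cdot\phi$ of the base map, hence meets $E_A^s$ transversely by a standard Bertini-type argument on $\secon{A}$ minus its stacky locus.
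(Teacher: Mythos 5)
Your overall strategy is the paper's: degenerate along a lattice point $\theta$ in the normal cone of the vertex, use Proposition~\ref{prop:approx} to see $\psi_t$ as an approximate union of $d_j$-fold covers of the $c_j$ on rescaled annuli, choose the radial function $g$ of the radar screen adapted to the slopes $\mu_j$ (the paper pins this down as $\frac{1}{\delta_0}t^{-\mu_{j-1}}\leq g_j\leq \delta_0 t^{-\mu_j}$), and locate the zeros of $E_A^s$ component by component via \cite{GKZ}~10.1. You also correctly identify the one point that needs care, namely that no critical value drifts into the overlap region between consecutive annuli. Two small remarks: the closing isotopy is not needed, since the points $p^{j,k}_n$ in the radar-screen construction are free parameters and can simply be taken to be the critical values once they are known to sit in the interiors of the $C_{j,k}$; and transversality of $\psi_t$ against $E_A^s$ is not actually invoked in the paper's proof of this statement.

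There is, however, a genuine error in how you propose to close the gap you identified. You write that the zeros of $i^*E_A$ are ``genuinely at the vertices $t_{i_j}$ of the path,'' deducing this from the remark that the pullback of $E_A$ has non-trivial coefficients only on $\{s_0,\ldots,s_r\}$. That remark is a statement about the \emph{monomial support} (Newton polytope) of the section, not about its zero locus, and its consequence is the opposite of what you assert: on a component $C_j$ with moment interval $[s_{j-1},s_j]$, having both extreme coefficients non-zero forces the zero locus into the dense torus orbit, i.e.\ \emph{away} from the two fixed points. This is exactly what the paper uses: by \cite{GKZ}~10.1, $\{E_A^s=0\}\cap C_j$ is a single interior point $q_j$, so for small $t$ the $m_j$ critical values attached to the $j$-th edge stay a definite distance from the boundary circles $|z|=g_j$, $|z|=g_{j+1}$, and the angular sectors of the $d_j$-fold cover separate them into the prescribed cells $C_{j,k}$. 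If the zeros really accumulated at the vertices (equivalently, at the nodes of $\psi$), the clustering argument would fail and no choice of $g$ could place them in the interiors of the radar-screen regions. Replace that step with the citation of \cite{GKZ}~10.1 (single interior zero per component) and the rest of your argument goes through as in the paper.
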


\begin{proof}  For any $\epsilon$ let $\p^1 (\epsilon)$ consist of all points in $\p^1$ that are at least $\epsilon$ distance from $0$ and $\infty$. From \cite[Section~10.1]{GKZ}, we have that $\mce_A \cap C_j$ consists of a single point $\{q_j\}$ for every $j$. It then follows from Lemma~\ref{lem:approx} that for any $\varepsilon$ and $0 <  \kappa < 1$, there exists $\delta > 0$ so that for $t < \delta$ and every $1 \leq j \leq n$ the function $(\beta \circ \alpha_j )|_{\{t \} \times \p^1 ( \kappa) }$ is $\varepsilon$ close to the $d_j$ covering $(\beta \circ \alpha_j)|_{\{0 \} \times \p^1 (\kappa)}$. In particular, from the comment above, we may choose $\varepsilon$ and $\kappa$ small enough so that
\begin{align} \label{eq:convd} \mce_A \cap \beta (t , \C^* ) =  \mce_A \cap \left( \cup_{j = 1}^n (\beta \circ \alpha_j ) |_{\{t \} \times \p^1 ( \kappa  ) } \right) \end{align}
for $t < \delta$. Let $\mcc_{t, j} (\kappa ) =  (\beta \circ \alpha_j ) |_{\{t \} \times \p^1 ( \kappa  ) }$ and $\mcc_t (\kappa ) = \cup_{j = 1}^n (\beta \circ \alpha_j ) |_{\{t \} \times \p^1 ( \kappa  ) }$, then it is clear that we may choose $\varepsilon$ sufficiently small so that the sets $\mcc_{t, j} (\kappa )$ in the union are mutually disjoint. Fix such an  $\varepsilon$ and $\kappa$ so that equation \eqref{eq:convd} holds and let \begin{align*} \delta_0 = \max \left\{ \kappa, \delta^{\frac{\mu_{i} - \mu_{i - 1}}{2}} : 2 \leq i \leq r \right\}. \end{align*} 
Then if $t < \delta$, since $\mu_i > \mu_{i - 1}$ and $\delta^{\mu_i - \mu_{i - 1}} \leq \delta_0^2$ we have 
\begin{align*}
\delta^{\mu_i - \mu_{i - 1}} & \leq \delta_0^2,  \\ & < \delta_0^2 \left( \frac{\delta}{t} \right)^{\mu_i - \mu_{i - 1}} . 
\end{align*}
This implies that
\begin{align*}
\frac{1}{\delta_0} t^{-\mu_{i - 1}} <  \delta_0 t^{- \mu_i} 
\end{align*} 
for every $2 \leq i \leq r - 1$. Choose a collection of continuous real valued functions  $\{g_2 (t), \ldots, g_{r - 1} (t) \}$ for which
\begin{equation*} \frac{1}{\delta_0} t^{-\mu_{i - 1}} \leq g_i (t) \leq \delta_0 t^{- \mu_i} . \end{equation*}
We define $g_{\varepsilon , \kappa , t} : T_\xi \to \R$ via $g_{\varepsilon , \kappa, t} (t_{i_0} ) = 0$, $g_{\varepsilon, \kappa, t} (t_{i_r}) = \infty$ and $g_{\varepsilon , \kappa, t} (t_{i_j} ) = g_j (t)$. We observe that for $0 < t < \delta_0$ and any $z \in \mcc_t (\kappa )$ we have that $z = \mcc_{t, j} (\kappa )$ if and only if $z = t^{-\mu_j} w$ for some $\delta_0 < |w| < 1 / \delta_0$. This implies that $z \in \mcc_{t , j} (\kappa )$ is contained in this image only if $\delta_0 t^{-\mu_j} < |z| < t^{-\mu_j} / \delta_0$. Thus for $z \in \mcc_t ( \kappa )$ we have $z \in \mcc_{t , j} (\kappa )$ if and only if $g_{\varepsilon , \kappa, t} (t_{i_j} ) < |z| < g_{\varepsilon , \kappa, t} (t_{i_{j + 1}} )$. By equation \eqref{eq:convd} and Lemma~\ref{lem:approx}, this implies that the points $\mce_A \cap \mcc_{t, j} (\kappa )$ are, after a rotation, contained in the interior of the components $C_{j, k}$ for $0 \leq k \leq d_j$ of the radar screen for $\ddata{\xi}$ with radial function $g_{\varepsilon , \kappa}$. Indeed, 
because we may choose $\varepsilon$ small enough that $\mcc_{t, j} (\kappa )$ is approximately a $d_j$-fold covering of $C_j$, we have that the $2\pi / d_j$ angular regions each approximately cover $C_j$ once and the intersection of $E^s_A = 0$ with each such map contains $m_j / d_j$ points (the order of $E^s_A$ restricted $C_j$ as given in \cite[Theorem~1.12]{GKZ}), justifying that this radar screen is  associated $\ddata{\xi} = (T_\xi, \{(d_i, m_i)\})$. By definition, the degenerate values of the LG model $\xi_t$ are the intersection points of $\beta (t, \_ )$ with $\mce_A$ and, again, by equation~\eqref{eq:convd}, all such points are accounted for in the interiors of the regions $\mcc_{t , j} (\kappa )$. 
\end{proof}
Note that the proof of Lemma~\ref{lem:approx} gives precise control on a simultaneous regeneration of every circuit in the maximal degeneration $\xi$.

\begin{figure}
\begin{picture}(0,0)%
\includegraphics{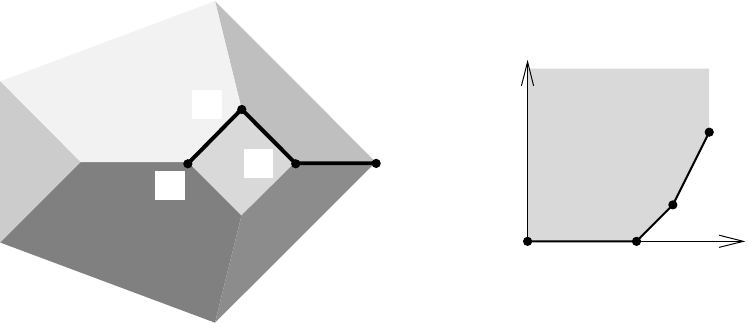}%
\end{picture}%
\setlength{\unitlength}{4144sp}%
\begin{picture}(5693,2453)(900,-2153)
\put(2138,-1164){\makebox(0,0)[lb]{\smash{$t_3$}}}
\put(2433,-545){\makebox(0,0)[lb]{\smash{$t_2$}}}
\put(2813,-995){\makebox(0,0)[lb]{\smash{$t_1$}}}
\put(3789,-882){\makebox(0,0)[lb]{\smash{$t_0$}}}
\put(4726,-1931){\makebox(0,0)[lb]{\smash{$B_\theta = \{(\tilde{s}_j, b_j)\}$ and $B^u_\theta$}}}
\end{picture}%

\caption{\label{fig:radarex}One parameter regeneration of a maximally degenerated LG model.}
\end{figure}
\begin{eg} \label{eg:radardeg}
	Returning to Example~\ref{eg:msexample} and identifying $\Z^A$ and its dual with $\Z^6$ using the ordering of $A$, we have that $\rho_{A^\prime} = (0, 0, 0, 0, 0, 1)$. Consider the path $\left< t_0, t_1, t_2, t_3 \right>$ on $\Sigma (A)$ pictured on the left in  Figure~\ref{fig:radarex}. Using equation~\eqref{eq:secvert}, one computes the coordinates for $t_i$ to be
	\begin{align}\label{eq:vertradar} \begin{split} 
	t_0 & = (1,1,4,4,5,0), \\ t_1 & = (1,1,3,3,4,3), \\ t_2 & = (1,2,2,3,3,4), \\ t_3 & = (2,2,2,2,2,5).
	\end{split}	\end{align} 
	Pairing with $\rho_{A^\prime}$ gives the coherent subdivision $\left< s_0, s_1, s_2, s_3 \right> = \left< 0, 3, 4, 5 \right>$ of $[0,5]$.  Using the coordinates given in equation~\eqref{eq:vertradar}, one sees that there are no additional lattice points on the edges $[t_i, t_{i + 1}]$ of $\Sigma (A)$ so that $\tilde{S} = \left< 0, 3, 4, 5 \right>$ and $e_j = 0$ for $j \in \{1,2,3\}$. This implies $d_j = m_j = s_j - s_{j - 1}$ and $(d_1,d_2,d_3) = ( 3,1,1 )$.
	
	Choosing $\theta = (0,7,0,-8,5,-1) \in \Z^6$, a short computation shows that $\theta$ pairs to zero on $t_0$ and $t_1$, while $\left< \theta , t_2 \right> = 1$ and $\left< \theta, t_3 \right> = 3$. The upper envelope $B^u_\theta$ of the set $B_{\theta}$ is shown on the right hand side of Figure~\ref{fig:secbp1p1}. The normal fan of this polyhedron defines the toric variety $\overline{X}_\theta$ which embeds into $\secon{A}$ and defines a  degeneration of $\p^1$ into the (closure of the) orbits corresponding to the edges $[t_j, t_{j + 1}] \subset \Sigma (A)$. In this case, $\p^1$ degenerates into three projective lines $C_1 \cup C_2 \cup C_3$. For each $j \in \{1,2,3\}$, the map $(\beta \circ \alpha_j)_{\{t\} \times \C^*}: \C^* \to \overline{X}_\theta$ from Lemma~\ref{lem:approx} converges to a $d_j$-covering of $C_j$. As $d_1 = 3$, the degeneration onto the first component gives a $3$-fold covering and the single critical value $\mathcal{E}_A \cap C_1$ yields $3$ critical values in the pullback along $(\beta \circ \alpha_1)_{\{t\} \times \p (\kappa)}$. The other two degenerations do not yield multiple coverings, so the critical values (i.e. their intersections with $\mce_A$) consist of one point each. The radar screen and distinguished basis that arises in this case is pictured in Figure~\ref{fig:radarscreen}.
\end{eg}
 Utilizing Theorem \ref{thm:radar}, for every maximal degeneration of a LG model $\xi$, we may use the radar screen distinguished basis to obtain a semi-orthogonal decomposition of a category which can be thought of as a type of Fukaya-Seidel category (see \cite{SeidelFPL}). However, for a general subset $A^\prime \subset A$ and an $A^\prime$-sharpened pencil $W$, the associated LG model $\mathbf{w}$ has a hypersurface degeneration, as opposed to a Morse singularity, over $0$ and the Fukaya-Seidel category for such a function has not yet been defined in general. Thus we will examine the special case for which an $A^\prime$-sharpened pencil gives rise to the Fukaya-Seidel category of a Lefschetz pencil as defined in \cite[Chapter 18]{SeidelFPL}. 

\begin{prop} Let $A^\prime \subset \textnormal{Int} (Q)$ and $W$ be a generic $A^\prime$-sharpened pencil. Then the LG model $\mathbf{w}$ associated to $W$ has isolated Morse critical points away from $\infty$.
\end{prop}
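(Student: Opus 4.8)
The plan is to reduce the statement to a standard transversality argument for the family of superpotentials cut out by $A^\prime$-sharpened pencils. Recall that an $A^\prime$-sharpened pencil $W$ contains a full section $s$ and is stable under the $G_{A^\prime}$ action, so after normalizing we may write $\mathbf{w} = s/s||_{A^\prime}$ on $\mcx_Q - D_W$. Because $A^\prime \subset \text{Int}(Q)$, the divisor $D_W = \{s||_{A^\prime} = 0\}$ is disjoint from the toric boundary $\partial \mcx_Q$, so on the open dense torus $(\C^*)^d \subset \mcx_Q$ the map $\mathbf{w}$ is, up to multiplication by a monomial, a Laurent polynomial $\mathbf{w}(x) = \sum_{\alpha \in A} c_\alpha x^\alpha$ with the coefficients of $A^\prime$ pinned and the remaining $c_\alpha$ ranging over a quasi-affine parameter space. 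First I would set up the universal family: let $\mcu \subset \mcx_Q - D_W$ denote the incidence locus of pairs $(x, W)$ with $x$ a critical point of $\mathbf{w}_W$, fibered over the (irreducible, rational) parameter space $P$ of $A^\prime$-sharpened pencils, and show that the generic fiber is finite (i.e.\ $\mathbf{w}_W$ has only isolated critical points for generic $W$) and consists of nondegenerate critical points.

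The key steps, in order: (1) Show that for generic $W$ the critical locus of $\mathbf{w}_W$ on $(\C^*)^d$ is zero-dimensional. This is a Bertini-type statement: the critical scheme is cut out by the $d$ logarithmic derivatives $x_i \partial_{x_i} \mathbf{w} = 0$, and one argues that as $W$ varies the common zero locus is generically of expected codimension $d$, hence finite, by exhibiting one pencil (e.g.\ a Batyrev-type superpotential as in \cite{horivafa}) where this holds and invoking semicontinuity over the irreducible $P$. (2) Show no critical points escape to $\partial \mcx_Q$ or to $D_W$ for generic $W$. Since $A^\prime \subset \text{Int}(Q)$ guarantees $D_W \cap \partial \mcx_Q = \emptyset$, and since the coefficient $c_\alpha$ for any vertex $\alpha \in \ver{A}$ may be taken nonzero (fullness), a moment-map / properness estimate shows $|\mathbf{w}_W| \to \infty$ or $\to 0$ appropriately near the boundary strata, ruling out critical points there; alternatively one notes that a critical value on a boundary orbit would force $E_A(s) = 0$ or a coincidence among coefficients of codimension $\geq 1$ in $P$. (3) Show the isolated critical points are nondegenerate (Morse). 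Here I would apply the parametrized Sard/transversality theorem to the section $(\mathbf{w}_W, d\mathbf{w}_W)$ of the $1$-jet bundle over the total family: the $G_{A^\prime}$-stability and the freedom in the non-pinned coefficients $\{c_\alpha : \alpha \in A - A^\prime\}$ give enough directions to make the critical-point map transverse to the zero section, so for generic $W$ the Hessian is nondegenerate at each critical point. Finally, (4) assemble: away from $\infty$ (the fiber $D_W$ over infinity of the pencil), $\mathbf{w}$ has finitely many critical points, each Morse, which is the claim.

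The main obstacle I expect is step (2): controlling critical points near the toric boundary $\partial \mcx_Q$ when $\mcx_Q$ is a singular stack (since $Q$ need not be simple). The hypothesis $A^\prime \subset \text{Int}(Q)$ is exactly what prevents $D_W$ from meeting the boundary, but one still needs to rule out critical points of $\mathbf{w}_W$ itself limiting onto lower-dimensional orbits $\orb{Q^\prime}$. The clean way to handle this is to observe that $\mathbf{w}_W$ extends to a pencil on all of $\mcx_Q$ whose base locus and indeterminacy are governed by the principal determinant $E_A$, so that the relevant "bad" pencils (those with critical points on the boundary, or non-isolated critical points) lie in the proper closed subvariety $\{E_A = 0\} \cup \discr$ of $P$; genericity of $W$ then puts us in the complement. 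This is precisely the content invoked in the proof of Theorem \ref{thm:fpsec}, so once steps (1) and (3) are in place, step (2) follows from the same circle of ideas together with the remark, made above, that an $A^\prime$-pencil is $A^\prime$-sharpened iff it contains a full section and is $G_{A^\prime}$-stable.
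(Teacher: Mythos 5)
There is a genuine gap, and it sits exactly where you flagged your ``main obstacle,'' in step (2). The substantive content of this proposition is not that the critical points of $\mathbf{w}$ on the dense torus are isolated and nondegenerate (your steps (1) and (3), which are standard and correspond to $W$ meeting the full discriminant $\Delta_A$ transversely at smooth points). It is that $W$, viewed as a curve in $\secon{A}$, meets \emph{no} component $\{\Delta_{Q_i} = 0\}$ of $E_A^s = 0$ attached to a proper face $Q_i$ of $Q$: a transverse intersection with such a component produces a stratified Morse singularity of positive codimension --- a smooth fiber tangent to a boundary stratum $\orb{Q_i}$ --- which is precisely what must be excluded for $\mathbf{w}$ to have only Morse critical points. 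Your proposed fix, that the ``bad'' pencils form a proper closed subvariety of the parameter space $P$ so that genericity saves you, cannot work. The curves $c_W$ arising from $A^\prime$-sharpened pencils are all closures of $G_{A^\prime}$-orbits and hence lie in a single algebraic equivalence class; if the intersection number $[c_W]\cdot[\Delta_{Q_i}=0]$ were positive, then \emph{every} such curve would meet $\{\Delta_{Q_i}=0\}$ and every $A^\prime$-sharpened pencil would be bad. (Note also that every pencil does meet $E_A^s=0$ somewhere --- that is where the Morse points come from --- so ``avoid $\{E_A=0\}$'' is not the right dichotomy; the question is which components are met.) The statement is therefore not a Bertini statement: one must prove the intersection number with each face component is zero.

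That computation is where the hypothesis $A^\prime \subset \text{Int}(Q)$ is actually consumed, and it is the heart of the paper's proof. Degenerating $W$ to a maximal degeneration and invoking Theorem \ref{thm:radar}, the intersection with $\{\Delta_{Q_i}=0\}$ is localized on the components $c_j$ of the broken curve, each governed by a circuit $B_j$; by the product formula for $E_A$, $c_j$ meets $\{\Delta_{Q_i}=0\}$ only if $B_j \subset Q_i \subset \partial Q$. This is then ruled out combinatorially: since $A^\prime \cap \partial Q = \emptyset$, the points of $A^\prime$ contribute equally to $f_{A^\prime}(\phi_+)$ and $f_{A^\prime}(\phi_-)$ for the two triangulations of any separating extended circuit (equation \ref{eq:circtrian}), contradicting the strict monotonicity $f_{A^\prime}(\phi_+) > f_{A^\prime}(\phi_-)$ along an edge of the monotone path. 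None of this is recoverable from a parametrized Sard argument. Separately, you never address the fiber over $0$, which is a priori a toric degeneration point of the pencil; the paper disposes of it by observing that a generic $A^\prime$-sharpened pencil sends $0$ into the interior of the face of $\psec{A}$ corresponding to the trivial subdivision $\{(Q, A - A^\prime)\}$, over which the fiber is smooth.
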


\begin{proof} Recall from Theorem~\ref{thm:GKZ1} that the principal $A$-determinant has a product decomposition $E_A (f) = \prod_{Q^\prime \leq Q} \Delta_{A \cap Q^\prime} (f)^{i (\Lambda , A) \cdot u ({\textnormal{Lin}_{\mathbb{N}} (\mathcal{A})} / Q^\prime )}$.
An intersection $W \cap \Delta_{A \cap Q^\prime}$  corresponds to stratified Morse critical values of $\mathbf{w}$ (by definition, these are points for which the hypersurface intersects the orbit associated to $Q^\prime$ non-transversely). To see that no such intersection points occur, we first note that, by definition, $\Delta_{A \cap Q^\prime} (f)$ equals $\Delta_{A \cap Q^\prime} (f||_{A \cap Q^\prime})$. Now, by the proof of Lemma~\ref{lem:orb}, we have that $W = \lin_\R \{s_0, s_\infty\}$ where $s_\infty||_{A^\prime} = s_\infty$ and $s_0||_{A^\prime} = 0$. For a generic choice of $W$, we may assume that $\Delta_{Q^\prime \cap A} (s_0) \ne 0$ for all faces $Q^\prime < Q$ (as the zero loci of such discriminants are hypersurfaces in $(\C^*)^{Q^\prime \cap A}$). This implies that, for any $t \in \C$,  
\begin{align*} \Delta_{Q^\prime \cap A} (s_0 - t s_\infty ) & = \Delta_{Q^\prime \cap A} \left( (s_0 - ts_\infty)||_{Q^\prime \cap A}  \right) , \\ & = \Delta_{Q^\prime \cap A} \left(  s_0||_{Q^\prime \cap A} \right) , \\ & \ne 0. 
\end{align*}
Thus all intersections $\{E_A = 0 \} \cap W$ arise as singularities $\Delta_A ( s_0 - t s_\infty ) = 0$. For such a $t$, the hypersurface $Y_t$ in $\mcx_Q$ defined by $s_0 - t s_\infty$ is singular in the interior $Y_t - (Y_t \cap \partial \mcx_Q)$. A generic choice of coefficients ensures that the intersections $\{E_A = 0 \} \cap W$ away from $0$ are transverse and therefore yield Morse singularities of the pencil.
\end{proof}

As was mentioned above, given a LG model $\mathbf{w}$ with Morse singularities and reasonable boundary conditions, i.e. a symplectic Lefschetz pencil, the Fukaya-Seidel category\gls{fukcat} $\textnormal{Fuk}^\rightharpoonup (\mathbf{w} )$ is well defined and studied in \cite{SeidelFPL}. Given an $A^\prime$-sharpened pencil $\xi \in \mlg{A}{A^\prime}$, write $\mathbf{w}_\xi : \mcx_Q - D_{A^\prime} \to \C$ for the associated function off the divisor at infinity $D_{A^\prime} = \{s_{\infty} = 0\}$. Taking the paths $\mcb$ associated to a radar screen to be the generating exceptional collection, Theorem \ref{thm:radar} and the above proposition gives the following corollary. 

\begin{cor} Assume $A^\prime \subset \textnormal{Int} (Q)$. For every maximal degeneration of a LG model in $\mlg{A}{A^\prime}$, there exists a smooth LG model $\xi_t$ and a semi-orthogonal decomposition of the Fukaya-Seidel category:
\begin{equation*} \textnormal{Fuk}^\rightharpoonup \left(\mathbf{w}_{\xi_t} \right) \approx \left< \mct_1 , \ldots, \mct_r \right> \end{equation*}
where $\mct_i$ is the Fukaya-Seidel category of a regenerated circuit corresponding to $\xi|_{C_i}$.
\end{cor}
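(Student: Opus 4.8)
The plan is to assemble the corollary directly from the two results immediately preceding it, namely Theorem~\ref{thm:radar} and the preceding proposition on Morse critical points of $\mathbf{w}$, together with the general existence of a directed Fukaya category for a symplectic Lefschetz pencil as in \cite{SeidelFPL}. First I would observe that, under the hypothesis $A' \subset \text{Int}(Q)$ and genericity of $W$, the preceding proposition guarantees that the LG model $\psi_t$ has isolated Morse critical points away from $\infty$; combined with the fact that $\psi_t$ is a $\partial$-framed pencil in the sense of section~\ref{subsec:flp}, this places us in the setting where $Fuk^\rightharpoonup(\psi_t)$ is defined and depends only on the choice of a distinguished basis of vanishing paths up to the usual mutation equivalence. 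So the construction of the left-hand side is legitimate.

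Next I would invoke Theorem~\ref{thm:radar}: for $\psi_t$ sufficiently close to the maximal degeneration $\psi$, there is a radar screen decomposition $\ddata{\psi} = (T, \{(d_i, m_i)\})$ of the domain $\p^1$ of $\psi_t$, together with a distinguished basis of paths $\{\gamma_1, \ldots, \gamma_{m_\psi}\}$ whose endpoints are exactly the critical values of $\psi_t$, and such that the paths landing in the annular region over $C_j$ are grouped consecutively (in the total order on the regions $C_{j,k}$). The key structural point is that this ordering of the exceptional collection is \emph{compatible} with the block structure: by the last proposition of the section, $W$ meets only the full discriminant and the toric boundary, so every critical value lies in the interior of some $C_{j,k}$, and no vanishing path needs to cross between annular regions of different index $j$. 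Hence the distinguished basis decomposes as a concatenation $\mcb = \mcb_1 \star \cdots \star \mcb_r$, where $\mcb_j$ collects the paths ending in the regions $C_{j,k}$ for $0 \le k < m_j/d_j$. Since a directed Fukaya category built from a distinguished basis that is an ordered concatenation of sub-bases carries a semi-orthogonal decomposition into the subcategories generated by the successive blocks (this is the standard relation between exceptional collections and semi-orthogonal decompositions, cf. \cite{bondal}), we obtain $Fuk^\rightharpoonup(\psi_t) \approx \langle \mcu_1, \ldots, \mcu_r \rangle$ where $\mcu_j$ is generated by the Lefschetz thimbles indexed by $\mcb_j$.

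Finally I would identify each block $\mcu_j$ with $\mct_j$, the directed Fukaya category of the regenerated circuit $B_j$ associated to $\psi|_{C_j}$. For this I would use Proposition~\ref{prop:approx} and the regeneration discussion of section~\ref{sec:applications1}: the restriction of $\psi_t$ to the annular region over $C_j$ is, for $t$ small, $\varepsilon$-close to a $d_j$-fold cover of the equivariant line $c_j$, i.e. it \emph{is} (up to Hamiltonian isotopy of the symplectic connection) the pullback of $\laf{A}$ along $(\beta\circ\alpha_j)|_{\{t\}\times\p^1(\kappa)}$, and this pullback is exactly a regeneration of the circuit $B_j$ of the type constructed via Proposition~\ref{prop:regrel}. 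The set of $m_j$ critical values over $C_j$ arises as the $m_j/d_j$ critical values of the circuit pencil for $B_j$, replicated $d_j$ times by the covering; the vanishing cycles of the paths in $\mcb_j$ are the corresponding (regenerated) circuit vanishing cycles described in section~\ref{subsec:stm} and Proposition~\ref{prop:coord}. Thus the $A_\infty$-subcategory $\mcu_j$ is quasi-equivalent to the directed Fukaya category of this partial LG model, which is $\mct_j$ by definition. Assembling these identifications into the semi-orthogonal decomposition gives the claim.

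The main obstacle I anticipate is the last identification step: matching the thimbles of $\psi_t$ restricted to the annular region over $C_j$ with the thimbles of the regenerated circuit pencil requires controlling the symplectic parallel transport under the $\varepsilon$-approximation of Proposition~\ref{prop:approx} well enough that Floer-theoretic data (and not merely the homeomorphism type) is preserved — i.e. one needs the isotopy to be through $\partial$-framed Lefschetz pencils with fixed vanishing-cycle data, which is exactly the sort of control the paper flags as ``a more nuanced control over the framing'' it does not fully possess in the degenerate case. For the non-degenerate circuits this is unproblematic, so the honest statement is that the corollary holds cleanly when all $B_j$ are non-degenerate circuits, and requires the framing caveat otherwise; I would phrase the proof accordingly, deferring the delicate gluing to the cited framework of \cite{SeidelFPL} and the regeneration apparatus already established.
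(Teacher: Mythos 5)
Your proposal follows exactly the paper's intended argument: the paper derives this corollary in one sentence by taking the radar-screen paths $\mcb$ of Theorem \ref{thm:radar} as the generating exceptional collection, with the preceding proposition supplying the Morse condition needed for $Fuk^\rightharpoonup$ to be defined, and the block structure of the ordered basis yielding the semi-orthogonal decomposition. Your expanded treatment, including the honest caveat about framing control for degenerate circuits, is consistent with (and more careful than) what the paper actually writes.
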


%We would like to use it to group together critical values in each decomposed component. For this we define a distinguished basis of paths as in section \ref{subsec:flp} which orders the paths according to the circuits in the degeneration. We do this for points near the maximal degeneration $T$, so that the critical values are distinct. Let us explain this approximation. Indeed, a check of formula \ref{thm:GKZ1} shows that a curve near the

%To each edge $c_j$, there corresponds a circuit $\mathbf{a}_j$ equal to the difference of the lattice points $t_{i_j} - t_{i_{j - 1}}$ in $\PSec (A)$, along with a collection of extended circuits $\{\mathbf{a}^k_j : 1\leq k \leq \covol_j\}$ supported on $\mathbf{a}_i$ which together define the circuit modification between the triangulations of the endpoints (see \cite{GKZ}). We take  $\mcc_A = \{(\mathbf{a}_i ,$  It is useful to keep in mind that when $\mathbf{a}_i$ is non-degenerate, $\covol_i = 1$ and $\mathbf{a}^1_i = \mathbf{a}_i$. 

\subsection{Homological mirror symmetry}

In the final pages of this article, we will detail a conjectural homological mirror to the maximally degenerated LG model and present some supporting evidence for this viewpoint. Aside from the intrinsic interest which many have for the subject of homological mirror symmetry, the perspective obtained from maximal degenerations predicts many results in the $B$-model setting which previously are either unknown or have been approached from a more opaque angle. 
%It has long been known \cite{Cox} that the secondary fan subdivides the pseudo-effective cone of....

We restrict our conversation to the homological mirrors of nef Fano DM toric stacks. More concretely, we take a simplicial fan $\Sigma$ in $\Z^d$ with a choice of one-cone generators, which we identify with $\Sigma (1)$, and consider its canonical stacky fan $\sfan = (\Z^{\Sigma (1)} , \Z^d , \beta_{\Sigma (1)}, \tilde{\Sigma} )$ where $\tilde{\Sigma}$ is the pullback of $\Sigma$ via $\beta_{\Sigma (1)}$.  The nef condition amounts to the assumption that  $\Sigma (1) \subset \partial (\convhull (\Sigma (1)))$.  This condition is equivalent to $-K_{\mcx_\Sigma}$ being nef. Letting $a_0 = 0 \in \Z^d$, we define the $A$-model mirror of $\mcx_\Sigma$ to be a generic LG model $\mathbf{w}$ associated to a $A^\prime = \{a_0 \}$-sharpened pencil $W$ for the set $A = \Sigma (1) \cup \{a_0 \}$. It is not hard to show that any homological mirror of a toric Fano orbifold as defined by \cite[Section 3]{horivafa} can be obtained in this way. We now formulate a structure associated to  $\mcx_\Sigma$ corresponding to a maximal degeneration $\xi$ of $\mathbf{w}$. 

For any triangulation $T$ of $A$, we define a stacky fan\gls{sigt} $\Sigma_T$ as follows.  Let $\sigma \in T$ be a simplex which contains $a_0$, $\tau$ the minimal face of $\sigma$ containing $a_0$, and $\tau (1)$ the vertices of $\tau$. We write $\Lambda_\tau $  for the finite rank abelian group $\Z^d / \lin_\Z (\tau (1))$ and $\lambda : \Z^d \to \Lambda_\tau$ for the quotient homomorphism. The star $\textnormal{St}_T (\tau )$ of $\tau$ in $T$ is defined to be the collection of simplices in $T$ containing $\tau$ as a face. For each such simplex $\upsilon \in \textnormal{St}_T (\tau)$ we define the cone $S_\upsilon = \textnormal{Lin}_{\R_{\geq 0}} (\{\lambda (v ) : v \in \upsilon (1) \} ) \subset \Lambda_\tau \otimes \R$ with generators $\lambda (v) \in \Lambda_\tau$. The collection of cones $\{ S_\upsilon \}$ along with their intersections defines a stacky fan which we write as $\Sigma_T$.

\begin{defn} \label{def:mmseq} Let $\xi \in \mlg{A}{A^\prime}$ be a maximal degeneration with decorated simplex path $\ddata{\xi} = (T_\xi , \{(d_i, m_i)\})$ where $T_\xi = \langle t_0 , \ldots, t_{r + 1} \rangle$. The sequence of stacks\gls{mirseq}
\begin{equation*} \mathbf{S}_\xi = (\mcx_{\Sigma_{t_{r + 1}}} ,  \ldots, \mcx_{\Sigma_{t_0}} ) \end{equation*}
will be called the mirror sequence to $\xi$.
\end{defn}
\begin{figure}[t]
\begin{picture}(0,0)%
\includegraphics{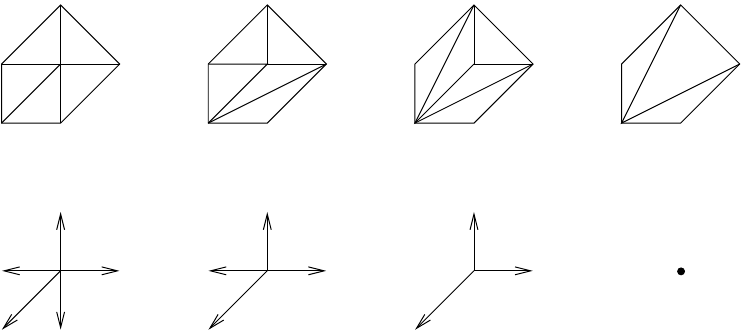}%
\end{picture}%
\setlength{\unitlength}{4144sp}%
\begin{picture}(5649,2511)(-461,12302)
\put(271,14654){\makebox(0,0)[lb]{\smash{$t_3$}}}
\put(1846,14654){\makebox(0,0)[lb]{\smash{$t_2$}}}
\put(3421,14654){\makebox(0,0)[lb]{\smash{$t_1$}}}
\put(4996,14654){\makebox(0,0)[lb]{\smash{$t_0$}}}
\put(136,13034){\makebox(0,0)[lb]{\smash{$\Sigma_{t_3}$}}}
\put(1666,13034){\makebox(0,0)[lb]{\smash{$\Sigma_{t_2}$}}}
\put(3241,13034){\makebox(0,0)[lb]{\smash{$\Sigma_{t_1}$}}}
\put(4816,13034){\makebox(0,0)[lb]{\smash{$\Sigma_{t_0}$}}}
\end{picture}%
\caption{\label{fig:mirseq} The mirror sequence to a maximal degeneration. }
\end{figure}
\begin{eg} Let us write out the mirror sequences for the maximal degenerations of $\{a_0\}$-pencils on the variety $\mcx_Q$ of Example~\ref{eg:msexample}. Referring to Figure \ref{fig:mppbp1p1}, we enumerate the maximal degenerations $\xi_1, \xi_2$ and $\xi_3$ associated to the vertices on the left hand side of the monotone path polytope, starting from the top and ending on the bottom. The mirror sequences of these degenerations are
\begin{align*} \mathbf{S}_{\xi_1} & = ( \mcx_Q^{mir} , F_1 , \p^2 , \{pt\}), \\ \mathbf{S}_{\xi_2} & = (\mcx_Q^{mir} , F_1 , \p^1 ) , \\ \mathbf{S}_{\xi_3} & = (\mcx_Q^{mir} , \p^1 \times \p^1 , \p^1 ). \end{align*}
The sequence of triangulations occurring in the decorated simplex path associated to $\xi_1$ and its mirror fans are illustrated in Figure~\ref{fig:mirseq}. As a degeneration of LG models, this sequence was examined in Example~\ref{eg:radardeg}. Noting that $F_1$ is the projective line bundle of $\mco (-1) \oplus \mco$ over $\p^1$ for the second sequence and that $\p^1 \times \p^1$ is the trivial projective line bundle over $\p^1$ for the third, this example suggests that the mirror sequences to maximal degenerations correspond to runs of the minimal model program for the mirror.
\end{eg}

We briefly recall the minimal model program on toric varieties as presented in \cite[Chapter 15]{cls}, \cite[Chapter 14]{matsuki}, or \cite{Reid}. For the moment, we take $\Sigma$ to be an arbitrary projective, simplicial stacky fan in $\Z^d$, and write $X_\Sigma$ for the corresponding toric orbifold. %If $a_i \in \Sigma (1)$, write $r_i$ for the lattice index  $[\R a_i \cap \Z^d :\Z a_i]$.  %Consider the stack $\mcx_\Sigma$ to be a log crepant resolution of the toric variety $X_\Sigma$ with $\Q$-divisor $B = \sum_{a_i \in \Sigma(1)} \frac{r_i  - 1}{r_i} D_i$ where $D_i$ is the divisor corresponding to the one cone $\R \cdot a_i$. 

Given a codimension $1$ cone $w = \textnormal{Lin}_{\R_{\geq 0}} \left\{ a_{3} , \cdots, a_{{d + 1}} \right\}$, there exist precisely two maximal cones containing $w$ with the additional vertices denoted $a_{1}$ and $a_{2}$ respectively. The set\gls{circon} $C (w) = \{a_0 , a_{1} , \cdots , a_{{d + 1}} \}$ is an extended circuit and has a fundamental relation
\begin{align*} 
\sum_{j = 0}^{d + 1} c_j a_{j} =  0, \quad
\sum_{i = 0}^{d + 1} c_j = 0 
\end{align*}
as in equation \eqref{eq:affrelation}. We write $C_\pm (w)$ and $C_0 (w)$ for the subsets $(C (w))_\pm$ and $(C(w))_0$ respectively. We assume $(c_1, \ldots, c_{d + 1} ) = 1$ and will orient the circuit so that $c_0 < 0$. 

Denote the full rank sublattice $ \text{Lin}_{\Z} \{a_1, \ldots, a_{d + 1}\}$ of $\Z^d$ by $\Lambda_w$. As was noted in Section \ref{sec:circuit1}, the volume 
\begin{align} \label{eq:vol0} \Vol_0 (C (w)) := \Vol (\convhull (\{a_1, \ldots, a_{d + 1} \} )) \end{align}  is given by $i_w \cdot \sum_{i = 1}^{d + 1} c_i$ where $i_w$ is the index $[\Z^d : \Lambda_w]$.   

Recall from Definition~\ref{defn:core} that the core of an extended circuit is the unique circuit contained within it. For any $a_{j} \in C_+ (w)$, define the cone
\begin{align*} \tau_j = \textnormal{Lin}_{\R_{\geq 0}} \{a_{i}  \in \Core (C (w)) : i \ne j\}. \end{align*} 
Note that $\{\tau_j\}_{a_j \in C_+ (w)}$ are cones over the simplices in the triangulation $T_+$ of $\Core (C(w))$ defined in equation~\eqref{eq:triang}.  We state a proposition which essentially rephrases \cite[Proposition~14.2.1]{matsuki}. 

\begin{prop}{\cite[Proposition~14.2.1]{matsuki}}.  \label{prop:matsuki}\gls{Upspm} The fan consisting of $\Upsilon = \{\tau_j : a_{j} \in C_+ (w)\}$ is contained in $\Sigma$. If the signature of $C(w)$ is $(p,q;r)$, then there exists a collection of $r$-cones $\textnormal{Supp} (w) := \{\sigma_i : 1 \leq i \leq m\}$ in $\Sigma$ 
such that the maximal cones in the star of $\Upsilon$ consists of cones $\overline{\Upsilon} (d ) := \{\tau_j + \sigma_i: a_{j} \in C_+ (w) , 1 \leq i \leq m\}$.
\end{prop}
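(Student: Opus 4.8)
The plan is to reduce the statement to the known combinatorial description of an extremal contraction, as in \cite{matsuki}~14-2-1, while keeping careful track of the stacky structure. First I would recall that the codimension one cone $w = \langle \alpha_3, \ldots, \alpha_{d+1} \rangle$ being extremal means precisely that the two maximal cones $\tau_{i_1} := w + \R_{\geq 0}\alpha_{i_1}$ and $\tau_{i_2} := w + \R_{\geq 0}\alpha_{i_2}$ containing $w$ are the \emph{only} two maximal cones of $\Sigma$ containing $w$, and that $C(w) = \{\alpha_0, \alpha_1, \ldots, \alpha_{d+1}\}$ is an extended circuit with core $\core{C(w)}$ supported on the vertices with $\tilde a_j \neq 0$. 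With the orientation $\tilde a_0 < 0$ fixed, $C_+(w)$ consists of those $\alpha_j$ with $\tilde a_j > 0$ (these are $\alpha_{i_1}, \alpha_{i_2}$ together with possibly other vertices of $w$ itself) and $\core{C(w)} = C(w) - A_0(w)$ in the notation of the circuit section.

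The main geometric input is proposition~\ref{prop:dualfan1} and the triangulation description in equation~\ref{eq:triang}: the extended circuit $C(w)$ has exactly two regular triangulations $T_\pm$, and $T_+ = \{\convhull(C(w) - \{\alpha_j\})\}_{\alpha_j \in C_+(w)}$. Translating this into cone language at the vertex $\alpha_0 \in C_-(w)$, the simplices of $T_+$ that contain $\alpha_0$ produce exactly the cones $\tau_j = \Cone(\{\alpha_i \in \core{C(w)} : i \neq j\})$ for $\alpha_j \in C_+(w)$, so the fan $\Upsilon = \{\tau_j : \alpha_j \in C_+(w)\}$ (together with all faces) is the fan locally cut out near $w$ by the triangulation $T_+$. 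The claim that $\Upsilon \subset \Sigma$ then follows because $\Sigma$ is a complete simplicial fan and, along the relative interior of the span of $\core{C(w)}$, $\Sigma$ must agree with one of the two triangulations $T_\pm$; the convention $\tilde a_0 < 0$ (i.e. our starting side of the flip) is exactly the choice that makes it $T_+$. Concretely one checks that $\tau_{i_1}$ and $\tau_{i_2}$ are among the $\tau_j$ and that the walls between consecutive $\tau_j$'s are genuine cones of $\Sigma$, using that any $\sigma^\prime \in \Sigma$ meeting the interior of $\Cone(\core{C(w)})$ is a union of pieces of the $\tau_j$.

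For the second assertion — the existence of $\supp{w} = \{\sigma_i\}$ and the description of the star of $\Upsilon$ as $\overline{\Upsilon} = \{\tau_j + \sigma_i\}$ — I would argue as follows. Because $C(w)$ has signature $(p,q;r)$, its core spans a sublattice of corank $r$, and the ambient maximal cones of $\Sigma$ containing some $\tau_j$ must factor, near $\tau_j$, as $\tau_j$ plus an $r$-dimensional ``transverse'' cone; this is where proposition~\ref{prop:dualfan1}'s statement that the polar polytope is an $r$-simplicial extension of a $(p,q)$-prism gets used, or equivalently that $A_0(w) = C(w) - \core{C(w)}$ contributes a free simplicial direction. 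Define $\supp{w}$ to be the collection of $r$-cones $\sigma$ such that $\tau_{i_1} + \sigma \in \Sigma$ (equivalently $\tau_j + \sigma \in \Sigma$ for one, hence every, $\alpha_j \in C_+(w)$): the point is that the link of $w$ in $\Sigma$, being the link of an extremal contractible wall, is combinatorially independent of which $\tau_j$ we sit over, so the set $\{\sigma : \tau_j + \sigma \in \Sigma\}$ does not depend on $j$. Granting that, every maximal cone of $\Sigma$ containing any $\tau_j$ is of the form $\tau_j + \sigma_i$ with $\sigma_i \in \supp{w}$, which is the asserted star. The one subtlety I expect to be the real obstacle is verifying this $j$-independence of $\supp{w}$ rigorously in the stacky/simplicial-with-multiplicities setting — i.e. that passing from one vertex $\alpha_j$ of $C_+(w)$ to another does not change which transverse $r$-cones glue on; I would handle it by restricting $\Sigma$ to the quotient lattice $\Z^d / \lin_\Z(\core{C(w)})$, where $C(w)$ collapses to a circuit spanning $\R^r$ supported on $A_0(w)$ and the statement becomes the elementary fact that a complete simplicial fan refining a single full-dimensional cone has a well-defined star over the origin of that sub-picture, and then pull back. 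Everything else is bookkeeping: matching the indices $a_i = r_i \tilde a_i / r_w$, $v_i = \alpha_i / r_i$ to the circuit data of section~\ref{sec:circuit1}, and citing \cite{matsuki}~14-2-1 for the classical (non-stacky) skeleton of the argument.
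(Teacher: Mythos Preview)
The paper does not give its own proof of this proposition: it is introduced with the sentence ``We state a proposition which essentially rephrases \cite{matsuki} 14-2-1'' and is then used immediately, with no argument supplied. So there is no proof in the paper to compare your proposal against; the authors simply import the result from Matsuki.

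Your sketch is a reasonable outline of how one might reconstruct such an argument, and the circuit/triangulation language of section~\ref{sec:circuit1} is the right dictionary. A couple of points where your write-up is loose, in case you want to tighten it: first, the claim that ``$\tau_{i_1}$ and $\tau_{i_2}$ are among the $\tau_j$'' conflates the full $d$-dimensional maximal cones $w + \R_{\geq 0}\alpha_{i_k}$ with the cones $\tau_j = \Cone(\core{C(w)} \setminus \{\alpha_j\})$, which live in the span of the core and have smaller dimension when $r > 0$; the correct statement is that the former contain the latter as faces. Second, the step you flag yourself as ``the one subtlety''---the $j$-independence of the transverse $r$-cones---is precisely the nontrivial content of Matsuki's 14-2-1, and your quotient-lattice argument does not quite close it: passing to $\Z^d / \lin_\Z(\core{C(w)})$ collapses all the $\tau_j$ to the origin, so you still need to know that the \emph{original} fan $\Sigma$ has the product structure $\tau_j + \sigma_i$ before quotienting, which is what you were trying to prove. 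The actual argument in Matsuki uses the extremality of the ray in the Mori cone (equivalently, the convexity of the support function associated to an ample divisor) to force this uniformity; that geometric input is absent from your sketch.
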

%NEED LANGUAGE OF TRIANGULATION SUPPORTING A DEGENERATE CIRCUIT SHOULD GO IN CIRCUIT SECTION.

Associated to the codimension one-cone $w=\langle a_3,\ldots ,a_{d+1}\rangle$ is an extremal contraction in the sense of Mori theory, the structure of which can be phrased combinatorially in terms of the circuit $\core{C (w)}$ as follows. First, consider the collection of simplices\gls{sympf} $\simp{\Sigma} = \{ \convhull (\sigma (1) \cup \{a_0\} ): \sigma \in \Sigma (d)\}$ and write 
\begin{align} \Vol (\Sigma ) = \sum_{ \sigma \in \simp{\Sigma} } \Vol (\convhull (\sigma (1) \cup \{a_0\} )).\end{align} 
Note that if $X_\Sigma$ is projective, then $\simp{\Sigma} $ extends to a regular triangulation $T$ of $\convhull (A)$. This can be seen by choosing an very ample divisor $\sum_{a \in \Sigma (1)} r_a D_a$ on $X_\Sigma$ and observing that the function sending $a$ to $r_a$ will defines an extension $T$ of  $\simp{\Sigma}$.  We call such a triangulation $T$ a convex extension of $\simp{\Sigma}$. In this way, we may consider the two collections of cones which depend on the circuit\gls{Upsneg} 
\begin{align*} \Upsilon^- & = \{\Cone (C (w) - a_j ) : a_j \in C_- (w) , a_j \ne a_0 \} , \\ \overline{\Upsilon}^- & = \{\tau + \sigma : \tau \in \Upsilon^- , \sigma \in \text{Supp} ({w}) \}. \end{align*} 
Assuming $T$ is supported on the circuit $\Core (C(w))$ as in Definition~\ref{defn:cirsupp}, we can replace the maximal cones (and their faces) of $\Sigma$ occurring in $\overline{\Upsilon}$ with those in $\overline{\Upsilon}^-$. This yields a fan $\Sigma^\prime = \Sigma_{m_{C(w)}(T)}$ which implements the circuit modification $m_{C(w)}(T)$ of $T$ by $C(w)$ as defined in . We summarize the corresponding statement in birational geometry as follows. 

%\begin{cor} \label{cor:sectri} Let $T$ be a convex extension of $\simp{\Sigma}$ and $w \in \Sigma (d - 1)$ an extremal rational curve. Then $T$ is supported on $\core{C (w)}$. \end{cor}

%This corollary ought to be thought of as part of the correspondence between the secondary fan and the secondary polytope as given in \cite{cls}. 

\begin{prop}{\cite[Theorem~15.4.1]{cls}}  With notation as above, let $T$ be a convex extension of $\textnormal{Simp} (\Sigma )$ and $w$ a codimension one-cone of $\Sigma$ such that $T$ is supported on $\core{C (w)}$. Then the extremal contraction corresponding to the rational curve determined by $w$ is given by a birational map 
\begin{equation*} f_w : \mcx_\Sigma  \dashrightarrow \mcx_{\Sigma^\prime}\;\; . \end{equation*}
\end{prop}

%We note that, by a birational map of toric stacks we mean a log map of underlying log varieties as in \cite{kawamatacrepant}. 

While we refer to the reference \cite{cls} for the proof of this proposition, we will detail the three essentially different situations that can occur. These are the standard operations of Mori theory: Mori fiber space, divisorial contraction, and flip. They are distinguished by the signature $(p, q; r)$ of $C (w)$. To see this, we need to define three stacks associated to $w$. Define the lattices\gls{feb}
\begin{eqnarray*}
\Lambda_F & = & \frac{\lin_\R (\core{C(w)} ) \cap \Z^d}{\lin_\R ( C_- (w) ) \cap \Z^d} , \\
\Lambda_E & = & \frac{\Z^d}{\lin_\R (C_- (w)) \cap \Z^d} , \\
\Lambda_B & = & \frac{\Z^d}{\lin_\R (\core{C(w)} ) \cap \Z^d}
\end{eqnarray*}
with the natural projections 
\begin{align*} \pi_F : \lin_\R (\core{C(w)} ) \cap \Z^d &\to \Lambda_F,  & \pi_B : \Z^d & \to \Lambda_B , &  \pi_E : \Z^d & \to \Lambda_E , \end{align*} 
Define stacky fans 
\begin{align} \label{eq:stakyfans} 
\begin{split} \Sigma_F & = \{\pi_F (\tau ) : \tau \in \Upsilon \}, \\ \Sigma_E & = \{\pi_E (\sigma \cup \tau ) : \sigma \in \text{Supp} (w) , \tau \in \Upsilon\},  \\ \Sigma_B & = \{\pi_B (\sigma ) : \sigma \in \text{Supp} (w)  \}
\end{split}
\end{align}
and denote their associated toric stacks by $\mcf$, $\mce$ and $\mcb$ respectively, with coarse spaces $F, E$ and $B$. Note that there is an obvious toric fibration $\pi : \mce \to \mcb$ with fiber $i: \mcf \hookrightarrow \mce$. 

We start with the case of $q  = 1$.  In this case the map $\pi_B : \Z^d \to \Lambda_B$ induces a map of stacky fans from $\Sigma = \Sigma_E$ onto $\Sigma_B$ which gives a smooth map $f : \mcx_\Sigma \to \mcb$.  Here $\Sigma_B=\Sigma^\prime$ and $f = \pi$ is a Mori fiber space map with general fiber equal to $\mcf$. 

In the case $q = 2$,  $C_- (w ) = \{a_0 , a_{d + 1} \}$, so $\Lambda_E$ has rank $(d - 1)$ and $\mce$ is a divisor in $\mcx_\Sigma$. The circuit modified fan $\Sigma^\prime$ is obtained by replacing the cones in the star of $\Upsilon$ with $\{ \sigma \cup C_+ (w ) : \sigma \in \text{Supp} (w)  \}$. In other words, we delete the one-cone corresponding to $a_{d + 1}$ which, on the coarse level, gives a divisorial contraction $f : X_\Sigma \to X_{\Sigma^\prime}$ whose exceptional locus is $E$ blown up along $B$. %As was mentioned in \cite{kawamatacrepant}, there is not always a morphism $f : \mcx_\Sigma \to \mcx_{\Sigma^\prime}$ reducing to this map, but it is not difficult to find an \'etale correspondence which induces the birational map. 

The case of $q > 2$ corresponds to a flip. Indeed, as in the case of $q = 2$, the circuit modified stacky fan $\tilde{\Sigma }$ is obtained by replacing the star of $\Upsilon$ by $\{\sigma \cup C_+ (w): \sigma \in \text{Supp} (w)  \}$. The induced map $\tilde{\pi} :\mcx_\Sigma \to \mcx_{\tilde{\Sigma}}$ contracts $\mce$, which in this case has codimension $> 1$ and contains the rational curve corresponding to $w$. As $\mcx_{\tilde{\Sigma}}$ is not $\mathbb{Q}$-factorial, to obtain the flip $\phi : \mcx_{\Sigma^\prime} \to \mcx_{\tilde{\Sigma}}$ one observes that $K_{\mcx_{\Sigma^\prime}}$ is ample relative to $\phi$, as required. 

%Each of these cases can be viewed as crossing a wall in the Mori fan \cite{hukeel} of $\mcx$ starting with the ample cone and moving to an adjacent full dimensional cone.  With this in mind, we define a minimal model program, or MMP, sequence as follows. 

We are interested in sequences of these birational operations which come from certain runs of the Mori program. 
\begin{defn} Given a toric stack $\mcx = \mcx_r$, a sequence of equivariant birational maps
\begin{equation*} \mcx_r \stackrel{f_r}{\dashrightarrow} \mcx_{r - 1} \dashrightarrow \cdots \stackrel{f_1}{\dashrightarrow} \mcx_0 \end{equation*}
will be called a MMP sequence of $\mcx$ if for every $1 \leq i \leq r - 1$, $f_i$ is a divisorial contraction or flip and $f_1$ is a Mori fiber space. 
\end{defn}

Recall that the effective cone of a projective simplicial toric variety $X$ admits a chamber decomposition whose chambers correspond to those toric varieties obtained from $X$ by the operations of the toric Mori program \cite{hukeel}, for brevity's sake we call the induced fan structure the Mori fan.  A MMP sequence as above gives rise to a piecewise linear path in the Mori fan of $\mcx$ starting at the ample cone and ending at the boundary of the nef cone. If this path can be made to be a linear path, we call the sequence \textit{regular}. These are instances of MMP sequences obtained from the MMP with scaling, in the terminology of \cite{BCHM}.  We may now state a suggestive theorem relating maximal degenerations of LG models to the minimal model program.

\begin{thm} \label{thm:mmps} Given a set of lattice points $A$, the set of regular MMP sequences which begin with a toric stack in \begin{equation*} \{\mcx_\Sigma : \Sigma (1) \cup \{0\} = A ,\; \mcx_\Sigma \text{ is nef Fano}\}\end{equation*} are in bijective correspondence with the set of mirror sequences to maximal degenerations of $\{a_0\}$-sharpened pencils on $\mcx_Q$. Both are in bijective correspondence with vertices of the monotone path polytope $\Sigma_{\rho_{a_0}} (\psec{A})$.
\end{thm}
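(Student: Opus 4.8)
\textbf{Proof proposal for Theorem \ref{thm:mmps}.}

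The plan is to establish the theorem by exhibiting a common combinatorial index set for both sides and then matching the geometry. The intermediary is the set of vertices of $\Sigma_{\rho_{\alpha_0}} (\psec{A})$, which by Proposition \ref{prop:mppvertices} is already in bijective correspondence with the maximal degenerations of $\{\alpha_0\}$-sharpened pencils on $\mcx_Q$, and hence (by the definition of the mirror sequence $\mathbf{S}_\psi$ attached to a decorated simplex path $\ddata{\psi}$) with the set of mirror sequences. So it remains only to produce a bijection between vertices of the monotone path polytope and MMP sequences of the nef Fano stack $\mcx_\Sigma$, compatible with the already-described passage from a vertex to a parametric simplex path $T = \langle t_0, \ldots, t_{r+1}\rangle$ on $\psec{A}$ and thence to the sequence of stacky fans $\Sigma_{t_0}, \ldots, \Sigma_{t_{r+1}}$.

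First I would set up the dictionary at the level of a single edge of $\psec{A}$. By the GKZ theorem (\cite{GKZ}, 7.2.10) recalled in the excerpt, each edge of $\psec{A}$ is a circuit modification $\mathbf{T}' = s_B(\mathbf{T})$ along a circuit $B \subset A$; dually, Corollary \ref{cor:sectri} says that a convex extension $T$ of $\simp{\Sigma}$ is supported on $\core{C(w)}$ for each extremal curve $w$ of $\mcx_\Sigma$, and the proposition following it identifies the extremal contraction $f_w : \mcx_\Sigma \dashrightarrow \mcx_{\Sigma'}$ with $\Sigma' = \Sigma_{s_{C(w)}(T)}$. Thus a single edge step on $\psec{A}$ along a circuit of the form $C(w)$ corresponds exactly to one elementary MMP step (divisorial contraction, flip, or Mori fiber space according to $q=2$, $q>2$, $q=1$ in the signature $(p,q;r)$ of $C(w)$), and the stack at each endpoint of the edge is $\mcx_{\Sigma_{t_j}}$. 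The key point to verify here is that under the nef Fano hypothesis every circuit appearing along a monotone path relative to $\rho_{\alpha_0}$ is of the form $C(w)$ for an extremal rational curve $w$ of the intermediate stack — i.e. that the monotone ordering forces each successive circuit modification to be a genuine $K$-negative contraction and never a flop or anticontraction; this should follow from the fact that $\rho_{\alpha_0} = f_{\{\alpha_0\}}$ is (up to scale) the linear functional computing $-K\cdot(\text{curve})$, so that moving down along the monotone path strictly decreases the $\{\alpha_0\}$-weight $f_{A^\prime}(\phi_\pm)$ exactly as in the proof of the proposition preceding the corollary on $Fuk^\rightharpoonup$, and decreasing this weight is decreasing anticanonical volume.

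Second, I would run the correspondence in the reverse direction: given an MMP sequence $\mcx_\Sigma = \mcx_r \dashrightarrow \cdots \dashrightarrow \mcx_0$, each $\mcx_i$ is a complete simplicial toric stack on the same set $A = \Sigma(1) \cup \{0\}$, hence is $\mcx_{\Sigma_{T_i}}$ for a regular triangulation $T_i$ of $(Q,A)$, and consecutive $T_i$, $T_{i-1}$ differ by the circuit modification $s_{C(w_i)}$ coming from the contracted extremal curve $w_i$; the vertices $\phi_{T_i} \in \psec{A}$ then form an edge path, and the MMP condition (each $f_i$ for $i<r$ a divisorial contraction or flip, $f_r$ a Mori fiber space) translates into the statement that this edge path is monotone with respect to $\rho_{\alpha_0}$ and terminates at the cone over which $\rho_{\alpha_0}$ is minimized — precisely a parametric simplex path, i.e. a vertex of $\Sigma_{\rho_{\alpha_0}}(\psec{A})$ by \cite{BS1}. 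One checks this is inverse to the forward construction using that a triangulation $T$ is determined by $\simp{\Sigma}$ together with a convex extension, and that the monotone path polytope vertices are in bijection with maximal chains of the poset of edges oriented by $\rho_{\alpha_0}$.

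The main obstacle I anticipate is the $q=2$ (divisorial contraction) and $q>2$ (flip) cases of the reverse direction, where the excerpt already flags that there need not be an actual morphism $\mcx_\Sigma \to \mcx_{\Sigma'}$ of stacks, only an étale correspondence inducing the birational (log) map in the sense of \cite{kawamatacrepant}; I must make sure that the circuit-modification recipe $\Sigma' = \Sigma_{s_{C(w)}(T)}$ produces a \emph{stack} (not merely a coarse variety) canonically from the data, so that the sequence $\mathbf{S}_\psi$ is well-defined stack-by-stack and the correspondence is a genuine bijection rather than one up to coarse-space equivalence. This is handled by taking the stacky fan structure dictated by $A$ throughout — every $\Sigma_{t_j}$ inherits its $\beta$ from the fundamental sequence of $A$ exactly as in the definition of $\Sigma_T$ — and by invoking that the log-crepant, $\Q$-divisor bookkeeping of the toric MMP (the divisor $B = \sum \frac{r_i-1}{r_i}D_i$) is precisely what the extra data of the marking $A$ records, so that passing through singular intermediate coarse spaces is invisible at the level of stacky fans. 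Once well-definedness is secured, the bijectivity is a formal consequence of the two constructions being mutually inverse edge-by-edge.
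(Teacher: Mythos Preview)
Your approach is essentially the same as the paper's: invoke Proposition \ref{prop:mppvertices} for the bijection with vertices, use the circuit/edge dictionary (GKZ, Corollary \ref{cor:sectri}, and the following proposition) to identify each edge step with an extremal contraction, and check that monotonicity of $\rho_{\alpha_0}$ forces the correct MMP structure, then run it in reverse. The paper's argument for the type of each $f_i$ is slightly more direct than your $-K\cdot(\text{curve})$ heuristic: since $\rho_{\alpha_0}(\phi_T) = \sum_{\alpha_0 \in \sigma \in T} \Vol(\sigma)$ by \eqref{eq:secvert}, one reads off immediately that $\rho_{\alpha_0}(t_i) \ne 0$ iff $\alpha_0$ is a vertex of some simplex of $t_i$ (hence $q \ge 2$, divisorial or flip), while $\rho_{\alpha_0}(t_0) = 0$ forces $\alpha_0$ to disappear from the triangulation (hence $q = 1$, Mori fiber space), and $\rho_{\alpha_0}(t_{r+1}) = \Vol(Q)$ gives nef Fano at the top. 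Your worry about stack-versus-coarse well-definedness is unnecessary here: the statement is a bijection of sets, and $\Sigma_T$ (with $\beta$ inherited from $A$) is canonically a stacky fan for each triangulation $T$, so each $\mcx_{\Sigma_{t_i}}$ is already a well-defined stack; the \'etale-correspondence subtlety concerns the \emph{maps} $f_i$, not the objects.
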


\begin{proof} Consider the linear projection $\rho_{a_0} : \R^{{\mathcal{A}}} \to \R$ and its restriction to $\psec{A}$. Recall that this projection takes $\sum_{a \in A}  r_a e_a$ to $r_{a_0}$  and thus, by equation \eqref{eq:secvert}, for any triangulation $T = \{(Q_i, A_i): i \in I\}$,  $\rho_{a_0} (\varphi_T) =  \sum_{a_0 \in A_i} \Vol (Q_i)$. In particular, $\rho_{a_0} (\varphi_T) = 0$ for triangulations of $(Q, A - \{a_0\})$ and $\rho_{a_0} (\varphi_T) = \Vol (Q)$ for triangulations in which every simplex contains $a_0$. Thus $\rho_{a_0}$ maps $\Sigma (A)$ onto $[0, \Vol (Q)]$. By Proposition \ref{prop:mppvertices}, the vertices of the monotone path polytope are in bijective correspondence with maximal degenerations. For any such vertex $\xi$, let $\ddata{\xi} = \left( \left<t_0, \ldots, t_{r + 1} \right> ,\{ (d_i, m_i ) \} \right)$ be its decorated simplex path as given in Definition~\ref{defn:decsimppath}. We first observe that the mirror sequence to $\xi$ is a MMP sequence for $\mcx_\Sigma$. Taking $\rho_{a_0} ( t_{r + 1} ) = \Vol (Q)$ to have the maximal value, we must have that $\Sigma_{t_{r + 1}}$ is nef Fano. For every circuit $C_i$ whose modifications give $t_i$ and $t_{i + 1}$, we have that $a_0 \not\in (C_i)_+$ which implies that there is an extremal contraction $f_i : \mcx_{\Sigma_{t_{i + 1}}} \dashrightarrow \mcx_{\Sigma_{t_i}}$ corresponding to the circuit. If $1 \leq i \leq r$ then since $\rho_{a_0} (t_i ) \ne 0$, we have that $a_0$ is a vertex of a simplex in $t_i$. This implies that $\sigma (C_i ) = (p, q; r)$ with $q \geq 1$ so that $f_i$ is a divisorial contraction or a flip. On the other hand, if $i = 0$, then $\rho_{a_0} (t_0 ) = 0$ which implies that $a_0$ is not a vertex of any simplex of $t_0$. This implies that $\sigma (C_0 ) = (p, 1; r)$ and that $f_0$ is a Mori fiber space. Therefore the mirror sequence to $\xi$ corresponds to a MMP sequence for $\mcx_\Sigma$. The converse is obtained by running the above correspondences in reverse.% and utilizing the regularity property of the MMP.
\end{proof}

From this result, one is led naturally to conjecture that every decomposition of the $A$-model category $\textnormal{Fuk}^\rightharpoonup (\mathbf{w})$ given by a radar screen corresponds to an equivalent decomposition of the $B$-model derived category of $\mcx_\Sigma$ which is associated to the mirror MMP sequence. On the $B$-model side, such a decomposition has been given very explicitly in \cite{kawamataderived}. We write a condensed version of these results here. While we refer the reader to loc. cit. for complete proofs of these statements, we include a partial proof to verify the count given in Theorem~\ref{thm:kawamata}(i). 
\begin{thm}[\cite{kawamataderived}] \label{thm:sodbmodel} \label{thm:kawamata}
(i) Let $C (w) = \{a_0 , \ldots, a_{d + 1} \}$ correspond to a signature $(p, q; r)$ circuit in a rank $\Z^d$ lattice with $a_0 = 0 \in C_- (w)$ and triangulations $T_\pm$. Letting $\mcx = \mcx_{\Sigma_{T_+}}$, the derived category $D^b (\mcx )$ has a strong exceptional collection of $\Vol_0 (C(w))$ line bundles
\begin{equation*} \mathbf{E}_w = \left\{ \mco \left( \sum_{i = 1}^{d + 1} k_i D_i \right) : 0 \geq \sum c_i k_i > - \sum c_i \right\}. \end{equation*}
If $q = r = 1$, then the collection is complete. \\

(ii) Let $\mcx = \mcx_r$ be a toric stack with a MMP sequence
\begin{equation*} \mcx_r \stackrel{f_r}{\dashrightarrow} \mcx_{r - 1} \dashrightarrow \cdots \stackrel{f_1}{\dashrightarrow} \mcx_0 \end{equation*}
%
%such that, to the birational map $f_i$ corresponds to the wall $w$ and has 
and associated toric stacks $\mcf_i$, $\mce_i$ and $\mcb_i$ at each stage. Then there is a semi-orthogonal decomposition
\begin{equation*} D^b (\mcx ) \simeq < \mcs_1, \ldots, \mcs_r > \end{equation*}
where each $\mcs_i$ admits a semi-orthogonal decomposition
\begin{equation*}
\mcs_i \simeq <  j_* ( \pi^* (D^b (\mcb_i )) \otimes \mcl ) : \mcl \in \mathbf{E}_w >.
\end{equation*}
\end{thm}

\begin{proof}  These statements are part of Theorems 3.1, 4.3, 5.2 and 6.1 in \cite{kawamataderived}. The only additional point not proven in loc. cit. is the count of exceptional objects being $\Vol_0 (C(w))$ as defined in equation \eqref{eq:vol0}. To prove  this, we observe that $\phi : \Z^{A } \to \Z^d$ given by $\phi (e_i ) = a_i$ has cokernel equal to $\Z^d / \Lambda_w$ and rank $1$ kernel. So the line bundles $\mco (\sum b_i D_i )$ form a subgroup of $\Pic (\mcx )$ isomorphic to $(\Z^d / \Lambda_w )^\vee \oplus \Z$. Thus the line bundles $\mco (\sum k_i D_i )$ satisfying $0 \geq \sum k_i c_i > - \sum_{i \ne 0} c_i$ can be counted up to equivalence as $|\Z^d / \Lambda_w| \cdot (\sum c_i  ) = \Vol_0 (A)$.
\end{proof}

One notational distinction worth noting is that what is called $\mcf$ in \cite{kawamataderived}, we refer to as $\mcb$. Now we recall from \cite[Section~2.6]{fulton} that the multiplicity of a $d$-dimensional cone $\sigma$ in $\Z^d$ is\gls{multip} 
\begin{align*}
\textnormal{Mult} (\sigma ) = [\Z^d : \lin_{\Z} (\sigma (1))].
\end{align*}
We use Theorem~\ref{thm:kawamata} to prove a more elementary result.

\begin{prop} \label{prop:rkkth} Let $\mcx_\Sigma$ be a complete toric stack with simplicial stacky fan $\Sigma$ in $\Z^d$. Then
\begin{equation} \label{eq:kthr} \rank (K_0 (D^b (\mcx_\Sigma ) ) ) = \Vol (\Sigma ) = \sum_{\sigma \in \Sigma (d)} \textnormal{Mult} (\sigma ) \end{equation}
\end{prop}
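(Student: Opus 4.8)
The plan is to prove the identity by induction on $d=\dim\mcx_\Sigma$, running the toric minimal model program on $\mcx_\Sigma$ and combining the resulting semiorthogonal decomposition of Theorem~\ref{thm:sodbmodel}(ii) with the inductive hypothesis. The base case $d=0$ is immediate, both sides being $1$. For the inductive step, fix an MMP sequence $\mcx_\Sigma=\mcx_r\dashrightarrow\cdots\dashrightarrow\mcx_0$ (available for any complete simplicial stacky fan by \cite{matsuki}), with $i$-th modification governed by a wall $w_i$, its circuit $C(w_i)$ of signature $(p_i,q_i;r_i)$, and the associated stacks $\mcf_i$, $\mce_i$, $\mcb_i$ of Proposition~\ref{prop:matsuki}. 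Each $\mcb_i$ is again a complete simplicial toric stack, and its dimension is $r_i$, which is strictly less than $d$ because a circuit has at least three elements, forcing $r_i=(d+2)-(p_i+q_i)\le d-1$. Thus the inductive hypothesis applies to every $\mcb_i$.

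Taking ranks of $K_0$ in the decomposition of Theorem~\ref{thm:sodbmodel}(ii) — in which $D^b(\mcx_\Sigma)$ is built from pieces $\mcs_i$, each itself assembled from $|\mathbf{E}_{w_i}|$ copies of $D^b(\mcb_i)$ — gives
\begin{equation*}
\rank K_0(D^b(\mcx_\Sigma))=\sum_{i}|\mathbf{E}_{w_i}|\cdot\rank K_0(D^b(\mcb_i)).
\end{equation*}
The proof of Theorem~\ref{thm:sodbmodel}(i) identifies $|\mathbf{E}_{w_i}|$ with $\Vol_0(C(w_i))=|\Z^d/\Lambda_{w_i}|\cdot\sum_j\tilde{a}_j$, and the inductive hypothesis gives $\rank K_0(D^b(\mcb_i))=\Vol(\Sigma_{B_i})$. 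Hence the statement reduces to the purely combinatorial identity
\begin{equation*}
\Vol(\Sigma)=\sum_i\Vol_0(C(w_i))\cdot\Vol(\Sigma_{B_i}).\qquad(\star)
\end{equation*}
I would establish $(\star)$ by tracking $\Vol(-)=\sum_\sigma \mathrm{Mult}(\sigma)$ along the MMP one step at a time. Proposition~\ref{prop:matsuki} identifies the maximal cones of $\Sigma$ in the star $\overline{\Upsilon}_i$ of the contracted cone as precisely the $\tau_j+\sigma$ with $\alpha_j\in C_+(w_i)$, $\sigma\in\mathrm{Supp}(w_i)$, while the cones outside this star are untouched by the modification; so it suffices to show that the total multiplicity carried by $\overline{\Upsilon}_i$ equals $\Vol_0(C(w_i))\cdot\Vol(\Sigma_{B_i})$, and that this total is unchanged when one passes to the dual star $\overline{\Upsilon}_i^-$ built from $C_-(w_i)$ (so that flips do not disturb the count). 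Using $\mathrm{Mult}(\tau_j+\sigma)=[\Z^d:\langle\alpha_k:k\ne j\rangle+\langle\sigma(1)\rangle]$ together with the short exact sequence $0\to \lin_\R(\core{w_i})\cap\Z^d\to\Z^d\to\Lambda_{B_i}\to 0$, the index factors as a product of a ``fibre'' index $\mathrm{Mult}_{\Lambda_{F_i}}(\pi_{F_i}\tau_j)$ and a ``base'' index $\mathrm{Mult}_{\Lambda_{B_i}}(\pi_{B_i}\sigma)$, up to a correction independent of $j$; summing over $\alpha_j\in C_+(w_i)$ then invokes the circuit volume identity $\sum_{\alpha_j\in C_+(w_i)}\mathrm{Mult}_{\Lambda_{F_i}}(\pi_{F_i}\tau_j)=\Vol_0(C(w_i))$ of \cite{GKZ}, and summing over $\sigma\in\mathrm{Supp}(w_i)$ recovers $\Vol(\Sigma_{B_i})$ via the correspondence $\pi_{B_i}:\mathrm{Supp}(w_i)\to\Sigma_{B_i}(r_i)$. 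Adding the contributions of all steps — with the Mori fibre space step, where $\mce_i=\mcx_\Sigma$, accounting for all of $\Vol(\Sigma)$ — yields $(\star)$.

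The hard part will be the lattice-index bookkeeping in this last step: one must verify that $[\Z^d:\langle\alpha_k:k\ne j\rangle+\langle\sigma(1)\rangle]$ really does factor through $\Lambda_{B_i}$ with the advertised base index and a $j$-independent correction, while keeping the stacky (non-reduced $\beta$) structure straight, since $\mathrm{Mult}$ is computed with the actual lattice points $\alpha_i$ rather than their primitive generators. A secondary point requiring care is the interaction between flip steps, which leave $\Sigma(1)$ and hence $\Vol(\Sigma)$ unchanged, and the nontrivial pieces $\mcs_i$ they contribute in Theorem~\ref{thm:sodbmodel}(ii): the count $(\star)$ must absorb these consistently, which is exactly what forces the symmetry of the total multiplicity of $\overline{\Upsilon}_i$ and $\overline{\Upsilon}_i^-$ noted above. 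Once $(\star)$ is in place the proposition follows by the chain of equalities assembled in the second paragraph.
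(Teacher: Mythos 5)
Your overall architecture coincides with the paper's: Kawamata's semiorthogonal decompositions (Theorem \ref{thm:sodbmodel}), additivity of $\rank (K_0)$ across such decompositions, and a combinatorial volume identity for circuit modifications, assembled by induction on dimension. The paper packages the induction slightly differently --- a double induction on dimension and on $\Vol(\Sigma)$, peeling off a single wall crossing $\Sigma \dashrightarrow \Sigma'$ and writing $\rank (K_0(D^b(\mcx_\Sigma))) = \rank (K_0(D^b(\mcx_{\Sigma'}))) + \Vol_0(C(w))\cdot\rank (K_0(D^b(\mcb)))$ --- whereas you telescope over the entire MMP sequence at once. That difference is cosmetic, and your reduction of the proposition to the identity $(\star)$ is sound (modulo the slip that at the Mori fiber space step $\mce$ is the variety at that stage of the program, not the original $\mcx_\Sigma$).

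The gap is in your proposed proof of $(\star)$: two of your combinatorial claims are false, and together they are inconsistent with $(\star)$ itself. First, the total multiplicity carried by $\overline{\Upsilon}_i$ is $\Vol(\Sigma_{B_i})\cdot i_w\sum_{\alpha_j\in C_+(w_i)}\tilde a_j$, which equals $\Vol_0(C(w_i))\cdot\Vol(\Sigma_{B_i})$ only in the Mori fiber space case $q_i=1$; in general $\Vol_0(C(w_i)) = -i_w\tilde a_0$ is strictly smaller. Second, the total is not preserved in passing to the dual star: it drops by exactly $\Vol_0(C(w_i))\cdot\Vol(\Sigma_{B_i})$, since $\sum_{\tau\in\Upsilon}\Vol(\tau) - \sum_{\tau\in\Upsilon^-}\Vol(\tau) = \Vol_0(\core{C(w)})$. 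Relatedly, flips do change $\Vol(\Sigma)$ even though they fix $\Sigma(1)$: $\Vol(\Sigma)$ is the total lattice volume of the simplices $\convhull(\sigma(1)\cup\{\alpha_0\})$, which depends on the fan structure and not merely on the rays, and the star-shaped region these simplices cover shrinks under a $K$-negative modification. If your two claims held, every step would preserve $\Vol$, the Mori fiber step alone would already account for all of $\Vol(\Sigma)$, and the right-hand side of $(\star)$ would strictly exceed the left whenever $r>1$. The correct mechanism --- and the one the paper's proof implements --- is that each wall crossing decreases $\Vol$ by precisely $\Vol_0(C(w_i))\cdot\Vol(\Sigma_{B_i})$, so the differences telescope to $\Vol(\Sigma)$ with the Mori fiber space step contributing the residual $\Vol(\Sigma_1)$. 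Your factorization of the multiplicity of $\tau_j+\sigma$ into fiber and base indices is the right tool; it must be applied to the difference of the two stars rather than to each star separately.
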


\begin{proof} We prove this by induction on dimension. Every stacky fan in $\Z$ is given by two primitive points $a_1, a_2 \in \Z$ which give a $(2, 1)$ circuit $A = \{a_0=0, a_1, a_2\}$. Clearly $\Vol_0 (A ) = |a_1 | + |a_2 |$ which equals the two quantities on the right in \eqref{eq:kthr}. By Theorem \ref{thm:sodbmodel} (i),  this is also the number of exceptional objects in a complete exceptional collection, so the proposition holds for this case.

Now assume that the proposition holds for dimensions $< d$ and all $d$-dimensional complete, simplicial stacky fans $\tilde{\Sigma}$ with $\Vol (\tilde{\Sigma}) < V$ for some $V \in \N$. Let $\Sigma$ be a $d$-dimensional complete, simplicial stacky fan with $\Vol (\Sigma ) = V$. Let $f : X_\Sigma \dashrightarrow X_{\Sigma^\prime}$ be a birational map associated to a circuit modification $C(w)$ of signature $(p, q; r)$. Write the corresponding fibration, defined in equation~\eqref{eq:stakyfans}, as $\mcf \to \mce \to \mcb$, where $ \dim (\mcb ) = r < d$. 
By Theorem \ref{thm:sodbmodel} (ii), the additivity of the rank of $K_0$ relative to semi-orthogonal decompositions, and the above assumptions, we have that
\begin{eqnarray*}
\rank (K_0 (D^b (\mcx_\Sigma ) ) ) & = & \rank (K_0 (D^b (\mcx_{\Sigma^\prime} ))) + \Vol_0 (C (w)) \cdot \rank (K_0 (D^b (\mcb ))) , \\ 
& = & \Vol (\Sigma^\prime ) + \Vol_0 (C (w)) \cdot \Vol (\Sigma_B ) .
\end{eqnarray*}

Now, from the definition of $\Sigma_B$ and Proposition~\ref{prop:matsuki}, we have that for every $d$-dimensional cone $\tilde{\sigma} \in \overline{\Upsilon} \subset \Sigma$ which contains $\tau_j$ as a face for some $\tau_j \in \Upsilon$, there is a unique $\sigma \in \Sigma_B$ which is the $\pi_B$-image of $\sigma^\prime  \in \Sigma$ where $\sigma^\prime + \tau_j = \tilde{\sigma}$. The volume of the simplex associated to $\tilde{\sigma}$ is thus  $\Vol (\sigma ) \cdot \Vol (\tau_j )$.
%(note that we need to use $\sigma$ in this formula instead of $\sigma^\prime$ in order to account for the lattice volume).
The contribution to $\Vol (\Sigma )$ from $\Upsilon$ is therefore $\sum_{\tau_j \in \Upsilon , \sigma \in \Sigma_B} \Vol (\tau_j ) \cdot \Vol (\sigma)$.

The same statement holds for $\overline{\Upsilon}^-$, so that the following formula holds for the difference
\begin{eqnarray*}
\Vol (\Sigma ) - \Vol (\Sigma^\prime ) & = & \sum_{\tau_j \in \Upsilon , \sigma \in \Sigma_B} \Vol (\tau_j ) \cdot \Vol (\sigma) - \sum_{\tau_i \in \Upsilon^- , \sigma \in \Sigma_B} \Vol (\tau_i ) \cdot \Vol (\sigma) , \\ & = & \sum_{\sigma \in \Sigma_B} \Vol (\sigma ) \left( \sum_{\tau_j \in \Upsilon} \Vol (\tau_j ) - \sum_{\tau_i \in \Upsilon^-} \Vol (\tau_j )  \right), \\ & = & \Vol (\Sigma_B ) \cdot \Vol_0 (\core{C (w)}) = \Vol (\Sigma_B ) \cdot \Vol_0 (C(w)) .
\end{eqnarray*}
But this implies that $\rank (K_0 (D^b (\mcx_\Sigma ))) = \Vol (\Sigma ) = V$ which proves the induction step.
\end{proof}

From this, we obtain an equivalence on the rank of the $K$-theory for the semi-orthogonal pieces arising from both the $A$-model and $B$-model categories.

\begin{cor} \label{prop:kth} Suppose $T_\xi = \left< t_{i_0}, \ldots, t_{i_r} \right>$ is the parametric simplex path corresponding to  the maximal degeneration $\xi \in \mlg{A}{a_0}$, $\xi_t$ is a regeneration of $\xi$ and  $ \{[s_j, s_{j + 1}] : s_j = \rho_{a_0} (t_{i_j} )\}$ is the induced tight coherent subdivision of $[0, \Vol (Q)]$. The associated semi-orthogonal decompositions $\textnormal{Fuk}^\rightharpoonup (\mathbf{w}_{\xi_t}) = < \mct_1, \ldots, \mct_r>$ and $D^b (\mcx_{\Sigma} ) = < \mcs_1, \ldots, \mcs_r >$ have the property
\begin{equation*} \rank (K_0 (\mct_j ))  = s_j - s_{j - 1} = \rank ( K_0 ( \mcs_j)) .\end{equation*}
\end{cor}

\begin{proof} The statement asserting the equality $\rank (K_0 (\mct_j )) = s_j - s_{j - 1}$ follows from the discussion after equation~\eqref{eq:monpath}, where it was observed that the multiplicity $m_i$ of $E_A$ equals  $s_j - s_{j - 1}$. This multiplicity denotes the number of critical points in the $j$-th outer annulus of the radar screen decomposition and thus the number of exceptional objects in the generating collection for $\mct_j$, verifying the first equality.
	
The equality for $\rank (K_0 (\mcs_i ))$ follows from the observation that $s_j - s_{j - 1}$ equals $\rho_{a_0} (t_{i_j}- t_{i_{j - 1}})$ which is the difference of the sum of the volumes  of simplices containing $a_0$ in $t_{i_j}$ and $t_{i_{j - 1}}$. Using the construction of $\Sigma_j = \Sigma_{t_{i_j}}$ preceding Definition~\ref{def:mmseq}, it follows that this equals $ \Vol (\Sigma_j ) - \Vol (\Sigma_{j - 1} )$, which is $\rank (K_0 (\mcs_i ))$ by Proposition~\ref{prop:rkkth}. \end{proof}

We infer from Theorem \ref{thm:mmps} and Proposition \ref{prop:kth} the following natural conjecture.

\begin{conj} Given any maximal degeneration $\xi$ of an $\{a_0\}$-sharpened pencil and  a regeneration $\xi_t$ of $\xi$, let
\begin{eqnarray*} \textnormal{Fuk}^\rightharpoonup (\mathbf{w}_{\xi_t} ) & = & < \mct_1, \ldots, \mct_r> , \\ D^b (\mcx_{\Sigma} ) & = & < \mcs_1, \ldots, \mcs_r >
\end{eqnarray*} be the semi-orthogonal decompositions associated to $\xi$ and its mirror sequence. Then there exists an equivalence of triangulated categories
\begin{equation*} \Phi_\xi : \textnormal{Fuk}^\rightharpoonup (\mathbf{w}_{\xi_t} ) \to D^b (\mcx_\Sigma ) \end{equation*}
which restricts to equivalences $\Phi_\xi : \mct_i \to \mcs_i$ for all $1 \leq i \leq r$.
\end{conj}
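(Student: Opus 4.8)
The plan is to prove the conjecture by induction on the length $r$ of the maximal degeneration $\psi$, peeling off one circuit at a time in the manner of the proof of Proposition \ref{prop:rkkth}, and to match the two semi-orthogonal decompositions block by block rather than constructing $\Phi_\psi$ from scratch. The starting point is known homological mirror symmetry for the ambient nef Fano stack: by results in the spirit of \cite{abouzaid} and \cite{horivafa} one has an equivalence $Fuk^\rightharpoonup(\mathbf{w}) \simeq D^b(\mcx_\Sigma)$ for a generic $\{\alpha_0\}$-sharpened pencil, and the content of the conjecture is that this equivalence, possibly after mutation, respects the radar-screen decomposition of the Corollary following Theorem \ref{thm:radar} on the $A$-side and Kawamata's decomposition of Theorem \ref{thm:sodbmodel}(ii) on the $B$-side.

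\textbf{Step 1: the local statement for one circuit.} First I would establish $\Psi_\psi\colon \mct_i \xrightarrow{\sim} \mcs_i$ for each $i$ in isolation. By Theorem \ref{thm:radar} and Proposition \ref{prop:approx}, the $i$-th annular region of the radar screen is a partial framed Lefschetz pencil which, after rescaling, is a $d_i$-fold cover of the circuit pencil $\pi\colon(\hyp{C_i},\partial\hyp{C_i})\to\p(d_-,d_+)$ of Propositions \ref{prop:circuitlaf}--\ref{prop:circuitsec}, spread out over the base $\mcb_i$; on the $B$-side $\mcs_i=\langle\, j_*(\pi^*(D^b(\mcb_i))\otimes\mcl) : \mcl\in\mathbf{E}_w\,\rangle$ with $\#\mathbf{E}_w=\Vol_0(C(w))$. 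Using the fibration structure $\mcf\to\mce\to\mcb_i$ of the MMP discussion (and the rigidity of simplicial boundary divisors from the proposition preceding Definition \ref{defn:curvesubgroup}), the $\mcb_i$-direction of the family is essentially trivial, so one reduces to: (a) the directed Fukaya category of the bare weighted-projective circuit model is generated by the $m_i=\Vol_0(\core{C_i})$ vanishing thimbles of Propositions \ref{prop:coord}--\ref{prop:realmorse}; (b) those thimbles, ordered as in the radar screen, have $A_\infty$-endomorphism algebra isomorphic to the explicit quiver algebra that Kawamata produces for $\mathbf{E}_w$ on $\mcf\approx\p(a_1,\dots,a_p)$ (when $q=1$; the prism and simplicial-extension cases of Proposition \ref{prop:dualfan1} are handled analogously), which is a Beilinson-type resolution computation; and (c) a relative HMS argument propagating (a)--(b) over $\mcb_i$ to recover the $j_*(\pi^*(-)\otimes\mcl)$ form of $\mcs_i$.

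\textbf{Step 2: the inductive peeling.} Granting the local statements, peel off the outermost annular region — the one adjacent to $\infty$, where Theorem \ref{thm:mmps} identifies the fan with the nef Fano $\Sigma$ itself. Seidel's theory gives $Fuk^\rightharpoonup(\psi_t)=\langle\mct_1,\langle\mct_2,\dots,\mct_r\rangle\rangle$ with the complement equal to the directed Fukaya category of the pencil with that thimble removed, which — by the mirror-sequence correspondence of Theorem \ref{thm:mmps} together with the compatibility of the radar screen for $\psi$ with that for its restriction to the inner annulus — is a regeneration of the maximal degeneration of the $A$-model mirror of $\mcx_{\Sigma_{t_r}}$. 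On the $B$-side Theorem \ref{thm:sodbmodel}(ii) gives $D^b(\mcx_\Sigma)=\langle\mcs_1,D^b(\mcx_{\Sigma_{t_r}})\rangle$. Matching $\mct_1\simeq\mcs_1$ by Step 1 and applying the inductive hypothesis to $\mcx_{\Sigma_{t_r}}$ and the shorter degeneration then assembles $\Phi_\psi$; Corollary \ref{cor:sectri} and Theorem \ref{thm:mmps} force the two orderings to coincide, and the rank-of-$K_0$ bookkeeping of Corollary \ref{prop:kth} guarantees no block is missed. The base case $r=1$ is a Mori fiber space and is precisely the local statement of Step 1 with its $\mcb$-family.

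\textbf{Main obstacle.} The principal difficulty is foundational: $Fuk^\rightharpoonup$ is not yet defined for the partial framed Lefschetz pencils that occur here (the paper invokes \cite{SeidelFPL} only in the honest-Lefschetz case), so the conjecture must first be made precise by building a Fukaya--Seidel category incorporating the partial wrapping around the stratifying divisors indicated after Proposition \ref{prop:coord} and the $\partial$-framing of section \ref{subsec:flp}. Tightly linked to this is the need to control the images under HMS of the \emph{non-rigid} boundary divisors — exactly the framing data section \ref{subsec:flp} declares ``not within our sight'' — without which one cannot guarantee that $\Phi_\psi$ maps $\mct_i$ \emph{into} $\mcs_i$ rather than merely inducing a $K_0$-isomorphism. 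Secondary obstacles are that for circuits of signature $(p,q;r)$ with $p,q>1$ the ambient stack $\mcx_Q$ is genuinely singular (Proposition \ref{prop:dualfan1}), so Step 1(b) needs Kawamata's stacky/log resolutions and the corresponding toric HMS rather than the smooth weighted-projective case; and the circuit relations of Theorem \ref{thm:circuitrelation} must be shown to correspond, under the conjectural equivalence, to the mutation relations among Kawamata's exceptional blocks — the categorical shadow of the symplectomorphism-group relations that motivated the paper.
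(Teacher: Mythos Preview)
The statement you are attempting to prove is labelled a \emph{Conjecture} in the paper, and the paper gives no proof of it. Immediately after stating the conjecture the authors write that a more detailed version ``will be left to a later work,'' and they offer only two pieces of supporting evidence: the case when $A$ is itself a circuit (homological mirror symmetry for a weighted projective stack) and certain $(2,2)$ circuits treated in \cite{kerr}. There is therefore no paper proof to compare your proposal against.

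Your proposal is a reasonable outline of how one might attack the conjecture, and you have correctly identified the genuine obstructions. In particular, your ``Main obstacle'' paragraph is accurate and aligns with what the paper itself flags: the directed Fukaya category $Fuk^\rightharpoonup$ is invoked only in the honest Lefschetz case, the partial-wrapping category hinted at after Proposition \ref{prop:coord} is not constructed, and the framing control for non-rigid boundary divisors is explicitly declared ``not within our sight'' in Section \ref{subsec:flp}. These are not technicalities you can patch; they are the reason the statement is a conjecture. Your Step 1 also presupposes HMS for the circuit models with $p,q>1$ (where $\mcx_Q$ is singular, as you note), and your Step 2 presupposes an ambient HMS equivalence $Fuk^\rightharpoonup(\mathbf{w})\simeq D^b(\mcx_\Sigma)$ for general nef Fano toric DM stacks --- neither of which is available in the paper or its references at the required level of generality. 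So what you have written is a plausible roadmap, consistent with the paper's evidence and its acknowledged gaps, but it is not a proof, and no proof exists in the paper to compare it to.
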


In fact, a more detailed conjecture can easily be formulated about the equivalence of the categories $\mct_i$ and $\mcs_i$ associated to degenerate circuits, but we will leave this to a later work. Additional evidence for this conjecture comes from the case of $A$ actually equaling a circuit, which is simply the statement of homological mirror symmetry for a weighted projective stack. Certain classes of $(2, 2)$ circuits were also examined in \cite{kerr} where the equivalence of the circuit regeneration and the semi-orthogonal component associated to a stacky blow-up was proved.

As a final remark, we point out that the edges of the monotone path polytope $\Sigma_{\rho_{a_0}} (\psec{A})$ correspond to minimal transitions between MMP sequences. They also correspond to certain two dimensional faces of $\psec{A}$. Restricting attention to those faces which have an edge on the minimum facet $\rho_{a_0} = 0$, we obtain a transition between two Mori fiber spaces. Such moves, or links, have been well studied in a much more general context and their classification is referred to as the Sarkisov program. As an outgrowth of our perspective, one may pursue a complete structure theorem for all toric Sarkisov links.

%msspelled

\appendix

\section{\label{sec:toric} Toric preliminaries}

In this section, we will give key definitions and constructions for a toric moduli space of hypersurfaces and its compactification. An important point to keep in mind throughout is that our moduli stacks are only of hypersurfaces in toric stacks, and only up to toric isomorphism, not general isomorphisms. The advantage of this approach is that we obtain stacks with extremely explicit representations. 

In the first two subsections we recall and collect notions of the algebraic and symplectic geometry of toric stacks. Many familiar aspects of this subject will be assumed, but all novel constructions will be discussed. In the last two subsections, we recall the constructions of Gelfand, Kapranov and Zelevinsky \cite{GKZ} and Lafforgue \cite{Lafforgue}. We adapt these ideas into the definition of several toric stacks which give the moduli compactification, a universal toric variety lying over it and its universal hypersurface.

\subsection{\label{sec:toricpreliminaries} Basic definitions}

We start this section by recalling the construction of toric stacks through the data of a stacky fan. We utilize the material in \cite{toricstacks} rather than the more classical approach given in \cite{bcs, Cox}. This allows one to work with more general Artin stacks.
\begin{defn} \label{defn:stfan} \gls{stfan} A stacky fan $\sfan $ consists of the data $(\Lambda_1, \Lambda_2, \beta , \fan )$ where:
\begin{itemize}
\item[(i)]  $\Lambda_2$ is a finitely generated abelian group, 
\item[(ii)] $\fan$ is a fan in $\Lambda_1 \otimes \R$ and $\Lambda_1$ is a lattice,
\item[(iii)] $\beta : \Lambda_1 \to \Lambda_2$ is a homomorphism with finite cokernel.
\end{itemize}

\end{defn}
The set of $d$-dimensional cones in $\fan$ will be denoted by $\fan (d)$ and we refer to $\sigma \in \fan (d)$ as a  $d$-cone.  We will frequently abuse notation and identify a one-cone $\R_{\geq 0} \cdot \lambda$ in $\fan (1)$ with its primitive generator $\lambda$. Note that our definition of a stacky fan is called a generically stacky fan in \cite[Definition~2.4]{toricstacks}. Now extend $\beta$ to an exact sequence
\begin{equation}\label{eq:fundamentalsequence}  0 \to L_\sfan \stackrel{\alpha}{\longrightarrow} \Lambda_1 \stackrel{\beta}{\longrightarrow} \Lambda_2 \to K_\sfan \to 0.\end{equation}
Let $\text{cone} (\beta) = [\Lambda_1 \stackrel{\beta}{\longrightarrow} \Lambda_2]$ be the cone of $\beta$ in the category of chain complexes of abelian groups and take 
\begin{align*} \gls{hgroup} \mathbb{H}_{\sfan} := \mathbf{Tor}_1 (\text{cone}(\beta),\mathbb{C}^*) \cong (L_\sfan \otimes \C^*) \oplus \Tor_1 (K_\sfan , \C^* ) \end{align*} to be the first hypertor group of $\text{cone} (\beta)$. The isomorphism above follows from considering the hypertor spectral sequence which collapses on the second page as $L_\sfan$ is a lattice and $K_\sfan$ is finite. Furthermore, the connecting homomorphism in the long exact sequence of hypertor maps $\mathbb{H}_\sfan$ onto $\ker (\beta \otimes_{\mathbb{Z}} 1) \subset \Lambda_1 \otimes_\mathbb{Z} \mathbb{C}^*$. This in turn gives rise to an action of $\mathbb{H}_\sfan$ on the toric variety \gls{prestack}$X_\Sigma$. One notes that if $\Lambda_2$ is not torsion free, then another look at the long exact sequence of hypertor shows that the finite group $\Tor_1 (\Lambda_2 , \mathbb{C}^*)$ naturally includes into $\mathbb{H}_\sfan$ as the subgroup which stabilizes $X_\Sigma$ generically. 
\begin{defn} \label{defn:toricstack} \gls{torstack} Given a stacky fan $\sfan$, the toric stack $\mcx_\sfan$ is defined to be the quotient stack $\mcx_\sfan = [X_\fan / \mathbb{H}_\sfan]$.
\end{defn}
The torus acting on $\mcx_\sfan$ is
\begin{align*} \gls{ggroup}
\mathbb{G}_\sfan = \Lambda_2 \otimes_\mathbb{Z} \mathbb{C}^*.
\end{align*}
Indeed, note that for any $\lambda \in \tgroup{\sfan}$, we may choose $\lambda^\prime \in \Lambda_1 \otimes_\mathbb{Z} \C^*$ with $\beta (\lambda^\prime ) = \lambda$ and define $\lambda \cdot \_ : \mcx_{\sfan} \to \mcx_{\sfan}$ by $\lambda^\prime \cdot z$ for $z \in X_\fan$. This defines the torus action of $\tgroup{\sfan}$ on $\mcx_\sfan$ up to natural isomorphisms. The action can be made strict when $K_\sfan$ is trivial.

Given two stacky fans, $\tilde{\sfan}$ and $\sfan$, we define a map $g :  \tilde{\sfan} \to \sfan$ to be a pair $(g_1 , g_2)$ such that $g_1 :\tilde{\Lambda}_1 \to \Lambda_1 $ induces a map of fans $ g_1 : \tilde{\fan} \to \fan$ and $g_2 :  \tilde{\Lambda}_2 \to \Lambda_2 $ satisfies $\beta \circ g_1 = g_2 \circ \tilde{\beta}$. It is clear that any such map of stacky fans induces a map $\stackmap{g}{} :  \mcx_{\tilde{\sfan}} \to \mcx_\sfan $ along with a homomorphism $g_2 \otimes 1 : \mathbb{G}_{\tilde{\sfan}} \to \mathbb{G}_\sfan$. While $\stackmap{g}{}$ is not strictly equivariant, it is weakly equivariant in the sense that for every $\tilde{\lambda} \in \mathbb{G}_{\tilde{\sfan}}$ and $z \in X_{\tilde{\Sigma}}$, there is a natural isomorphism $h_{\tilde{\lambda}} \in \mathbb{H}_{\sfan}$ for which $h_{\tilde{\lambda}} (\stackmap{g}{} ({\tilde{\lambda}} \cdot z)) = (g_2 \otimes 1)({\tilde{\lambda}} ) \cdot \stackmap{g}{} (z)$. These isomorphisms must satisfy a cocycle condition which is evident from their construction. In particular, if ${\tilde{\lambda}} \in \tgroup{\tilde{\sfan}}$ lifts to act via ${\tilde{\lambda}}^\prime \in \tilde{\Lambda}_1 \otimes_\mathbb{Z} \mathbb{C}^*$ and $(g_2 \otimes 1) ({\tilde{\lambda}}) \in \mathbb{G}_{\sfan}$ lifts to $\lambda^\prime \in \Lambda_1 \otimes_\mathbb{Z} \mathbb{C}^*$, then one defines $h_{\tilde{\lambda}} =  \lambda^\prime \cdot [(g_1 \otimes 1)({\tilde{\lambda}}^\prime)]^{-1}$.

Following \cite[Section~5]{toricstacks}, we call a stacky fan $\sfan$ good if the primitive generators $\fan (1)$ in $\Lambda_1$  are linearly independent and span a saturated sublattice of $\Lambda_2$.  All of the toric stacks defined and worked with in this paper will be good and most will be Deligne-Mumford (DM). It was shown that in loc. cit. that for any toric stack $\mcx$, there is a canonical stack $\mathcal{X}_{\tilde{\sfan}}$ and a map $\mcx_{\tilde{\sfan}} \to \mcx$ where $\tilde{\sfan}$ is a good stacky fan. This map satisfies a universal property and can be thought of as a stacky resolution of $\mcx$.  When $\mcx = \mcx_{\sfan}$ and $\sfan$ is good, the map is an isomorphism. 

For a good DM toric stack $\mcx_\sfan$, one can identify the space of equivariant Cartier divisors $\Divisor_{eq} (\mcx_{\sfan } )$ with $\Lambda_1^\vee$ and the Picard group with $\Pic (\mcx_{\sfan} ) = L_\sfan^\vee \oplus \ext^1 (K_\sfan , \Z)$. Indeed, letting $\fan^\vee \subset \Lambda_1^\vee$ be the dual cone to the cone over $\fan (1)$, the ring 
\begin{align} \label{eq:coxring} R_\sfan = \C [ x_{\sigma} : \sigma \in \Sigma (1) ] \end{align} is the homogeneous coordinate ring for $X_\Sigma$ graded by the character lattice $L_\sfan^\vee \oplus \ext^1 (K_\sfan , \Z)$ of $\mathbb{H}_\sfan$.  Given $\gamma_0 \in \Lambda_1^\vee$, we write $D_{\gamma_0}$ for the associated Cartier divisor and $\mco (D_{\gamma_0} )$ for the line bundle in $\Pic (\mcx_\sfan )$. Utilizing the map $\alpha$ from the exact sequence  \eqref{eq:fundamentalsequence}, for any character $\gamma_0 \in \fan^\vee$ define the set
\begin{equation*} [\gamma_0 ] = \{\gamma \in \fan^\vee : \alpha^\vee (\gamma ) = \alpha^\vee (\gamma_0 ) \} \subset \Lambda_1^\vee . \end{equation*}
This gives the space $H^0 (\mcx_\sfan , \mco (D_{\gamma_0} ))$ an eigenvector decomposition into $(\C^{[\gamma_0 ]})^\vee = \Hom_{set} ([\gamma_0 ] , \C )$ with eigenbasis consisting of the monomials $\{x_{\gamma} : \gamma \in [\gamma_0 ]\} \subset R_\sfan$. When the divisor $D_\gamma$ is chosen, the group $\tgroup{\sfan} \times \C^*$ acts on $H^0 (\mcx_\sfan , \mco (D_\gamma ))$ via
\begin{equation*}
(\lambda , t) \left( \sum_{\gamma \in [\gamma_0]} c_\gamma x_\gamma \right) = t \sum_{\gamma \in [\gamma_0]} (\beta^\vee)^{-1} (\gamma - \gamma_0) (\lambda ) c_\gamma x_\gamma .
\end{equation*}
Here we have identified $\Lambda_2^\vee$ with the group of characters $\Hom (\tgroup{\sfan} , \C^*)$.

Suppose $g: \tilde{\sfan} \to \sfan$ is a map of stacky fans and $\gamma \in \fan^\vee$ an effective divisor on $\mcx_\sfan$, then the map
\begin{equation} \label{eq:stackmap} \stackmap{g}{}^* : H^0 (\mcx_\sfan , \mco (D_\gamma )) \to H^0 (\mcx_{\tilde{\sfan}} , \mco (D_{g_1^\vee (\gamma )}))
\end{equation}
is simply
\begin{equation*}
\stackmap{g}{}^* \left( \sum_{\gamma \in [\gamma_0]} c_\gamma x_\gamma \right) = \sum_{\gamma \in [\gamma_0]} c_\gamma x_{g_1^\vee (\gamma )} .
\end{equation*}

Now assume that $g:  \tilde{\sfan} \to \sfan $ describes a flat morphism of good toric stacks. Recall from \cite[Proposition~2.4]{yau} that such a map has the property that $g_1$ maps $\tilde{\fan}(1)$ onto $\fan (1)$ implying that $g_1 : \tilde{\Lambda}_1 \to \Lambda_1$ has cofinite image $\Gamma_1 := \image (g_1 )$. Let $\Gamma_2$ be the pushout
\begin{equation*}
\begin{CD}
\tilde{\Lambda}_1 @>{\tilde{\beta}}>> \tilde{\Lambda}_2 \\
@V{g_1}VV             @V{h}VV \\
\Gamma_1 @>{\gamma}>> \Gamma_2
\end{CD}
\end{equation*}
used in the following definition.

\begin{defn} \gls{colimit} \label{defn:colimitstack} Given an equivariant flat morphism $g : \mcx_{\tilde{\sfan }} \to \mcx_{\sfan}$ between two good toric stacks, let $\sfan_g^{\to} = (\Gamma_1, \Gamma_2 ,  \gamma , \Sigma )$ and $\mathcal{X}_{g}^{\to} = \mcx_{\sfan_g^{\to}}$ be called the colimit stack relative to $g$, and $g^\to = (g_1 , h) : \mcx_{\tilde{\sfan}} \to \mcx_g^{\to}$ the induced morphism.
\end{defn}

Note that the colimit stack is a good toric stack. The following proposition establishes a universal property for the colimit stack.

\begin{prop}\label{prop:colup} Suppose $\mcx_1$, $\mcx_2$ and $\mcx_3$ are good toric stacks. Let $g :\mcx_1 \to \mcx_3$ be a flat equivariant morphism, factored by $g = h \circ f$.
\begin{equation} \label{diag:univprop}
\begin{tikzpicture}[baseline=(current  bounding  box.center), >=angle 90]
\matrix(a)[matrix of math nodes,
row sep=3em, column sep=3em,
text height=1.5ex, text depth=0.25ex]
{\mcx_1 & \mcx_g^\to  \\
\mcx_3	& \mcx_2 \\};
\path[->,font=\scriptsize] (a-1-1) edge node[left]{$g$} (a-2-1)
edge node[right=.5ex]{$f$} (a-2-2)
edge node[above]{$g^{\to}$} (a-1-2);
\path[->,font=\scriptsize] (a-2-2) edge node[below]{$h$} (a-2-1);
\path[dotted,->,font=\scriptsize] (a-1-2) edge node[right]{$\tilde{h}$} (a-2-2);
\end{tikzpicture}
\end{equation}
	 If $h$ is a bijection on orbits then there exists a unique map $\tilde{h} : \mcx_g^\to \to \mcx_2$ such that $f = \tilde{h} \circ g^\to$.
\end{prop}

\begin{proof}
The proof of this follows from the universal properties of pushout along with the assumption that $h$ is an isomorphism on fans  defining $\mcx_2$ and $\mcx_3$. In particular, suppose $\sfan_i = \left( \Lambda^i_1, \Lambda^i_2, \beta_i, \Sigma_i \right)$ for $i \in \{1,2,3\}$ are stacky fans for $\mcx_i$, and $f, g$ and $h$ are represented by maps of stacky fans $(f_1, f_2)$, $(g_1,g_2)$ and $(h_1,h_2)$ respectively. By \cite[Theorem~IV.6.7]{Ewald}, the condition that $g$ is flat implies that $g_1$ is surjective. Since $h$ is an isomorphism of coarse toric varieties, it follows that $h_1$ is an isomorphism of lattices and induces an isomorphism of fans from $\Sigma_2$ to $\Sigma_3$. This implies that $f_1$ factors through $g_1$ so that the pushout $\Gamma$ of $g_1$ and $\beta_1$ admits a map $\tilde{h}$ to $\Lambda^2_2$ making diagram \eqref{diag:univproppushout} commute. 

\begin{equation} \label{diag:univproppushout}
\begin{tikzpicture}[baseline=(current  bounding  box.center), >=angle 90]
\matrix(a)[matrix of math nodes,
row sep=3em, column sep=2em,
text height=1.5ex, text depth=0.25ex]
{\Lambda^1_1 & & & \Lambda^1_2  \\
	& \Lambda^3_1 & \Gamma & \\
\Lambda^2_1 & & & \Lambda^2_2  \\
\Lambda^3_1 & & & \Lambda^3_2 \\};
\path[->,font=\scriptsize] (a-1-1) edge node[above]{$\beta_1$} (a-1-4)
edge node[right]{$f_1$} (a-3-1)
edge node[right]{$g_1$} (a-2-2)
edge [bend right=30] node[left]{$g_1$} (a-4-1);
\path[->,font=\scriptsize] (a-1-4) edge node[left]{$f_2$} (a-3-4)
edge [bend left=30] node[right]{$g_2$} (a-4-4);
\path[dotted,->,font=\scriptsize] (a-1-4) edge node[left]{$\tilde{g}^\to_2$} (a-2-3);
\path[->,font=\scriptsize] (a-2-2) edge node[right]{$h_1^{-1}$} (a-3-1);
\path[->,font=\scriptsize] (a-2-3) edge node[left]{$\tilde{h}$} (a-3-4);
\path[dotted,->,font=\scriptsize] (a-2-2) edge node[below]{$\beta^\to$} (a-2-3);
\path[->,font=\scriptsize] (a-3-1) edge node[above]{$\beta_2$} (a-3-4)
edge node[right]{$h_1$} (a-4-1);
\path[->,font=\scriptsize] (a-3-4) edge  node[left]{$h_2$} (a-4-4);
\path[->,font=\scriptsize] (a-4-1) edge node[above]{$\beta_3$} (a-4-4);
\end{tikzpicture}
\end{equation}
The stacky fan of the colimit stack $\sfan^\to_g = \left( \Lambda^3_1, \Gamma, \beta^\to , \Sigma_3 \right)$ then admits the stacky fan map $(h_1^{-1}, \tilde{h})$ to $\sfan_2$ whose induced map on toric stacks makes diagram \eqref{diag:univprop} commute.
\end{proof}

The toric stacks relevant for this paper arise from finite sets in a lattice or finitely generated abelian group. We now recall this construction and fix our notation.  Let $B$ be a finite subset of a finitely generated abelian group $\Lambda$ which spans $\Lambda \otimes \Q$. To construct a toric stack associated to $B$, let $\beta_B : \Z^B \to \Lambda$ be the homomorphism given by assigning $e_b$ to $b$ where $\{e_b : b \in B\}$ is the standard basis for $\Z^B$. We call the exact sequence
\begin{align}  \gls{fundseq} \gls{lb} \gls{kb} \label{eq:setfunseq} 0 \to L_B \stackrel{\alpha_B}{\longrightarrow} \Z^B \stackrel{\beta_B}{\longrightarrow} \Lambda \to K_B \to 0 \end{align}
the fundamental sequence associated to $B$. Let $\text{cone} (\beta_B )$ be the cone of $\beta_B$ in the category of chain complexes of abelian groups. Using the hyperext spectral sequence, one can compute the hyperderived dual $\mathbb{R}^* \Hom (\text{cone} (\beta_B ) , \mathbb{Z})$ to see that it is concentrated in degree $1$ and isomorphic to $L_B^\vee \oplus \ext^1 (K_B , \mathbb{Z})$. We will use the notation\gls{lamgdual} $\Lambda_{\gdual{B}}$ for this abelian group. Note that the long exact sequence associated to $\mathbb{R}^* \Hom (- , \mathbb{Z})$ is then
\begin{align} \label{eq:alphastar} 0 \to \Lambda^\vee \stackrel{\beta_B^\vee}{\longrightarrow} (\Z^B)^\vee \stackrel{\alpha_B^\star}{\longrightarrow} \Lambda_{B^\vee} \to 0 ,
\end{align}
where $\alpha_B^\star = \alpha^\vee \oplus \delta$ is the connecting homomorphism.  

Assume $B$ comes naturally equipped with an abstract simplicial complex $\mcb$, i.e. a collection of subsets of $B$ which is closed under intersection. Then we define the fan $\fan_\mcb$ in $\R^B$ to consist of the cones $\textrm{Cone} (\tau ) = \textnormal{Lin}_{\R_{\geq 0}} \{ e_b : b \in \tau \}$ for every $\tau \in \mcb$. We write $\sfan_{B , \mcb} = (\Z^B , \Lambda , \beta_B , \fan_\mcb )$ and $\mcx_{B, \mcb}$ for the associated stack. If $\mcb$ is understood, we may write $\sfan_B$ and $\mcx_B$. Note that all stacky fans in the sense of \cite{bcs} and fantastacks from \cite{toricstacks} are obtained from this construction.

Suppose $\Lambda$ is a rank $d$ lattice. Let $A \subset \Lambda$ be a finite subset which affinely spans $\Lambda \otimes \R$ and $Q \subset \Lambda_\R$ equals the convex hull of $A$ denoted \gls{convhl}$ \convhull (A) $. By a \gls{markpoly} marked polyhedron we mean a pair $(Q, A)$ where $Q$ is a polyhedron, i.e. the intersection of finitely many half spaces in $\Lambda \otimes \R$. We take $\gls{suphyp}\du{Q} \subset \Lambda^\vee$ to be the finite set of primitive generators for supporting hyperplanes of $Q$. More precisely, for every $b \in \Lambda^\vee$ let 
\begin{align} \gls{nsubb}\label{eq:mcoq} n_b = - \min \{ b (v) : v \in Q\}. \end{align}  Then $b \in \du{Q}$ if and only if $b$ is primitive and $\{v \in Q : b (v) = - n_b\}$ is facet of $Q$.  The dual of the face poset of $Q$ then defines an abstract simplicial complex $\mcb_Q$ on $\du{Q}$. In particular, 
\begin{align} \label{eq:simpcompdual} \mcb_Q = \{\du{Q}_{Q^\prime} : Q^\prime \text{ is a face of } Q\},
\end{align}
where the set $\du{Q}_{Q^\prime}$ is defined as $\{b \in \du{Q}: b (v) = - n_b \text{ for every } v \in Q^\prime\}$.

The marked polyhedron $(Q, A)$ provides the stack $\mcx_{\sfan_{\bar{Q}, \mathcal{B}_Q}}$ with a positive line bundle $\mco (D_{\gamma_A} )$ where 
\begin{align}\label{eq:gammaA} \gamma_A = \sum_{b \in \du{Q}} n_b e_b^\vee \in (\Z^{\du{Q}})^\vee, \end{align} and a linear system $(\C^A)^\vee \subset (\C^{[\gamma_A]} )^\vee = H^0 (\mcx_{\du{Q}, \mcb_Q} , \mco (D_\gamma ))$. 
\begin{defn}\label{defn:polystack}\gls{polsf}\gls{polst}\gls{polbound}\gls{pollb}\gls{polls}\gls{polgroup} Given a marked polyhedron $(Q, A)$, let
	\begin{enumerate}
		\item $\sfan_Q := \sfan_{\du{Q}, \mcb_{Q}}$ be the stacky fan associated to $Q$,
		\item  $\mcx_Q := \mcx_{\du{Q} , \mcb_Q}$ be the toric stack associated to $Q$,
		\item  $\partial \mcx_Q = D_{\sum_{b \in \du{Q}} e_b^\vee}$ be the boundary divisor of $\mcx_Q$,
		\item $\mco_A (1) := \mco (D_{\gamma_A} )$, 
		\item  $\linsys{A} := (\C^A)^\vee \subset H^0 (\mcx_Q , \mco_A (1))$ be the linear system of sections of $\mco_A (1)$ with by the equivariant sections in $A$, 
		\item   $\mathbb{G}_Q$ be the group $\mathbb{G}_{\sfan_{Q}} = \Lambda^\vee \otimes \C^*$ acting on $\mcx_Q$.
	\end{enumerate}   
\end{defn}
%Put graded polyhedral bundles in later.
We illustrate these definitions with two basic examples.
\begin{eg} \label{eg:simplex} Suppose $A \subset \Z^2 = \Lambda$ is the subset $A = \{(1,0), (-1, 0), (0,1)\}$ so that $Q$ is a triangle. One computes $\du{Q} = \{(1,-1), (-1,-1), (0,1)\} $ which implies that the fundamental exact sequence \eqref{eq:setfunseq} for $B = \du{Q}$ is isomorphic to 
\begin{align*}
0 \to \Z \stackrel{\alpha_{\du{Q}}}{\longrightarrow} \Z^{3} \stackrel{\beta_{\du{Q}}}{\longrightarrow} \Z^2 \to 0 ,
\end{align*}
where $\alpha_{\du{Q}} (1) = (1,1,2)$. Thus the stacky fan for $(Q,A)$ is $\sfan = \left( \Z^3, \Z^2 , \beta_{\du{Q}}, \Sigma \right)$ where $\Sigma$ consists of all proper faces of $\R^3_{\geq 0}$.  This implies $X_{\Sigma} = \C^3 - \{0\}$ and $\mathbb{H}_{\sfan} = \C^*$ via the action $\lambda \cdot (x_1, x_2, x_3) = (\lambda x_1, \lambda x_2, \lambda^2 x_3)$ so that 
\begin{equation}
\mcx_{\sfan} = \p (1, 1, 2) .
\end{equation}
One can check to see that $n_{(1,-1)} = 1, n_{(-1,-1)} = 1, n_{(0,1)} = 0$ so that $\mco_A (1) = \mco_{\p (1, 1, 2)} (2)$ where $\mco_{\p (1,1,2)} (n )$ corresponds to the graded module $\C [x_1, x_2, x_3]$ with $1$ in degree $-n$. Furthermore, the linear system $\linsys{A}$ is the span of $\{x_1^2, x_2^2, x_3 \}$. As $(0,0) \in Q$ was not included in $A$, its corresponding section $x_1 x_2$ does not appear in the linear system.
\end{eg}

\begin{eg} \label{eg:circuit1} Suppose $A = \{(0,0), (1,0), (0,1),(-1,-1)\} \subset \Z^2 = \Lambda$. Again we have that $Q$ is a triangle and $\du{Q} = \{(2,-1), (-1, 2), (-1,-1)\}$. However, in this case $K_{\du{Q}}$ is non-trivial in the fundamental sequence
\begin{align*}
	0 \to \Z \stackrel{\alpha_{\du{Q}}}{\longrightarrow} \Z^{3} \stackrel{\beta_{\du{Q}}}{\longrightarrow} \Z^2 \to \left( \Z / 3 \Z \right) \to 0.
\end{align*}
Here $\alpha_{\du{Q}} (1) = (1,1,1)$ and the stacky fan is $\sfan = \left( \Z^3 , \Z^2, \beta_{\du{Q}}, \Sigma \right)$ where $\Sigma$ is the same as in Example \ref{eg:simplex}. Thus, letting $\mu_3$ be the third roots of unity, $X_{\Sigma} = \C^3 - \{0\}$ and $\mathbb{H}_\sfan \cong \C^* \oplus \mu_3$ where the action of $\mathbb{H}_\sfan$ on $X_\Sigma$ is 
$(\lambda, \zeta) \cdot (x_1, x_2, x_3) = (\lambda x_1,  \lambda \zeta^{-1} x_2, \lambda \zeta x_3 )$ (up to a change of coordinates), so that 
\begin{align*}
\mcx_{\sfan} = \left[ \p^2 / \mu_3 \right].
\end{align*}
One checks that $n_b = 1$ for $b \in \du{Q}$ which implies that $\mco_A (1)$ is the pullback of $\mco_{\p^2} (3)$. The generators of $\linsys{A}$ corresponding to $A$ are the invariant sections $\{x_1^3, x_2^3, x_3^3, x_1x_2x_3\}$. 
\end{eg}

The study of toric varieties and stacks from the perspective of marked polytopes places the linear system as a central object. Those sections that have singularities on various orbits of $\mcx_Q$ will be of particular interest. Let \gls{avert}$\ver{A}$ be the set of vertices of $Q$ and \gls{anvert} $\nver{A} = A - \ver{A}$. For any face $Q^\prime$, we will write $\orb{Q^\prime} \subset \mcx_Q$ for the corresponding orbit. For a section $s$ of a line bundle over a stack, we denote its zero locus by $\mcy_s$.
\begin{defn} \label{defn:sections} A section $s \in \linsys{A} \subset H^0 (\mcx_Q , \mco_A (1))$ is degenerate if the scheme theoretic intersection $Y_s \cap \orb{Q^\prime}$ is singular for some face $Q^\prime$ of $Q$. If $s = \sum_{a \in A} c_a e_a$ we say $s$ is full if $c_a \ne 0$ for all $a \in \ver{A}$ and very full if $c_a \ne 0$ for all $a \in A$.
\end{defn}
When $\mcx_Q$ is a smooth stack, a degenerate section is a section which does not transversely intersect the toric boundary. The principal $A$-determinant,
\begin{equation} \label{eq:princadet} \gls{Adet}  E_A : \linsys{A} \to \C \end{equation}
is a polynomial which vanishes on degenerate sections (see  \cite[Chapter~10]{GKZ}). We also recall that the discriminant \gls{Adisc}$\Delta_A : \linsys{A} \to \C$ is a polynomial that vanishes on the closure of the set of sections with a singularity in the maximal torus orbit of $\mcx_Q$. Note that there exist sets $A$ for which the discriminant $\Delta_A$ is constant. These cases yield toric varieties that are called dual defect and are studied in \cite{dicksturm}.

Our next aim is to review the procedure of equipping $\mcx_Q$ with an
invariant symplectic structure. We will follow the usual route of
symplectic reduction \cite{audin}. We take \gls{ucirc}$\mbbT = \{z \in \C^* : |z| = 1\}$ and, given any lattice $\Gamma$, we
write ${\mbbT}_\Gamma$ and $\mft_\Gamma \approx \Gamma_\R$ for the
real torus $\mbbT \otimes \Gamma$ and its Lie algebra. We will utilize the fundamental sequence
\begin{equation} \label{eq:Bex} 0 \to L_{\du{Q}} \stackrel{\alpha_{\du{Q}}}{\longrightarrow} \Z^{\du{Q}} \stackrel{\beta_{\du{Q}}}{\longrightarrow} \Lambda^\vee \to K_{\du{Q}} \to 0 . \end{equation}
We note that the toric variety $X_{\fan_{\du{Q}, \mcb_Q}} \subset \C^{\du{Q}}$ is an open equivariant
subset, so that restricting the standard K\"ahler structure on
$\C^{\du{Q}}$ to $X_{\Sigma_A}$ yields the moment map \gls{moment} $\mu_{\du{Q}} : X_{\Sigma_A} \to \R_{\geq 0}^{\du{Q}}$ given by
\begin{equation} \label{eq:momentstandard} \mu_{\du{Q}} (z_1, \ldots, z_{|\du{Q}|} ) = (|z_1|^2 , \ldots , |z_{|\du{Q}|}|^2) , \end{equation}
where we have chosen the action of $\mbbT_{\Z^{\du{Q}}}$ on $\C^{\du{Q}}$ to be
\begin{equation*} (\theta_1, \ldots , \theta_{|\du{Q}|} ) \cdot (z_1 , \ldots, z_n ) = (e^{-2 i \theta_1 } z_1 , \ldots, e^{-2 i \theta_{|\du{Q}|} } z_{|\du{Q}|} ). \end{equation*}
On the other hand, restricting to the ${\mbbT}_{L_{\du{Q}}}$ action gives the
moment map $\mu_{L_{\du{Q}}} = \mu_\du{Q} \circ \alpha_{\du{Q}}^\vee$ where $\alpha_{\du{Q}}^\vee : \mft^\vee_{\Z^\du{Q}}
\to \mft^\vee_{L_{\du{Q}}}$ is just tensoring with $\R$ and taking the dual.
Choosing a value $\omega $ in the interior of the image of $\mu_{L_{\du{Q}}}$
gives a symplectic form on $\mcx_Q$ via the symplectic reduction
\begin{equation*} (\mcx_Q , \omega ) = [\mu_{L_{\du{Q}}}^{-1} (\omega ) /
\mbbT_{L_{\du{Q}}}] . \end{equation*}
We write \gls{symred} $\rho_\omega : \mu_{L_{\du{Q}}}^{-1} (\omega ) \to \mathcal{X}_Q$ for the symplectic quotient map. If no choice of $\omega$ is mentioned, we set 
\begin{align} \label{eq:stsympform} \omega = \alpha_{\du{Q}}^\vee (D_\gamma ) \end{align} and call this the standard symplectic form on $\mcx_Q$. Such a choice fixes $\mcx_Q$ as a monotone symplectic stack, which can be thought of as a very stringent condition \cite{mcdufftoric}. After having chosen a symplectic form on $\mcx_Q$, we recover the moment
map of $\mbbT_{\Lambda^\vee}$ on $\mcx_Q$ by first considering
$\tilde{\Lambda}^\vee = \beta_{\du{Q}} (\Z^{\du{Q}} )$ and the moment map with respect
to $\mbbT_{\tilde{\Lambda}^\vee }$. We have that, for this group,
there is a splitting $i : \mbbT_{\tilde{\Lambda}^\vee} \to
\mbbT_{\Z^\du{Q}}$ of $\beta$. From the exactness of the sequence,
\begin{equation}\label{eq:Bexm} 0 \to L_{\du{Q}} \stackrel{\alpha_{\du{Q}}}{\longrightarrow} \Z^\du{Q} \stackrel{{\beta}_{\du{Q}}}{\longrightarrow} \tilde{\Lambda}^\vee  \to 0 ,\end{equation}
we have that $\tilde{\mu}_{A} : \mcx_Q \to \mft_{\tilde{\Lambda}^\vee}
\approx \tilde{\Lambda}_\R$ is given by $i^* \circ \mu_{\du{Q}}$. To
recover the actual moment map, we need only compose with the
natural map $\tilde{\Lambda}_\R \to \Lambda_\R$ inverse to the
dual of the inclusion. These moment maps fit into the commutative
diagram
\begin{equation} \label{diag:moment2}
\begin{CD} \mu_{L_{\du{Q}}}^{-1} (\omega ) @>{\rho_\omega}>> \mcx_Q \\
@VV{\mu_\du{Q}}V  @VV{\mu_A}V \\
\R^\du{Q} @<{\beta_{\du{Q}}^\vee + \gamma}<< \Lambda_\R \end{CD} \end{equation}
where $\gamma \in \R^\du{Q}$ satisfies
$\alpha_{\du{Q}}^\vee (\gamma ) = \omega$ (note that a different choice will simply
translate the moment map).

We observe that the image of the moment map on $\mcx_Q$ can be
seen as the intersection of an affine subspace $i (\Lambda_\R ) +
\omega$ with the positive cone $\R^\du{Q}_{\geq 0}$. For the 
case of the standard form, the image of $\mu_A$ is $Q$ itself. This can be seen by taking $\gamma = \gamma_A$ from equation~\eqref{eq:gammaA}.
%
% 0 \to H \stackrel{k}{\longrightarrow} \Z^B \stackrel{h}{\longrightarrow} \Lambda^* \to \coker (h) \to 0 
% 
%  0 \to L_{\du{Q}} \stackrel{\alpha_{\du{Q}}}{\to} \Z^{\du{Q}} \stackrel{\beta_{\du{Q}}}{\to} \Lambda^\vee \to K_{\du{Q}} \to 0
%
%

\subsection{\label{sec:torichd}Stable pair degenerations}

We now review the procedure for simultaneous degeneration of a toric stack and its hypersurface (see \cite{gross05, gross, mumford72}). 

\begin{defn} Given a section $s \in \linsys{A} \subset H^0 (\mcx_Q , \mco_A (1))$, write $\mcy_s$ for its zero locus and
call the pair $(\mcx_Q, \mcy_s)$ a stable pair. Two such pairs, $(\mcx_Q, \mcy_s)$ and $(\mcx_{Q^\prime} , \mcx_{s^\prime})$ will be considered equivalent if there exists an equivariant isomorphism from $\mcx_Q$ to $\mcx_{Q^\prime}$ which pulls back $s^\prime$ to $s$.
\end{defn}

We recall the definition of a regular marked subdivision \gls{subdiv} $S = \{( Q_i, A_i)\}_{i \in I}$ of $(Q, A)$ from \cite[Chapter~7.2]{GKZ}. First, we require that for each $i \in I$, $A_i \subset A$, $Q_i = \convhull (A_i)$, the union of the $Q_i$ is $Q$, and the intersection of any two $Q_i$ is a face of each. Note that the union  $\cup_{i \in I} A_i$ is not necessarily the set $A$. The added condition of regularity is formulated in the following way. Let $\eta : A \to \R$ be any function and take
\begin{equation*} \gls{qeta} Q_\eta = \convhull \{(a , t) \in \Lambda_\R \oplus \R : a \in A,  t \geq \eta
(a ) \} \end{equation*}
to be the convex hull of the half lines defined by $\eta$. Let $\tilde{\eta} : Q \to \R$ be the function
\begin{equation} \label{eq:tildeeta} \tilde{\eta} (q) = \min \{ t : (q, t) \in Q_\eta\}. \end{equation}
It follows that $\tilde{\eta}$\gls{eta} is a convex, piecewise affine function on $Q$.
\begin{defn} \label{defn:regsub}
We say that $\eta$ is a defining function for the subdivision $S = \{(Q_i , A_i) : i \in I\}$ if
\begin{enumerate}[label=(\roman*), ref=\thelem(\roman*)]
		\item \label{lem:regsub:1} $\tilde{\eta}|_{Q_i}$  extends to an affine function  $\varsigma_i$  on $\Lambda \otimes \R$ ,
		\item  \label{lem:regsub:2}  $\eta (a ) = \varsigma_i  (a )$  if and only if $a \in A_i$.
\end{enumerate}
 $S$ is a regular subdivision if it has a defining function. If the set $A_i$ is affinely independent for every $i \in I$, the subdivision $S$ is called a regular triangulation and denoted by\gls{triang} $T$.
\end{defn}  
An example of the graph $\{(a , \eta (a )): a \in A \}$ of the function $\eta$ and its associated polyhedron $Q_\eta$ is illustrated in Figure \ref{fig:toricd1}.

\begin{figure}[h] 
\begin{picture}(0,0)%
\includegraphics{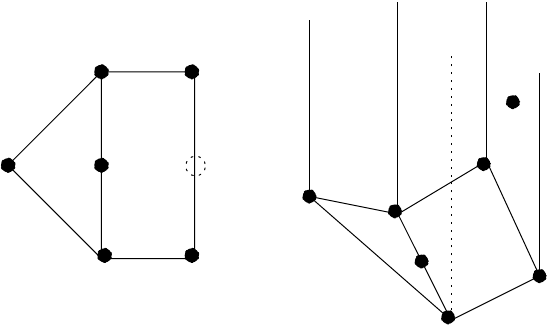}%
\end{picture}%
\setlength{\unitlength}{4144sp}%
\begingroup\makeatletter\ifx\SetFigFont\undefined%
\gdef\SetFigFont#1#2#3#4#5{%
  \reset@font\fontsize{#1}{#2pt}%
  \fontfamily{#3}\fontseries{#4}\fontshape{#5}%
  \selectfont}%
\fi\endgroup%
\begin{picture}(4216,2474)(1268,-2458)
\put(2271,-1144){\makebox(0,0)[lb]{\smash{{\SetFigFont{10}{12.0}{\rmdefault}{\mddefault}{\updefault}{$Q_2$}%
}}}}
\put(1581,-1144){\makebox(0,0)[lb]{\smash{{\SetFigFont{10}{12.0}{\rmdefault}{\mddefault}{\updefault}{$Q_1$}%
}}}}
\end{picture}%
\caption{\label{fig:toricd1} $S = \{(Q_1, A_1), (Q_2, A_2)\}$ and a defining function $\eta$}
\end{figure}

For any regular subdivision $S$, we take $\gls{seccone}C_\R^\circ (S )$ to be the cone of all defining functions for $S$ and $C^\circ_\Z (S ) = (\Z^A)^\vee \cap C_\R^\circ (S )$ the set of integral defining functions. Write $C_\R (S)$ for its closure and $C_\Z (S ) = (\Z^A)^\vee \cap C_\R (S )$. For any $\eta \in C_\Z^\circ (S )$, we define
\begin{equation*} A_\eta = \{(r , t) \in \Lambda \oplus \Z : r \in A
, t \geq \eta (r) \} \end{equation*}
and write $(Q_{\eta, i} , A_{\eta , i} )$ for the marked facet of $(Q_\eta , A_\eta )$ over $Q_i$. 

We will now use integral defining functions to construct and study a degeneration of $\mcx_Q$. This technique follows that of Mumford in \cite{mumford72}. Let $\eta \in C_\Z^\circ (S )$ and
write $\mcx_\eta$ for the toric stack $\mcx_{Q_\eta}$ as
constructed in Definition~\ref{defn:polystack}. Recall that $\du{Q}_\eta$ is in bijection with the facets of the polyhedron $Q_\eta$. Then $\du{Q}_\eta$ can be written as the disjoint union $\du{Q}_\eta^v  \cup \du{Q}_\eta^h$ of two types of facets where $v$ and $h$ refer to vertical and horizontal divisors. The first type, $b \in \du{Q}_\eta^v$, is a facet on the lower boundary of $Q_\eta$. These are in one to one correspondence with the polytopes $\{(Q_i,
A_i) : i \in I\}$ of $S$. The second type, $b \in \du{Q}_\eta^h$ is a facet of $Q_\eta$ which is invariant under positive translations by $(0, t)$ for $t \geq 0$. These are in one to one correspondence with the facets of $Q$ itself. 

We notice that the combinatorics of the polyhedron $Q_\eta$ and thus those of $\mcb_{Q_\eta}$ and $\fan_{\du{Q}_\eta , \mcb_{Q_{\eta}}}$ are dictated by $S$ and not $\eta$. The role that $\eta$ plays in the definition of $\mcx_\eta$ is in the function $\beta_{\du{Q}_\eta}: \Z^{\du{Q}_\eta} \to (\Lambda \oplus \Z)^\vee$.
The sub-fan $\Sigma_{A_\eta}$ consisting of one-cones in $\du{Q}_\eta^v$ projects to a fan $\beta_{\du{Q}_\eta} (\Sigma_{A_\eta} ) \subset  
(\Lambda_\R \oplus \R)^\vee$ with one-cones given by $\beta_{\du{Q}_\eta} ( \du{Q}_\eta^v) = \{ f - d
\varsigma_i : i \in I \}$ where $f = (0, 1) \in (\Lambda
\oplus \Z)^\vee$ and $d \varsigma_i$ is the derivative (or linear part) of the affine function $\varsigma_i$ appearing in Definition~\ref{lem:regsub:1}. 
A subtle point about this formula is that, when $A_i$ affinely spans a proper  sublattice of $\Lambda$, the element $f - d \varsigma_i $ is not necessarily in $(\Lambda \oplus \Z)^\vee$. In this case, it is necessary to take a multiple to obtain a primitive generator. We write $c_{\eta, i} \in \Z_{> 0}$ for the  denominator of $d \varsigma_i$. In other words, for any $i \in I$, we define the constant $c_{\eta, i}$ as 
\begin{align} \label{eq:defceta} \gls{ceta}
c_{\eta, i} := \min \{n \in \Z_{> 0} : n \cdot d \varsigma_i \in \Lambda^\vee \}.
\end{align} 
It is not hard to see that $c_{\eta , i}$ divides the index $[\Lambda : \aff_\Z (A_i)]$ where $\aff_{\Z} (A_i)$ is the affine hull of $A_i$. So, in general, there are only a finite number of possible constants $c_{\eta, i}$ that can occur amongst all $\eta \in C_\Z^\circ (S)$. 

As is always the case with toric stacks defined by polyhedra, the stack $\mcx_\eta$ comes equipped with a line bundle
$\mco_\eta (1)$ such that the vector space $\C^{A_\eta}$ is canonically
identified with a linear system. The map $\eta$ induces a
natural inclusion $ \iota_\eta : \C^A \to \C^{A_\eta} $ given by
\begin{equation*} \iota_\eta \left( \sum_{a \in A} c_a e_a \right) = \sum_{a \in A} c_a e_{(a, \eta (a ))}. \end{equation*}

\begin{defn} \gls{degfam} \label{defn:degfamily} A degenerating family of $(\mcx_Q, \mcy_s)$ is
a stable pair $(\mcx , \mcy)$ equivalent to a pair
$(\mcx_\eta , \mcy_{\iota_\eta (s^\prime)})$ for some  defining function $\eta$ of a regular subdivision
$S$ of $(Q, A)$ and a very full section $s^\prime$.
\end{defn}

We note that the stack $\mcx_\eta$ admits a morphism\gls{tordeg} $F_\eta : \mcx_\eta \to
\C$. Taking $\C$ to be the stacky fan given by $(\Z, \Z, 1_\Z,  \R_{\geq 0} )$ where $\R_{\geq 0}$ is thought of as the fan consisting of itself and $\{0\}$, we may describe $F_\eta$ as a map $(f_1, f_2)$ of stacky fans
\begin{equation*}
\begin{CD} \Z^{\du{Q}_\eta} @>{\beta_{\du{Q}_\eta}}>> (\Lambda \oplus \Z)^\vee \\
@V{f_1}VV  @V{f_2}VV \\
\Z @>{Id}>> \Z .
\end{CD}
\end{equation*}
Here, for every $b \in \du{Q}_\eta^h$,  $f_1 (e_b ) = 0$ while for $b_i \in \du{Q}_\eta^v$ corresponding to $(Q_i, A_i)$, $f_1 (e_{b_i}) = c_{\eta , i}$. The homomorphism $f_2$ is simply projection to the $\Z$ factor.
 It is not hard to see then that the fiber of $(\mcx_\eta ,
\mcy_{\iota_{\eta} (s)})$ over $1 \in \C^*$ is equivalent to $(\mcx_Q, \mcy_s)$. On the other hand, the fiber over zero is the union $\left( \cup_{i \in I} \mcx_{Q_i}, \cup_{i \in I} \mcy_{s |_{A_i}} \right)$ whose irreducible components are equivalent to the toric pairs $(\mcx_{Q_i}, \mcy_{s |_{A_i}})$. 

It is useful to view the morphism $F_\eta$ from the moment map perspective as well. Here we have that $\mu_{L_{\du{Q}_\eta}}^{-1} (\omega ) \subset \C^{\du{Q}_\eta}$ defines the stack $\mcx_\eta$ after taking the quotient by $\mbbT_{L_{\du{Q}_\eta}}$. Observe that the map $F_\eta$ then can be defined on $\C^{\du{Q}_\eta}$ as 
\begin{equation} \label{eq:toricd} \tilde{F}_\eta \left(z_1, \cdots, z_{|\du{Q}_\eta|} \right) = \prod_{i \in \du{Q}_\eta^v} z_i^{c_{\eta, i}} . \end{equation}
In other words, $\tilde{F}_\eta$ is invariant with respect to the $\mbbT_{L_{\du{Q}_\eta}}$ action and descends to $F_\eta$ on the quotient $\mcx_\eta = [\mu_{L_{\du{Q}_\eta}}^{-1} (\omega ) / \mbbT_{L_{\du{Q}_\eta}}]$.

In general, the marking $A$ should be thought of as a set
specifying the non-zero coefficients of a given section. Let $A_v \subset A$ be the set of vertices of $Q$ and call any stable pair $(\mcx_Q , \mcy_s)$ full if $s \in (\C^*)^{A_v} \times \C^{A -  A_v}$ and very full if $s \in (\C^*)^A$.

\begin{defn} \label{defn:degfiber} Let $s \in H^0 (\mcx_Q, \mco_A (1))$ be a full section and $F_\eta : \mcx \to \C$ is the projection associated to a  $\eta \in \Z^A$.
\begin{itemize} \item[(i)] A toric degeneration of $\mcx_Q$ is the fiber $F_\eta^{-1} (0)$. 
\item[(ii)] A hypersurface degeneration of $\mcy_s$ is the fiber $ F_\eta^{-1} (0) \cap \mcy$. 
\item[(iii)] A stable pair degeneration of $(\mcx_Q, \mcy_s)$ is the pair $(F_\eta^{-1} (0),  F_\eta^{-1} (0) \cap \mcy)$.
\end{itemize}
If $t \in \C$, we write $\fib{\eta}{t}$ for the fiber $F_\eta^{-1} (t) \cap \mcy_s$. \end{defn}

\subsection{\label{sec:toricstack}Secondary and Lafforgue stacks}

In this section we give an explicit formulation of several moduli stacks related to $A$. One stack we obtain is closely related to those defined in \cite{alexeev02} and \cite{Lafforgue}.

We start by setting up more notation and recalling several general results from \cite{GKZ}. Given a monoid $M$ acting on an abelian group $\Lambda$ and a subset $A \subset \Lambda$, we write $\gls{lin}_M (A)$ for the set of linear combinations of $A$ with coefficients in $M$. Again we assume $A \subset \Lambda$ is a finite set which affinely spans $\Lambda \otimes \R$ and promote it to the subset
\begin{align}
\gls{gicone} := \{ (a , 1) : a \in A \} \subset \Lambda \oplus \Z.
\end{align}
This spans a semigroup $\textnormal{Lin}_\N ({\mathcal{A}} )$ with convex hull $\textnormal{Lin}_{\R_{\geq 0}} ({\mathcal{A}} )$. We note that the supporting hyperplane functions $\dul{\textnormal{Lin}_{\R_{\geq 0}} ({\mathcal{A}} )} = \{(b , n_b) : b \in \du{Q}\}$ and $\mcx_{\dul{\textnormal{Lin}_{\R_{\geq 0}} ({\mathcal{A}} )}}$ is the affine cone of $\mcx_{Q}$ where constants $n_b$ were defined in equation \eqref{eq:mcoq}. Recall from equation \eqref{eq:setfunseq} that the fundamental sequence associated to $\mathcal{A}$ is
\begin{equation} \label{eq:Aex} 0 \to L_{\mathcal{A}} \stackrel{\alpha_{\mathcal{A}}}{\longrightarrow} \Z^{\mathcal{A}} \stackrel{\beta_{\mathcal{A}}}{\longrightarrow} \Lambda \oplus \Z \to K_{\mathcal{A}} \to 0 . \end{equation}
We will return to the extension ${\mathcal{A}}$ of $A$ and the sequence \eqref{eq:Aex} several times throughout this section.

A marked polytope $(Q, A)$ will be referred to as a simplex if $Q$ is a simplex and $A$ is its set of vertices. Recalling Definition~\ref{defn:regsub}, a regular triangulation of $A$ is a regular subdivision $S = \{(Q_i, A_i ) : i \in I\}$ such that every $(Q_i , A_i)$ is a simplex. Such triangulations correspond to vertices of the secondary polytope $\psec{A}$ as defined in \cite[Chapter~7]{GKZ}. More concretely, for a regular triangulation $T =  \{(Q_i, A_i): i \in I\}$, define the element 
\begin{equation} \gls{spv} \label{eq:secvert} \varphi_T = \sum_{a \in \cup A_i} \left(\sum_{a \in A_i} \Vol (Q_i) \right) e_a \in \Z^A . \end{equation}
In this formula, $\Vol (Q_i)$ is normalized so that the standard simplex has volume $1$.
\begin{figure}[t]
\begin{picture}(0,0)%
\includegraphics{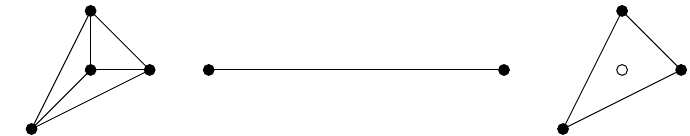}%
\end{picture}%
\setlength{\unitlength}{4144sp}%
\begin{picture}(5239,1018)(-689,-1460)
\put(-634,-601){\makebox(0,0)[lb]{\smash{$T_-$}}}
\put(4456,-601){\makebox(0,0)[lb]{\smash{$T_+$}}}
\put(1846,-736){\makebox(0,0)[lb]{\smash{$\Sigma (A)$}}}
\put(846,-1276){\makebox(0,0)[lb]{\smash{$\varphi_{T_-}$}}}
\put(3016,-1276){\makebox(0,0)[lb]{\smash{$\varphi_{T_+}$}}}
\end{picture}%	
\caption{\label{fig:circuitfirst} Regular triangulations and the secondary polytope for $A = \{(0,0), (1,0), (0,1),(-1,-1)\} \subset \Z^2$.}
\end{figure}

The secondary polytope of $A$ is then the convex hull
\begin{equation} \gls{sp} \label{eq:secondef} \psec{A} = \convhull \{ \varphi_T : T \text{ a regular triangulation of } A \} \subset \R^A . \end{equation}
While the vertices of $\Sigma (A)$ have a particularly nice  formula in $\R^A$, we will see in Theorem \ref{thm:GKZ2} that the dimension of $\Sigma (A)$ is always $(|A| - d - 1)$ where $d$ is the rank of $\Lambda$. 

\begin{eg}  Suppose $A \subset \Z^2 = \Lambda$ is the subset $A = \{(1,0), (-1, 0), (0,1)\}$ from Example \ref{eg:simplex} consisting of the vertices of a simplex. Then there is only one regular triangulation $T = \{(Q,A)\}$ and the secondary polytope $\Sigma (A)$ consists of a single point $\varphi_T = 2e_{(1,0)} + 2 e_{(-1,0)} + 2 e_{(0,1)}$. 
\end{eg}

\begin{eg}
	Suppose $A = \{(0,0), (1,0), (0,1),(-1,-1)\} \subset \Z^2 = \Lambda$ as in Example~\ref{eg:circuit1} and observe that there are precisely two regular triangulations $T_-$ and $T_+$ of $(Q,A)$ illustrated in Figure \ref{fig:circuitfirst}. Thus the secondary polytope in this case is an interval between the points
	\begin{align} \begin{split} \label{eq:exampleverts}
	\varphi_{T_-} & = 3e_{(0,0)} + 2e_{(1,0)} + 2 e_{(0,1)} + 2 e_{(-1,-1)}, \\
	\varphi_{T_+} & = 3e_{(1,0)} + 3 e_{(0,1)} + 3 e_{(-1,-1)}.
	\end{split}
	\end{align}
\end{eg}

The next cited theorem connects the secondary polytope to the linear system $\linsys{A}$. In order to state it, we review more of the notation from \cite[Section~5.3]{GKZ}. We write $Q^\prime \leq Q$ if $Q^\prime$ is a face of $Q$. For any face $Q^\prime \leq Q$, take ${\mathcal{A}}^\prime = \{(a , 1): a \in Q^\prime \cap A\}$ and let $\textnormal{Lin}_\R (\mca^\prime )$ and $\textnormal{Lin}_\Z (\mca^\prime )$ be the $\R$-linear and $\Z$-linear span of ${\mathcal{A}}^\prime$ respectively. Then the index $i (Q^\prime , A)$ is set to equal $[\Lambda \oplus \Z \cap \textnormal{Lin}_\R ( \mca^\prime ) : \textnormal{Lin}_\Z (\mca^\prime )]$. Given an additive monoid $M$ contained in a lattice, the notation $u(M)$ denotes its subdiagram volume. This is defined by letting $\Lambda$ be the group completion of $M$, $K (M)$ ($K_+ (M)$) the convex hull of $M$ ($M - \{0\}$) in $\Lambda_\R$, and $K_- (M)$ equal to the closure of $K (M) - K_+ (M)$. With this notation, the subdiagram volume is given by $u(M) = \Vol_{\Lambda} (K_- (M))$. The notation $u ({\textnormal{Lin}_{\mathbb{N}} (\mathcal{A})} / Q^\prime )$ denotes the subdiagram volume of the semigroup ${\textnormal{Lin}_{\mathbb{N}} (\mathcal{A})} / Q^\prime$ defined as the image of ${\textnormal{Lin}_{\mathbb{N}} (\mathcal{A})}$ in $\Lambda \oplus \Z /  (\Lambda \oplus \Z \cap \textnormal{Lin}_\R ( \mca^\prime ) )$. 
\begin{thm}{\cite[Theorem~10.1.2]{GKZ}} \label{thm:GKZ1} \mbox{ }
\begin{itemize}
\item[(i)]  The Newton polytope of $E_A$ is $\psec{A}$. 
\item[(ii)] $E_A (f) = \prod_{Q^\prime \leq Q} \Delta_{A \cap Q^\prime} (f)^{i (\Lambda , A) \cdot u ({\textnormal{Lin}_{\mathbb{N}} (\mathcal{A})} / Q^\prime )}$.
\end{itemize} \end{thm}
The exponent $i (Q^\prime , A) \cdot u ({\textnormal{Lin}_{\mathbb{N}} (\mathcal{A})} / Q^\prime )$ equals the multiplicity of any point on the orbit associated to $Q^\prime$ on the possibly non-normal toric variety associated to $A$. We prefer the formulation above over simply writing the multiplicity since our definition of a toric stack associated to a polytope does not coincide with the one given in \cite{GKZ}. However, there is always a dominant map from our definition of $\mcx_Q$ to theirs, namely, the map associated to the linear system given by $A$.

The secondary fan is a construction more in the spirit of Appendix~\ref{sec:torichd} than the secondary polytope. This fan consists of the cones of defining functions $C_\R (S )$ given in Definition~\ref{defn:regsub} for all regular subdivisions $S$. We write \gls{secfan} $\fans{A}$ as the secondary fan with support $(\R^A)^\vee$ and cite the following theorem.
\begin{thm}{\cite[Chapter~7.1]{GKZ}} \label{thm:GKZ2} \mbox{ }
\begin{itemize}
\item[(i)] The secondary polytope $\psec{A}$ has a single point as its image under $\beta_{\mathcal{A}}$. 
\item[(ii)] The fan $\fans{A}$ is the normal fan of $\psec{A}$.
\end{itemize}  
\end{thm}
In more detail, it follows from \cite[Proposition~7.1.11]{GKZ} that 
\begin{align} \label{eq:secpolhomogenious} \beta_{\mathcal{A}} (\psec{A}) = (\delta_Q , (d + 1) \Vol (Q)) \end{align} where $\delta_Q = (d + 1) \int_Q x \hbox{ d}x$ is the dilated centroid of $Q$ and that $\psec{A}$ affinely spans the fiber $\beta_{\mathcal{A}}^{-1} (\delta_Q , (d + 1) \Vol (Q)) $. Consequently, $\Sigma (A)$ is an $(|A| - d - 1)$-dimensional polytope inside an $|A|$-dimensional vector space.  We will define several stacks associated to $\psec{A}$  utilizing techniques from Appendix~\ref{sec:toricpreliminaries}. Since $\psec{A}$ does not affinely span $\R^A$, but rather an affine plane parallel to $L_{\mathcal{A}} \otimes \R$, we cannot define $\mcx_{\psec{A}}$ as before. Instead, choose any $v \in \Z^A$ for which $\beta_{\mathcal{A}} (v) = \delta_Q$ and let
\begin{equation*} \gls{tsp} \psecv{A}{v} = \left\{ w \in L_{\mathcal{A}} \otimes \R : \alpha_A (w) + v \in \psec{A} \right\} \subset L_{\mathcal{A}} \otimes \R . \end{equation*}
Thus $\psecv{A}{v}$ is the translation of $\psec{A}$ to a full dimensional integral polytope in a linear, instead of affine, subspace. As a different choice of $v$ will simply translate $\psec{A}$ in $L_{\mathcal{A}} \otimes \R$, the stack $\mcx_{\psecv{A}{v}}$ is independent of this choice.  We will denote it by $\mcx^r_{\psec{A}}$ where the exponent $r$ is a  notational convenience to distinguish it from a finer stack $\mcx_{\psec{A}}$ which will be defined later in this section. 

Let us detail the stacky fan associated to $\psecv{A}{v}$. First observe that Theorem \ref{thm:GKZ2} gives a bijective correspondence between faces of $\psec{A}$ (or equivalently, the translated polytope $\psecv{A}{v}$) and regular subdivisions of $A$. This bijection is order reversing in the sense that a face inclusion corresponds to a refinement of a subdivision.  Recall that $\dul{\psecv{A}{v}} \subset L_A^\vee$ denotes the supporting hyperplane primitives for $\psecv{A}{v}$. By \cite[Section~7.2]{GKZ}, the set of supporting hyperplanes is $\{b_S : S \text{ a coarse subdivision}\}$. By definition, a coarse subdivision is a regular subdivision that is not a refinement of any non-trivial regular subdivision. Given $b \in \dul{\psecv{A}{v}}$, we take $S_b$ to be the corresponding coarse subdivision and $F_b$ the facet of $\Sigma (A)$ supported by $b$.  A collection $J \subset \dul{\psecv{A}{v}}$ is in the abstract simplicial complex $\mcb$ associated to $\psecv{A}{v}$ if and only if there is a regular subdivision $S$ refining the coarse subdivisions $\{S_b : b \in J\}$. Indeed, we recall from equation \eqref{eq:simpcompdual} that this simplicial complex, viewed as a poset inside the power set of its vertices, is dual to the face poset of $\Sigma (A)$. So if $J = \{b_1, \ldots, b_{k}\}$, $J$ will be a member if and only if the intersection of the facets $F_{b_1}, \ldots, F_{b_k}$ is a non-empty face of $\Sigma(A)$. This is equivalent to there existing a regular refinement $S$, corresponding to the face $F_{b_1} \cap \cdots \cap F_{b_k}$, of $S_{b_1}, \ldots, S_{b_k}$. Assembling these structures gives the stacky fan 
\begin{align} \label{eq:secstkfan}  \sfan_{\psecv{A}{v}} = \left( \Z^{\dul{\psecv{A}{v}}} , L_A^\vee , \beta_{\dul{\psecv{A}{v}}}, \fan_{\mcb} \right)  \end{align} 
for $\mcx^r_{\psec{A}} $.
\begin{eg} \label{eg:simplex2}
We continue to explore Examples \ref{eg:simplex} and \ref{eg:circuit1}. For Example \ref{eg:simplex}, one observes that since the secondary polytope is a point, the stacky fan  $\sfan_{\psecv{A}{v}} = (0, 0, 0, \{0\})$ is completely trivial and defines only a point. For Example \ref{eg:circuit1}, notice that there are no lattice points on the relative interior of $\psec{A}$, so that $\psecv{A}{v}$ is a unit interval in $L_A^\vee\otimes \R \cong \R$. Thus $\mcx_{\psec{A}}^r$ is isomorphic to $\p^1$ and the line bundle determined by $\psec{A}$ is $\mco (1)$.
\end{eg}

To obtain more control over the hypersurfaces in $\mcx_Q$ and their degenerations, we will need a more nuanced secondary stack than $\mcx^r_{\psec{A}}$. Instead of working around the fact that $\psec{A}$ does not span $\R^A$, we extend the polytope $\psec{A}$ to a polyhedron $\plaf{A}$ and apply constructions from Appendix~\ref{sec:toricpreliminaries}. This yields a stack $\laf{A}$ which we call the Lafforgue stack of $A$ due to the fact that its coarse toric variety equals the Lafforgue variety as defined in \cite{Lafforgue}.
\begin{defn}\gls{lp} \gls{lph} Let $\simplex{A}{t} = \left\{\sum_{a \in A} c_a e_a : c_a \geq 0 ,  \sum c_a = t \right\}$ be a simplex in $\R^A$ and $\simplex{A}{\geq t} = \cup_{s \geq t} \simplex{A}{s}$.
\begin{itemize}
\item[(i)] The Lafforgue polytope $\ptlaf{A}$ of $A$ is the Minkowski sum $\psec{A} + \simplex{A}{1}$,
\item[(ii)] The Lafforgue polyhedron $\plaf{A}$ of $A$ is defined as the Minkowski sum $ \psec{A} + \simplex{A}{\geq 1}$. 
\end{itemize} 
\end{defn}
%
%We note that the polytope $\PLaf (A)$ can be written as the convex hull of elements $\phi_{(T, a )}$ as follows.

To justify the name of these polyhedra, we recall the construction by Lafforgue (\cite{hacking}, \cite[Chapter~2.1]{Lafforgue}) of a fan \gls{lfan}$\fanl{A}$ which refines the secondary fan $\fans{A}$. Given a regular subdivision $S = \{ (Q_i, A_i ) : i \in I \}$ and a non-empty marked face $(Q_p , A_p)$  of one of the subdividing polytopes $(Q_i, A_i)$ satisfying $A_p = Q_p \cap A_i$, we define the closed cone
\begin{align} \gls{lafcone} \label{eq:lafcones} C_\R (S, A_p) = \{\eta \in {C_\R (S )} : \eta ( a ) \leq \eta (a^\prime ) \text{ for all } a \in A_p, a^\prime \in A\} . \end{align}
We call the pair\gls{psubdiv} $(S , A_p)$ a pointed subdivision and when $A_p = \{a\}$, we simply write $C_\R (S, a )$. It is clear that $C_\R (S, A_p) \subset C_\R (S^\prime , A_p^\prime )$ if and only if $S^\prime$ refines $S$ and $A_p \supset A_p^\prime$. In this case we write $(S^\prime , A_p^\prime) \preceq (S , A_p)$.
By definition, the fan $\fanlt{A}$ consists of the cones $\{C_\R (S , A_p) : (S, A_p) \text{ a pointed subdivision of }(Q,A)\}$. For certain classes of sets $A$, Lafforgue has shown that the toric variety associated to this fan yields a parameter space for toric degenerations of the variety $X_A$. However, this paper is concerned primarily with degenerations of hypersurfaces in a toric stack, so in order to relate this work to ours, we require a line bundle on the associated variety. Furthermore, to preserve information on toric isomorphisms, we wish to consider the toric stack construction along the lines of Appendix~\ref{sec:toricpreliminaries}.  For this, we prove the following lemma.
\begin{lem} \label{prop:lafpolytope} The fan $\fanlt{A}$ is the normal fan of to the polytope $\ptlaf{A}$. 
\end{lem}
\begin{proof} Let $R \subseteq \ptlaf{A}$ be any subset containing the vertices of $\ptlaf{A}$. Given any element $\phi \in \ptlaf{A}$, write 
	\begin{align*} N_{\phi} (\ptlaf{A}) & = \{\psi \in (\R^A)^\vee : (\psi , \phi) \leq (\psi , \phi^\prime) \text{ for all }\phi^\prime \in \ptlaf{A}\}, \\ & = \{\psi \in (\R^A)^\vee : (\psi , \phi) \leq (\psi , \phi^\prime) \text{ for all }\phi^\prime \in R \} 
	\end{align*}
for the normal cone of $\phi$. Here, in accordance with the description of defining functions in $C_\R (S)$, we view elements $\psi \in (\R^A)^\vee$ as functions from $A$ to $\R$ and the contraction is given by $(\psi , e_a) := \psi (a)$. 
	
It follows from the definition that $\fanlt{A}$ is a refinement of $\fans{A}$. In particular, the cones $C_\R (S , A_p )$ can be described as intersections of $C_\R (T_j , a)$ where $T_j$ is a regular triangulation refining $S$, and $a$ is both a member of $A_p$ and a vertex in a simplex of $T$.  Thus the cones 
\begin{align*} \{C_\R (T, a) : T \text{ a regular triangulation}, a \text{ a vertex in a simplex of T} \} \end{align*} form the set of maximal cones $\fanlt{A} (|A|)$. We now show that each one of these maximal cones is a normal cone to an element in $\ptlaf{A}$. 

For any regular triangulation $T$ and $a \in A$ let
\begin{equation} \label{eq:verticeslaf} \varphi_{(T, a)} := \varphi_T + e_a . \end{equation}
Note that since $\ptlaf{A}$ is the Minkowski sum of $\Sigma (A)$ and $\Delta^A_1$, the set 
\begin{align*} R := \{\varphi_{(T, a)} : T \text{ a regular triangulation} , a \in A \} \end{align*} contains the set of vertices of $\ptlaf{A}$. Fixing one triangulation $T$, suppose $a$ is a vertex of a simplex of $T$ so that $C_\R (T, a)$ is in $\fanlt{A} (|A|)$. Let $\psi \in C_\R (T, a)$, and $\varphi_{(T^\prime, b )} \in R$. By the definition of $C_\R (T, a )$, we have $\psi (a ) \leq \psi (a^\prime )$ for any $a^\prime \in A$. Using the result that the secondary fan is dual to the secondary polytope, and in particular that $C_\R (T) = N_{\varphi_T} (\psec{A})$,  we have
\begin{eqnarray*} (\psi , \varphi_{(T, a )} ) & = & (\psi , \varphi_T) +  \psi (a ) , \\ & \leq & (\psi , \varphi_T ) +  \psi (b ) , \\ & \leq & (\psi , \varphi_{T^\prime}) +  \psi (b ) , \\ & = & (\psi , \varphi_{(T^\prime, b )}).  \end{eqnarray*}
Thus $C_\R (T, a ) \subseteq N_{\varphi_{(T, a)}} (\ptlaf{A})$. For the converse, one simply observes that both the normal fan to $\ptlaf{A}$ and the fan $\fanlt{A}$ are complete fans supported in $\R^A$ with $C (T, a )$, and thus also $N_{\varphi_{(T, a)}} (\ptlaf{A})$, both $|A|$-dimensional cones. The inclusions $C_\R (T, a ) \subseteq N_{\varphi_{(T, a)}} (\ptlaf{A})$ thus imply that $\fanlt{A}$ is a refinement of the normal fan to $\ptlaf{A}$. However, as the number of vertices of $\ptlaf{A}$ is greater than or equal to  the number of maximal cones in $\fanlt{A}$, we must have that $C_\R (T, a ) = N_{\varphi_{(T, a)}} (\ptlaf{A})$. Returning to the initial observation that every cone in $\ptlaf{A}$ can be described as a non-trivial intersection of the maximal cones $C_\R(T, a)$, and observing that the same is true for normal fans of polytopes, we obtain the result.
\end{proof}

%INE : It is a good idea to start and continue a discussion of the hyperplane sections. An example motivating the construction of the secondary stack (different from the secondary polytope stack) would be helpful!!

This proposition gives us a polarization for the variety associated to the Lafforgue fan. However, if we wanted to obtain a polytope spanning $\R^{\mathcal{A}}$, we have missed the mark by one dimension. As in the case of the secondary polytope, we could restrict to the subspace spanned by $\ptlaf{A}$. However, it is more natural to consider the polyhedron $\plaf{A} \subset \R^{\mathcal{A}}$ and a variant of its associated stacky fan as defined in Appendix~\ref{sec:toricpreliminaries}. Before introducing this stacky fan, we examine the combinatorics and geometry of the polyhedron $\plaf{A}$.

\begin{lem}
	\label{lem:lafhyppart} \gls{lafhv} The primitives of the supporting hyperplanes, $\dul{\plaf{A}}$, can be partitioned into a disjoint union \begin{align*} \{\varrho_A\} \cup \dul{\plaf{A}}^h \cup \dul{\plaf{A}}^v \subset (\mathbb{Z}^A)^\vee \end{align*} where :
	\begin{itemize}
		\item[(i)]  $\varrho_A = \sum_{a \in A} e_a^\vee$ defines the supporting hyperplane of $\ptlaf{A}$.
		\item[(ii)] The set $\dul{\plaf{A}}^h$ bijectively corresponds to pointed subdivisions $(S, A_p)$ where $S = \{(Q, A)\}$ and $A_p$ are the elements of $A$ on a facet of $Q$.
		\item[(iii)] The set $\dul{\plaf{A}}^v$ bijectively corresponds to pointed subdivisions $(S, A_p)$ where $S = \{(Q_i, A_i) : i \in I\}$ is a coarse subdivision and $A_p = A_i$ for some $i \in I$.
	\end{itemize} 
\end{lem}
\begin{proof}
First we observe that the polyhedron $\plaf{A}$ is combinatorially equivalent to the cone  $\mathbb{R}_{\geq 1} \times \ptlaf{A}$ over $\ptlaf{A}$. This can been seen by recalling equation \eqref{eq:secpolhomogenious} which gives us that $\varrho_A ( \psec{A} ) = (d + 1) \Vol (Q)$ and, by definition, $\varrho_A (\Delta^A_t) = t$. As $\plaf{A} = \Delta_{\geq 1}^A + \psec{A} = \cup_{t \geq 1} (\Delta^A_t + \psec{A})$, we have that  
\[\varrho_A : \plaf{A}  \to \left[1 + (d + 1)\Vol (Q), \infty \right) \] combinatorially trivializes $\plaf{A}$ as the product of a ray and $\ptlaf{A}$. Since $\ptlaf{A}$ has the $(|A|-1)$-dimensional simplex as a Minkowski summand, it is $(|A|-1)$-dimensional. In particular, $\ptlaf{A}$ is the facet $\varrho_A^{-1} (1 + (d + 1) \Vol (Q))$ of $\plaf{A}$ defined by the primitive $\varrho_A \in \dul{\plaf{A}}$. 

Since $\plaf{A}$ is combinatorially a product of $\ptlaf{A}$ and a ray, the remaining facets of $\plaf{A}$ arise as products $ \mathbb{R}_{\geq 1} \times F$ where $F$ is a facet of $\ptlaf{A}$. By Lemma \ref{prop:lafpolytope}, these are in bijection with the minimal non-trivial cones in $\fanlt{A}$. Here the trivial cone is the one dimensional space spanned by $\varrho_A$ (as it is the normal cone to points in the relative interior of $\ptlaf{A}$). Such cones correspond to pointed subdivisions $(S, A_p)$ which are minimal among non-trivial pointed subdivisions with respect to the partial order $\preceq$ discussed after the definition of $C(S, A_p)$ in equation \eqref{eq:lafcones}. In particular, they are pointed subdivisions $(S, A_p)$ such that $(\{(Q, A)\}, A) \prec (S, A_p)$, but no other pointed subdivision $(S^\prime, A_p^\prime)$ satisfies $(\{(Q, A)\}, A) \prec  (S^\prime, A_p^\prime) \prec (S, A_p)$. It follows from the definition of $\prec$ that either $S = \{(Q, A)\}$ or $S$ is a coarse subdivision. In the former case, $A_p$ must be the set of points in $A$ lying on a facet of $Q$ (again, by the definition of $\prec$). We let the collection of the dual primitives of such facets make up the subset $\dul{\plaf{A}}^h$. In the latter case, we have $S = \{(Q_i, A_i) : i \in I\}$ is a coarse subdivision and, if $A_p$ lies on a proper face of $Q_i$ for some $i \in I$, then $(\{(Q, A)\}, A) \prec (S, A_i) \prec (S,A_p)$ contradicting minimality of $(S,A_p)$. Thus $A_p = A_i$ for some $i \in I$ and we denote the collection of the dual primitives to these facets $\dul{\plaf{A}}^v$.
\end{proof}
%INE
%The motivation for labeling the exponents $h$ and $v$ is that their respective facets can be viewed as horizontal and vertical boundaries of a piecewise linear map from $\ptlaf{A}$ to $\Sigma(A)$. A 
Having classified elements of $\dul{\plaf{A}}$ combinatorially, we now consider their linear forms. For elements of $\dul{\ptlaf{A}}^h$, we return to the exact sequence \eqref{eq:Aex}. If $A_p = F \cap A$ for a facet $F$ of $Q$, then there is a unique primitive $b_{A_p} \in (\Lambda \oplus \Z)^\vee$ which is a supporting hyperplane for the cone $\textnormal{Lin}_{\R_{\geq 0}} ({\mathcal{A}})$ and vanishes on $A_p \oplus \{1\} \subset \Lambda \oplus \Z$. 
\begin{lem} \label{lem:lafnormal}  The elements of $\dul{\plaf{A}} \subset (\mathbb{Z}^A)^\vee$ not equal to $\varrho_A$ are uniquely characterized by:
	\begin{enumerate}[label=(\roman*), ref=\thelem(\roman*)]
		\item \label{lem:lafnormal:1} If $b \in \dul{\plaf{A}}^h$ then it is contained in $C_\R (\{(Q, A)\}, A_p)$ where $A_p$ consists of all elements of $A$ in a facet of $Q$. There exists $c_b \in \N$ such that \[b = c_b^{-1} \beta_{\mathcal{A}}^\vee (b_{A_p}). \]
		\item \label{lem:lafnormal:2} If $b \in \dul{\plaf{A}}^v$ corresponds to $(S, A_p)$, then  $b = \eta_{(S, A_i)} \in C_\Z^\circ (S )$ is the primitive defining function for $S$ satisfying $\eta_{(S , A_i)} |_{A_i} = 0$.
	\end{enumerate} 
\end{lem}

\begin{proof}
	First observe that the proof of Lemma \ref{lem:lafhyppart} classifies the dual facets to a given $b \in \dul{\plaf{A}}$. In particular, if $b \ne \varrho_A$, then there is a pointed subdivision $(S, A_p)$ for which $b \in C_\R (S, A_p)$. Take $F_S \subseteq \Sigma (A)$ to be the face of $\Sigma (A)$ whose normal cone is $C_\R (S)$ in $\mcf_{\psec{A}}$. Note that when $b \in \dul{\plaf{A}}^h$ we have $F_S = \Sigma (A)$, while if $b \in \dul{\plaf{A}}^v$,  $F_S$ is a facet of $\Sigma (A)$. In either case, the facet of $\plaf{A}$, defined by $b$ is then the polyhedron
	\begin{align*}
	F_{(S,A_p)} := \bigcup_{t \geq 1} \left( F_S + t \cdot \convhull \left\{ e_a :  a \in A_p \right\} \right).
	\end{align*} 
	Since $b$ is constant along the facet $F_{(S, A_p)}$ and $e_a$ is parallel to $F_{(S,A_p)}$ for every $a \in A_p$, we have that $b  |_{A_p} = 0$. This, along with the fact that $b$ is a primitive element of  $C_\Z (S, A_p)$, uniquely characterizes $b$ and proves \ref{lem:lafnormal:2}.
	
	To prove \ref{lem:lafnormal:1}, assume $b \in C_\R (\{(Q, A)\}, A_p)$ so that $b \in C_\R (\{(Q,A)\})$ implying $b$ is the restriction of an affine function on $\Lambda_\R$ to $A$ (for otherwise, it defines a non-trivial subdivision). The set of such functions is precisely the image of $\beta_{\mathcal{A}}^\vee$. In particular, if  $b (a) = \psi (a) $ for every $a \in A$, where $\psi (u) = \tilde{\psi} (u) + c$ for a linear function $\tilde{\psi} \in \Lambda_\R^\vee$ and $c \in \R$ then $b = \beta_{\mathcal{A}}^\vee (\tilde{\psi}, c)$.	Since $b$ achieves its minimum  strictly on $A_p$, for every $a \in A_p$ and $a^\prime \in A$, we have that $\tilde{\psi} (a ) \leq \tilde{\psi } (a^\prime )$ with equality if and only if  $a^\prime \in A_p$. Thus $\tilde{\psi}$ is a supporting hyperplane of the convex hull of $A_p$. Furthermore, since $(\tilde{\psi}|_{A_p} + c) = b |_{A_p} = 0$, we have that $(\tilde{\psi} , c) |_{A_p \oplus \{1\}} = 0$. Thus $(\tilde{\psi}, c)$ also equals zero on the cone $\textnormal{Lin}_{\R_{\geq 0}} (A_p \oplus \{1\})$, which is a facet of $\textnormal{Lin}_{\R_{\geq 0}} (A \oplus \{1\})$. Thus $(\tilde{\psi} , c)$ can be expressed uniquely as $r \cdot b_{A_p}$ with $r > 0$. As both $b$ and $b_{A_p}$ are primitive and $\beta_{\mathcal{A}}^\vee : \textnormal{Lin}_{\Z} (b_{A_p}) \to \textnormal{Lin}_{\Z} (b)$, we have that $r = c_b^{-1}$ for a unique $c_b \in \mathbb{N}$.
\end{proof}
\begin{figure}
\begin{picture}(0,0)%
\includegraphics{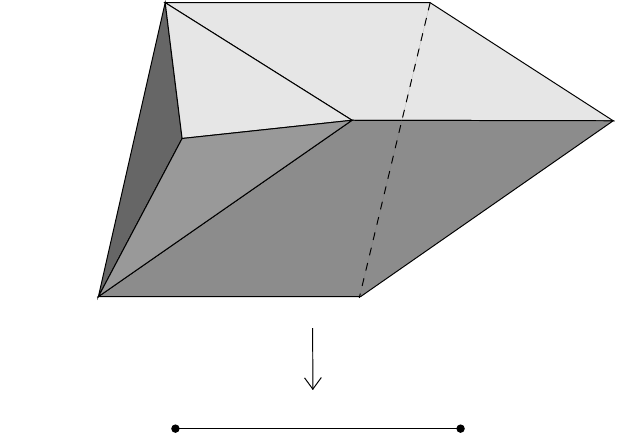}%
\end{picture}%
\setlength{\unitlength}{4144sp}%
\begin{picture}(4685,3293)(458,-1767)
\put(763,-1597){\makebox(0,0)[lb]{\smash{$\psec{A}$}}}
\put(673,442){\makebox(0,0)[lb]{\smash{$\ptlaf{A}$}}}
\end{picture}%
\caption{\label{fig:circuitsecond} The Lafforgue polytope relative to the secondary polytope for $A = \{(0,0), (1,0), (0,1),(-1,-1)\} $. }
\end{figure}

\begin{eg} \label{eg:circuit2}
	The Lafforgue polytope $\ptlaf{A}$ for Example \ref{eg:circuit1} illustrates the geometry seen in general. Since $\psec{A}$ is an interval and $\Delta^A_1$ is a tetrahedron in $\R^4$ parallel to $\psec{A}$, we can place their Minkowski sum in a three dimensional hyperplane. This is illustrated in Figure \ref{fig:circuitsecond}. Note that the facets parallel to $\psec{A}$ correspond to the horizontal boundary components $\dul{\ptlaf{A}}^h$ of $\ptlaf{A}$ and are in natural bijection with the facets of $Q$. Meanwhile, the vertical facets in $\dul{\ptlaf{A}}^v$ lie over the boundary of $\psec{A}$. Each of them correspond to one of the two triangulations $T_\pm$ along with a choice of subdividing polytope in $T_\pm$ (which, in the case of $T_+$, must be all of $Q$). These subdividing polytopes $Q_i$ determine the pointing sets $A_p = A \cap Q_i$. 
	
	Using equation \eqref{eq:verticeslaf}, we have explicit coordinates for the vertices of $\ptlaf{A}$. Recall that equations \eqref{eq:exampleverts} gave formulas for the vertices $\varphi_{T_\pm}$ of $\psec{A}$ corresponding to the triangulations $T_\pm$. Building off of this, the four vertices of $\ptlaf{A}$ on the left in Figure \ref{fig:circuitsecond}  are $\{\varphi_{(T_-, a)} : a \in A\}$ while the three vertices on the right are $\{\varphi_{(T_+, a)}: a \in A - \{(0,0)\}\}$. In general, it is a consequence of Lemma \ref{lem:lafhyppart} that the vertices of $\ptlaf{A}$ are $\{\varphi_{(T, a)} : (T,a) \text{ a pointed triangulation of }(Q,A)\}$.
\end{eg}

In case the cokernel $K_{{\mathcal{A}}}$ of $\beta_{{\mathcal{A}}}$ is non-zero, we will need to consider a more refined version of a primitive supporting hyperplane. Recall from properties~\ref{lem:regsub:1} and \ref{lem:regsub:2} that if $\eta_S \in (\R^{{\mathcal{A}}})$ defines the subdivision $S = \{(Q_i, A_i): i \in I\}$, then its restriction to each $Q_i$ equals that of an affine function $\varsigma_i \in (\Lambda_\R \oplus \R)^\vee$. We will say that $\eta_S$ is a $\Lambda$-defining function for $S$ if 
\begin{align} \label{eq:lambdadef} \varsigma_i \in (\Lambda \oplus \Z)^\vee & \text{ for every }i \in I.
\end{align}
It is clear that the set of $\Lambda$-defining functions forms a semigroup in $\Z^{\mathcal{A}}$ and, if $\eta_S$ is a primitive element of this semigroup, we call $\eta_S$ a primitive $\Lambda$-defining function for $S$. Generally, a $\Lambda$-primitive function for a given subdivision is not unique. However, for a coarse pointed subdivision, $\{(S, A_p)\}$, Lemma \ref{lem:lafnormal:2} implies that there is a one dimensional ray $\R_{\geq 0} \cdot \eta_{(S,A_p)}$ in $(\R^{\mathcal{A}})^\vee$ of defining functions for $S$ which vanish on $A_p$. As $\eta_{(S,A_p)} \in (\Z^A)^\vee$, there is a positive integer multiple of it that is the unique primitive $\Lambda$-defining function in this ray. We write $\du{\eta}_{(S, A_p)}$ for this defining function.

\begin{eg}
Let $A = \{-2, 0,2\} \subset \Z = \Lambda$ so that $K_A \approx \Z / (2)$. Then $e_{-2}^\vee \in (\Z^A)^\vee$  defines the subdivision $S = \{([-2,0], \{-2,0\} ) , ([0,2], \{0,2\}) \}$. While it is primitive, it is not a primitive $\Lambda$-defining function for $S$. Rather, the multiple $2e_{-2}^\vee$ is and gives the unique function $\du{\eta}_{(S, \{0,2\})}$.
\end{eg}

We use the definition of primitive $\Lambda$-defining functions and the constants $c_b$ occurring in Lemma \ref{lem:lafnormal:1} to define the homomorphism $\tilde{\beta}_{\dul{\plaf{A}}} : \Z^{\dul{\plaf{A}}} \to (\Z^A)^\vee$ via 
\begin{align} \label{eq:deftildbeta} \tilde{\beta}_{\dul{\plaf{A}}} (e_b) & = \begin{cases} \varrho_A & \text{ if } b = \varrho_A, \\ \du{\eta}_{(S,A_p)} &  \text{ if } b = \eta_{(S,A_p)} \in \dul{\plaf{A}}^v , \\ c_b b & \text{ if } b \in \dul{\plaf{A}}^h . \end{cases} \end{align} 
Define the stacky fan \gls{sflaf}
\begin{align}  \label{eq:lafsfan} \widetilde{\sfan}_\plaf{A} = \left( \Z^{\dul{\plaf{A}}} , (\Z^A)^\vee , \tilde{\beta}_{\dul{\plaf{A}}} , \fan_{\plaf{A}} \right) \end{align}
where the lattices and fan are equal to those for the stacky fan of $\plaf{A}$ as in Definition \ref{defn:polystack}, but $\tilde{\beta}_{\dul{\plaf{A}}}$ differs from the prescribed homomorphism $\beta_{{\plaf{A}}}$. In particular, even in the case where $K_{{\mathcal{A}}} = 0$ (implying every defining function is $\Lambda$-defining), there will generally be elements $b \in \dul{\plaf{A}}$ for which the scaling constants $c_b \in \N$ are not equal to $1$. We will glean a bit more detailed information about $\tilde{\beta}_{\dul{\plaf{A}}}$ later in this section, but first we consider an assortment of structures on the stack associated to $\widetilde{\sfan}_\plaf{A}$.

By defining the polyhedron $\plaf{A}$ as a Minkowski sum $\psec{A} + \Delta^A_{\geq 1}$, we ensure that its normal fan not only refines $\fanlt{A}$, but also the normal fan $\mathcal{F}_\Delta$ of $\Delta^A_{\geq 1}$. The toric variety associated to this fan is the total space of the tautological bundle $\mco (-1)$ over $\mathbb{P}^{|A| - 1}$. Indeed, $\mathcal{F}_\Delta$ is a refinement of the cone $\textnormal{Lin}_{\R_{\geq 0}} \{e_a^\vee : a \in A\}$ obtained by adding the ray $\textnormal{Lin}_{\R_{\geq 0}} (\varrho_A )$ and subdividing. This is the toric construction for blowing up the origin in $\mathbb{C}^{|A|}$. One can check to see that $e_{a}^\vee$ is the primitive corresponding to a pointed subdivision $(S, A_p)$ where $S$ is a coarse subdivision. Thus $e_a^\vee \in \dul{\plaf{A}}^v$ and there is a morphism of stacks 
\begin{align} \label{eq:morphtoproj} 
\tilde{G} : \mcx_{\widetilde{\sfan}_\plaf{A}} \to \mco_{\p^{|A| - 1}} (-1)
\end{align}
which, after projection, gives a morphism $G : \mcx_{\widetilde{\sfan}_\plaf{A}}  \to \p^{|A| - 1}$.
In fact, $\mcx_{\widetilde{\sfan}_\plaf{A}}$ is itself a line bundle over the divisor $D_{\varrho_A}$ defined by $\varrho_A$ and $\tilde{G}$ is a map of line bundles over proper stacks. We will not use this fact, but we will consider the restriction $G : D_{\varrho_A} \to \mathbb{P}^{|A| - 1}$.
%    
%BEGINNING TO USE h sequence... %For each $b \in B_A$, let $\delta_b = b (\delta_A )$
%
%
\begin{defn} \mbox{ } \label{defn:moduli}
\begin{itemize} 
\item[(i)]\gls{tlfst} The total Lafforgue stack of $A$ is $\tlaf{A} := \mcx_{\widetilde{\sfan}_\plaf{A}}$.  
\item[(ii)] \gls{uuniv}The universal line bundle $\univ{A}$ on $\tlaf{A}$ is $G^* (\mco_{\p^{|A| - 1}} (1))$.
\item[(iii)]  \gls{unsec}The universal section $s_A \in H^0 (\tlaf{A} , \univ{A})$ is the pullback  $G^*(\sum_{a \in A} Z_a)$.
\item[(iv)] \gls{tothyp}The total universal hypersurface  is the zero locus $\tilde{\mcy}_A$ of $s_A$.
\item[(v)] \gls{lafst}The Lafforgue stack of $A$ is $\laf{A} := D_{\varrho_A}$.
\item[(vi)] \gls{unhyp}The universal hypersurface $\hyp{A} \subset \laf{A}$ is $\tilde{\mcy}_A \cap \laf{A}$.
\end{itemize} 
\end{defn}

The toric stack $\laf{A}$ can also be described by taking the star of $\varrho_A$ in $\Sigma_{\plaf{A}}$ which yields a fan $\Sigma_{\ptlaf{A}}$ in $\R^B$ combinatorially equivalent to the Lafforgue fan where $B = \dul{\plaf{A}} - \{\varrho_A\}$. The map $\tilde{\beta}_{\ptlaf{A}} : \Z^B \to (\Z^A)^\vee / \textnormal{Lin}_\Z  ( \varrho_A )$ obtained by restricting $\tilde{\beta}_{\dul{\plaf{A}}}$ to $\Z^B$ and then quotienting by $\textnormal{Lin}_\Z (\varrho_A )$ defines the stacky fan  \begin{align} \label{eq:ptlafstfan}  \sfan_{\ptlaf{A}} := \left( \Z^B , (\Z^A)^\vee / \textnormal{Lin}_\Z (\varrho_A), \tilde{\beta}_{\ptlaf{A}} , \Sigma_{\ptlaf{A}} \right) .\end{align} This gives an alternative description of $\laf{A}$. The advantage of detailing the total Lafforgue stack is to give a natural context in which to define the universal line bundle and the universal section. Let us describe this stack for our two examples.

\begin{eg} For $A = \{(1,0), (-1, 0), (0,1)\} \subset \Lambda$ as in Example \ref{eg:simplex}, we have seen that $\psec{A}$ is a point and thus $\plaf{A}$ is a translation of $\Delta^A_{\geq 1}$. One can check that
\begin{align*}
\dul{\plaf{A}} = \{ e_{(1,0)}^\vee, e_{(-1,0)}^\vee, e_{(0,1)}^\vee,  e_{(1,0)}^\vee + e_{(-1,0)}^\vee + e_{(0,1)}^\vee = \varrho_A  \} \subset (\Z^{{\mathcal{A}}})^\vee.
\end{align*} 
Here, in indexing the basis, we identify elements of $A$ with their counterparts in ${\mathcal{A}}$. Were we to have taken the usual toric stack defined by the normal fan of this polyhedron, we would obtain the total space $\mco_{\p^{|A| - 1}} (-1)$. However, having altered $\beta_{\dul{\plaf{A}}}$ to $\tilde{\beta}_{\dul{\plaf{A}}}$, we have to check to see if this has modified the stacky fan in this case. Since there are no coarse subdivisions of $(Q,A)$, we consider only $b \in \dul{\plaf{A}}^h$. 

From Example \ref{eg:simplex} we have that $\du{Q} = \{(1,-1), (-1,-1), (0,1)\} \subset \Lambda^\vee$ and the associated primitive normal rays to  $\textnormal{Lin}_{\R_{\geq 0}} ({\mathcal{A}})$ are $\{(1,-1,1), (-1,-1,1), (0,1,0) \} \subset (\Lambda \oplus \Z)^\vee$. The tautological map $\beta_{{\mathcal{A}}} : \Z^{{\mathcal{A}}} \to \Lambda \oplus \Z$ sends $e_{(a,b)}$ to $(a, b, 1)$ and one can compute 
\begin{align*}
\beta_{{\mathcal{A}}}^\vee (1,-1,1) & = 2 e_{(1,0)}^\vee, \\
\beta_{{\mathcal{A}}}^\vee (-1,-1,1) & = 2 e_{(-1, 0)}^\vee, \\
\beta_{{\mathcal{A}}}^\vee (0,-1,0) & =  e_{(0,1)}^\vee.
\end{align*}
By equation \eqref{eq:deftildbeta}, we see that $\tilde{\beta}_{\dul{\plaf{A}}}$ takes $e_{b}$ to $b$, for $b \in \{\varrho_A , e_{(0,-1)}^\vee \}$ and $e_{b}$ to $2b$ for $b \in \{e_{(1,0)}^\vee, e_{(1,0)}^\vee\}$. The cokernel of this map is $\Z / 2\Z$ and one observes that the stacky fan $\sfan_{\ptlaf{A}}$ yields the stack 
\begin{align*}
\laf{A} \cong \left[ \p (1,1,2) / \left( \Z / 2 \Z \right) \right].
\end{align*}
To explain the appearance of the group $\Z / 2 \Z$, we note that there is a $\Z / 2\Z$ subgroup of $\mathbb{G}_{Q}$ which fixes $\linsys{A}$ and therefore is an automorphism of any hypersurface defined by a section in $\linsys{A}$. 
\end{eg}

\begin{eg} \label{eg:circuit3}  Let us consider the Lafforgue stack for Example~\ref{eg:circuit1}, where $A = \{(0,0), (1,0), (0,1),(-1,-1)\} \subset \Z^2 = \Lambda$. We computed both $\du{Q}$ and $n_b$ in Example~\ref{eg:circuit1}, and putting these together gives $\{(2,-1, 1), (-1, 2, 1), (-1,-1, 1)\}$ as the set of supporting hyperplane primitives to $\textnormal{Lin}_{\R_{\geq 0}} ({\mathcal{A}})$. Now, applying $\beta_{{\mathcal{A}}}^\vee$ gives 
	\begin{align*} \beta_{{\mathcal{A}}}^\vee (2,-1, 1) & = e_{(0,0)}^\vee + 3 e_{(1,0)}^\vee , \\
	\beta_{{\mathcal{A}}}^\vee (-1,2, 1) & =
	e_{(0,0)}^\vee + 3 e_{(0,1)}^\vee , \\
	\beta_{{\mathcal{A}}}^\vee (-1,-1, 1) & = e_{(0,0)}^\vee + 3 e_{(-1,-1)}^\vee. \end{align*} As each of these is primitive, we have that the constants $c_b = 1$ for each $b \in \dul{\plaf{A}}^h$. Turning to the vertical facets, we note that $\dul{\plaf{A}}^\vee = \{e_{(0,0)}^\vee , e_{(1,0)}^\vee, e_{(0,1)}^\vee, e_{(-1,-1)}^\vee \}$. This follows from the fact that $\Delta^A_{\geq 1}$ is a Minkowski summand of $\plaf{A}$ and, from Example~\ref{eg:circuit2}, there are only four remaining facets. One checks that 
	\begin{align}
	\label{eq:circuit3} \begin{split} e_{(0,0)}^\vee & = \eta_{(T_+, A - \{(0,0)\})}, \\
	e_{(1,0)}^\vee & = \eta_{(T_-, A - \{(1,0)\})}, \\ e_{(0,1)}^\vee & = \eta_{(T_-, A - \{(0,1)\})}, \\ e_{(-1,-1)}^\vee & = \eta_{(T_-, A - \{(-1,-1)\})}. 
	\end{split}
	\end{align} Furthermore, each of these are $\Lambda$-defining functions for the respective triangulations. These facts imply that $\tilde{\beta}_{\dul{\plaf{A}}}  = \beta_{\dul{\plaf{A}}}$ in this case. One can apply Proposition~\ref{prop:circuitlaf} to obtain a description of this Lafforgue fan and stack. In particular, $\laf{A}$ is shown to be a weighted blowup of $\p^3$ over three lines. 
\end{eg}

To obtain the last construction of this section, we first make a modification of the stacky fan defining the toric stack $\mcx_{\psecv{A}{v}}^r$ associated to the secondary polytope. We note that there are alternatives to this approach if $K_{\mathcal{A}} \ne 0$. Given a coarse subdivision $S = \{(Q_i, A_i): i \in I\}$ of $(Q,A)$, write $b_S \in \dul{\psecv{A}{v}}$ for the primitive hyperplane supporting the facet corresponding to $S$. Let $\Gamma_S \subseteq (\Z^{{\mathcal{A}}})^\vee$ be the $\Z$-linear span of all $\Lambda$-defining functions for $S$. As $S$ is a coarse subdivision, the image $\alpha_{{\mathcal{A}}}^\vee (\Gamma_S)$ is contained in $\textnormal{Lin}_\Z (b_S ) \approx \Z$ and we  choose a primitive $c_S \in \Gamma_S$ such that 
\begin{align} \label{eq:defrs1} \alpha_{{\mathcal{A}}}^\vee (c_S ) & = r_S b_S \end{align} for $r_S \in \N$ and $\alpha_{{\mathcal{A}}}^\vee (\Gamma_S) = \textnormal{Lin}_\Z (r_S b_S)$. This uniquely defines $r_S$ and we modify the homomorphism $\beta_{\dul{\psecv{A}{v}}}$ in equation \eqref{eq:secstkfan} for the stacky fan for $\mcx_{\psecv{A}{v}}^r$ by defining $\tilde{\beta}_{\dul{\psecv{A}{v}}} : \Z^{\dul{\psecv{A}{v}}} \to L_A^\vee$ via
\begin{align} \label{eq:defrs2}
\gls{psvsf} \tilde{\beta}_{\dul{\psecv{A}{v}}} (e_{b_S}) & = r_S b_S .
\end{align}
Keeping the rest of the data in equation \eqref{eq:secstkfan} the same, we take
\begin{align} \label{eq:tildsecstkfan}  \widetilde{\sfan}_{\psecv{A}{v}} = \left( \Z^{\dul{\psecv{A}{v}}} , L_A^\vee , \tilde{\beta}_{\dul{\psecv{A}{v}}}, \fan_{\mcb} \right)  \end{align} 
to be the modified stacky fan. 

Define $p_1 : \Z^{\dul{\plaf{A}}} \to \Z^{\dul{\psecv{A}{v}}}$ to be the homomorphism
\begin{align} \label{eq:defp1}
p_1 (e_b) & = \begin{cases}
 e_{b_S} & \text{ if } b = \eta_{(S,A_p )} \in \dul{\plaf{A}}^v , \\
0 & \text{ otherwise}. 
\end{cases}
\end{align} 
\begin{lem} \label{lem:stackfanmap}
	The diagram 
	\begin{equation} \label{diag:secondef}
	\begin{CD}
	\Z^{\dul{\plaf{A}}} @>{\tilde{\beta}_{\dul{\plaf{A}}}}>> (\Z^A)^\vee \\
	@V{p_1}VV  @V{\alpha^\vee_A}VV \\
	\Z^{\dul{\psecv{A}{v}}} @>{\tilde{\beta}_{\dul{\psecv{A}{v}}}}>> L_A^\vee .
	\end{CD}
	\end{equation}
	commutes and defines a map of stacky fans
	\begin{align}
\tilde{p} = (p_1, \alpha^\vee_{{\mathcal{A}}} ): \widetilde{\sfan}_{\plaf{A}} \to \widetilde{\sfan}_{\psecv{A}{v}}.
	\end{align}
\end{lem}
\begin{proof}
To prove that diagram \eqref{diag:secondef} commutes, one must show that for any coarse subdivision $S = \{(Q_i, A_i): i \in I \}$ and any $i \in I$,  $\alpha_A^\vee (\du{\eta}_{(S,A_i)} ) = r_S b_S$. Equivalently, one checks that the $\alpha_A^\vee$ image of $\du{\eta}_{(S,A_i)}$ generates $\alpha_A^\vee (\Gamma_S)$. To see this, suppose $c_S \in \Gamma_S$ maps to such a generator. Then since $c_S$ is a $\Lambda$-defining function for $S$, restricting $c_S$ to $A_i$ one obtains the $\Lambda$-affine function $\varsigma_i \in (\Lambda \oplus \Z)^\vee$. Thus $c^\prime_S := c_S - \beta^\vee_{{\mathcal{A}}} (\varsigma_i) \in \Gamma_S$ and, since $\text{im} (\beta^\vee_{{\mathcal{A}}}) \subset \ker (\alpha^\vee_{{\mathcal{A}}})$, $\alpha^\vee_{{\mathcal{A}}} (c^\prime_S )$ also generates the image of $\Gamma_S$. But since $c^\prime_S$ vanishes on $A_i$ and is a $\Lambda$-defining function for $S$, it is in $\lin_{\N} (\du{\eta}_{(S,A_i)})$ implying it must equal $\du{\eta}_{(S,A_i)}$. This verifies the commutativity of diagram \eqref{diag:secondef}.

The assertion that $\tilde{p}$ induces a map of stacky fans then follows from the fact that $\fanlt{A}$ is a refinement of $\fans{A}$.
\end{proof}
Quotienting by $\varrho_A$ factors $\tilde{p}$ to give a morphism $p : \sfan_{\ptlaf{A}} \to \widetilde{\sfan}_{\psecv{A}{v}}$. Moreover, an application of \cite[Theorem~IV.6.7]{Ewald} shows that this is a toric fibration meaning that it is a flat, surjective morphism of normal toric stacks. Thus we can apply Definition~\ref{defn:colimitstack} of a colimit stack and expect the universal property in Proposition~\ref{prop:colup} to hold.

\begin{defn} \label{defn:secstack}\gls{secstack} The secondary stack is $\secon{A} := \mcx_p^\to$ and the map $p^\to$ will be written as $\pi: \laf{A} \to \secon{A}$. Given $q \in \secon{A}$, write\gls{fibtot} $\fib{A}{q}$ for the fiber $\pi^{-1} (q) \cap \hyp{A}$.  Using the coefficients of the $A$-determinant $E_A$, write\gls{Adetsect} $E_A^s \in \mco_{\psec{A}} (1)$ for the section and\gls{Adethyp} $\mce_A \subset \secon{A}$ for its zero locus.
\end{defn}

Observe that the map $p$ in this definition can be replaced with $\tilde{p}$ to give an isomorphic stack as the two associated diagrams yield the same pushout. 

We conclude this section by describing the stacky fan for $\secon{A}$. To do this, we recall the notation for the hyperext group $\Lambda_{{\mathcal{A}}^\vee} = \mathbb{R} \Hom^* (\text{cone} (\beta_{{\mathcal{A}}}), \Z)$. Here, the set $B = {\mathcal{A}}$ and the long exact sequence \eqref{eq:alphastar} is   
\begin{align} \label{eq:extAex}
0 \to (\Lambda \oplus \Z)^\vee \stackrel{\beta_{{\mathcal{A}}}^\vee}{\longrightarrow} (\Z^{{\mathcal{A}}})^\vee \stackrel{\alpha_{{\mathcal{A}}}^\star}{\longrightarrow} \Lambda_{{{\mathcal{A}}}^\vee} \to 0 .
\end{align}
To describe a stacky fan for $\secon{A}$ in complete generality, we will require a finite extension of $\Lambda_{{\mathcal{A}}^\vee}$. We will say that $\eta \in (\Lambda \oplus \Z)^\vee$ defines a wall in $A$ if it is constant on a subset ${\mathcal{A}}^\prime \subset {\mathcal{A}}$ which spans a codimension $1$ subspace of $\Lambda_\R \oplus \R$. Note that this definition implies that a constant affine function $(0,n)$ on $\Lambda$, defines a wall.
\begin{defn} \label{defn:walllattice}\gls{walat}\gls{xilat}
	The wall lattice $(\Lambda \oplus \Z)_{wall}^\vee$ of $A \subset \Lambda$ is the sublattice of $(\Lambda \oplus \Z)^\vee$ generated by elements that define a wall in $A$. If $(\Lambda \oplus \Z)_{wall}^\vee = (\Lambda \oplus \Z)^\vee$, we say that $A$ is wall complete. We write $\Xi_{{\mathcal{A}}}$ for the cokernel of $\beta_{{\mathcal{A}}}^\vee$ restricted to $(\Lambda \oplus \Z)_{wall}^\vee$. 
\end{defn}
In most examples that we consider, $A$ will be wall complete implying that $\Lambda_{{\mathcal{A}}^\vee} = \Xi_A$. In particular, if $A$ contains a standard simplex then this equality will occur. A more general criterion is given in the following lemma.

\begin{lem} \label{lem:KtoXi}
If $K_{{\mathcal{A}}} = 0$ then $A$ is wall complete. 
\end{lem}
\begin{proof}
	Given any simplex $B = \{b_0, \ldots, b_d\} \subset A$, the set $\{(b_i , 1) : 0 \leq i \leq d\}$ forms a basis for $\Lambda_\Q \oplus \Q$. Take $b_{i, B}^\vee$ to be the dual basis in $(\Lambda_\Z \oplus \Z)^\vee$ and observe that $\textnormal{Lin}_\Z \{b_{i,B}^\vee : 0 \leq i \leq d, B \text{ a simplex in }A \} = (\Lambda \oplus \Z)^\vee_{wall}$. 
	
	Now, choosing a basis $\{e_0, \ldots, e_{d}\}$ for $\Lambda \oplus \Z$, consider the isomorphism $\phi : \wedge^{d} (\Lambda \oplus \Z) \to (\Lambda \oplus \Z)^\vee$ given by $\phi (v_0 \wedge \cdots \wedge v_{d - 1}) (u) = \left< e_0^\vee \wedge \cdots \wedge e_d^\vee , v_0 \wedge \cdots \wedge v_{d - 1} \wedge u\right>$. Observe that, for any simplex $B= \{b_0, \ldots, b_d\}$ there are constants $r_i \in \Z$ for which 
	\begin{align*} r_i b_{i, B}^\vee = \phi (b_{0}\wedge \cdots b_{i - 1} \wedge b_{i + 1} \wedge \cdots \wedge b_d ) . \end{align*} 
	Since $K_{{\mathcal{A}}} = 0$, it follows that ${\mathcal{A}}$ spans $\Lambda \oplus \Z$. Thus $\left\{ \bar{a}_0 \wedge \cdots \wedge \bar{a}_{d - 1} : \bar{a}_i = ( a_i, 1) \in {\mathcal{A}} \right\} $ spans $\wedge^d (\Lambda \oplus \Z)$ which implies its image $\{r_i b_{i, B}\}$ under $\phi$ spans $(\Lambda \oplus \Z)^\vee$ yielding $(\Lambda \oplus \Z)^\vee = (\Lambda \oplus \Z)^\vee_{wall}$. 
\end{proof}
Using $\Xi_A$, we now describe a stacky fan for $\secon{A}$. 

\begin{lem} \label{lem:stfansecon}\gls{secstfan} There is a map $\tilde{\beta}_{\dul{\psec{A}}}$ for which 
	\begin{align}
	\widetilde{\sfan}_{\psec{A}} = \left( \Z^{\dul{\psec{A}}}, \Xi_{{\mathcal{A}}}, \tilde{\beta}_{\dul{\psec{A}}}, \fan_{\mcb}  \right)
	\end{align}
is a stacky fan for $\secon{A}$. If $K_{{\mathcal{A}}} = 0$, then $\tilde{\beta}_{\dul{\psec{A}}} = \tilde{\beta}_{\dul{\psecv{A}{v}}}$.
\end{lem}

\begin{proof}
By Definition \ref{defn:colimitstack} of the colimit stack, it suffices to prove that $\Xi_{{\mathcal{A}}}$ is isomorphic to the pushout of the diagram:
\begin{align} \label{eq:colimdiag}
\begin{CD}
 \Z^{\dul{\plaf{A}}} @>{\tilde{\beta}_{\dul{\plaf{A}}}}>> 
(\Z^{{\mathcal{A}}})^\vee \\
 @V{p_1}VV  @.\\ 
\Z^{\dul{\psecv{A}{v}}} @. 
\end{CD}
\end{align}
Since $p_1$ is onto, the pushout is isomorphic to the cokernel of $\tilde{\beta}_{\dul{\plaf{A}}}$ restricted to $\ker (p_1 )$.  Thus, using the definition of $\Xi_{{\mathcal{A}}}$, it suffices to show that 
\begin{align}  \tilde{\beta}_{\dul{\plaf{A}}} ( \ker (p_1)) & = \beta_{{\mathcal{A}}}^\vee ( (\Lambda \oplus \Z)^\vee_{wall} ). \end{align}
We first prove that $\tilde{\beta}_{\dul{\plaf{A}}} ( \ker (p_1)) \subseteq \beta_{{\mathcal{A}}}^\vee ( (\Lambda \oplus \Z)^\vee_{wall} )$. For any coarse subdivision $S = \{(Q_i, A_i): i \in I\}$ of $(Q,A)$ we will say that $A_i$ and $A_j$ are adjacent if $Q_i \cap Q_j$ is a facet of both $Q_i$ and $Q_j$. Define the set of differences
\begin{align*} B_S := \{e_{\eta_{(S,A_i)}} - e_{\eta_{(S,A_j)}} : i, j \in I, A_i \text{ adjacent to } A_j \} \subset \Z^{\dul{\plaf{A}}}. \end{align*}
Then the lattice $\ker (p_1)$ is easily seen to be generated by 
\begin{align*} \{e_{\varrho_A}\} \cup \left\{e_b : b \in \dul{\plaf{A}}^h \right\} \cup \left( \cup_{S} B_S \right),
\end{align*}
where the last union is over all coarse subdivisions of $(Q,A)$. One computes that $\tilde{\beta}_{\dul{\plaf{A}}} (e_{\varrho_A}) = \varrho_A = \beta_{{\mathcal{A}}}^\vee(0,1)$. Also, by the definition of $\tilde{\beta}_{\dul{\plaf{A}}}$ in equation \eqref{eq:deftildbeta} and Lemma \ref{lem:lafnormal:1}, if $b \in \dul{\plaf{A}}^h$ corresponds to the facet $F$ of $Q$ with $A_p = A \cap F$, then $\tilde{\beta}_{\dul{\plaf{A}}} (e_b ) = \beta_{{\mathcal{A}}}^\vee (b_{A_p})$. By definition, $b_{A_p}$ is constant on $A_p$ which implies it is in $ (\Lambda \oplus \Z)^\vee_{wall}$. Finally, by  Lemma \ref{lem:stackfanmap}, if $e_{\eta_{(S,A_i)}} - e_{\eta_{(S,A_j)}} \in B_S$ then 
\begin{align*}
\alpha_{{\mathcal{A}}}^\vee \left(\tilde{\beta}_{\dul{\plaf{A}}} \, ( e_{\eta_{(S,A_i)}} - e_{\eta_{(S,A_j)}}) \right) = \tilde{\beta}_{\dul{\psecv{A}{v}}}\, (e_{b_S} - e_{b_S}) = 0 .
\end{align*}
This implies that $\tilde{\beta}_{\dul{\plaf{A}}} \, ( e_{\eta_{(S,A_i)}} - e_{\eta_{(S,A_j)}}) = \du{\eta}_{(S,A_i)} - \du{\eta}_{(S,A_j)}$ restricts to an affine function (as the kernel of $\alpha^\vee_{{\mathcal{A}}}$ is $\left(\text{im} (\beta_{{\mathcal{A}}})\right)^\vee$). Since $\du{\eta}_{(S,A_i)}, \du{\eta}_{(S,A_j)} \in \Gamma_S$ are $\Lambda$-defining functions for $S$, their restriction to $A_i$ is in $(\Lambda \oplus \Z)^\vee$. This implies their difference is an affine function on $\Lambda$ and thus equals $\beta_{{\mathcal{A}}}^\vee ( \lambda )$ for some $\lambda \in (\Lambda \oplus \Z)^\vee$. Furthermore, since both $\du{\eta}_{(S,A_i)}$ and $\du{\eta}_{(S,A_j)}$ are zero on $A_i \cap A_j$, $\lambda$ is as well, and as $A_i$ is adjacent to $A_j$, $\lambda$ defines a wall in $A$. Therefore, we have shown $\tilde{\beta}_{\dul{\plaf{A}}} \, ( e_{\eta_{(S,A_i)}} - e_{\eta_{(S,A_j)}}) \in \beta_{{\mathcal{A}}}^\vee ( (\Lambda \oplus \Z)^\vee_{wall} )$.

We now turn to the inclusion $ \beta_{{\mathcal{A}}}^\vee ( (\Lambda \oplus \Z)^\vee_{wall} ) \subseteq \tilde{\beta}_{\dul{\plaf{A}}} ( \ker (p_1))$. To verify this, suppose $\nu \in (\Lambda \oplus \Z)^\vee$ defines a wall in $A$ and observe that restricting $\nu$ to ${{\mathcal{A}}}$ will give us one of three possible scenarios. First, $\nu |_{{\mathcal{A}}}$ could be constant, in which case $\nu$ is as well and $\beta^\vee_{{\mathcal{A}}} (\nu ) =  \tilde{\beta}_{\dul{\plaf{A}}} (n \varrho_A)$ for some $n \in \Z$. Second, it could be the case that $\nu$ is constant on a facet $F$ of $Q$ and non-constant on $Q$. In this case, it is a multiple of the primitive $b_{A_p}$ where $A_p = F \cap A$. By Lemma \ref{lem:lafnormal:1} and the definition of $\tilde{\beta}_{\dul{\plaf{A}}}$, $\beta_{{\mathcal{A}}}^\vee (\nu ) \in \tilde{\beta}_{\dul{\plaf{A}}} ( \ker (p_1))$. Third,  $\nu$ could be constant on subset ${{\mathcal{A}}}^\prime \subset {{\mathcal{A}}}$ whose affine span in $\Lambda_\R$ divides $Q$ into two subpolytopes. More precisely, letting $c = \nu (a, 1 )$ for some $(a, 1) \in {{\mathcal{A}}}^\prime$, take $A_- = \{(a,1) \in {{\mathcal{A}}} : \nu (a,1) \leq c \}$ and $A_+  = \{(a,1) \in {{\mathcal{A}}} : \nu (a) \geq c \}$. Then $A_-$ and $A_+$ affinely span $\Lambda_\R$ and one can define elements $\nu_\pm \in (\Z^A)^\vee$  as 
\begin{align*} \nu_+ & = \sum_{a \in A_+} \nu (a) e_a^\vee + \sum_{a \in A - A_+} c e_a^\vee - c \varrho_A
\end{align*}
and $\nu_- = \nu_+ - \beta_{{\mathcal{A}}}^\vee (\nu )$. We claim that $\nu_\pm$ lie in $C_\Z (S)$ for a coarse subdivision $S = \{(Q_i, A_i): i \in I\}$ of $(Q,A)$. As they differ by an element of $\text{im} (\beta_{{\mathcal{A}}}^\vee)$, they define the same subdivision. By definition, $S$ is a coarse subdivision if and only if the cone of defining functions $C_\R^\circ (S) \subset (\R^{{\mathcal{A}}})^\vee$ for $S$ is $(d + 2)$-dimensional (recall that $\dim (\Lambda_\R) = d$).  Let $L_{{{\mathcal{A}}}^\prime} \subset [(\Lambda_\R \oplus \R)^\vee]^2$ be the subspace of all pairs of affine functions $(b_1, b_2)$ for which $b_1|_{{{\mathcal{A}}}^\prime} = b_2 |_{{{\mathcal{A}}}^\prime}$ and observe that, since ${{\mathcal{A}}}^\prime$ spans a codimension $1$ subspace,  $\dim (L_{{{\mathcal{A}}}^\prime}) = d + 2$. Now, letting $Q_\pm = \convhull (A_\pm)$, it follows from the construction of $\nu_+$ that  $(Q_\pm , A_\pm ) \in S$. Thus there is a homomorphism $F : C_\R^\circ (S) \to L_{{{\mathcal{A}}}^\prime}$ defined by $F ( \eta ) = ( \varsigma_+ , \varsigma_-)$ where $\varsigma_\pm$ are the affine functions which restrict to $A_\pm$. One can check that the image of $F$ is $(d + 2)$-dimensional and, since $A = A_+ \cup A_-$, $F$ is also injective. This verifies the claim that $\nu_\pm$ define a coarse subdivision. 

Finally, by construction we have that $\nu_\pm |_{A_\pm} = 0$ so $\nu_\pm$ lie in the cones $C_\Z (S , A_\pm)$ of the Lafforgue fan and are multiples of $\eta_{(S, A_\pm)}$, respectively. Again, by construction, they are both $\Lambda$-defining functions for $S$, so they are also multiples of $\du{\eta}_{(S, A_\pm)}$. Indeed, it follows from Lemma \ref{lem:stackfanmap} that there exists a single constant $C$ such that $\nu_\pm = C \du{\eta}_{(S,A_\pm)}$ which implies that $\beta_{{\mathcal{A}}}^\vee (\nu) = \nu_+ - \nu_- = C (\du{\eta}_{(S,A_+)} - \du{\eta}_{(S,A_-)}) \in \tilde{\beta}_{\dul{\plaf{A}}} ( \ker (p_1))$. This completes the proof of the first statement. 

For the second statement, applying Lemma \ref{lem:KtoXi} gives us that $(\Lambda \oplus \Z)^\vee_{wall} = (\Lambda \oplus \Z)^\vee$. This in turn implies that $\Xi_{{\mathcal{A}}} = L_{{\mathcal{A}}}^\vee$ and that diagram \eqref{diag:secondef} is a colimit diagram.
\end{proof}
We conclude this subsection with a description of this stacky fan for our two main examples.
\begin{eg}
For Example \ref{eg:simplex}, we had that $A = \{(1,0),(-1,0),(0,1)\}$ was a simplex and observed that $\mcx_{\psec{A}}^r$ was equal to a point. The primitives hyperplane support functions for $Q$ are 
\begin{align*} \du{Q} = \{(0,1), (1,1), (-1,1)\} .\end{align*} 
By including $\Lambda^\vee$ into $(\Lambda \oplus \Z)^\vee$ and adding any constant affine of the form $(0,0,n)$, one observes that $(\Lambda \oplus \Z)^\vee_{wall} = (\Lambda \oplus \Z)^\vee$ in this case. One computes then that $\Xi_{{\mathcal{A}}} = \Lambda_{{\mathcal{A}}^\vee} \cong \left( \Z / 2 \Z \right)$. Applying Lemma~\ref{lem:stfansecon} and results from Example~\ref{eg:simplex2}, we have that the stacky fan for $\secon{A}$ is 
\begin{align*}
\widetilde{\sfan}_{\psec{A}} = \left( 0, \left(\Z / 2 \Z \right), 0 , \{0\} \right).
\end{align*}
This implies that 
\begin{align*}
\secon{A} \cong B \left(\Z / 2 \Z \right).
\end{align*}
More generally, for any set $A$ which consists solely of lattice vertices of a $d$-dimensional simplex in $\Lambda_\R$,  one can show that $\secon{A}$ is isomorphic to the classifying stack $ B \, \Xi_{{\mathcal{A}}} = [\text{pt} / \Xi_{{\mathcal{A}}} ]$ (note that, even in this set of examples, it is not always the case that $\Xi_{{\mathcal{A}}} = \Lambda_{{{\mathcal{A}}}^\vee}$ ). 
In the next subsection, we will interpret this as the moduli stack for hypersurfaces in $\mcx_Q$ defined by sections in $\linsys{A}$.
\end{eg}

\begin{eg}
We conclude this subsection by describing the stack $\secon{A}$ for $A = \{(0,0), (1,0), (0,1),(-1,-1)\}$. Since $A$ contains a simplex which affinely spans $\Lambda$, we have that $(\Lambda \oplus \Z)^\vee_{wall} = (\Lambda \oplus \Z)^\vee$. This implies $\Xi_{{\mathcal{A}}} = \Lambda_{{\mathcal{A}}^\vee}$ and as the fundamental sequence for ${\mathcal{A}}$ is equivalent to
\begin{align*}
0 \to \Z \stackrel{\alpha_{{\mathcal{A}}}}{\longrightarrow} \Z^{4} \stackrel{\beta_{{\mathcal{A}}}}{\longrightarrow} \Z^3  \to 0.
\end{align*}
we have $K_{{\mathcal{A}}} = 0$. Here we compute  $\alpha_{{\mathcal{A}}}(1) = (3,-1,-1,-1)$ as this yields the generating relation $3 (0,0,1) - (1,0,1) - (0,1,1) - (-1,-1,1) = 0$ of elements in ${\mathcal{A}}$.  Thus $\Lambda_{{\mathcal{A}}^\vee}$ is isomorphic to $L_{{\mathcal{A}}}^\vee \cong \Z$. In particular, the commutative diagram~\eqref{diag:secondef} in Lemma~\ref{lem:stackfanmap} is a pushout and the homomorphism $\tilde{\beta}_{\psecv{A}{v}}$ is equivalent to $\tilde{\beta}_{\psec{A}}$.

Now, $\dul{\psec{A}} = \{b_{T_-}, b_{T_+}\} \in L_{{\mathcal{A}}}^\vee \cong \Z$. In Example~\ref{eg:circuit3} we saw that $\tilde{\beta}_{\dul{\plaf{A}}} = \beta_{\dul{\plaf{A}}}$. Using the commutativity of diagram~\eqref{diag:secondef} and equations~\eqref{eq:circuit3}, this implies 
\begin{align*} \tilde{\beta}_{\psec{A}} (b_{T_-}) &  = \alpha_{{\mathcal{A}}}^\vee (\eta_{(T_-, A - \{(1,0)\})}) = \alpha_{{\mathcal{A}}}^\vee (0,1,0,0) = -1, \\  
\tilde{\beta}_{\psec{A}} (b_{T_+}) &  = \alpha_{{\mathcal{A}}}^\vee (\eta_{(T_+, A - \{(0,0)\})}) = \alpha_{{\mathcal{A}}}^\vee (1,0,0,0) = 3.
\end{align*} 
Thus the stacky fan is isomorphic to 
\begin{equation*}
\widehat{\sfan}_{\psec{A}} = \left( \Z^2, \Z, 3e_1^\vee - e_2^\vee, \Sigma \right),
\end{equation*}
where $\Sigma$ is the fan consisting of all proper faces of $\R_{\geq 0}^2$. Thus $X_\Sigma = \C^2 - \{0\}$ and the secondary stack is the weighted projective line
\begin{equation*}
\secon{A} \cong \p (3, 1).
\end{equation*}
\end{eg}

%Rewrite diagram... prove main theorem on one parameter degenerations of toric varieties being represented by \secon{} + stabilizer computation proposition. Go back and include illustrations on \{0, 1, 2, 3\}

\subsection{\label{sec:moduli}Stable pair moduli}

In this section we relate the moduli space of hypersurfaces defined by full sections in $\linsys{A}$ introduced in Definition \ref{defn:sections} to a dense open subset in $\secon{A}$. While this will be done with the standing assumption that $A$ is wall complete, we note that if this is not the case, one can replace $\mcx_Q$ by an \'etale cover $\mcx_{Q}^\prime$ to obtain a similar interpretation of $\secon{A}$. We emphasize here that by moduli space, we mean hypersurfaces in a toric stack up to toric equivalence, not up to isomorphism. This toric moduli space, which we will denote by $\mcv_A$, will be proven to be an affine DM stack and is therefore much easier to control. We then show that the pullback $\hyp{A} \subset \laf{A}$ along the inclusion yields a universal hypersurface over $\mcv_A$. Finally, we prove that any toric degeneration $F_\eta : \mcx_\eta \to \C$ obtained by a $\Lambda$-defining function $\eta$ can be realized by pulling back $\laf{A}$ along a map $\rho_\eta : \C \to \secon{A}$ where $0$ is sent to the compactifying divisor $\secon{A} - \mcv_A$. Restricting this to the universal hypersurface gives meaning to the notion of $\secon{A}$ as a moduli for hypersurface degenerations. 

Our first goal is to describe the space of sections $\linsys{A}$ modulo toric isomorphisms. For every $a \in A$ there is an equivariant  divisor  \begin{align*} D_a = \sum_{b \in \du{Q}} \left( \left< b, a \right> + n_b \right)e_b^\vee \in (\Z^{\du{Q}})^\vee = \Divisor_{eq} (\mcx_Q ). \end{align*} 
The section vanishing on $D_a$ will be denoted $x_{D_a} \in \linsys{A}$. Recall from Definition \ref{defn:polystack} that the torus $\mathbb{G}_Q$ acting on $\mcx_Q$ is $\Lambda^\vee \otimes \C^*$. By fixing the set $A \subset \Lambda$, we may identify the maximal torus orbit $U \subset \mcx_Q$ as $\mathbb{G}_Q$ and trivialize $\mco_Q (1)$ over $U$ so that the section $x_{D_a}$ is identified with the monomial $a \in \Lambda \cong \Hom (\Lambda^\vee \otimes \C^* , \C^*)$. Using these identifications, the action of the torus $\mathbb{G}_{Q}$ on $\mcx_Q$ extends to one on $\linsys{A} \cong \C^A$ by tensoring the homomorphism $\alpha_A^\vee : \Lambda^\vee  \to (\Z^A)^\vee$ by $\C^*$. In other words, taking the dual of the evaluation map $\alpha_A$ and tensoring with $\C^*$ realizes $\mathbb{G}_Q$ inside of $(\mathbb{C}^*)^A$ which acts diagonally on $\linsys{A} = \C^A$. We also wish to quotient by the $\C^*$ scaling action on sections giving the group $\mathbb{G}_A \times \C^* \cong (\Lambda \oplus \Z)^\vee \otimes \C^*$. The action of this group on the space $\linsys{A} = \C^A$ can be realized by the tensoring $\alpha_{{\mathcal{A}}}^\vee : (\Lambda \oplus \Z )^\vee \to (\Z^{{\mathcal{A}}})^\vee$ with $\C^*$. By equation \eqref{eq:extAex}, this leads to the following definition.
\begin{defn} The $A$-linear system quotient stack is the toric stack $\mcx_\linsys{A}$ given by the stacky fan:
\begin{equation*}  \sfan_{\linsys{A}} = \left( (\Z^{{\mathcal{A}}})^\vee , \Lambda_{\gdual{{\mathcal{A}}}},  \alpha_{{\mathcal{A}}}^\star , \fan \right) \end{equation*} where $\fan$ is the fan with unique maximal cone $(\R_{\geq 0}^{{\mathcal{A}}})^\vee$.
\end{defn}
From the arguments preceding the definition, this gives the Artin stack $[\linsys{A} / (\Lambda \oplus \Z)^\vee \otimes \C^*]$ corresponding to sections in $\linsys{A}$ up to toric equivalence. There are many substacks of $\mcx_{\linsys{A}}$ that are  Deligne-Mumford (or DM), but our focus will be on the substack of hypersurfaces defined by full sections. Recall that ${\mathcal{A}}_v$ denotes the set of vertices of $\convhull ({\mathcal{A}})$ and ${{\mathcal{A}}}_{nv} = {\mathcal{A}} - {{\mathcal{A}}}_v$ denotes the remaining elements of ${\mathcal{A}}$. The substack of full sections is  obtained by taking the subfan $\Sigma^\prime$ of $\Sigma$ which has $(\R^{{\mathcal{A}}_{nv}}_{\geq 0} )^\vee$ as its maximal cone. The stacky fan 
\begin{align} \label{eq:fullsec}  \sfan^\prime := \left( (\Z^{{\mathcal{A}}})^\vee , \Lambda_{\gdual{{\mathcal{A}}}} , \alpha_{{\mathcal{A}}}^\star, \fan^\prime \right) \end{align}
is otherwise the same as for $\sfan_{\linsys{A}}$ and we denote its toric stack by\gls{mcvA} $\mcv_A$. We now verify the claim that this is a DM stack.
\begin{prop} \label{prop:etale}
The dense open substack $\mcv_A \subset \mcx_\linsys{A}$ is an affine DM stack.
\end{prop}
\begin{proof} To prove this, we will define an affine DM stack $\mathcal{X}$ with stacky fan 
\begin{align} \label{eq:doubleprime} \sfan^{\prime \prime} = \left( \Gamma , \Lambda_{{\mathcal{A}}^\vee} , \gamma , \Sigma^{\prime \prime} \right) \end{align} and an isomorphism $g : \mcx \to \mcv_A$ induced by a map of stacky fans $(g_1, g_2) :  \sfan^{\prime \prime} \to \sfan^\prime $. 
%We note that while the definition of $\sfan^{\prime \prime}$ depends on choices, the resulting  toric stacks will be isomorphic.

First we define $\sfan^{\prime \prime}$. Recall that $A$ affinely spans $\Lambda_\Q$ implying that $A_v$ does as well. In turn, this implies ${\mathcal{A}}_v$ linearly spans $(\Lambda_\Q \oplus \Q)$ and we choose $C \subseteq {\mathcal{A}}_v$ to be any $(d + 1)$ element subset which is a basis (recalling that $\rank (\Lambda) = d$). Let $\Gamma = \textnormal{Lin}_{\Z} \{e_a^\vee : a \in {\mathcal{A}} - C \} \subset \left( \Z^{{\mathcal{A}}} \right)^\vee$ and $\gamma : \Gamma \to \Lambda_{{\mathcal{A}}^\vee} $ be the restriction of $\alpha^\star_{{\mathcal{A}}}$ to $\Gamma$. To complete the definition of $\sfan^{\prime \prime}$, take $\Sigma^{\prime \prime}$ to again be the fan with unique maximal cone $(\R_{\geq 0}^{{\mathcal{A}}_{nv}})^\vee$.

To verify that $\sfan^{\prime \prime}$ is a stacky fan, we must show that $\gamma$ has finite cokernel. Recall that the exact sequence \eqref{eq:extAex} is 
\begin{align*}
0 \to (\Lambda \oplus \Z)^\vee \stackrel{\beta_{{\mathcal{A}}}^\vee}{\longrightarrow} (\Z^{{\mathcal{A}}})^\vee \stackrel{\alpha_{{\mathcal{A}}}^\star}{\longrightarrow} \Lambda_{{{\mathcal{A}}}^\vee} \to 0 .
\end{align*}
Here $\beta_{{\mathcal{A}}} : \Z^{{\mathcal{A}}} \to \Lambda \oplus \Z$ was the tautological map $\beta_{{\mathcal{A}}} (e_a) = a$. Now if $\mathbf{a} \in \Gamma \cap \text{im} (\beta_{{\mathcal{A}}}^\vee )$ then there exists $f \in (\Lambda \oplus \Z)^\vee$ such that 
\[\sum_{a \in {\mathcal{A}} - C} c_a e_a^\vee = \mathbf{a} = \beta_{{\mathcal{A}}}^\vee (f) = \sum_{a \in {\mathcal{A}}} f(a) e_a^\vee. \]
But then $f(a) = 0$ for all $a \in C$ and, as $C$ was chosen to be a basis for $\Lambda_\Q \oplus \Q$, $f = 0$. Since the sequence is exact, this implies $\ker (\gamma )$ is zero and, as the rank of $\Gamma$ equals that of $\Lambda_{{\mathcal{A}}^\vee}$, $\gamma$ must have finite cokernel and $\sfan^{\prime \prime}$ is a stacky fan. Furthermore, since $\gamma$ is injective, $\mathbb{H}_{\sfan^{\prime \prime}}$ is a finite group and we have that
\[\mcx_{\sfan^{\prime \prime}} = \left[ \left( (\C^*)^{|A_v| - d - 1}  \times \C^{|A_{nv}|} \right) / \, \mathbb{H}_{\sfan^{\prime \prime}} \right] \]
is an affine DM stack.

Letting $g_1 : \Gamma \to \left( \Z^{{\mathcal{A}}} \right)^\vee$ be the inclusion and $g_2 : \Lambda_{{\mathcal{A}}^\vee} \to \Lambda_{{\mathcal{A}}^\vee} $ the identity, we have the commutative diagram 
\begin{equation*}
\begin{CD}
	 \Gamma @>{\gamma}>> \Lambda_{{{\mathcal{A}}^\vee}} \\
	@V{g_1}VV @V{=}V{g_2}V \\
	(\Z^{{\mathcal{A}}})^\vee @>{\alpha^\star_{{\mathcal{A}}}}>> \Lambda_{{{\mathcal{A}}^\vee}} 
\end{CD}
\end{equation*}
and since $g_1$ takes $\Sigma^{\prime \prime}$ to $\Sigma^\prime$, $(g_1, g_2)$ is a map of stacky fans. 

Finally, $g_1 : | \Sigma^{\prime \prime} | \to |\Sigma^\prime|$ is an isomorphism on the support of the fans and this restricts to an isomorphism of monoids  $g_1 : | \Sigma^{\prime \prime} | \cap \Gamma \to |\Sigma^\prime| \cap (\Z^{{\mathcal{A}}})^\vee$. As $g_2$ is the identity, we have verified the hypothesis of \cite[Theorem~B.3]{toricstacks} to ensure that $g$ induces an isomorphism of toric stacks.
\end{proof}

By definition, the stack $\mcv_A$ is a quotient of the affine toric variety of full sections $X_{\Sigma^\prime} \cong (\C^*)^{{{\mathcal{A}}}_v} \times \C^{{{\mathcal{A}}}_{nv}}$ by $\mathbb{H}_{\sfan^{\prime}} = \left(\Lambda \oplus \Z \right)^\vee \otimes \C^* $. Observe that the group $\mathbb{H}_{\sfan^\prime}$ is naturally isomorphic to $\mathbb{G}_Q \times \C^*$ where $\mathbb{G}_Q$ is the torus acting on $\mcx_Q$ and the additional factor of $\C^*$ rescales the sections. This group also acts naturally on the total space $\mco_A (-1)$. Taking the dual action of $\mathbb{H}_{\sfan^\prime}$ on $X_{\Sigma^\prime}$ (i.e. $\lambda \cdot x = \lambda^{-1} x$), we obtain a diagonal action of $\mathbb{H}_{\sfan^\prime}$ on the product  $X_{\Sigma^\prime} \times \mco_A (-1)$ and define the toric stack $\mcu_A$ over $\mcv_A$ to be the quotient
\begin{equation*} \mcu_A := \left[(X_{\Sigma^\prime} \times \mco_A (-1) ) / \, \mathbb{H}_{\sfan^\prime} \right].  \end{equation*}
Write $E (1)$ for the line bundle on $\mco_A (-1)$ which is the pullback of $\mco_A (1)$ along the projection $\mco_A (-1) \to \mcx_Q$ and examine the tautological section $\tilde{s}$ of $\mco \, \boxtimes \, E (1)$ over $X_{\Sigma^\prime} \times \mco_A (-1)$. This is given by taking $(t, (q,v))$ with $t \in X_{\Sigma^\prime} \subset H^0 (\mcx_Q , \mco_A (1))$, $q \in \mcx_Q$ and $v  \in \mco_A (-1)$ lying over $q \in \mcx_Q$ to $t (q ) \in \mco \, \boxtimes \, E (1)$. As we have  $\mathbb{H}_{\sfan^\prime}$ acting with an inverse on $X_{\Sigma^\prime}$, $\tilde{s}$ is invariant under the diagonal action and defines a section of the line bundle $\mce(1)$ on the quotient $\mcu_A$. Its zero locus is the incidence variety  $[\tilde{s}^{-1} (0) / \mathbb{H}_{\sfan^\prime} ]$ which we denote by\gls{mcwA} $\mcw_A$. We consider the pair $\mcw_A \subset \mcu_A$ to be the universal hypersurface $\{\tilde{s} = 0\}$ over $\mcv_A$.

\begin{prop} \label{prop:openemb1}\gls{mcuA}
	There is an open inclusion $\iota : \mcu_A \to \tlaf{A}$.
\end{prop}
\begin{proof} To prove the proposition, we first provide a stacky fan description for $\mcu_A$. For this, note that the total space $\mco_A (-1)$ has a stacky fan dual to the polyhedron which is the cone  $\textnormal{Lin}_{\R_{\geq 1}} (Q \oplus \{1\}) \subset \Lambda^\vee_\R \oplus \R$. As this is a cone over the polytope $Q$, a facet is either $Q$, defined by $\varrho_B = (0, 1)$, or a facet of the cone $\textnormal{Lin}_{\R_{\geq 0}} ({\mathcal{A}})$. So the set of primitive hyperplanes can be identified with $B = \du{Q} \cup \{\varrho_B\}$ with the simplicial set $\mcb$ corresponding to the normal fan of $\textnormal{Lin}_{\R_{\geq 1}} (Q \oplus \{1\})$. The fundamental exact sequence \eqref{eq:setfunseq} for $\mco_A (-1)$ is then 
\begin{align} \label{eq:mco-1}
0 \to L_{\du{Q}} \stackrel{\alpha_B}{\longrightarrow} \Z^\du{Q} \oplus \Z \stackrel{\beta_B}{\longrightarrow} (\Lambda \oplus \Z)^\vee \to K_{\du{Q}} \to 0.
\end{align}
Thus 
\begin{align*}
\sfan_{\mco_A (-1)} := \left( \Z^\du{Q} \oplus \Z , (\Lambda \oplus \Z)^\vee , \beta_B , \Sigma_{B, \mcb} \right)
\end{align*} gives a stacky fan for $\mco_A (-1)$. Now, as $\mathbb{H}_{\sfan^\prime} = \mathbb{G}_{\sfan_{\mco_A (-1)}} = ( \C^*)^B / \mathbb{H}_{\sfan_{\mco_A (-1)}}$ we let $\mathbb{H}_{\sfan_{\mco_A (-1)}}$ act trivially on $X_{\Sigma^\prime}$ and obtain
\begin{align*}
\mcu_A & = \left[(X_{\Sigma^\prime} \times \mco_A (-1) ) / \, \mathbb{H}_{\sfan^\prime} \right] , \\ & = \left[\left(X_{\Sigma^\prime} \times \left( X_{\Sigma_{B, \mcb}} / \, \mathbb{H}_{\sfan_{\mco_A (-1)}} \right)\right) / \, \left(  ( \C^*)^B / \mathbb{H}_{\sfan_{\mco_A (-1)}} \right) \right], \\ & = \left[\left(  (X_{\Sigma^\prime} \times X_{\Sigma_{B, \mcb}}) / \, \mathbb{H}_{\sfan_{\mco_A (-1)}} \right) / \, \left(  ( \C^*)^B / \mathbb{H}_{\sfan_{\mco_A (-1)}} \right) \right] , \\ & = \left[  (X_{\Sigma^\prime} \times X_{\Sigma_{B, \mcb}})   / \, ( \C^*)^B  \right].
\end{align*}
The action of $(\C^*)^B$ on $X_{\Sigma^\prime}$ is obtained by tensoring the negative of the homomorphism 
\begin{align} \label{eq:chi} \tilde{\chi}_A :=   \beta_{{\mathcal{A}}}^\vee \circ \beta_B : \Z^{\du{Q}} \oplus \Z  \to (\Z^{{\mathcal{A}}})^\vee \end{align} 
with $\C^*$, where $\beta_B$ and $\beta_{{\mathcal{A}}}^\vee$ occur in equations \eqref{eq:mco-1} and \eqref{eq:Aex} respectively.  On the other hand, the action of $(\C^*)^B$ on $X_{\Sigma_{B, \mcb}} \subset \C^B$ is just the restriction of the torus action. Thus the diagonal action is given by  $(- \tilde{\chi}_A , \text{Id} )  : \Z^{\du{Q}} \oplus \Z  \to (\Z^{{\mathcal{A}}})^\vee \oplus \Z^{\du{Q}} \oplus \Z$ which has cokernel $\chi_A = (\text{Id}, \tilde{\chi}_A )$.
\begin{align*} \chi_A : (\Z^{{\mathcal{A}}})^\vee \oplus \Z^{\du{Q}} \oplus \Z \to (\Z^{{\mathcal{A}}})^\vee .\end{align*}
Taking the product fan $\Sigma_{U,A} :=\Sigma^{\prime } \times \Sigma_{B, \mcb}$, one observes that $\mcu_A$ can be obtained from the stacky fan 
\begin{align}
\label{eq:openU}
\sfan_{U, A} = \left(  (\Z^{{\mathcal{A}}})^\vee \oplus \Z^{\du{Q}} \oplus \Z , (\Z^{{\mathcal{A}}})^\vee , \chi_A, \Sigma_{U,A} \right).
\end{align}

From Lemma \ref{lem:lafhyppart} we have that $\dul{\plaf{A}}$ is the disjoint union $\{\varrho_A\} \cup \dul{\plaf{A}}^h \cup \dul{\plaf{A}}^v$. Recall that elements of $\dul{\plaf{A}}^h$ are indexed by pointed subdivisions $(S, A_p)$ where $S$ is the trivial subdivision and $A_p$ are points an a facet of $Q$ whereas $\dul{\plaf{A}}^v$ correspond to $(S, A_p)$ where $S = \{(Q_i, A_i) : i \in I\}$ is a coarse subdivision and $A_p = A_i$ for some $i \in I$. By Lemma \ref{lem:lafnormal}, the primitive corresponding to the latter type is $\eta_{(S, A_p)}$ and is the unique primitive defining function for $S$ satisfying $\eta_{(S, A_p)}|_{A_p} = 0$. Amongst all of the elements in $\dul{\plaf{A}}^v$ are those whose coarse subdivisions are of the type $S = \{(Q, A - \{a\})\}$ with $a \in A_{nv}$.   One can check that in this case $\eta_{(S, A - \{a\})} = e_a^\vee$.  We define the subset $B_{\plaf{A}}  = \{e_a^\vee : a \in \mca_{nv}\} \cup \{\varrho_A\} \cup \dul{\plaf{A}}^h \subset \dul{\plaf{A}}$ of supporting hyperplanes of $\plaf{A}$ and
let $\Sigma_{\ptlaf{A}}^\prime$ be the subfans of $\Sigma_{\ptlaf{A}}$, appearing in equation \eqref{eq:ptlafstfan}, consisting of all cones whose boundary one-cones are generated by elements in $B_{\ptlaf{A}}$. Expanding to the union of $B_{\ptlaf{A}}$ with $A_v$, we define the open substack $\mcx_{\plaf{A}}^\circ \subset \tlaf{A}$ by taking the stacky fan
\begin{align*} 
\sfan_{\plaf{A}}^\prime & = \left( \Z^{A_v \cup B_{\plaf{A}}}  , (\Z^{A})^\vee ,  \beta^\prime , \Sigma_{\plaf{A}}^\prime \right).  
\end{align*}
Here $\beta^\prime$ is the restriction of $\tilde{\beta}_{\dul{\plaf{A}}}$, defined in equation \eqref{eq:lafsfan}, to $\Z^{A_v \cup B_{\plaf{A}}}$. 
We mention that while  the inclusion $i$ of $\Z^{A_v \cup B_{\plaf{A}}}$  into $\Z^{\dul{\plaf{A}}}$ is not an isomorphism, the induced map on the stacks $(i , \text{Id}): \sfan^\prime_{\plaf{A}} \to \widetilde{\sfan}_{\plaf{A}}$  defines the open inclusion whose image is the complement of the divisors associated to $\dul{\plaf{A}} - B_{\plaf{A}}$. This is an application of  \cite[Theorem~B.3]{toricstacks}.
	
We now relate $\sfan_{\plaf{A}}^\prime$ to $\sfan_{U, A}$. By Lemma \ref{lem:lafhyppart}, there is a bijection between $\du{Q} \cup \{\varrho_B\}$ and $\dul{\plaf{A}}^h \cup \{\varrho_A\}$. Extending this bijection to basis vectors, and taking the identity on basis vectors indexed by ${\mathcal{A}}$, we obtain an isomorphism 
\begin{align} g_1 :   (\Z^{{\mathcal{A}}})^\vee \oplus \Z^{\du{Q}} \oplus \Z  \to \Z^{A_v \cup B_{\plaf{A}}}. \end{align}
Taking $g_2$ to be the identity, and consulting the definition of $\tilde{\beta}_{\dul{\plaf{A}}}$ obtained from Lemma \ref{lem:lafnormal:1}, we observe that this gives a commutative diagram
\begin{equation} \label{eq:isomstack1}
\begin{CD}
(\Z^{{\mathcal{A}}})^\vee \oplus \Z^{\du{Q}} \oplus \Z  @>{\chi_A}>> (\Z^{{\mathcal{A}}})^\vee \\
@V{g_1}VV @V{g_2}VV \\ 
\Z^{A_v \cup B_{\plaf{A}}}  @>{\beta^\prime }>> (\Z^A)^\vee \\.
\end{CD}
\end{equation}

To prove that $(g_1, g_2)$ this induces an isomorphism of stacks, we check that $g_1$ realizes an isomorphism of fans from  $\Sigma_{U, A}$ to $\Sigma^\prime_{\plaf{A}}$. Take $\Sigma_1$ to be the fan supported in $\R^{{\mathcal{A}}_{nv}} \subset \R^{B_{\plaf{A}}}$ with unique maximal cone $\R_{\geq 0}^{{{\mathcal{A}}}_{nv}}$ and let $\Sigma_2$ be the subfan of $\Sigma_{\plaf{A}}^\prime$ supported in $\R^{\dul{\plaf{A}}^h \cup \{\varrho_A\}}$. Lemma \ref{lem:lafnormal:2} along with the definition of the Lafforgue fan implies that the bijection between $\du{Q} \cup \{\varrho_B\}$ with $\dul{\plaf{A}}^h \cup \{\varrho_A\}$ induces an isomorphism between $\Sigma_{B, \mcb}$ and $\Sigma_2$. Moreover, consulting the inclusion relation $\prec$ for the Lafforgue fan following equation \eqref{eq:lafcones}, we check that $\Sigma^\prime_{\plaf{A}}$ is the product fan $\Sigma_1 \times \Sigma_2$. Indeed, any cone in $\Sigma^\prime_{\plaf{A}}$ corresponds to a pointed subdivision $(S, A_p)$ of $(Q,A)$ where $S$ only refines subdivisions of the type $S^\prime = \{(Q, A^\prime)\}$ (otherwise $C_\R (S, A_p)$ contains a one-cone which is not generated by an element of $B_{\plaf{A}}$). By the same reasoning, $S$ itself must also be such a subdivision, i.e. $S = \{(Q, A^\prime)\}$. Now, suppose $\sigma \times \tau $ is a cone in $\Sigma_1 \times \Sigma_2$. Then $\sigma$ is the span of basis vectors corresponding to a subset $C$ of $A_{nv}$. As $\Sigma_2$ is isomorphic to $\Sigma_{B, \mcb}$, there is a face $Q^\prime$ of $Q$ such that $\tau$ is the span of the basis vectors whose corresponding pointed subdivisions have pointing set that contain $Q^\prime \cap A$. Then the cone in $\Sigma^\prime_{\plaf{A}}$ corresponding to $(\{(Q, A - C)\}, Q^\prime \cap A)$ is the isomorphic image of $\sigma \times \tau$. As these exhaust all possible cones of each fan, we have shown that $\Sigma^\prime_{\plaf{A}}$ and $\Sigma_1 \times \Sigma_2$ are isomorphic on their support and thus $\mcu_A$ and $\mcx_{\plaf{A}}^\circ$ are isomorphic stacks. 
\end{proof}

We continue by relating the universal line bundle and hypersurface in the total Lafforgue stack, introduced in Definition \ref{defn:moduli}, to the incidence variety $\mcw_A \subset \mcu_A$.

\begin{prop}
	The morphism $\iota$ satisfies $\iota^* (\univ{A}) = \mce(1)$ and $\iota^* (\tilde{\mcy}_A) = \mcw_A$.
\end{prop}

\begin{proof}
To prove the proposition, we write explicit formulas for the sections $\tilde{s}$ and $s_A$.  First recall from equation \eqref{eq:mcoq} that for $b \in \du{Q}$, $-n_b$ is the minimum value of $b$ on $Q$. For a toric stack $\mcx$ and an equivariant divisor $D \in \Divisor_{eq} (\mcx )$, we write $x_D$ for the section of $\mco ([D])$ defining $D$.

Now, the universal section $s_A \in H^0 (\tlaf{A}, \univ{A})$ is defined as the pullback of the section  $\sum_{a \in A} x_{D_a}$ in $\mco_{\p^{|A| - 1}} (1)$. To understand this pullback in terms of the stacky fan $\sfan_{\plaf{A}}^\prime$, first consider the map $\mcx_{\ptlaf{A}}^\circ \to \p^{|A| - 1}$ which is the restriction of $G: \tlaf{A} \to \p^{|A| - 1}$  appearing before Definition \ref{defn:moduli}. A stacky fan for $\p^{|A| -1}$ is $\sfan_\p = \left( (\Z^A)^\vee , (\Z^A)^\vee / (\varrho_A), \beta_\p , \Sigma_{\p^{|A| - 1}} \right)$ where $\beta_\p$ is the quotient homomorphism and $\Sigma_{\p^{|A| - 1}}$ is the fan of all proper subcones of $(\R^A_{\geq 0})^\vee$. We define a function $\rho : \Z^{A_v \cup B_{\plaf{A}}} \to (\Z^A)^\vee$ so that the function $G$ is induced from the map of stacky fans given by $(\rho , \beta_\p)$. For $(\rho , \beta_\p)$ to be a map of stacky fans, the diagram 
\begin{equation*}
\begin{CD}
\Z^{A_v \cup B_{\plaf{A}}}  @>{\beta^\prime}>> (\Z^{A})^\vee  \\
 @V{\rho}VV @V{\beta_{\p}}VV \\ 
(\Z^{A})^\vee @>{\beta_{\p}}>> (\Z^{A})^\vee / (\varrho_A) 
\end{CD}
\end{equation*} 
must commute. For this to occur, we must have $\rho = \beta^\prime + \rho^\prime \cdot \varrho_A$ where $\rho^\prime \in (\Z^{A_v \cup B_{\plaf{A}}})^\vee$. The map $\rho^\prime$ is then uniquely defined so that the support of the fan $\Sigma^\prime_{\plaf{A}}$ maps to that of $\Sigma_{\p^{|A| -1}}$ (as translating by $\varrho_A$ displaces the $|\Sigma_{\p^{|A| -1}}|$ from itself). Now, $\beta^\prime (e_a) = e_a^\vee \in |\Sigma_{\p^{|A| -1}}|$, so $\rho^\prime (e_a) = 0$ while $\beta^\prime (e_{\varrho_A}) = \varrho_A$ implying $\rho^\prime (\varrho_A ) = -1$. For $b \in \dul{\plaf{A}}^h$ corresponding to a facet $F$ of $Q$, we apply the definition of $\beta^\prime$ arising from Lemma  \ref{lem:lafnormal:1} to recall $\beta^\prime (b) =  \beta_{{\mathcal{A}}}^\vee (b_{A_p})$ where $A_p = A \cap F$. 
But $b_{A_p}$ is the support function for the cone over $F$ in $\textnormal{Lin}_{R_{\geq 0}} ({\mathcal{A}})$ and is thus zero on $A_p$ and positive on $A - A_p$ implying $\beta^\prime (b) \in |\Sigma_{\p^{|A| -1}}|$ and $\rho^\prime (b) = 0$. Therefore, $\rho^\prime = - e_{\varrho_A}^\vee$ and 
\begin{align*}
\rho = \beta^\prime - e^\vee_{\varrho_A} \cdot \varrho_A.
\end{align*}

The pullback of $\sum_{a \in A} x_{D_a}$, where $D_a$ is identified with the basis element $e_a$ in $\Divisor_{eq} (\p^{|A| - 1}) = ((\Z^A)^\vee )^\vee = \Z^A$, is the universal section $s_A = \sum_{a \in A} x_{\rho^\vee (e_a)}$. Since $\rho (e_{\varrho_A}) = 0$, the explicit form of $\rho$ gives 
\begin{align*}  \rho^\vee (e_a) = e_a^\vee + \sum_{b \in \dul{\plaf{A}}^h} \left<\beta^\prime (e_b), e_a \right> e_b^\vee  \in  (\Z^{A_v \cup B_{\plaf{A}}})^\vee =  \Divisor_{eq} ( \mcx^\circ_{\plaf{A}}) , \end{align*}
and $s_A = \sum_{a \in A} x_{\rho^\vee (e_a)}$. If $b$ corresponds to the facet $F$ with $A_p = F \cap A_p$, let $b_F \in \du{Q}$ be the supporting hyperplane function of $F$ in $(\Z^A)^\vee$. Then, using the definition of $n_{b_F}$ we have $b_{A_p} = (b_F , n_{b_F})$. As $\beta^\prime (b) = \beta^\vee_{{\mathcal{A}}} (b_{A_p}) $ we compute 
\begin{align*} \left<\beta^\prime (e_b), e_a \right> & = \left< \beta^\vee_{{\mathcal{A}}} (b_{A_p}), e_a \right>,  \\ & = \left< b_{A_p}, \beta_{{\mathcal{A}}} e_a \right>, \\ & = 
\left< (b_F, n_{b_F}), (a, 1) \right>, \\ & = 
\left< b_F ,a \right> + n_{b_F} . 
\end{align*}

Turning to $\mcu_A$, recall that the tautological section $\tilde{s}$ was defined on $X_{\Sigma^\prime} \times \mco_A (-1)$ before Proposition \ref{prop:openemb1}.  Let $D_a \in \Divisor_{eq} (\mcx_Q )$ be the divisor associated to $a \in A$.  Take $r_a \in H^0 (\mco_A (-1), E(1))$ as the pullback of $x_{D_a} \in H^0 (\mcx_Q , \mco_A (1))$ and $t_a :X_{\Sigma^\prime} \to \C$ the projection to the $a$-th coordinate. Then, by definition, $\tilde{s} = \sum_{a \in A} t_a \otimes r_a$.  We lift this to an equivariant function $\mathbf{s}$ on the affine toric variety $X_{\Sigma_{U,A}} = X_{\Sigma^\prime} \times X_{\Sigma_{\mco_A (-1)}}$ defined in equation \eqref{eq:openU}.  For every equivariant divisor $D \in \Divisor_{eq} ( X_{\Sigma_{U,A}} ) = \Z^{{\mathcal{A}}} \oplus ( \Z^\du{Q} \oplus \Z)^\vee$ we write $x_D$ for its defining function. The lift of the divisor associated to the monomial $t_a \otimes r_a$ is
\begin{align}
D_a = e_a +  \sum_{b \in \du{Q}} \left(n_b + \left< b, a \right> \right) e_b^\vee   , 
\end{align}
so that $\mathbf{s} = \sum_{a \in A} x_{D_a}$. As the isomorphism $(g_1, g_2)$ in equation \eqref{eq:isomstack1} pulls back $\rho^\vee (e_a)$ to $D_a$, the result has been shown.
\end{proof}

Having related the universal hypersurface in the total Lafforgue stack with the incidence variety in $\mcu_A$, the following theorem shows that the secondary stack from the previous section is a compactification of the moduli stack $\mcv_A$ of full sections. In particular, the discussion immediately following Theorem \ref{thm:GKZ2} described the facets of $\psec{A}$ in terms of coarse subdivisions $S = \{(Q_i, A_i) : i \in I\}$ of the marked polytope $(Q, A)$. Among such subdivisions are those which contain only one marked polytope  $\{(Q, A - \{a\})\}$ where $a \in A_{nv}$. The pointed subdivisions $(\{(Q, A- \{a\}) \}, A - \{a\})$ of this type formed the vertical boundary of $\mcx_{\plaf{A}}^\circ$.
Including the components of the toric boundary in $\secon{A}$ which correspond to such subdivisions, and taking the complement of the remaining ones, yields a stack isomorphic to $\mcv_A$. The compactifying strata then correspond to reasonable degenerations of $\mcx_Q$. This is in analogy to the moduli space of curves and their stable compactifications which served as motivation for the definition.

%Characterization of stack isomorphism \Gamma_1 \to \Gamma_2 iso and \fan_1 \to \fan_2 bijective on essential cones (i.e. ones that are not quotiented to zero).

\begin{thm} \label{thm:toric1} There is an open embedding $i : \mcv_A \to \secon{A}$. If $p \not\in i (\mcv_A)$, then $p$ is in a boundary divisor $D_S$ where $S = \{(Q_i, A_i ) : i \in I\}$ is a coarse subdivision and $|I| > 1$. 
\end{thm}

\begin{proof}  
To start we define an open substack of $\mcx_{\psec{A}}^r$ given by the stacky fan
\begin{align}   \sfan_{\psecv{A}{v}} = \left( \Z^{\dul{\psecv{A}{v}}} , L_A^\vee , \beta_{\dul{\psecv{A}{v}}}, \fan_{\mcb} \right)  \end{align} 
defined in equation \eqref{eq:secstkfan}. Note that if $a \in A_{nv}$, then there is a unique pointed subdivision $(\{(Q, A - \{a\})\},A - \{a\})$ corresponding to a facet of $\ptlaf{A}$. The set $A_{nv}$ also labels a subset of supporting primitives in $\dul{\psecv{A}{v}}$.  Define the subset
\begin{align*} B_{\psec{A}} & = \{\eta_S \in \dul{\psecv{A}{v}}   \text{ a primitive dual to } S = \{(Q, A - \{a\})\} : a \in A_{nv} \}.
\end{align*} 
Let $\Sigma_{\mcb}^\prime$ be the subfan of $\Sigma_{\mcb}$ consisting of all cones whose boundary one-cones are generated by elements in $B_\psec{A}$. Define the open substack $(\secon{A}^r)^\circ \subset \secon{A}^r$ to be that associated to the taking the stacky subfan
\begin{align*}  \sfan_{\psecv{A}{v}}^\prime & = \left( \Z^{A_v \cup B_{\Sigma (A)}} , L_A^\vee , \beta_{\dul{\psecv{A}{v}}}, \fan^\prime_{\mcb} \right).  \end{align*}
Recall from Proposition \ref{prop:openemb1} that $\mcx_{\plaf{A}}^\circ$ was defined from the stacky subfan 
\begin{align*}
\sfan_{\plaf{A}}^\prime & = \left( \Z^{A_v \cup B_{\plaf{A}}}  , (\Z^{A})^\vee ,  \beta^\prime , \Sigma_{\plaf{A}}^\prime \right).  
\end{align*}
Restricting the map $\tilde{p} : \widetilde{\sfan}_{\plaf{A}} \to \sfan_{\psecv{A}{v}}$ from Definition \ref{defn:secstack} to these subfans gives the map $\tilde{p}^\prime : \widetilde{\sfan}_{\plaf{A}}^\prime \to \sfan_{\psecv{A}{v}}^\prime$ described by the commutative diagram 
\begin{equation*}
\begin{CD}
\Z^{A_v \cup B_{\plaf{A}}}   @>{\beta^\prime}>> (\Z^A)^\vee \\
@V{p_1^\prime}VV @V{\alpha^\vee_{A}}VV \\
\Z^{A_v \cup B_{\Sigma (A)}} @>{\beta_{\dul{\psecv{A}{v}}}}>>  L_A^\vee .
\end{CD}
\end{equation*}
We claim that the colimit stack $\mcx^\to_{\tilde{p}^\prime}$ of $\tilde{p}^\prime$ has stacky fan 
\begin{align*} \sfan^{\to}_{\tilde{p}^\prime} =  \sfan_{\psecv{A}{v}}^\prime & = \left( \Z^{A_v \cup B_{\Sigma (A)}} , \Lambda_{\gdual{A}}, \alpha_A^\star, \fan^\prime_{\mcb} \right).  \end{align*}
This follows at once from the diagram
\begin{equation*}
\begin{CD}
\Z^{A_v \cup B_{\plaf{A}}}   @>{\beta^\prime}>> (\Z^A)^\vee \\
@V{p_1^\prime}VV @V{\kappa}VV \\
\Z^{B_{\Sigma (A)}} @>{\alpha_A^\star}>> \Lambda_{\gdual{A}}.
\end{CD}
\end{equation*}
being a pushout. To see that this is the case, partition the basis vectors of $A_v \cup B_{\plaf{A}}$ into $A \approx A_v \cup \{e_a^\vee : a \in A_{nv} \}$ and $\dul{\plaf{A}}^h \cup \{\varrho_A\}$. Likewise, as the elements of $B_{\psec{A}}$ are indexed by $A$, we identify $\Z^{B_\psec{A}}$ with $\Z^A$. Then the map $p_1^\prime : \Z^A \oplus \Z^{\dul{\plaf{A}}^h} \oplus  \Z^{\{\varrho_A\}} \to \Z^A$ is simply projection and $\beta^\prime|_{\Z^A}$ is clearly injective, implying the pushout $\kappa$ is the cokernel of $\beta^\prime |_{ \Z^{\dul{\plaf{A}}^h} \oplus  \Z^{\{\varrho_A\}}}$
But by Lemma \ref{lem:lafnormal}, we have that the image of $\beta^\prime$ restricted to $\Z^{{\du{\plaf{A}}} \cup \{\varrho_A\}}$ is the image of $\beta_A^\vee$ which has the indicated cokernel from the dual fundamental sequence for ${\mathcal{A}}$. 

The described stacky fan data obtained on the bottom of the diagram defines the colimit stack and is identical to the stacky fan defining $\mcv_A$. This proves the claim.

\end{proof} 

Finally, we describe the points on the compactifying divisor.

\begin{thm} \label{thm:toric2} Suppose $(\mcx , \mcy )$ is a degenerating family of a hypersurface $(\mcx_Q , \mcy_s )$ defined by a $\Lambda$-defining function. Then $(\mcx, \mcy)$ is represented by a map $\upsilon : \C \to \secon{A}$.
\end{thm}

\begin{proof} Let $\eta \in \left(\Z^{{\mathcal{A}}} \right)^\vee$ be a $\Lambda$-defining function for the family $(\mcx , \mcy)$ corresponding to the subdivision $S = \{(Q_i, A_i) : i \in I\}$ with $|I| > 1$. Define a map $\upsilon_\eta : \N \to \Lambda_{{\mathcal{A}}^\vee}$ by taking $\upsilon (1) := \alpha^\star (\eta )$. The stacky fan $\sfan = (\Lambda_1, \Lambda_2 ,  \beta, \fan)$ occurring in the fiber product $\C  \text{ }_{\stackmap{\upsilon}{\eta}} \times_{\pi} \laf{A}$ has $\Lambda_2 \approx \Lambda^\vee \oplus \Z$ from the Cartesian diagram
\begin{equation} \label{diag:bettersec}
\begin{CD}
\Lambda^\vee \oplus \Z \cdot \eta @>{\psi}>> (\Z^{{\mathcal{A}}})^\vee / (\sum_{\alpha \in A} e_\alpha^\vee ) \\
@V{p_2}VV  @V{\alpha^\star}VV \\
\Z @>{\upsilon_\eta}>> \Lambda_{\gdual{A}},
\end{CD}
\end{equation}
Here the map $\psi$ is the composition of the quotient $\text{proj} : (\Z^{{\mathcal{A}}})^\vee \to  (\Z^{{\mathcal{A}}})^\vee / (\sum_{\alpha \in A} e_\alpha^\vee ) $ and the direct sum $ \beta_{{\mathcal{A}}}^\vee|_{\Lambda^\vee} \oplus \text{inc}$ where $\text{inc}: \Z \cdot \eta \to (\Z^{{\mathcal{A}}})^\vee$ is the inclusion.

To find $\Lambda_1$ and $\fan$, we let $\fan_\eta$ be the fan obtained by intersecting $\beta_A^\vee \oplus \text{inc} ((\R^d)^\vee \oplus \R_{\geq 0})$ with the Lafforgue fan and $T_\eta$ the generators of its  one-cones. The map $\beta_\eta : \Z^{T_\eta} \to \Lambda_2$ by evaluation of primitives gives the stack $\sfan_\eta = (\Z^{T_\eta}, \Lambda_2 , \beta_\eta , \fan_\eta )$. For every $f \in T_\eta$, we have that
\begin{equation*} f = \sum_{b \in \sigma (1) \subset \dul{\plaf{A}}} c_b b \end{equation*} 
for some maximal cone $\sigma$ in the Lafforgue fan. Since $\eta$ is a $\Lambda$-defining function and $\alpha^\star (\psi (f)) = \alpha^\star ( \eta )$, it follows that $c_b b$ is in the image of $\tilde{\beta}_{\dul{\plaf{A}}}$ for all $b \in \sigma (1)$. Let  $g_1 (e_\tau ) = \sum_{b \in \sigma (1)} c_b e_b$. It is not hard to see that the map $g = (g_1 , 1 )$ then induces an equivalence between $\sfan_\eta$ and the pullback $\sfan$. 

To see that $\sfan_\eta$ is the normal stacky fan to $(Q_\eta , A_\eta)$, we need only show that $T_\eta = \du{Q}_\eta \subset \Lambda \oplus \Z$. By Lemma~\ref{lem:lafnormal}, $\tau \in T_\eta$ if and only if it defines the subdivision $S$ and constant on $Q_i$ for some $i$. So the one-cones of $T_\eta$ equal those of $\du{Q}_\eta$. But both sets consist of primitives of their one-cones on vertical divisors, implying the equality. 

To show that the pull back is isomorphic to $(\mcx_\eta , \mcy_{\iota_\eta (s)})$, we prove any section of the form  $\iota_\eta (s)$ can be represented by a pullback of the universal section $s_A$. For this, we simply observe that the pullback of $s_A$ to $H^0 (\mcx_\eta , \mco_\eta (1))$ is $\stackmap{\upsilon}{\eta}^* (s_A ) = \sum_\alpha x_{(\alpha, \eta (\alpha ))}$ from equation~\eqref{eq:stackmap}. The group $\tgroup{\psec{A}}$ acts transitively on the pullback of the space of very full sections of $H^0 ( \mcx_\eta , \mco_\eta (1) )$ up to equivalence. Indeed, from the fundamental exact sequence for $A$, it is easy to see that there exists a $\lambda \in \tgroup{\psec{A}}$ such that $\stackmap{\upsilon}{\eta}^* ( \lambda \cdot s_A) = \sum_\alpha c_\alpha x_{(\alpha , \eta (\alpha ))}$ for any $\{c_\alpha \} $ satisfying $\prod c_\alpha^{m_\alpha} = 1$ with $\sum_\alpha m_\alpha \alpha = 0$. Any full section has a representative in this class, yielding the claim.
\end{proof}

\section{\label{sec:pfsymp}$\partial$-framed symplectomorphisms}

We begin this section by defining certain subgroups of symplectomorphism groups which we refer to as $\partial$-framed groups. The symplectic orbifolds we consider have boundary divisors that are preserved by the symplectomorphisms under consideration. Moreover, we would like to distinguish between subgroups that fix the boundary tangentially and those that do not. This aim would be easily achieved were our boundary divisor smooth and the symplectomorphisms fixed the boundary divisor pointwise. However, neither of these requirements are satisfied in our setting, so we must introduce a more elastic notion of framing.

After defining the notion of a $\partial$-framed group, we proceed to examine the geometry of various symplectomorphisms contained in them. Up to Hamiltonian isotopy, the generators of our groups arise as monodromy maps around a singular symplectic orbifold. The type of permissible singularities that we will study fall into two classes.
The first will be a stable pair degeneration of the symplectic
orbifold into irreducible orbifolds glued along normal crossing
divisors akin to the situation in complex geometry. The study of
maximal degenerations of this type in the toric case was
thoroughly analyzed in \cite{abouzaid}. 

The second type of singularity we see is a stratified Morse
singularity. This is studied in \cite{goresky}, but only the
non-stratified case has been understood in the symplectic setting
\cite{seideldehn}. We will examine the general case and give a
geometric description of the monodromy.

\subsection{Definitions}

Let $(\mcy, \omega )$ be a symplectic orbifold of real dimension $2n$ with atlas\gls{atlas} $\mcu = (U_\beta, G_\beta , \pi_\beta)_{\beta \in \mcb}$. Most of the familiar constructions in symplectic geometry can be defined through the invariant manifold analogs in an atlas when working with symplectic orbifolds. For example, a Hamiltonian will mean a smooth function on $\mcy$ or, equivalently, a collection of smooth, compatible, invariant functions on $(U_i, G_i)$.  Likewise, its flow can be computed in $\mcy$ or, for short time on a relatively compact subset, in each chart of the atlas. Types of submanifolds (Lagrangian, isotropic, symplectic), almost complex structures, Poisson brackets, symplectomorphisms are all defined locally and can be given precise meaning in the symplectic stack setting. We omit using the adjective ``orbifold" for all of these terms throughout the paper. We leave the definitions of these structures to the existing literature \cite{orbifolds}, \cite{mcdufftoric}, but will give details for structures that are less familiar.

Take $\mcj$ to be the space of compatible almost complex structures on $\mcy$ and $D = D_1 + \cdots + D_k$
a symplectic divisor, i.e. each $D_i$ is a smooth symplectic suborbifold of real codimension $2$. If there is an integrable $J \in \mcj$ and $\mcy$ is a manifold, it makes sense to say that $D$ is a divisor with normal crossing singularities. We extend this to symplectic orbifolds in the following fashion.  For every $D_i$ and $\beta \in \mcb$, take $D_i (\beta ) = \pi_\beta^{-1} (D_i)$. 

\begin{defn} Let $J \in \mcj$. \begin{itemize}
\item[(i)] \label{defn:integrable} A symplectic divisor $D$ will be called $J$-integrable if, for every $D_i$ and every $\beta \in \mcb$ there are symplectic neighborhoods $V_i$ of $D_i (\beta )$ such that $J$ is integrable on $V_i$ and $D_i (\beta )$ is a complex divisor in $V_i$ relative to $J$. 
\item[(ii)] \label{defn:nc} A symplectic divisor $D$ is a $J$-normal crossing divisor if, given $p \in D$ and $I = \{i : 1 \leq i \leq k , p \in D_i\}$,  there exists $U \subset \C^n$ and a $J$-holomorphic chart $\psi :U \to \cup_{i \in I} V_i$ near $p \in V$ such that $\psi (0) = p$ and $D \cap V = \psi (\{(z_1, \ldots, z_n) : z_{i_1} \cdots z_{i_k} = 0 \})$.
\item[(iii)] \label{defn:onc} A normal crossing divisor is $J$-standard if for every point $p \in D$, there exists a $J$-holomorphic chart $\psi$ such that $\psi^* \omega = \omega_{st}$ where $\omega_{st}$ denotes the standard symplectic form on $U \subset \C^n$.
\item[(iv)] We say that a divisor is integrable, normal crossing or standard if there exists some $J \in \mcj$ for which it is $J$-integrable, $J$-normal crossing or $J$-standard.
\end{itemize}
 \end{defn}

A consequence of having a $J$-standard normal crossing divisor is that the distance squared functions $h_i : \mcy \to \R$  from $D_i$ (via the metric induced by $\omega$ and $J$) Poisson commute in neighborhoods of $D$. In other words, there exists an $\varepsilon_J > 0$ for which $\{h_i , h_j \} = 0$ on $U_i \cap U_j$ where $U_i = h_i^{-1} ([0, \varepsilon_J ))$. We call any $\varepsilon < \varepsilon_J$  commuting. For any commuting $\varepsilon$, we define $\rho^\varepsilon_i = \lambda^\varepsilon \circ h_i$ where
$\lambda^\varepsilon : \R_{\geq 0 } \to \R_{\geq 0}$ is a smooth monotonic function satisfying
\begin{equation*} \lambda^\varepsilon (r) = \left\{ \begin{matrix} r & r <
\varepsilon / 2 \\ \varepsilon & r \geq \varepsilon \end{matrix}
\right. . \end{equation*}
It is easy to see that $\{\rho^\varepsilon_i , \rho^\varepsilon_j
\} = 0$ on $\mcy$. Given any $\mathbf{x} = (x_1, \ldots, x_k ) \in \R^k$, we define $\boundflow (\mathbf{x})$ to be the flow of $\sum_{i = 1}^k x_i \rho^\varepsilon_i$. The fact that the $\rho_i$ Poisson commute implies that $\boundflow (\mathbf{x}_1 + \mathbf{x}_2) = \boundflow (\mathbf{x}_1) \circ \boundflow (\mathbf{x}_2 )$. It is best to think of these maps as rotations, or twists, about the components of the divisor.

We take\gls{symporb} $\Symp (\mcy)$ to denote the topological group of symplectomorphisms with the $C^\infty$-topology and $\Symp_0 (\mcy)$ the identity component. For a Hermitian line bundle $L$ over $\mcy$, let $\Symp (L / \mcy)$ be the group of unitary line bundle automorphisms of $L$ over symplectomorphisms of $\mcy$ and $\Symp_0 (L / \mcy)$ its identity component (not to be confused with those maps of $L$ lying over $\Symp_0 (\mcy)$).

Given a standard normal crossing divisor $D \subset \mcy$, we fix a commuting $\varepsilon > 0$ and define\gls{sympdv} $\Symp (\mcy, D)$ to consist of symplectomorphisms of $\mcy$ which preserve the distance to each $D_i$ in an $\varepsilon$ tubular neighborhood of $D$. Here we mean that for any $\phi \in \Symp (\mcy, D)$, we have $\phi^* (h_i |_{U_\varepsilon} ) = h_i |_{U_\varepsilon}$ for all $i$ where $U_\varepsilon$ is an $\varepsilon$-neighborhood of $D_i$. Equivalently, we can
consider $\Symp (\mcy, D)$ to be the group of symplectomorphisms which
commute with $\boundflow (\mathbf{x} )$ for every $\mathbf{x} \in \R^k$. From this definition, it is clear the subgroup\gls{bolT}
\begin{equation} \label{eq:frm} \frmee := \{\boundflow (\mathbf{x} ) : \mathbf{x} \in \R^k \} 
\end{equation}
is contained in the center $Z (\Symp (\mcy, D))$.  

For a normal crossing divisor such as $D \subset \mcy$, we write $\Symp (D)$ for the subgroup of $ \times_{i = 1}^k \Symp (D_i, D_i \cap (\cup_{j \ne i} D_j))$ consisting of $\{\phi_i\}$ with $\phi_i |_{D_i \cap D_j} = \phi_j |_{D_i \cap D_j}$. Let $L = \{L_i : 1 \leq i \leq k \} $ be a collection of line bundles where $L_i$ is a line bundle over $D_i$. 
\begin{defn} The collection $L$ of line bundles is compatible if there exist isomorphisms
\begin{align} \gamma_{i,j} : L_i |_{D_i \cap D_j}
\stackrel{\cong}{\longrightarrow} N_{ D_i \cap D_j} D_j \end{align} for every $1 \leq i, j \leq k$. A set of isomorphisms $\mathbf{g} = \{ \gamma_{i, j} \}$ will be called gluing data. 
\end{defn}
Given such data, we take $\Symp_{\mathbf{g}} (L / D )$ to consist of symplectic line bundle automorphisms $\{\psi_i \}$ of $L_i / D_i$
which lie over some $\{\phi_i \} \in \Symp (D)$ and are compatible in the sense that 
\begin{align} \label{eq:compcond} d \phi_j |_{D_j \cap D_i} = \gamma_{i, j} (\psi_i ) \end{align} for every $1 \leq i,j \leq k$.  We will simply write $\Symp (L / D)$ when the gluing data is evident. For example, in our context of a normal crossing divisor $D \subset \mcy$, we take $N_D \mcy$ for the collection of normal bundles $\{ N_{D_i} \mcy\}$ with the induced gluing data.

%PARAGRAPH ON STACKS AND CODIMENSION TWO SINGULARITIES:
%PERHAPS A S\mcyMPLECTIC REDUCTION PERSPECTIVE IS HELPFUL?

\begin{defn} Given $\mcy$ with a standard normal crossing divisor $D$, we say
that a compactly generated, closed subgroup\gls{bframe} $\mbF \subseteq  \Symp (N_D \mcy / D)$ is a
$\partial$-frame group of $(\mcy, D)$.
\end{defn}

Let $j: D \to \mcy$ be the inclusion map and\gls{jshrp} $j^\# : \Symp (\mcy, D) \to
 \Symp ( N_D \mcy / D)$ the restriction of the derivative. Given a
$\partial$-frame group $\mbF$, we will say $\phi \in \Symp (\mcy, D)$ is an
$\mbF$-framed, or framed, symplectomorphism if $j^\# (\phi ) \in
\mbF$. Denote the group of $\mbF$-framed symplectomorphisms by\gls{frsymp}
$\Symp^{\mbF} (\mcy, D)$. Letting $i : \mbF \to \Symp (N_D \mcy / D)$ be the inclusion, this group is defined by the Cartesian diagram
\begin{align}
\begin{CD}
\Symp^{\mbF} (\mcy, D) @>>> \mbF \\ @VVV @V{i}VV \\ 
\Symp (\mcy, D) @>{j^\#}>> \Symp ( N_D \mcy / D).
\end{CD}
\end{align}

% I like putting in an edit which relates this to the case of 
%It is worth mentioning the reasons for studying this type of group. Often in symplectic geometry, one considers a symplectic %manifold with contact type boundary and then considers symplectomorphisms that restrict to contactomorphisms. In a 

It may be the case that symplectomorphisms of $N_D \mcy$ do not
extend to those on $\mcy$. Including such maps into the $\partial$-frame group has
no effect on the framed symplectomorphism group. To take care of
this redundancy, we define a reduced framing as follows.

\begin{defn} A $\partial$-frame group $\mbF$ will be called \emph{reduced} if for every $\phi \in \mbF$ there
exists a $\psi \in \Symp (\mcy, D)$ such that $j^\# (\psi ) = \phi$. The
maximal reduced subgroup\gls{bframered} 
\begin{align*}
\reduce{\mbF} = \mbF \cap \text{im} \left( j^\# \right) 
\end{align*} of a $\partial$-frame group $\mbF$ will be called the $(\mcy, D)$ reduction of $\mbF$.
\end{defn}

Of course, the closure of the image $j^\# (\mbG )$ of any subgroup $\mbG \subset \Symp (\mcy, D)$ is a reduced $\partial$-frame group. An important class of these groups occurs in the following definition:
\begin{defn} A $\partial$-gauge group is a $\partial$-frame group contained in $j^\# (\frmee )$.
\end{defn}
The motivation for defining $\partial$-gauge groups stems from the desire to
exert control over a group similar to the group $(S^1)^k$ of complex multiplications on $
\oplus_{i = 1}^k N_{D_i} \mcy$. Such a group would keep track of rotations around the boundary divisor $D$ of $\mcy$.
Unfortunately, for $\dim \mcy > 2$, this group is not contained in $\Symp (N_D \mcy / D)$
as the compatibility condition in equation \eqref{eq:compcond} is violated. The $\partial$-gauge group and its subgroups can
be thought of as an approximation to such a rotation group.

One of the central points of $\partial$-frame groups is to allow more flexibility
than fixing the boundary and a normal bundle on it. In fact, this
more restrictive case occurs as the framed group $\Symp^\one (\mcy,
D)$ with the trivial framing $\one = \{1 \}$. This fits nicely
into the more general framework as follows.

\begin{prop} \label{prop:fiberbundle} For any reduced $\partial$-frame group $\mbF$, the map:
\begin{equation*}  \Symp^\mbF (\mcy, D) \stackrel{j^\#}{\longrightarrow} \mbF
\end{equation*}
defines a topological fiber bundle with fiber $\Symp^{\one} (\mcy,
D)$.
\end{prop}

\begin{proof}
It follows from the definition of reduced framings that $j^\#$ is
the quotient of $\Symp^\mbF (\mcy, D)$ by the closed, normal subgroup
$\Symp^{\one } (\mcy, D)$. Thus, to prove the claim, one need only show the existence of a local section of $j^\#$ in a neighborhood $U \subset \mbF$ around the identity. To see this, note that the tangent space of $\Symp (N_D \mcy / D)$ is a closed subspace of that of $\times_i \Symp (N_{D_i} / D_i)$. In turn, the tangent space of each $\Symp (N_{D_i}/ D_i)$ consists of closed one-forms $\Omega^1 (N_{D_i})$ which are multiples of $d h_i$ when restricted to the tangent space of any fiber. Denoting this space by $V_i$ and the tangent space of $\Symp (N_D \mcy / D)$ by $V$, we have $V \cong \oplus_i V_i$. Any element $\phi$ in a neighborhood can be realized the integral of a path $\delta_\phi : [0,1] \to V$ of such forms. Furthermore, as $\mbF$ is reduced, we may choose the path $\delta_\phi$ to be contained in the image of the derivative $D j^\# : T_{Id} (\Symp^{\mbF} (\mcy, D)) \to V$.  Note that $D j^\#$ is simply the pullback of the closed one-form associated to a tangent vector in $T_{Id} (\Symp^{\mbF} (\mcy, D))$ along the inclusion $D \hookrightarrow \mcy$. As $D j^\#$ is a linear map, we may choose a section of $\tilde{s} : V \to T_{Id} (\Symp^{\mbF} (\mcy, D))$. Using this, we define the desired local section $s : U \to j^\# (U)$ by taking $s (\phi)$ as the integral of the path $\tilde{s} \circ \delta_\phi$.
\end{proof}

This gives an important corollary.
\begin{cor} \label{cor:fibbund} Suppose $\mbF_1 \subseteq \mbF_2$ are reduced $\partial$-frame groups.
Then there is a homotopy  fiber sequence:
\begin{equation*} \Symp^{\mbF_1} (\mcy, D) \to \Symp^{\mbF_2} (\mcy, D) \to
\frac{\mbF_2 }{\mbF_1}
\end{equation*}
\end{cor}
\begin{proof} Let $i : \Symp^{\mbF_1} (\mcy, D) \to \Symp^{\mbF_1} (\mcy, D)$ be the inclusion and $\text{C} (i)$ its homotopy cofiber.  By Proposition \ref{prop:fiberbundle}, the rows and final column of diagram~\eqref{eq:octodiag} are homotopy fiber sequences.
\begin{align} \label{eq:octodiag}
\begin{CD}
\Symp^{\one} (\mcy, D) @>>> \Symp^{\mbF_1} (\mcy, D) @>{j^\#_1}>> \mbF_1 \\
@| @V{i}VV @VVV \\
\Symp^{\one} (\mcy, D) @>>> \Symp^{\mbF_2} (\mcy, D) @>{j^\#_1}>> \mbF_2 \\
@. @VVV @VVV \\
@. \text{C} (i) @>{\psi}>> \frac{\mbF_2 }{\mbF_1}.
\end{CD}	
\end{align}	
The fact that the induced map $\psi$ from $\text{C} (i)$ to $\frac{\mbF_2 }{\mbF_1}$ is a weak equivalence essentially follows from the octahedral axiom. If one wishes to avoid this argument, take the long exact sequence associated to the homotopy fibrations on each row. Using the fact that the two columns are also homotopy fiber sequences and applying an induction argument, one observes that $\psi$ induces an isomorphism on homotopy groups. Applying Whitehead's theorem then gives that $\psi$ is a homotopy equivalence.
\end{proof} 

For a reduced $\partial$-frame group $\mbF$, we define\gls{relfg} $\relaxed{\mbF}$ to be the group generated by $\mbF$ and $\frme$. As all elements in $\Symp (\mcy, D)$ are required to commute with $\boundflow (\mathbf{x} )$ and $\mbF$ is reduced, $\relaxed{\mbF}$ is a central extension of $\mbF$. The following proposition is then an elementary application of Corollary \ref{cor:fibbund}.

\begin{prop} \label{prop:frame} Assume $D = \cup_{i = 1}^k D_i $ is a standard divisor in $\mcy$. For any reduced $\partial$-frame group $\mbF$, there exists $r \leq k$ and a homotopy exact sequence
\begin{equation*}
\Symp^{\mbF} (\mcy, D) \to \Symp^{\relaxed{\mbF}} (\mcy,
D) \to (S^1)^r .
\end{equation*}
\end{prop}

\begin{proof} We first observe that $\relaxed{\mbF}$ is a finite dimensional central extension of $\mbF$. Since $\frme \cap \mbF$ is closed in $\frme \cong \R^k$, it must be isomorphic $\Z^{r} \oplus \R^s$ for $r + s \leq k$. As it is also closed in $\relaxed{\mbF}$, we have that $\relaxed{\mbF} / \mbF \cong \frme / (\frme \cap \mbF ) \cong (S^1)^r \times \R^{k - s}$. So, by Corollary \ref{cor:fibbund}, we have the result.
\end{proof}

Our primary examples of symplectic orbifolds arise as  hypersurfaces in $\mcx_Q$. Generally,  $Q$ is not assumed to be a simple polytope and so the hypersurfaces will generally be singular along a complex codimension $2$ subspace $\mcy_{sing} \subset D$ of $\mcy$. To deal with these cases, we extend our notion of $\partial$-framing. 

\begin{defn} \label{defn:resolvcol}
	Suppose $D \subset \mathcal{Y}$ is a $J$-integrable divisor and $(\mathcal{Y} - D, \omega)$ is a symplectic orbifold. A set 
	\begin{align} \mcr = \{\phi_\varepsilon :(\tilde{\mcy}, \tilde{D} ) \to (\mcy, D) \} \end{align} of normal crossing resolutions of $(\mcy, D)$ will be called a resolving collection if:
	\begin{enumerate}
		\item each $(\tilde{\mcy} , \tilde{D} )$ is a smooth symplectic orbifold with $\tilde{J}$-standard normal crossing divisors,
		\item  $\phi_\varepsilon^* (\omega ) = \tilde{\omega}$ off of an $\varepsilon$ neighborhood of $\mcy_{sing}$, 
		\item $\phi_\varepsilon$ is $(\tilde{J}, J)$-holomorphic in a neighborhood of $\tilde{D}$.
	\end{enumerate}
We say that $(\mcy, D)$ is a standard symplectic stack if there exists a nonempty collection $\mcr $. 	
\end{defn}
Generally, when $(\mcy , D)$ has a resolution of singularities $(\tilde{\mcy}, \tilde{D})$, it is not clear that one may force the resolution to satisfy the conditions in Definition \ref{defn:resolvcol}. However, when $(\mcy, D)$ is a standard symplectic stack with resolving collection $\mcr$, we may consider a proper subgroup of symplectomorphisms that extend to all resolutions in $\mcr$. 
\begin{defn} 
	Suppose $\mcr$ is a resolving collection for $(\mcy, D)$. Let $\Symp_{\mcr} (\mcy, D)$ be the group of symplectomorphisms $\psi  \in \Symp (\mcy - \mcy_{sing} , D - \mcy_{sing} )$ that are restrictions of  symplectomorphisms $\tilde{\psi} \in \Symp (\tilde{\mcy}, \tilde{D} )$ for all $(\tilde{\mcy} , \tilde{D} )$. 
\end{defn}
We note that this is the coarsest group that could be defined relative to $\mcr$, ignoring any of the subtleties of the combinatorics of the distinct resolutions in $\mcr$. 
Indeed, the benefit of considering resolving collections $\mcr$ instead of a single resolution is that we may define the group $\Symp_{\mcr} (\mcy, D)$, which is independent of the choice of resolution in $\mcr$.   

A $\partial$-frame group $\mbF$ for a standard symplectic stack $(\mcy, D)$ is a subgroup of $\Symp (N_{D - \mcy_{sing}} (\mcy - \mcy_{sing} / D - \mcy_{sing} )$ that has a lift to $\Symp (N_{\tilde{D}} \tilde{\mcy} , \tilde{D} )$ for every $(\tilde{\mcy} , \tilde{D} )$. The definition of the framed group $\Symp^\mbF (\mcy, D)$ and the results above all hold in this case for obvious reasons.

Our primary example of standard symplectic stacks arise in the toric setting. Call a complete intersection non-degenerate if its scheme theoretic intersection with every toric orbit is smooth.

\begin{prop} Suppose $(\mcx , \partial \mcx)$ is a K\"ahler DM toric stack, where $\partial \mcx$ is the toric boundary. If $\mcy \subset \mcx$ is a non-degenerate complete intersection, taking $D = \partial \mcx \cap \mcy$, the pair $(\mcy, D)$ is a standard symplectic stack.
\end{prop}
\begin{proof} This follows immediately from the fact that $\mcx$ has standard resolutions and the non-degeneracy assumption for $\mcy$. \end{proof}

\subsection{Stable pair degeneration monodromy}

In this subsection, we obtain the local model for monodromy around a stable pair degeneration. 
Assume $(\mcx , \omega )$ is a symplectic orbifold of
dimension $n$ with an $r$-dimensional Hamiltonian torus action. We
write $\mbbT^r$ for the torus, $\mft_r$ (or $\mft$) for its Lie
algebra and, for $v \in \mft$, take $X_v \in \textnormal{Vect} (\mcx)$ to be the infinitesimal action in the direction of $v$. Let $J$ be a compatible almost complex structure on $\mcx$
which is invariant with respect to the action and $\mu : \mcx \to \mft^\vee$ the moment map. 

Let $p \in \mcx$ with $\mu (p) = u \in \mft^\vee$ and $v \in \mft$. We define the map
\begin{equation} \label{eq:momentkappa}\gls{kppa} \kappa : \mu (\mcx) \to \Hom_\R ( \mft , \mft^\vee ) \end{equation} 
by taking $\kappa_u ( v) = d \mu_p (J X_v) \in T_u \mft^\vee = \mft^\vee$. Note that this is well defined only under the assumption that $J$ is $\mbbT^r$ invariant. Alternatively, we may think of the map $\kappa$ as giving the metric  restricted to the infinitesimal action vector fields  $g|_\mft  \in \mft^\vee \otimes \mft^\vee$. Given two vectors $v, w \in \mft$, we will write
\begin{equation*} \left< v , w \right>_{\kappa_u} := [\kappa_u (v) ](w) = g_p (X_v , X_w ) .
\end{equation*}

Suppose we have the commutative diagram
\begin{equation} \label{diag:moment1} \begin{CD} \mcx  @>{\mu}>> \mft^\vee \\
@VVFV @VVfV \\
\C @>{\mu_\C}>> \R \end{CD}
\end{equation}
where $F$ is a non-constant, holomorphic function and $\mu_\C = |\hbox{ }|^2$. We assume that $F$ has no critical values outside $0$ and let $\mcx^\circ = \mcx - F^{-1} (0)$. The most common example of diagram \eqref{diag:moment1} is that of a normal crossing degeneration. 

\begin{eg} \label{eg:commutingmoment}
Consider the torus $\mbbT^n = \{(z_1, \ldots, z_n) \in \C^n: |z_i| = 1, \text{ for all } 1 \leq i \leq n\}$ acting on $(\C^n , \omega_{st})$ where $\omega_{st}$ is the standard symplectic form. Identify $\mft$ with $\R^n$ so that if $\mathbf{r} = (r_1, \ldots, r_n) \in \mft$, we may exponentiate to obtain $\exp (\mathbf{r}) = (e^{-2 ir_1}, \ldots, e^{-2 i r_n} )$. Using the dual of the standard basis, we identify $\mft^\vee$ with $\R^n$ as well. Then the moment map for this action is 
\begin{align} \mu (z_1, \ldots, z_n) = \left(  |z_1|^2 , \ldots, |z_n|^2 \right).
\end{align}
If $(a_1, \ldots, a_n) \in \N^n$, consider the map $F : \C^n \to \C$ given by $F (z_1, \ldots, z_n) = z_1^{a_1} \ldots, z_n^{a_n}$ defining a normal crossing singularity over $0$. Then taking $f(r_1, \ldots, r_n) = r_1^{a_1} \cdots r_n^{a_n}$ yields the commutative diagram \eqref{diag:moment1}.
\end{eg}
Recall that $\omega$ defines a Hamiltonian connection on the smooth map $F : \mcx^\circ \to \C^*$ by taking the horizontal distribution to be the symplectic orthogonal to the tangent space of the fiber. As usual, this allows us to lift any vector field on $\C^*$ to $\mcx^\circ$ via
\begin{equation*} \xi : \textnormal{Vect} (\C^* ) \to \textnormal{Vect} (\mcx^\circ ) . \end{equation*}

Recall that the map $\mu_{\C}  : \C \to \R$  is the moment map for circle action of $\mbbT = \{z \in \C : |z| = 1\}$ on $\C$ given by multiplication. Here, as in Example \ref{eg:commutingmoment}, we identify $\R$ as the Lie algebra dual to $\mft = \R$ where $\mbbT$ is parametrized by $\exp (r) = e^{-2r i}$ for $r \in \mft$. We take $\rotat = - 2 i z \partial_z$ to denote the infinitesimal vector field of $\partial_r \in \mft$ on $\C^*$. Note also that derivative of $f$ at a point $p$ gives a natural function $df : \mft^\vee \to \mft$.

\begin{lem} \label{lem:moment} Let $p \in \mcx^\circ$ and $q = \mu (p)$. The horizontal lift $\xi (\rotat )$ of $\rotat $ at $p$ is dependent only on $q$ in the sense that it equals infinitesimal vector field $X_{\delta_q}$ where $\delta_q \in \mft$ is given by
\begin{equation*} \delta_q = \frac{4 f(q)}{||df_{q} ||_{\kappa_q}^2} df_{q} .\end{equation*}
\end{lem}

\begin{proof}  We recall that the defining property of the moment map $\mu : \mcx \to \mft^\vee$ is that, for every $v \in \mft$,  \begin{align} \label{eq:defnmommap} \iota_{X_{v}} \omega = d \, \left< \mu , v \right> = \left< d \mu , v \right>. \end{align} 
Here, $\iota_X \eta$ is the interior product of a differential form $\eta$ with a vector field $X$ and $\left< w, v \right>$ is the canonical pairing taking $w \in \mft^\vee$, $v \in \mft$ to $w (v)$. Thus, letting  $Y \in T_p \mcx^\circ$, by the definition of the moment map and the commutative diagram \eqref{diag:moment1}, we have
\begin{eqnarray*} \omega (X_{df_{ q}}, Y) & = & \left< d \mu (Y), df_{\mu (p)} \right> , \\ &
=& d (f \circ \mu )_{p} (Y) , \\ &  = & d (\mu_\C \circ F )_p (Y) .
\end{eqnarray*}
In particular, if $F(p) = p^\prime$ we see that $X_{df_{q}} (p) \in (T_p F^{-1} (p^\prime))^{\perp_{\omega}}$ and that \begin{align*} d \mu_\C [d F (X_{df_{\mu (p)}} )] = 0. \end{align*} The latter equality gives us that $\rotat \wedge dF (X_{df_{q}}) = 0$ so that
$X_{df_{q}}$ is a real multiple of $\xi (\rotat_{p^\prime} )$. To evaluate this constant, let $X_{df_{q}} = \gamma_p$ and define $r_p$ as
\begin{align} \label{eq:definer} d F ( \gamma_p ) = r_p \rotat_{p^\prime} . \end{align}
Now note that 
\begin{align} \label{eq:definer1}
\left< \rho_{p^\prime} , \rho_{p^\prime} \right> = \left< -2p^\prime \partial_z , -2 p^\prime \partial_z \right> = 4 \mu_\C (F (p) ) = 4 f (\mu (p)).
\end{align}
Also, using the defining property of moment maps in equation~\eqref{eq:defnmommap} for $\mu_\C$ one observes that the Hamiltonian vector field of $\mu_\C$ is $X_{\partial_r}$ which we have denoted $\rotat$. When identifying $\C$ with the real tangent space $T_{p^\prime} \C$ the inner product satisfies $\left< a , b \right> = \omega_{st} (a , i b )$ so that
\begin{align} \label{eq:definer2}
 \begin{split}
\left< dF (\gamma_p) , \rotat_{p^\prime} \right> & = \omega_{st} (d F (\gamma_p ), i \rotat_{p^\prime}), \\ & = \omega_{st} (\rotat_{p^\prime}, id F (\gamma_p )), \\ &  = \omega_{st} (\rotat_{p^\prime}, d F ( J \gamma_p ) ), \\ & = d\mu_{\C} \left(  d F ( J \gamma_p ) \right).
\end{split}
\end{align}
In the second to last equality, we used the fact that $F$ is holomorphic.

To evaluate $r_p$, we take the inner product with $\rotat$ on both sides of equation \eqref{eq:definer} and employ equations \eqref{eq:definer1} and \eqref{eq:definer2} to obtain
\begin{subequations} 
\begin{align*} r_p & = \frac{\left< dF (\gamma_p) , \rotat_{p^\prime} \right>}{\left< \rotat_{p^\prime} , \rotat_{p^\prime} \right>} ,  \\ 
& =  \frac{ d \mu_\C ( dF ( J \gamma_p))}{4 f (\mu (p))} ,  \\ 
& =  \frac{ d (\mu_\C \circ F) (J \gamma_p)}{ 4 f (\mu (p))} ,  \\ 
& =  \frac{ df_{\mu (p)} (d \mu_q (J X_{df_{q}}))}{4 f (q)} ,  \\ 
&=  \frac{ \left< df_{q } , df_{q} \right>_{\kappa_q}}{ 4f (q)} .
\end{align*}
\end{subequations} 
Letting $\delta_q = r_p^{-1} df_q$ then gives $dF (X_{\delta_q} ) = \rotat_{p^\prime}$, yielding the claim.
\end{proof}

Given any smooth function $\tilde{f} : \mu (\mcx) \to \mft$, the vector field
$X_{\tilde{f} (\mu (p))} (p)$ is easily integrated to $\phi^{\tilde{f}}_t : \mcx
\to \mcx$ where $\phi^{\tilde{f}}_t ( p ) = \exp (t \tilde{f} (\mu (p)))\cdot p$. Thus
the previous lemma gives an explicit description of the symplectic monodromy map of $F$. Namely, take $\tilde{f} = \left( \frac{4 f(q)}{|| df_{q} ||_{\kappa_q}^2} \right) df_q$, and for any
$\varepsilon >0$ the monodromy map is
\begin{equation*} \phi^{\tilde{f}}_1 : F^{-1} (\varepsilon ) \to F^{-1} (\varepsilon ). \end{equation*}
We utilize this to study the monodromy around a stable pair degeneration by first examining the monodromy with respect to the ambient toric variety and then perturbing this map slightly near the critical points of the degeneration to obtain a characterization of the monodromy on the hypersurface. Recall from Appendix~\ref{sec:torichd} that $A \subset \Lambda$ gives a subset of equivariant linear sections of a line bundle $\mco_A (1)$ on a toric stack $\mcx_Q$ specified by $(Q, A)$. Suppose $S = \{( Q_i, A_i ) \}_{i \in I}$ is a regular subdivision of $(Q, A)$ and $\eta : A \to \Z$ is an integral defining function of $S$. In Definitions \ref{defn:degfamily} we introduced the degenerating family $(\mcx_\eta, \mcy_s)$ which came equipped with a holomorphic function $F_\eta : \mcx_\eta \to \C$. We will explore two aspects of this definition, the symplectic structure as defined by the moment map and the holomorphic function $F_\eta$. 

To perform symplectic parallel transport around the critical value of a degenerating family, one must first choose a symplectic form on $\mcx_\eta := \mcx_{Q_\eta}$. We take the standard symplectic form $\omega$ on $\mcx_\eta$ defined in equation \eqref{eq:stsympform} for an arbitrary polyhedron. In the case of $Q_\eta$, we utilize the divisor associated to $\gamma_\eta = \sum_{b \in \du{Q}_\eta} n_b e_b \in \Z^{\du{Q}_\eta}$ where $n_b$ was defined in equation \eqref{eq:mcoq}. By definition, we have $\omega = \alpha_{\du{Q}_\eta}^\vee (\gamma_\eta )$. To shorten notation, we define\gls{nueta} the affine function
\begin{align} \label{eq:defnups} \nu_\eta := \beta_{\du{Q}_\eta}^\vee + \gamma_\eta .
\end{align}
The moment map $\mu_\eta : \mcx_{\eta} \to \Lambda_\R \oplus \R$ then can be found using diagram \eqref{diag:moment2} for the polyhedron $Q_\eta$ which is
\begin{equation} \label{diag:moment3}
\begin{CD} \C^{\du{Q}_\eta} @<{\text{inc}}<< \mu_H^{-1} (\omega ) @>{\rho_{A_\eta}}>> \mcx_\eta @= \mcx_\eta \\
@V{\mu_{\du{Q}_\eta}}VV @V{\mu_{\du{Q}_\eta}}VV @. @VV{\mu_\eta}V \\
\R^{\du{Q}_\eta} @=  \R^{\du{Q}_\eta} @<{\nu_\eta}<< \Lambda_\R \oplus \R @<{\text{inc}}<< Q_\eta .\end{CD} \end{equation}

Turning to the holomorphic function $F_\eta$, we review equation \eqref{eq:toricd} which defines $\tilde{F}_\eta (z_1,\ldots, z_n) : \C^{\du{Q}_\eta} \to \C$ as
\begin{align*}
\tilde{F}_\eta \left(z_1, \ldots, z_{|\du{Q}_\eta|} \right) = \prod_{i \in \du{Q}_\eta^v} z_i^{c_{\eta , i}}.
\end{align*}
Here we have made two implicit identifications. First, we identified the indexing set $I$ in the subdivision $S$ with $\du{Q}_\eta^v$. Second we identified $\du{Q}_\eta = \du{Q}_\eta^v \cup \du{Q}_\eta^h$ with $\{1, \ldots, |\du{Q}_\eta| \}$. We recall from equation \eqref{eq:defceta} that $c_{\eta, i}$ is the denominator of $d\varsigma_i$ where $\varsigma_i = \beta_{\du{Q}_\eta} (e_i)$ is the affine function which restricts to $\eta$ along $A_i$ as in Definition~\ref{lem:regsub:1}.

Using $\tilde{F}_\eta$, the function $F_\eta : \mcx_\eta \to \C$ associated to $\eta$ was defined by the diagram
\begin{equation} \label{diag:moment4}
\begin{CD} \mcx_\eta @<{\rho_H}<< \mu_H^{-1} (\omega ) \\
 @V{F_\eta}VV  @VV{\text{inc}}V \\ 
\C @<{\tilde{F}_\eta}<< \C^{\du{Q}_\eta}
\end{CD}
\end{equation}

Note that $\tilde{F}_\eta$ was explored in Example~\ref{eg:commutingmoment} and one can fill in diagram \eqref{diag:moment1} as
\begin{equation} \label{diag:moment5} \begin{CD} \C^{\du{Q}_\eta}  @>{\mu_{\du{Q}_\eta}}>> \R^{\du{Q}_\eta} \\
@V{\tilde{F}_\eta}VV @VV{\tilde{f}_\eta}V \\
\C @>{\mu_\C}>> \R \end{CD}
\end{equation}
where $\tilde{f}_\eta (r_1, \ldots, r_{|\du{Q}_\eta|}) = \prod_{i \in \du{Q}_\eta^v} r_i^{c_{\eta, i}}$. Letting $Y = \mu^{-1}_{L_{\du{Q}_\eta}} (\omega )$, we assemble the commutative diagrams \eqref{diag:moment3}, \eqref{diag:moment4} and \eqref{diag:moment5} into the diagram \eqref{diag:moment6} which defines\gls{feta} $f_\eta$.

\begin{equation} \label{diag:moment6}
\begin{tikzpicture}[baseline=(current  bounding  box.center), >=angle 90]
\matrix(a)[matrix of math nodes,
row sep=3.3em, column sep=2.75em,
text height=1.8ex, text depth=0.25ex]
{\mcx_\eta & & & \Lambda_\R \oplus \R \\
& Y & \R^{\du{Q}_\eta} & \\
& \C^{\du{Q}_\eta} & \R^{\du{Q}_\eta} & \\
\C & & & \R \\};
\path[->,font=\scriptsize] (a-1-1) edge node[above]{$\mu_\eta$} (a-1-4)
				edge node[left]{$F_\eta$} (a-4-1);
\path[->,font=\scriptsize] (a-2-2) edge node[below=1ex]{$\rho_\eta$} (a-1-1);
\path[right hook->,font=\scriptsize] (a-2-2) edge node[auto]{inc} (a-3-2);
\path[->] (a-2-2) edge (a-2-3);
\path[->,font=\scriptsize] (a-3-2) edge node[above]{$\mu_{\du{Q}_\eta}$} (a-3-3)
				edge node[above=.5ex]{$\tilde{F}_\eta$} (a-4-1);
\path[->,font=\scriptsize] (a-4-1) edge node[above]{$\mu_\C$} (a-4-4);
\path[left hook->,font=\scriptsize] (a-1-4) edge node[below=1ex]{$\nu_\eta$} (a-2-3);
\path[dotted,->,font=\scriptsize] (a-1-4) edge node[right]{$f_\eta$} (a-4-4);
\path[->,font=\scriptsize] (a-2-3) edge node[left]{$=$} (a-3-3);
\path[->,font=\scriptsize] (a-3-3) edge node[above=.5ex]{$\tilde{f}_\eta$} (a-4-4);
\end{tikzpicture}
%\caption{\label{diag:moment6}}
\end{equation}

\begin{eg}
The most basic example of this setup is the degeneration of $\p^1$ into two projective lines intersecting in a node. To describe diagram \eqref{diag:moment6} for this case, we take $A = \{-1, 0, 1\} \subset \Z = \Lambda$ so that $Q = [-1, 1]$, $\mcx_Q = \p^1$ and $\mco_A (1) = \mco_{\p^1} (2)$. Define the degeneration by taking $\eta : A \to \Z$ to be the defining function given by $e_{(1,1)}^\vee \in \left( \Z^{{\mathcal{A}}} \right)^\vee$ (i.e. the function taking $-1$ and $0$ to $0$ and $1$ to $1$). Then $Q_\eta$ is illustrated in Figure \ref{fig:p1deg} and one obtains $\du{Q}_\eta = \du{Q}_\eta^v \cup \du{Q}_\eta^h =  \{(0,1), (-1,1)\} \cup \{(1, 0), (-1,0)\} $. It is not hard to check that $\mcx_\eta$ is isomorphic to the blowup of $\p^1 \times \C$ at $([1:0], 0)$. 

While one can work out the inner square of diagram \eqref{diag:moment6}, the details of the computation do not reveal much insight into the geometry. However, one obtains that $F_\eta$ is simply the blow down map composed with the projection $([a,b], z) \mapsto z$. Moreover, using the coordinates $(s,t)$ in Figure \ref{fig:p1deg}, one computes $f_\eta( s, t  ) = t (s - t)$. Observe that this is identically zero on the lower boundary of $Q_\eta$ which is the image of the degenerate fiber, namely the total transform of $\p^1 \times \{0\}$. The level sets $f_\eta^{-1} (\varepsilon ) \cap Q_\eta$ are the moment map images of the fibers of $F_\eta$ lying over a radius $\sqrt{\varepsilon}$ circle. Moreover, Lemma \ref{lem:moment} shows that the symplectic monodromy about such a circle can be described in terms of the torus action on $\mcx_{Q_\eta}$ using the infinitesimal vector fields associated to the derivative of $f_\eta$.
\end{eg}
\begin{figure}[h]
\begin{picture}(0,0)%
\includegraphics{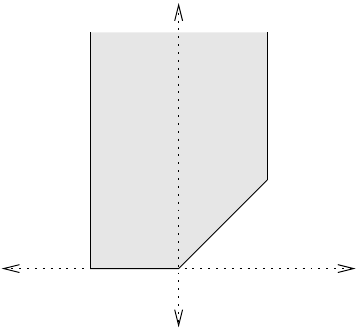}%
\end{picture}%
\setlength{\unitlength}{4144sp}%
\begin{picture}(2724,2499)(1564,-2773)
\put(3376,-2086){\makebox(0,0)[lb]{$\Lambda_\R \oplus \R$}%
}
\put(2476,-1186){\makebox(0,0)[lb]{$Q_\eta$}%
}
\put(4276,-2486){\makebox(0,0)[lb]{$s$}%
}
\put(3076,-386){\makebox(0,0)[lb]{$t$}%
}
\end{picture}%
\caption{\label{fig:p1deg} The polyhedron $Q_\eta$ for a degeneration of $\p^1$.}
\end{figure}
The following proposition gives the general description of $f_\eta$ as well as the monodromy of $F_\eta$ around $0$.

\begin{prop} \label{prop:toricmonodromy} Let $(\mathbf{r} , t)$ be coordinates for $\Lambda_\R \oplus \R$. Then $f_\eta$ can be written as
\begin{equation*} f_\eta (\mathbf{r} , t) = \prod_{i \in I} [c_{\eta, i} (t - \varsigma_i (\mathbf{r} ))]^{c_{\eta , i}} . \end{equation*}
The normalized derivative 
\begin{align} \label{eq:tormono2} \frac{4 f(\mathbf{r} , t)}{||df_{(\mathbf{r}, t)}||_{\kappa (\mathbf{r} , t)}^2} df_{(\mathbf{r} , t)}  \end{align} converges uniformly to $(dt - d\varsigma_i )$ on compactly supported subsets of the interior of $Q_{\eta , i}$.
\end{prop}
\begin{proof} For the first part of the claim, we re-examine the map $\nu_\eta$ defined in equation \eqref{eq:defnups}. Recall that $\beta_{\du{Q}_\eta} : \Z^{\du{Q}_\eta} \to \Lambda^\vee \oplus \Z^\vee$ was the tautological map  $\beta_{\du{Q}_\eta} (e_b ) = b $ for every minimal supporting hyperplane of a facet of $Q_\eta$. Now, for every $i \in I$, let $b_i$ be the supporting hyperplane for the lower boundary facet $Q_{\eta, i}$ which is the marked facet of $(Q_\eta , A_\eta )$ over $Q_i$ and write $e_{b_i} \in \Z^{\du{Q}_\eta}$ as $e_i$. By definition, $\tilde{\eta}$ restricts to the affine function $\varsigma_i$ which is the sum $d \varsigma_i - m_i$ where $d \varsigma_i \in \Lambda_\Q^\vee$ is the derivative, or linear part, of $\varsigma_i$ and $m_i \in \Q$. 
By the construction of $Q_\eta$ and property~\ref{lem:regsub:2} of $\varsigma_i$, we have $\eta ( a ) \geq \varsigma_i (a  )$ for all $a \in A$ with equality if and only if $a \in A_i$.  Taking $h^\vee = (0,1)^\vee \in \Lambda^\vee \oplus \Z^\vee$, for any $a \in A$, $r \in \R_{\geq 0}$ we have 
\begin{align*}
(h^\vee - d \varsigma_i ) (a, \eta (a) + r) & = \eta (a) + r - ( d \varsigma_i (a) - m_i) -  m_i ,\\ 
& = \eta (a) - \varsigma_i (a) + r -  m_i ,  \\ & \geq  r - m_i \geq - m_i .
\end{align*}
Equality is achieved if and only if $r = 0$ and $a \in A_i$. This implies that $h^\vee - d \varsigma_i$ is a supporting hyperplane for $Q_{\eta, i}$. However, only after multiplying by $c_{\eta, i}$ can we ensure that it is contained in $\Lambda^\vee \oplus \Z^\vee$, so that $\beta_{\du{Q}_\eta} (e_i ) =  c_{\eta, i} (h^\vee - d \varsigma_i)$.  We also see from this argument that $n_{b_i} = c_{\eta, i} m_i$ for every $i \in I$. In turn, this gives
\begin{align*}
\gamma_\eta = \sum_{i \in I} c_{\eta, i} m_i e_i + \sum_{j \in \du{Q}_\eta^h} n_j e_j \in \Z^{\du{Q}_\eta}.
\end{align*}
Therefore, for any $i \in I$, we have 
\begin{align*} e_i^\vee \circ \nu_\eta  & =  e_i^\vee \circ \beta_{\du{Q}_\eta}^\vee + e_i^\vee(\gamma) , \\ & =  \beta_{\du{Q}}^\vee (e_i )  + c_{\eta , i} m_i , \\ & =  c_{\eta , i} [(h^\vee - d \varsigma_i)]  + c_{\eta , i} m_i ,\\ & = c_{\eta , i} ( h^\vee - \varsigma_i ). \end{align*}
But the function $r_i : \R^{\du{Q}_\eta} \to \R$ is induced from $e_i^\vee$ so that the composition $r_i \circ \nu_\eta = e_i^\vee \circ \nu_\eta = c_{\eta , i} ( h^\vee - \varsigma_i )$ which, as a function on $\Lambda_\R \oplus \R$, we simply write as $c_{\eta, i} (t - \varsigma_i (\mathbf{r}))$. The formula for $f_\eta := \tilde{f}_\eta \circ \nu_\eta$ then follows from that for $\tilde{f}_\eta$ following diagram \eqref{diag:moment5}. 

We use this and the convexity of $\tilde{\eta}$, defined in equation \eqref{eq:tildeeta}, to for the second claim. Before proving this though, we take
$\tilde{\kappa}$ and $\kappa$ to be the pairings from equation \eqref{eq:momentkappa} for the actions of the tori $\mbbT_{\Z^{\du{Q}_\eta}}$ and $\mbbT_{\Lambda \oplus \Z}$ on $\C^{\du{Q}_\eta}$ and $\mcx_{Q_\eta}$ respectively. Observe that, letting $\mathbf{R} = (r_1 , \ldots, r_{|\du{Q}_\eta |})  \in \R^{\du{Q}_\eta}$, the map $\tilde{\kappa}$ for $\C^{\du{Q}_\eta}$ is
\begin{equation*} \tilde{\kappa}_{\mathbf{R}} = \left[ \begin{matrix} 4 r_1 & 0  &  \cdots & 0 \\ 0 & \ddots & \ddots & \vdots \\ \vdots & & \ddots  & 0 \\ 
0 & \cdots & 0 & 4 r_{|\du{Q}_\eta|} \end{matrix} \right] . \end{equation*}
More succinctly, we have that $\tilde{\kappa}_\mathbf{R} (dr_i \otimes dr_j) = \delta_{ij} 4r_i$. To see how this induces $\kappa$, we first note that the map $\nu_\eta : \Lambda_\R \oplus \R \to \R^{\du{Q}_\eta}$ gives identification $r_i = c_{\eta, i} (t - \varsigma_i)$. 
If $p \in \mu_\eta (\mcx_\eta)$ and $s_1, s_2 \in (\Lambda_\R \oplus \R)^\vee$ then $\kappa_p (s_1 \otimes s_2) = \tilde{\kappa}_p (\tilde{s}_1 \otimes \tilde{s}_2)$ where $\tilde{s}_i \in (\R^{\du{Q}_\eta})^\vee$ for satisfy $\tilde{s}_i \circ \nu_\eta = s_i$ and $\tilde{\kappa}_p (\tilde{s}_i \otimes \tilde{s}) = 0$ for all $\tilde{s} \in \ker (\beta_{\du{Q}_\eta})$. In other words, over every $p \in \mu_\eta$, one can find a linear splitting $\tau_p : (\Lambda \oplus \R)^\vee \to (\R^{\du{Q}_\eta})^\vee$ of $\beta_{\du{Q}_\eta}^\vee$ for which 
\begin{align} \label{eq:kaptotildkap} \kappa_p (s_1 \otimes s_2) = \tilde{\kappa}_p ( \tau_p(s_1) \otimes \tau_p (s_2)). \end{align}

For any polytope $Q_{\eta, i}$, the image $\nu_\eta (Q_{\eta, i})$ lies on the boundary $r_i = 0$ of $\mu_{\du{Q}_\eta} (\C^{\du{Q}_\eta})$. Now, let $\tilde{U} \subset Q_{\eta, i}$ be any neighborhood away from the boundary and  $U \subset Q_\eta$ an normal $\varepsilon$ tubular neighborhood of $\tilde{U}$. We choose a continuous splitting function $\tau : U \to \Hom ((\Lambda \oplus \R)^\vee, (\R^{\du{Q}_\eta})^\vee )$ so that
\begin{align} \label{eq:tausplit} \tau_p \left( c_{\eta, j} d(t - \varsigma_j) \right) = \sum_{k = 1}^{|{\du{Q}_\eta}|} g_{j,k} dr_k .
\end{align}
Since $\tilde{\kappa}|_{r_i = 0}$ contains $dr_i$ in its null space, we may choose  $g_{j, k}$ to be continuous functions on $U$ so that $g_{j, i}|_{Q_{\eta, i}} = 0$ for every $j \ne i$ and $g_{i, i} |_{Q_{\eta, i}} = 1$. 

Using formulas \eqref{eq:kaptotildkap} and \eqref{eq:tausplit}, we compute
\begin{align} \label{eq:kapform}
\kappa_p ( c_{\eta, j} d(t - \varsigma_j) \otimes  c_{\eta, k} d(t - \varsigma_k) ) = \sum_{l \in I} 4 (c_{\eta,l} (t - \varsigma_l)) g_{j,l} g_{k, l}. 
\end{align}

% This may need some justification..... This formula may need to be adjusted so as to give the norm square on the symplectic reduction $\mcx_\eta$. 
%NEED MORE HERE!!!
We now calculate
\begin{align*} d \tilde{f}_\eta (\mathbf{R}) = \tilde{f}_\eta (\mathbf{R} )  \cdot \sum_{j \in I}  \frac{c_{\eta , j}}{r_j} d r_j . \end{align*}
Using the fact that $f_\eta = \tilde{f}_\eta \circ \nu_\eta$, we have
\begin{equation*} d f_\eta (\mathbf{r},t) = f_\eta (\mathbf{r},t) \cdot \sum_{j \in I} \frac{1}{ t - \varsigma_j} d \left(c_{\eta, j} (t - \varsigma_j) \right)  .\end{equation*}
Using \eqref{eq:kapform}, we compute the following norm on $U$ as a meromorphic function in $(t- \varsigma_i)$ as
\begin{align*}
\left\| \sum_{j \in I} \frac{1}{ t - \varsigma_j} d \left(c_{\eta, j} (t - \varsigma_j) \right) \right\|^2_{\kappa (\mathbf{r}, t)} & = 4 \sum_{j,k,l \in I} \frac{c_{\eta,_l} (t - \varsigma_l)}{(t - \varsigma_j)(t - \varsigma_k)} g_{j,l} g_{k,l}, \\ & = 
4  \frac{c_{\eta,_i} }{t - \varsigma_i} + O ((t - \varsigma_i)^0).
\end{align*}
Note that while there are poles of this function on the other boundary facets $Q_{\eta, j}$, we have chosen $U$ to be disjoint from these so that the only pole is the first order pole at $t = \varsigma_i$. We utilize this to compute
\begin{align*} \frac{ 4 f_\eta (\mathbf{r} , t)}{||df_\eta (\mathbf{r}, t)||_{\kappa (\mathbf{r}, t)}^2} df_\eta {(\mathbf{r} , t)} & = \frac{4}{\left\| \sum_{j \in I} \frac{1}{ t - \varsigma_j} d \left(c_{\eta, j} (t - \varsigma_j) \right) \right\|^2_{\kappa (\mathbf{r}, t)}} \sum_{j \in I} \frac{c_{\eta , j}}{t - \varsigma_j } d (t - \varsigma_j ) , \\ & =  \frac{1}{\frac{c_{\eta,_i} }{t - \varsigma_i} + O ((t - \varsigma_i)^0)} \sum_{j \in I} \frac{c_{\eta , j}}{t - \varsigma_j } d (t - \varsigma_j ) , \\
& = \sum_{j \in I} \frac{c_{\eta, j}(t - \varsigma_i)}{c_{\eta,i}(t - \varsigma_j)( 1  + O ((t - \varsigma_i))}   d (t - \varsigma_j ) \end{align*}
From our previous observations that   $(t - \varsigma_j)|_{U} \ne 0$ for $j \ne i$, we have that the limit of the normalizing derivative on the interior of $Q_{\eta,i}$ is
\begin{align*} \lim_{\varsigma_i \to t} \frac{ 4 f_\eta (\mathbf{r} , t)}{||df_\eta (\mathbf{r}, t)||_{\kappa (\mathbf{r}, t)}^2} df_\eta {(\mathbf{r} , t)} & = d (t - \varsigma_i ) . \end{align*}
\end{proof}

We now explain the meaning of this proposition in the form of a corollary. Combined with Lemma \ref{lem:moment}, the first part of the proposition gives an explicit formula for monodromy of $\mcx_Q$ about a toric degeneration. To understand the statement, we decompose the fiber $F_\eta^{-1} (\varepsilon) \cong \mcx_Q$ by taking symplectic parallel transport along $F_\eta$ to $0$. The degenerate fiber $F_\eta^{-1} (0)$ is the union $\cup_{i \in I} \mcx_{Q_i}$ of its irreducible components. We write $U_i \subset F_\eta^{-1} (\varepsilon )$ for the set which converges to $\mcx_{Q_i}$ under parallel transport along the positive real axis.

\begin{cor} \label{cor:subdivision} For a regular subdivision $\eta$ and any sufficiently small $\delta > 0$, there is an induced decomposition $\mcx_Q = \cup_{i \in I} U_i$ such that degeneration monodromy relative to $\eta$ is an interpolation of toric multiplications $ \exp (-  d \eta_{Q_i})$ on each $U_i$ along  $\delta$ neighborhoods of their intersections.
\end{cor}
 
\begin{proof}
Choosing $\varepsilon > 0$, the fiber $F_\eta^{-1} (\varepsilon )$ is isomorphic to  $\mcx_Q$ and the inverse image of the circle $F_\eta^{-1} (\varepsilon S^1 )$ is preserved under flowing with respect to $\xi (\rotat )$. The time $- \pi$ flow for $\xi (\rotat )$ sends $F_\eta^{-1} (\varepsilon )$ to itself and yields the symplectic monodromy map (as it lifts the time one flow of $\rotat = -2 i z \partial_z$ ). Proposition \ref{prop:toricmonodromy} gives an explicit expression for $\xi (\rotat )$ as the normalized derivative in equation \eqref{eq:tormono2}. This is a map from $F_\eta^{-1} (\varepsilon S^1 ) \to \mft_{\Lambda \oplus \Z}$ (then composed with the map taking $v \in \mft_{\Lambda \oplus \Z}$ to its infinitesimal vector field $X_v$).  Exponentiating and evaluating at time $-\pi$ gives a map $\exp_\varepsilon : F_\eta^{-1} (\varepsilon ) \cong \mcx_Q \to \mbbT_{\Lambda}$ (since it preserves $F_\eta^{-1} (\varepsilon)$, the additional circle action is constant). Symplectic monodromy around $0$ is then given by the action $x \mapsto \exp_{\varepsilon} (x) \cdot x$.  

Fixing a small $\delta > 0$, take $\tilde{V}_i$ be the open set in $Q_i$ consisting of points which have distance greater than $\delta$ from $ \partial Q_i$. Take set $V_i$ of points in $F^{-1}_\eta (\varepsilon )$ which flow to $\tilde{V}_i$. It is clear that $V_i \subset U_i$ and that $V_i$ can be identified with the complement of a neighborhood of the boundary in $\mcx_{Q_i}$. By Proposition \ref{prop:toricmonodromy}, the monodromy $\exp_{\varepsilon} (x)$ uniformly converges to $\exp  ( {\eta, i}^{-1} (dt - d \varsigma_i))$ on $Q_{\eta, i}$ as $\varepsilon$ tends to $0$. As $\exp (dt)$ acts as the identity, this multiplication converges to $\exp (-  d \varsigma_i) = \exp (- d \eta_{Q_i}) $ which is multiplication by a constant in the maximal torus acting on $V_i \subset \mcx_{Q_i}$. Thus, conjugating by symplectic flow from $F_\eta^{-1} (\varepsilon )$ to $F_\eta^{-1} (0)$ on the domains $V_i$, we obtain a representation of symplectic monodromy as in the corollary. The fact that these interpolate over their boundaries follows from the representation of monodromy as $\exp_\varepsilon (x) \cdot x$ and the continuity of $\exp_\varepsilon (x)$ in equation \eqref{eq:tormono2}.
\end{proof}

Thus we find that the symplectic operation of parallel transport, which is very far from being holomorphic, limits to a holomorphic map on the components of the degeneration.

\begin{figure}[t]
\begin{picture}(0,0)%
\includegraphics{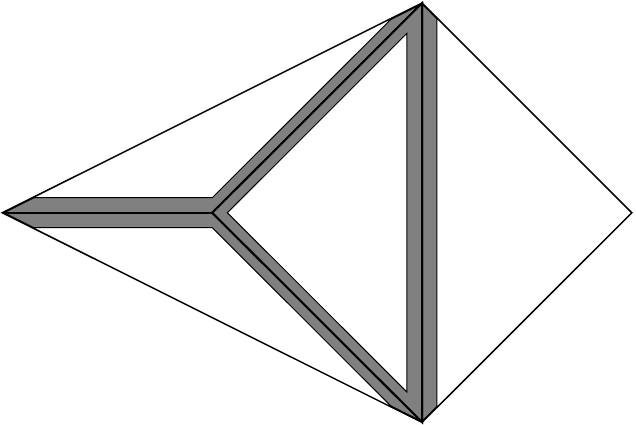}%
\end{picture}%
\setlength{\unitlength}{4144sp}%
\begingroup\makeatletter\ifx\SetFigFont\undefined%
\gdef\SetFigFont#1#2#3#4#5{%
  \reset@font\fontsize{#1}{#2pt}%
  \fontfamily{#3}\fontseries{#4}\fontshape{#5}%
  \selectfont}%
\fi\endgroup%
\begin{picture}(4834,3235)(3122,-3717)
\put(4171,-1846){\makebox(0,0)[lb]{\smash{{\SetFigFont{12}{14.4}{\rmdefault}{\mddefault}{\updefault}{$U_1$}%
}}}}
\put(4156,-2536){\makebox(0,0)[lb]{\smash{{\SetFigFont{12}{14.4}{\rmdefault}{\mddefault}{\updefault}{$U_2$}%
}}}}
\put(5386,-2161){\makebox(0,0)[lb]{\smash{{\SetFigFont{12}{14.4}{\rmdefault}{\mddefault}{\updefault}{$U_3$}%
}}}}
\put(6721,-2176){\makebox(0,0)[lb]{\smash{{\SetFigFont{12}{14.4}{\rmdefault}{\mddefault}{\updefault}{$U_4$}%
}}}}
\end{picture}%
\caption{\label{fig:monodromy} Regions of finite order monodromy}
\end{figure}

To obtain the structure of parallel transport on the hypersurface, we simply define an appropriate perturbation of these maps which preserve the hypersurface. This is a less elegant approach than the straightening method of \cite[Appendix~A]{abouzaid}, but one which works for arbitrary stable pair degenerations and yields a description that is Hamiltonian isotopic in the case of a degeneration resulting from a triangulation of $(Q,A)$. We only need to assume that the defining section $s$ is in the complement of the principal $A$-determinant. As the hypersurfaces are fixed by the limit of the monodromy maps $\exp_0 (x)$ in the degenerate fiber $F_\eta^{-1} (0)$, the ambient toric monodromy approximates the hypersurface map up to a negligible factor along their intersections.

To describe the hypersurface monodromy map, let $\fib{\eta}{0} = \cup_{i \in I} \fib{i}{0}$ be the components of the degenerated hypersurface from Definition \ref{defn:degfiber} and $g_i : \fib{i}{t} \to \fib{i}{t}$ the K\"ahler automorphism corresponding to $\exp (-d \eta_{Q_i})$. 
\begin{prop} \label{prop:hypdeg} There exists a decomposition of $\fib{\eta}{t} = \cup_{i \in I} \overline{V_i} $ such that $Z_i \approx \fib{i}{0} - \partial \fib{i}{0}$ and the monodromy map $\phi_\eta : \fib{\eta}{t} \to \fib{\eta}{t}$ equals $g_i$ on $Z_i$ off a $\varepsilon$ neighborhood $Z_i (\varepsilon)$ of $\partial Z_i$, and is interpolated smoothly over $Z_i (\varepsilon)$ by a Hamiltonian flow.
\end{prop}
\begin{proof}
Given Corollary~\ref{cor:subdivision}, we need only show that the action $\exp (-d \eta_{Q_i})$ preserves the hypersurface $ \fib{i}{0}$ for every $i \in I$. This follows from the observation that $\eta_{Q_i}$ is an affine function on $\lin_\Z (A_i)$ and $\fib{i}{0}$ is defined by sections in $A_i$. Multiplication of the section $z^a$ by $\exp (-d \eta_{Q_i})$ is given by $\exp (-d \eta_{Q_i} (2 \pi a)) z^a = z^a$ so that the section defining $\fib{i}{0}$ is fixed implying that the hypersurface is preserved as well.
\end{proof}

\subsection{\label{subsec:stm}Stratified Morse singularities}

In \cite{seideldehn}, it was seen that symplectic monodromy around a Morse singularity has infinite order in the symplectic mapping class group for any dimension.
In this paper, these types of singularities are encountered as a non-degenerate case. For the degenerate case, we need a different model whose critical fiber is in fact smooth, but fails to transversely intersect the boundary divisor. Restricting to the boundary divisor, we see a Morse singularity and expect that the monodromy on the ambient space extends the monodromy of the restriction.

We have one essential obstruction to pursuing this naively.
Namely, if our parallel transport map preserves a boundary divisor $D$ in $\mcy$, then $D$ must be horizontal relative to the symplectic orthogonal connection. On the other hand, if a smooth fiber does not intersect $D$ transversely at $p$, then the symplectic orthogonal will be normal, or vertical, to $D$. This holds for all symplectic connections in $\Omega^2 (\mcy )$. We resolve this difficulty by considering a singular connection on $D$ and show that the parallel transport vector field extends over the singularities and preserves the symplectic form of the fiber up to a negligible factor.

Let $U \subset \C^n$ to be a neighborhood of zero, $D (m) = \{z_1 z_2 \cdots z_m = 0 \} \cap U$ and $\maxint = \cap_{i = 1}^m D_i \cap U$. Given a linear function $L : \C^m \to \C$ with nonzero restrictions to each coordinate line, we let $f_L : U \to \C$ be the function
\begin{equation*} f_L (z_1, \ldots , z_n) = L (z_1, \ldots, z_m ) + \frac{1}{2}\left(
z_{m + 1}^2 + \cdots + z_n^2 \right) .\end{equation*}
While this map is smooth, the fiber over zero does not transversely intersect the divisor $D(m)$ along $\maxint$ at $0$. 

For the following definition, let $G$ be a subgroup of the unit circle $\mbbT \subset \C$ and $[\C / G]$ the quotient orbifold. We take $\psi_G : \C \to [\C / G]$ to be the orbifold chart of $[\C / G]$. 
\begin{defn} Let $\mcy$ be a symplectic orbifold with normal crossing divisor $D = \cup_{i = 1}^m D_i$, $p \in \cap_{i = 1}^m D_i$ and  $f: \mcy \to [\C / G]$ a map of orbifolds with $f(p) = 0$. We say that $f$ is a stratified Morse function at $p$ relative to $D$ if there exists a holomorphic orbifold chart $\phi :((U, D(m)), G_U) \to (\mcy, D)$ centered at $p$, a homomorphism $g : G_U \to G$, a $(G_U, G)$-equivariant linear function $L : \C^m \to \C$ such that $f$ lifts to the $(G_U, G)$-equivariant function $ f_L : U \to \C$. In this case we say that $f$ is stratified with codimension $m$, $p$ is a degenerate point of $f$ and that $f(p)$ is a degenerate value of $f$.
\end{defn}
We will concentrate on the case where $G$ and $G_U$ are trivial, as the general orbifold case will be an equivariant quotient thereof.  In the non-stratified setting we have a useful criterion for deciding when a function is Morse. A similar tool, whose proof is straightforward, is available in the stratified case.
\begin{prop} \label{prop:str} Let $U \subset \C^n$, $D = \cup_{i = 1 }^m D_i$,  $\maxint = \cap_{i = 1}^m D_i \cap U$ and $f : U \to \C$ be a holomorphic function. Then $f$ is a stratified Morse function at $0$ relative to $D$ with codimension $m$ if and only if the following conditions are satisfied 
	\begin{enumerate} 
		\item $df_0 |_{C} \ne 0$ on any coordinate subspace $C = \cap_{i \in J} D_i$ for which $J \subsetneq \{1, \ldots, m\}$, 
		\item $df_0 (T_0 \maxint ) = 0$, 
		\item $\textnormal{Hess}_0 (f)$ is non-degenerate on $T_0 \maxint$.
	\end{enumerate}
\end{prop}
\begin{proof}
	Applying the complex Morse Lemma to $f|_{D_{[m]}}$ and, inductively applying the Implicit Function Theorem for  coordinate planes containing $D_{[m]}$ yields the result.
\end{proof}
Let $\disc_{\epsilon}$ be an $\epsilon$ radius disc about the origin in $\C$ and $\tilde{U} = f_L^{-1} (\disc_{\epsilon } ) \subset \C$. As was pointed out above, the symplectic orthogonal connection on $f_L : \tilde{U} \to \disc_\epsilon$ needs to be corrected in order to preserve the boundary $D (m)$. We implement a form of Moser's trick  by integrating a path of equivalent symplectic forms, perform parallel transport relative to the corrected form and then flow back to the standard form.

\begin{figure}[h]
\begin{picture}(0,0)%
\includegraphics{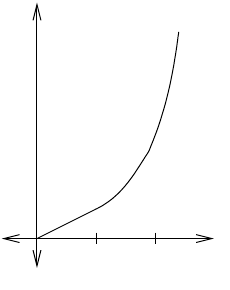}%
\end{picture}%
\setlength{\unitlength}{4144sp}%
\begin{picture}(1737,2110)(2644,-2609)
\put(3286,-1366){\makebox(0,0)[lb]{\smash{$\rho_\varepsilon (r)$}}}
\put(4366,-2401){\makebox(0,0)[lb]{\smash{$r$}}}
\put(3336,-2536){\makebox(0,0)[lb]{\smash{$\varepsilon$}}}
\put(3781,-2536){\makebox(0,0)[lb]{\smash{$2\varepsilon$}}}
\end{picture}%
\caption{\label{fig:rhoeps} The function $\rho_\varepsilon$.}
\end{figure}

We define a smoothly varying collection $\{\rho_\varepsilon\}_{1 \geq \varepsilon > 0}$ of functions where $\rho_\varepsilon : \R_{\geq 0} \to \R_{\geq 0 }$ is a smooth convex function which satisfies
\begin{equation*}  \rho_\varepsilon (r ) = \left\{ \begin{matrix} \frac{\varepsilon^2}{4}  r & \text{ for } r < \varepsilon, \\ \frac{1}{4}r^2 & r > 2 \varepsilon .\end{matrix} \right. \end{equation*}
The function $\rho_\varepsilon$ is illustrated in Figure \ref{fig:rhoeps}. Then define $\omega_\varepsilon$ on $\C^n$ to be the symplectic form obtained by the K\"ahler potential
\begin{equation} \label{eq:rhoeps} p_\varepsilon (z_1, \ldots, z_n ) = \sum_{ i = 1}^m \rho_\varepsilon (|z_i| ) + \frac{1}{4} \sum_{i = 1}^{m + 1} |z_i|^2 . \end{equation}
It is clear that $\omega_\varepsilon$ is a smooth symplectic form away from $D(m)$ and singular on $D(m)$. While singular, an application of Stokes' Theorem shows that for any disc $\Sigma$ with boundary outside of a radius $2$-neighborhood of $D(m)$ and any $\varepsilon \leq 1$, the integral $\int_\Sigma \omega_\varepsilon$ is finite and independent of $\varepsilon$. Indeed, if $\Sigma$ is such a disc, then we may perturb its interior so that it transversely intersects $D (m)$ implying that each intersection point is in $D_i$ for some $1 \leq i \leq m$. Again, after perturbing, we may assume that $\Sigma$ is orthogonal to $D_i$ which reduces the computation to the one dimensional case. Note that, as $\omega_\varepsilon$ is an exact symplectic form off of $D(m)$ and we have kept the outer boundary fixed, these perturbations do not affect $\int_{\Sigma} \omega_\varepsilon$. To check the assertion in the one dimensional case, assume $\Sigma \in \C$ contains the origin and, for $\delta < \varepsilon$ let $\Sigma_\delta = \Sigma - \disc_\delta$. The boundary of $\Sigma_\delta$ then consists of an outer closed curve $\mathcal{C}_o$ which we assume to be outside a disc of radius $2$, and the inner closed curve $\mathcal{C}_i$ where $\mathcal{C}_i$ is a circle of radius $\delta$ about the origin. Take $\lambda_\varepsilon = -d^c \rho_\varepsilon (|z|)$ to be the Liouville form. Note that in an $\varepsilon$-neighborhood of the origin,
\begin{align}
\omega_\varepsilon =  d  \lambda_\varepsilon = \frac{\varepsilon^2}{4} d \left( \frac{x dy - y dx}{\sqrt{x^2 + y^2}} \right) .
\end{align}
Thus if $\mathcal{C}_i$ is parameterized by $(\delta \cos (\theta ), \delta \sin (\theta ))$ then  $\lambda_\varepsilon |_{\mathcal{C}_i} = (\varepsilon^2 \delta)/4 d \theta$. On the other hand, since $\mathcal{C}_o$ lies outside of the radius $2$ neighborhood of $0$, we have that $\lambda_\varepsilon |_{\mathcal{C}_o}$ is independent of $\varepsilon$. Thus,
\begin{align*}
\int_{\Sigma} \omega_\varepsilon & = \lim_{\delta \to 0} \int_{\Sigma_\delta} \omega_\varepsilon ,  \\ 
& =  \lim_{\delta \to 0} \int_{\partial \Sigma_\delta} \lambda_\varepsilon , \\
& = \lim_{\delta \to 0} \left( \int_{\partial \mathcal{C}_o} \lambda_\varepsilon  - \int_{\mathcal{C}_i} \lambda_\varepsilon \right), \\
& = \int_{\partial \mathcal{C}_o} \lambda_\varepsilon  -  \lim_{\delta \to 0} \left( \frac{\varepsilon^2 \delta}{4}  \int_0^{2\pi} d \theta \right), \\ & = \int_{\partial \mathcal{C}_o} \lambda_\varepsilon .
\end{align*}
%We also note that:
%
%
%yields a smooth diffeomorphism off of $D(m)$ that pulls back 
This verifies that the relative cohomology class of $\omega_\varepsilon$ is constant in a radius $2$-neighborhood of $D(m)$ as $\varepsilon$ varies. It is this fact that hints towards a Moser argument relating the standard symplectic structure with $\omega_\varepsilon$. In particular, Let $X_\varepsilon$ be the vector field which is the $f_L$ fiberwise $\omega_\varepsilon$ dual of the $1$-form $\frac{d}{d t} \left( d^c p_t \right)_{t = \varepsilon}$. The vector field $X_\varepsilon$ is smooth off of $D (m)$ and extends continuously to $\C^n$ by letting it equal zero on $D(m)$. Furthermore, the derivative of $X_\varepsilon$ is bounded on $\C^n$. This implies that the time varying vector field $X_t$ may be integrated on $\C^n$ for time $t  \in \R_{\geq 0}$. Recalling that $\tilde{U} = f_L^{-1} (B_\epsilon )$, and noting that, by definition, $X_t$ is tangent to the fibers of $f_L$, we may restrict the $X_\varepsilon$ flow to  $\tilde{U}$. We denote by $\Phi_{s} : \tilde{U} \to \tilde{U}$ the singular symplectomorphism obtained through integrating $\{X_t\}$ to time $t = s$. 
%Perhaps say more on the global definition 

\begin{defn} For any stratified Morse function $f : \mcy \to \C$ with a local model $U \subset \C^n$ and $f_L : U \to \C$, we will call conjugation of parallel transport around $0$ relative to $\omega_1$ by $\Phi_1$ \emph{modified symplectic parallel transport} relative to $U$.
\end{defn}

We now investigate the local behavior of modified symplectic parallel transport for $f_L : \C^n \to \C$. As the computation is local, we may extend the symplectic form $\omega_1$ near $0$ to one over all of $\C^n$, differing from $\omega_1$ only outside of a neighborhood of $0$. From the definition of $\omega_\varepsilon$ using the potential  $p_\varepsilon$ in equation \eqref{eq:rhoeps}, we note that near $D (m) = \cup_{i = 1}^m$, the symplectic form for $\varepsilon = 1$ is 
\begin{equation*} \omega = \frac{i}{4}  \sum_{i = 1}^m
\frac{dz_i \wedge d \bar{z}_i}{ |z_i|}  + \frac{i}{2} \sum_{i = m + 1}^n dz_i \wedge d \bar{z}_i . \end{equation*}
Sufficiently far away from $D (m)$, $\omega_1$ is the standard symplectic form and it interpolates between $\omega$ and $\omega_{st}$. We may thus use $\omega$ for the local model.

Let $L (z_1, \ldots , z_m ) = c_1 z_1 + \cdots c_m z_m$ and note that we may change coordinates by multiplying $z_i$ with $e^{- \text{arg} (c_i) i}$ without affecting the map $\Phi$, so that we may assume $c_i \in \R_{+}$. In the following computation,  we will examine only the case where $c_i = 1$ for every $m + 1 \leq i \leq n$ and write $f$ for $f_L$. The case of a more general linear function $L$ only affects the isotopy class of the modified parallel transport map if a small $\partial$-frame group $\mbF$ is considered for which rotations about the boundary in $\Symp^\mbF (\mcy)$ do not exist. In less restrictive frame groups, for example if $\mbF$ is the image of all symplectomorphisms preserving the boundary divisor under $j^\#$, we may isotope to this case.

Our goal will be to understand parallel transport around $0$. As a
first step, let $\gamma_{vc} : \R_{> 0} \to \C$ be the path $\gamma_{vc} (t) = t$ and examine the flow of the parallel transport vector field which lifts $-\partial_z$. Let
$F_q$ be the fiber of $f$ over $q \in \C$ and $\phi_t : F_q \to
F_{q - t}$ be the parallel transport map for $q > t$. Define
\begin{align} \label{eq:defnvt} \vt^\circ & = \left\{z \in f^{-1} (\R_{> 0} ) : \lim_{ t \to f (z)}
\phi_t (z ) = 0 \right\}, \\ \label{eq:defnvc} \vc^\circ & = \left\{ z \in F_1 : \lim_{t \to 1}
\phi_t (z ) = 0 \right\}, \end{align} 
to be the open vanishing thimble and cycle, respectively, of $f$. A priori,  parallel transport can only be defined where $\omega$ is non-singular, so $\vt^\circ \subset \C^n - D (m)$ and $\vc^\circ \subset F_1 - \left(F_1 \cap D(m)\right)$. The vanishing thimble $\vt$ and cycle $\vc$ will then be defined as the closure of these in $\C^n$ and $F_1$ respectively.

Let $\varphi: \C^n \to \C^n$ be given by the map
\begin{equation*} \varphi (w_1, \ldots, w_n) = \left( \frac{1}{2} w_1^2 , \ldots, \frac{1}{2}w_m^2 , w_{m + 1}, \ldots, w_n \right). \end{equation*}
Observe that $\tilde{f} := f \circ \varphi (w_1, \ldots , w_n) = 1/2 \sum w_i^2$ and that $\varphi^* \omega_1 = \omega_{st}$ so that the diagram
\begin{equation*} \begin{CD} (\C^n , \omega_{st} ) @>{\varphi}>> (\C^n , \omega_1) \\ @V{\tilde{f}}VV @V{f}VV \\ \C @= \C \end{CD} \end{equation*}
commutes. This immediately implies that, off of $D(m)$, the parallel transport vector fields are mapped to each other via $\varphi$. 

In fact, we show that this description extends over $D(m)$. To see this note that, given any K\"ahler form $\tilde{\omega}$ on $\C^n$, a holomorphic function $F : \C^n \to \C$, a regular point $p \in \C^n$ of $F$ with $q = F(p)$ and a tangent vector $z \in T_q \C$, one has the following formula
\begin{align}
\label{eq:partransformula} \xi_{F, \tilde{\omega}} (z) =  z \cdot \frac{\text{grad}_{\tilde{\omega}} (F)}{\| \text{grad}_{\tilde{\omega}} (F)\|_{\tilde{\omega}}^2}
\end{align}
for the symplectic connection lift $\xi_{F, \tilde{\omega}} (z)$ of $z$ to $T_p \C^n$. Here the gradient and norm is with respect to the Hermitian form defined by $\tilde{\omega}$. Letting $p = (w_1, \ldots, w_n) \in \C^n$ with the standard metric, one computes that the lift of $z$ for $\tilde{f}$ 
\begin{align*}
\xi_{\tilde{f}, \omega_{st}} (z)|_p = z \cdot \frac{(\bar{w}_1, \ldots, \bar{w}_n)}{\| (\bar{w}_1,\ldots, \bar{w}_n)\|_{\omega_{st}}^2} .
\end{align*}
On the other hand, taking $\varphi_* (\xi_{\tilde{f}, \omega_{st}}(z)|_p)$ and using equation \eqref{eq:partransformula} for $F = f$ and $\tilde{\omega} = \omega$ at $\varphi (p) = (1/2 w^2_1, \ldots, 1/2 w^2_m, w_{m + 1}, \ldots, w_n)$  gives
\begin{align} \label{eq:partransf}
\begin{split}
\varphi_* (\xi_{\tilde{f}, \omega_{st}}(z)|_p) & = z \cdot \frac{( |w_1|^2, \ldots,  |w_m|^2 , \bar{w}_{m + 1}, \ldots, \bar{w}_n )}{\| (\bar{w}_1,\ldots, \bar{w}_n)\|_{\omega_{st}}^2} , \\ & = z \cdot \frac{( |w_1|^2, \ldots,   |w_m|^2 , \bar{w}_{m + 1}, \ldots, \bar{w}_n )}{\| ( |w_1|^2, \ldots,  |w_m|^2 , \bar{w}_{m + 1}, \ldots, \bar{w}_n )\|_{\omega}^2}, \\ & = z \cdot \frac{\text{grad}_{\omega} (f)}{\| \text{grad}_{\omega} (f)\|^2_{\omega}} \\ & = \xi_{f, \omega} (z)|_{\varphi (p)}.
\end{split}
\end{align} 
Off of $D(m)$, this equality follows from the fact that $\varphi^* (\omega_1) = \omega_{st}$. The upshot of the computation is the realization that any parallel transport vector field with respect to $f$ extends to $D(m)$. Moreover, a closer look at the equations in \eqref{eq:partransf} shows the divisors $\{D_i\}_{1 \leq i \leq m}$ are horizontal with respect to parallel transport. In fact, the vector field $\xi_{f, \omega}$ restricts to the parallel transport field associated to $f|_{D(m)}$.

\begin{prop} \label{prop:vcvt} The vanishing thimble and cycle of $f$ are
\begin{equation*} \vt = \R_{\geq 0}^m \times \R^{n - m}, 
\end{equation*}
\begin{equation*} \vc = F_1 \cap \left( \R^m_{\geq 0} \times \R^{n - m} \right) \approx  \Delta_{m - 1} \star S^{n - m
 - 1} ,
\end{equation*}
where $\Delta_{m - 1}$ is the $(m - 1)$-dimensional simplex, $S^{n
- m - 1 }$ is the sphere and $\star$ is the join.
\end{prop}

\begin{proof} By the definition of vanishing thimble in equation \eqref{eq:defnvt}, $T$ is the union of all integral curves of $\xi_{f, \omega} (-1)$ which limit to $0 \in \C^n$ (here $-1$ represents the vector field $-\partial_z$ on $\C$). It is known that $\R^n$ is the vanishing thimble of $\tilde{f}$ along $\gamma_{vc}$ (see \cite[Example~16.5]{SeidelFPL}).  By equation \eqref{eq:partransf},  $\varphi_* (\xi_{\tilde{f}, \omega_{st}}(-1)|_p) = \xi_{f, \omega} (-1)|_{\varphi (p)}$ which implies that if $\delta : [a,b] \to \C^n$ is an integral curve for $\xi_{\tilde{f}, \omega_{st}}(-1)$ then $\varphi \circ \delta$ is an integral curve for $\xi_{f, \omega} (-1)$. As $\varphi$ is surjective, we see by the uniqueness of integral curves (up to reparameterization) that every integral curve of $\xi_{f, \omega} (-1)$ is the image of one for $\xi_{\tilde{f}, \omega_{st}}(-1)$. As $\varphi^{-1} (0) = 0$, this implies that $T$ is the image $\varphi (\R^n ) = \R^m_{\geq 0} \times \R^{n - m}$. 
	
The description of $\vc$ follows from intersecting $\vt$ with the fiber $F_1$. In particular, $(r_1, \ldots, r_m, z_1, \ldots, z_{n - m}) \in F_1 \cap \left( \R^m_{\geq 0} \times \R^{n - m} \right)$ if and only if $r_i \geq 0$ for all $i$ and 
\begin{align} \label{eq:descvc} \sum_{j = 1}^{n - m} z_i^2 = 2 \left( 1 - \sum_{i = 1}^m r_i \right).
\end{align}
Taking the standard simplex $\Delta_{m - 1} = \{s = (s_1, \ldots, s_m) \in \R_{\geq 0}^{m} : \sum_{i = 1}^m s_i = 1\}$ and $S^{n - m - 1} = \{u \in \R^{n - m}: \|u\|^2 = 1\}$ we map $G : \Delta_{m - 1} \times S^{n - m - 1} \times [0,1] \to F_1 \cap \left( \R^m_{\geq 0} \times \R^{n - m} \right)$ via 
\begin{align*}
G(s , u , t) = \left( ts , \sqrt{2(1 -t)}\,u \right).
\end{align*}
Recall that the join $\Delta_{m - 1} \star S^{n - m - 1}$ is equal to $\Delta_{m - 1} \times S^{n - m - 1} \times [0,1] / \sim$ where $(s, u, 0) \sim (s^\prime, u, 0)$ and $(s, u, 1) \sim (s, u^\prime, 1)$. 
It follows then from equation \eqref{eq:descvc} that $G$ induces a homeomorphism of $\Delta_{m - 1} \star S^{n - m - 1}$ onto $\vc$.
\end{proof}

We illustrate a few examples of these vanishing cycles in Figure \ref{fig:vc}.  In general, one would hope that these cycles could appear as natural objects in a Fukaya-Seidel category, perhaps with a partial wrapping around the stratifying divisors. 

We conclude this section with a description of the monodromy map around the stratified Morse critical value. 
\begin{figure}[t]
\begin{picture}(0,0)%
\includegraphics{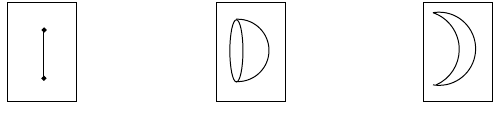}%
\end{picture}%
\setlength{\unitlength}{4144sp}%
\begingroup\makeatletter\ifx\SetFigFont\undefined%
\gdef\SetFigFont#1#2#3#4#5{%
  \reset@font\fontsize{#1}{#2pt}%
  \fontfamily{#3}\fontseries{#4}\fontshape{#5}%
  \selectfont}%
\fi\endgroup%
\begin{picture}(3762,958)(391,-1097)
\put(406,-1051){\makebox(0,0)[lb]{\smash{{\SetFigFont{7}{8.4}{\rmdefault}{\mddefault}{\updefault}{\color[rgb]{0,0,0}$n = 2, m = 1$}%
}}}}
\put(2026,-1051){\makebox(0,0)[lb]{\smash{{\SetFigFont{7}{8.4}{\rmdefault}{\mddefault}{\updefault}{\color[rgb]{0,0,0}$n=3, m=1$}%
}}}}
\put(3601,-1051){\makebox(0,0)[lb]{\smash{{\SetFigFont{7}{8.4}{\rmdefault}{\mddefault}{\updefault}{\color[rgb]{0,0,0}$n=3, m=2$}%
}}}}
\end{picture}%
\caption{\label{fig:vc} The vanishing cycle as a join $\vc \approx \Delta_{m - 1} \star S^{n - m - 1}.$}
\end{figure}
\begin{prop} \label{prop:stratmon}
For any $\varepsilon > 0$ exists a radius $\varepsilon$ neighborhood $U$ of $L$ and a symplectomorphism $\phi$ supported on $U$ isotopic to symplectic monodromy of $f$ around $0$. Furthermore, $L$ is a deformation retract of $U$ with retraction $\rho : U \to L$ and for $x \in L - \partial L$, the fiber $F_x = \{u \in U : \rho (u) = x\}$ satisfies:
\begin{enumerate}
\item $F_x$ is a topological disc,
\item $\phi (F_x) \cap L = \{x\}$,
\item $\rho \circ \phi$ is generically a $2^m$-cover of $L$.
\end{enumerate}
\end{prop}
\begin{proof}
The symplectic monodromy around $0$ relative to the function $\tilde{f}$ describes a spherical twist as introduced in \cite{seideldehn} and surveyed in \cite[Section~6.3]{mcduffsolomon}. We let $G = (\Z / 2 \Z )^m$ act on $\C^n$ by multiplying the $i$-th coordinate with $\pm 1$. As $\tilde{f}(w_1, \ldots, w_n) = 1/2 \sum w_i^2$, we see that $\tilde{f}$ and  $\varphi$ are invariant with respect to this action. Since the action preserves the standard symplectic form, the lift of the parallel transport vector field and the symplectic monodromy are equivariant with respect to the action. Noting that $F_q$ is simply the quotient of $\tilde{f}^{-1} (q)$ by $G$, we aim to understand the symplectic monodromy around zero on $F_1$ as a quotient of that on $\tilde{f}^{-1} (q)$ by $G$.

To accomplish this aim, we take a moment to recall the formulation of the spherical twist with respect to $\tilde{f}$. It is known that $\tilde{f}^{-1} (1) \approx T^* S^{n - 1}$ and the vanishing cycle is $Z = \{(w_1, \ldots, w_n) \in \R^n : \sum w_i^2 = 2\}$. Taking $g$ to be the constant curvature $1$
metric on $Z \approx S^{n -1}$ we have the dual metric $g^*$ on $T^* S^{n - 1}$ and consider $H : T^* S^{n - 1} \to \R$ to be
the Hamiltonian $H (w, v) = \frac{1}{2} \|v\|_{g^*}^2$ generating geodesic flow \cite[Example~1.22]{mcduffsolomon} and
$X_H$ its vector field. For an arbitrary choice of $\varepsilon > 0$, one may rescale $X_H$ off of an $\varepsilon / 2$ neighborhood of the zero section $Z \subset T^* S^{n - 1}$ to obtain an exact vector field $\tilde{X}_H$ supported on a radius $\varepsilon$ neighborhood $\tilde{U}$ of $Z$ whose time $1$ flow the antipode map on $Z$. The resulting monodromy map $\tilde{\phi}$ has support on the neighborhood $\tilde{U}$ of $Z$ and is Hamiltonian isotopic to the spherical twist. This is a generalization of the Dehn twist in $2$-dimensions.

We would like to utilize this description to understand the stratified case. Let $U = \tilde{U} / G$ and $\phi : F_1 \to F_1$ be the monodromy map induced by symplectic parallel transport $\tilde{\phi}$ along a loop about the origin. Note that, as the bundle projection from $\tilde{U}$ to $Z$ is $G$-equivariant, it defines a retract $ \rho : U \to L$ in $F_1$. Decompose $Z$ into $2^m$ regions $Z = \cup_{g \in G} Z_g$ defined as
\begin{equation*} Z_0 = \{(w_1, \ldots, w_n) \in S^{n - 1} : w_i
\geq 0 \text{ for all } 1 \leq i \leq m \} ,\end{equation*}
and $Z_g = g \cdot Z_0$. We let $T^* Z_0$ consist of pairs $(w, v)$ such that, if $w \in \partial Z_0$, then $v (\nu ) \geq 0$ for all inward pointing tangent vectors $\nu \in T_w Z_0$. Observe that $T^* Z_0$ forms a fundamental domain for the $G$ action in $\tilde{f}^{-1} (1)$ ramified over the boundary $\partial Z_0$. Thus the points in  $F_1 \approx T^* S^{n - 1} / G$, can be identified with those in $T^* Z_0$.

Now, by restricting $\tilde{\phi}$ to any cotangent fiber
$T^*_p S^{n - 1}$ and projecting to $Z$, we obtain a decomposition
of each such fiber $T^*_p S^{n -1 } = \cup_{g \in G} Z_{p, g}$. Here 
\begin{align*} Z_{p, g} = \{ (p, v) \in T^*_p S^{n - 1} :  \pi (\phi^{-1} (p, v)) \in Z_g \} \end{align*} 
where $\pi : T^* S^{n - 1} \to Z$ is the cotangent bundle projection. Identifying $T^* Z_0$ with $F_1$, the monodromy map $\phi$ takes $(p, v) \in Z_{p, g}$ to $g^{-1} \varphi (p, v)$. Qualitatively, we observe that a given fiber of $T^*_p Z_0 \approx T^*_p \vc$, identified with $\rho^{-1} (p)$, wraps around the zero section $2^m$ times with one crossing. The map on the vanishing cycle is seen to be the join of the identity on the simplex with the antipode map on the sphere.
\end{proof}

\subsection{\label{subsec:flp} $\partial$-framed Lefschetz pencils}

In this section we address certain transitions in framings for
symplectomorphisms arising in monodromy calculations. We assume $(\mcy , D )$ is a K\"ahler orbifold with standard normal crossing divisor $D = D_1 + \cdots + D_k$, i.e. it is a symplectic orbifold with a specified $J \in \mcj$ that is integrable everywhere. 

Take $\curve$ to be a $1$-dimensional DM stack with coarse space $\p^1$. Let $\pi : \mcy \to \curve$ be a map with determinant values $\detl{\pi}$. These are defined to be values of $\pi$ for which $\pi$ is either singular, or where $\pi |_{\cap_{i \in I} D_i}$ is singular.

\begin{defn} \label{defn:framedpencil} We will say that $\pi$ defines a $\partial$-framed Lefschetz pencil
if $\omega \in \Omega^2 (\mcy)$ is isotopic to some $\tilde{\omega}$ for which $D$ is horizontal and such that there is a covering $\{U_i\}$ of $\curve$ such that $\pi : \pi^{-1} (U_i ) \to U_i$ is either a smooth proper fibration, a normal crossing
degeneration or a stratified Morse function for every $i$.
If $(\mcy , D)$ is a standard K\"ahler stack with resolving collection $\mcr$, we say that $\pi$ is a $\partial$-framed Lefschetz pencil if $\pi \circ \psi_\varepsilon : \tilde{\mcy} \to \curve$ is a $\partial$-framed Lefschetz pencil for every $(\tilde{\mcy}, \tilde{D} ) \in \mcr$. 
 % maybe non-negative is better?
\end{defn}

We note that the definition of Lefschetz pencil given in \cite{Donaldson1} is generalized by the definition above. The notion of a partial Lefschetz fibration given in \cite{kerr} can also be put in this framework. However, our principal example of a framed pencil is obtained from considering stacky curves in $\secon{A}$ where $A$ satisfies some basic conditions. 

Before we state the following theorem we review some notation from Appendix~\ref{sec:toric}. Recall that  $A$ is a finite set in a lattice with convex hull $Q$. The toric stack defined by $Q$ was denoted $\mcx_Q$ and, for any face $F \subseteq Q$, the orbit of the maximal torus acting on $\mcx_Q$ corresponding to $F$ was denoted $\orb{F}$. Coupled with $\mcx_Q$ was a line bundle $\mco_A (1)$ defined by $Q$ and a subspace of sections $\linsys{A}$ defined by $A$. In equation \eqref{eq:princadet} we defined the principal $A$-determinant $E_A : \linsys{A} \to \C$ which vanished on degenerate sections in $\linsys{A}$. These were elements of $\linsys{A}$ whose zero locus intersected an orbit $\orb{F}$ non-transversely for some face $F$ of $Q$. In Definition \ref{defn:secstack}, we then extended $E_A$ to a section $E^s_A$ of a line bundle over the secondary stack $\secon{A}$ with zero loci $\mce_A$. The discriminant $\Delta_A : \linsys{A} \to \C$ is a polynomial vanishing only on those sections which defined hypersurfaces with singularities in the maximal orbit $\orb{Q}$. When $\Delta_A$ is constant, we call $(Q,A)$ dual defect.

\begin{thm} \label{thm:fpsec} Suppose $A \subset \Z^d$ defines the marked polytope $(Q, A)$ such that for every face $F$ of $Q$ either $\orb{F}$ %The orbit associated to \sigma
 has a smooth neighborhood, or $(F , A \cap F )$ is dual defect. 
Let $(\hyp{A} , \partial \hyp{A}) \subset \laf{A}$ be the universal toric
hypersurface with boundary. Suppose $\curve$ is $1$-dimensional and $i : \curve \to \secon{A}$ is an embedding which transversely intersects $\mce_A$ and $\partial \secon{A}$. Then $\pi : i^* (\hyp{A} , \partial \hyp{A} ) \to
\curve$ is a $\partial$-framed pencil. \end{thm}

\begin{proof} From Theorem \ref{thm:GKZ1}, there is a product decomposition  
	\[E_A (f) = \prod_{Q^\prime \leq Q} \Delta_{A \cap Q^\prime} (f)^{i (\Lambda , A) \cdot u ({\textnormal{Lin}_{\mathbb{N}} (\mathcal{A})} / Q^\prime )}\] of $E_A$ where, in this case, $\Lambda = \Z^d$. Recall that this is indexed by faces $Q^\prime = \convhull (A^\prime ) \subseteq Q$. Under the conditions above, if the orbit $\orb{Q^\prime}$ does not admit a smooth neighborhood then $\Delta_{A^\prime}$ is constant. So we may assume that for every set of lattice points $A^\prime$ in a face $Q^\prime$ of $Q$ which has a non-constant discriminant,  $X_{Q^\prime}$ admits a smooth neighborhood. 

Since every intersection point  $ p \in i (\curve ) \cap \{\Delta_{A^\prime} = 0\}$ is transverse, we have that the point $p$ is a smooth point of $\Delta_{A^\prime} = 0$ and not in $\{\Delta_{A^{\prime \prime}} = 0\}$ for any other subset $A^{\prime \prime}$ on a face of $Q$. By \cite[Theorem~1.5.1]{GKZ}, this implies that the Hessian of $\pi: i^* (\hyp{A}) \cap \orb{Q^\prime} \to \curve$ at $p$ is non-degenerate and that $\pi :i^* (\hyp{A}) \cap \orb{Q^{\prime \prime}} \to \curve$ is non-singular at $p$ for all faces $Q^{\prime \prime}$ containing $Q^\prime$. Thus, by Proposition \ref{prop:str},  $\pi: i^* (\hyp{A} , \partial \hyp{A} ) \to U$ is a stratified Morse singularity in a neighborhood $U$ of $p$. 

For every $p \in i (\curve ) \cap \partial \secon{A}$, Theorems \ref{thm:toric1} and \ref{thm:toric2} imply that there is a neighborhood $U$ of $p$ such that $\pi : i^* (\hyp{A} , \partial \hyp{A} ) \to U$ is a hypersurface degeneration of $\fib{A}{q} = \pi^{-1} (q)$  for $q \in U - \{p \}$. 
%NEED MAIN THEOREM ON TORIC DEGENERATIONS MODULI with label \label{thm:toric}
\end{proof}

All of the results on symplectomorphisms will be obtained by parallel transport in a $\partial$-framed Lefschetz pencil. However, the parallel transport map occurs naturally as a functor in higher dimensional settings. We take a moment to fix notation for the general set up, and quickly return to the $1$-dimensional case afterwards.

Given a stack $\mcx$ with atlas $(U_\beta, G_\beta , \pi_\beta)_{\beta \in \mcb}$, let $\Pi (\mcx )$ be the path category of $\mcx$ defined by taking elements $p \in \cup U_\beta$ to be objects and morphisms $\Hom (p, q) = \{\gamma : [0, 1] \to \mcx : \gamma (0) = p , \gamma (1) = q \}$. We can think of this category as an $(\infty, 1)$-category, as morphisms do not compose associatively.

Given a bundle $\pi : (\mcy , \partial \mcy ) \to \mcx$ of standard symplectic stacks over $\mcx$ and a symplectic connection which preserves their boundaries, we write parallel transport as a functor
\begin{equation*} \partrans : \Pi (\mcx ) \to \mathbf{Symp} \end{equation*}
where $\mathbf{Symp}$ is the category of standard symplectic stacks. This map takes $p \in \mcx$ to $\pi^{-1} (p )$ and a morphism to the map obtained by parallel transport. We will abuse notation and also write $\partrans : \Omega_p (\mcx ) \to \Symp (\pi^{-1} (p) , \partial \pi^{-1} (p) )$ for the restriction to based loops. As indicated by Theorem \ref{thm:fpsec}, the primary example we consider is $\mcx = (\secon{A} - \mce_A)$.  Using this theorem and the general parallel transport map, we define:
\begin{defn}\gls{mongroup}
For any point $p \in (\secon{A} - \mce_A)$ let 
\begin{align*} \agroup_p \subset \pi_0 (\Symp ( \fib{A}{p} , \partial \fib{A}{p} )) \end{align*} be the group of components of the image $\partrans (\Omega_p (\secon{A} - \mce_A ))$.
\end{defn}

For any $\partial$-framed Lefschetz pencil $\pi : \mcy \to \curve$ and $q \in \curve - \detl{\pi}$
and take $\mcz_q = \pi^{-1} (q)$ to be a fiber with $\partial \mcz_q = \mcz_q \cap D$. If the $q$ is a chosen base point, we simply write $\mcz$ and $\partial \mcz$.  Note that the definition above ensures that every fiber outside $\detl{\pi}$ transversely intersects $D$, so $(\mcz, \partial \mcz)$ is a symplectic orbifold with standard normal crossing divisor. 

The connection given by the modified symplectic form $\tilde{\omega}$ yields a parallel transport map that preserves the boundary, which we write as\gls{partrns}
\begin{equation*} \partrans : \Omega_q (\curve - \detl{\pi} ) \to \Symp
(\mcz, \partial \mcz) , \end{equation*}
where $\Omega_q$ is notation for piecewise smooth based loops at $q$.

A key point which will be made precise in Proposition~\ref{prop:flp} is that when we examine local monodromy, we may
utilize the local model descriptions to analyze the
symplectomorphisms as framed maps with respect to a reasonable
$\partial$-frame group. However, as we extend to the global pencil, these maps loose their framing in the holonomy.  Another way of saying this is that if we omit a point $q_\infty \in \curve - \detl{\pi}$, we may define a $\partial$-frame group $\mbF$ which is tightly controlled on parts of $\partial \mcz$ and obtain, up to homotopy, a lift\gls{prtrns}
\begin{equation*} \partran : \Omega_q (\curve - \detl{\pi} -
\{q_\infty\} ) \to \Symp^{\mbF} (\mcz, \partial \mcz) . \end{equation*}
However, to extend the map to $\Omega_q (\curve - \detl{\pi} )$, we need to consider the $\partial$-frame group $\relaxed{\mbF}$.

To make this idea precise, we must define a $\partial$-frame group of a $\partial$-framed Lefschetz fibration. For this, recall the normal crossing divisor $D = \sum_{i = 1}^k D_i$ is contained in $\mcy$ and let $\partial \mcz = \sum_{i = 1}^k \divi_i$ where $\divi_i = D_i \cap \mcz$. In general, the symplectomorphisms in $\Symp (\mcz, \partial \mcz)$ arising from parallel transport will have non-trivial restrictions to the symplectomorphism group of the boundary divisors $\Symp (\divi_i)$. The following definition gives conditions that allow us to control this additional complexity.
\begin{defn} A boundary component $D_i$ is called rigid if there exists a trivialization $D_i - \partial D_i \approx (\divi_i - \partial \divi_i ) \times \curve$ over $\mcc$ where $\pi$ is projection to the second factor. 
\end{defn}
We say that a face $F \subset Q$ is a simplicial face if $F$ is a face of $Q$ and $F \cap A$ is an affinely independent set. The following proposition may be seen from the fact that the orbits corresponding to simplicial faces of $Q$ occur in trivial families as substacks of $\pi : \hyp{A} \to \secon{A}$.
\begin{prop} \label{prop:rigid} If $i: \curve \to \secon{A}$ pulls back $\pi : \hyp{A} \to \secon{A}$ to a $\partial$-framed Lefschetz fibration and $D$ is a divisor associated to a simplicial facet of $(Q, A)$, then $D$ is rigid.
\end{prop}
\begin{proof}
If $(Q^\prime, A^\prime)$ is a simplicial facet of $(Q,A)$ then by Theorem~\ref{thm:GKZ2}, $\Sigma (A^\prime)$ is zero dimensional. Thus the moduli of hypersurfaces in the toric stack $\mcx_{Q^\prime}$ is also zero dimensional. Let $\mathbf{D} \subset \laf{A}$ be the horizontal divisor corresponding to the pointed subdivision $(S,A^\prime)$ where $S$ is the trivial subdivision $\{(Q,A)\}$. By Proposition~\ref{prop:lafpolytope} and Lemma~\ref{lem:lafnormal}, the facet $F_\mathbf{D}$ of the Lafforgue polytope $\ptlaf{A}$ corresponding to the boundary divisor $\mathbf{D}$ is a Minkowski sum $P + \Sigma (A)$ of two polytopes $P := \convhull \{e_a : a \in A^\prime\}$ and $\psec{A}$. By equation~\eqref{eq:secpolhomogenious}, the polytopes $P$ and $\psec{A}$ lie on independent affine spaces in $\R^{A}$ implying that the boundary is $P \times \Sigma (A)$ and $\mathbf{D} \cong \mcx_P \times \secon{A}$. From the definition of $\pi$, one has that $\pi |_{\mathbf{D}}$ is the projection of the second factor. Also, as the hypersurface in $\hyp{A} \cap \mathbf{D}$ forms a trivial family over $\secon{A}$ (since $\Sigma (A^\prime)$ is a point), we have that $\hyp{A} \cap \mathbf{D}$ also splits as a product. Pulling back  along $\iota : \mcc \to \secon{A}$ gives the desired splitting over $\mcc$.
\end{proof}
Now, let $\detl{\pi}  = \{q_1 , \ldots, q_N \}$ and take $B_\varepsilon (p)$ to be a disc of radius $\varepsilon$ around $p$. We take $\mcb = \{\gamma_1, \ldots, \gamma_N \}$ to be a set of embedded paths from $[0, 1]$ to $\curve$ such that $\gamma_i (0 ) = q$, $\gamma_i (1) = q_i$
and, for $i \ne j$, $\gamma_i (t) = \gamma_j (s)$ if and only if $t = s = 0$.
We also assume that $\gamma_i^\prime (0)$ is ordered
clockwise. Such a collection is known as a distinguished
basis of paths \cite{brieskorn}. For any such distinguished basis and any $\gamma_i$, we define the loop $\gamma^\varepsilon_i$ by following
$\gamma_i$ until reaching a distance of $\varepsilon$, circling
around the boundary of $B_\varepsilon (q_i )$ clockwise
and following $\gamma_i$ back to $q$. By Definition~\ref{defn:framedpencil}, there is  an $\varepsilon$ 
sufficiently small so that $\phi_i = \partran (\gamma_i^\varepsilon )$ is a degeneration monodromy map or stratified Morse monodromy map as presented in the previous sections. We divide\gls{idim} $\{1, \ldots, N\} = I_d \cup I_m$ into those points of hypersurface degeneration monodromy and stratified Morse values respectively. 

For $i \in I_m$, we let $\vc_i \subset (\mcz, \partial \mcz)$ be the vanishing cycle pulled back along $\gamma_i$ and write $S_i = \{j : \vc_i \cap \divi_j \ne \emptyset \}$ for the set of divisors that intersect the vanishing cycle of $\gamma_i$. Let $K_i$ be a relatively compact neighborhood of $\partial \vc_i$ and $K = \cup_i K_i$. By the discussion following Proposition \ref{prop:vcvt}, we have that $\phi_i$ can be viewed as a symplectomorphism with support in $K_i$. 

For $i \in I_d$, first recall that the facets $\{\partial Q_j\}$ of $Q$ are indexed by $\{1, \ldots, k\}$ and to each facet $\partial Q_J$ there corresponds a divisor $\divi_j$ of $\mathcal{Z}$. Let $\eta_i : Q \to \R$ be the defining function for the stable pair degeneration at $q_i$ and take $S_i = \{j : \eta_i \text{ is not affine on } \partial Q_j\}$. In other words, the degeneration of $\mcz$ at $q_i$ also degenerates $\divi_j$. We take 
\begin{align} R = \{1, \ldots, k \} - \cup_i S_i \end{align} 
and observe that every boundary divisor $\divi_j$ is rigid if $j \in R$, as in Proposition \ref{prop:rigid}. Let\gls{rketa}
\begin{align} \label{eq:rkpi} \R^k_{\eta_i} = \{(r_1, \ldots, r_k) : r_j \in \R,  r_j \in \mathbb{Z} \text{ for } j \not\in S_i \} , 
\end{align}
 and $\frme_{\eta_i} = \{ \boundflow ( \mathbf{r} ) : \mathbf{r} \in \R^k_{\eta_i}\}$. We take\gls{frmpi} $\frme_\pi$ to be the group generated by the subgroups $\frme_{\eta_i}$ over all $i \in I_d$.
 
\begin{eg}
Consider the set $A = \{ (0, 0), (1, 0), (-1, 0), (0, 1)\}$, the universal hypersurface $\hyp{A} \subset \laf{A}$ and the restriction $\pi|_{\hyp{A}} : \hyp{A} \to \secon{A}$. Theorem~\ref{thm:GKZ2} implies that $\secon{A}$ is one dimensional so that its coarse moduli space is $\p^1$. The horizontal boundary of $\hyp{A}$ is the intersection of $\hyp{A}$ with the horizontal boundary of $\laf{A}$ which, by Lemma~\ref{lem:lafhyppart} correspond to the boundary of $Q$. Index the three horizontal boundary divisors of $\hyp{A}$ as $D_1$ for the line segment between $(-1, 0)$ and $(1,0)$, $D_2$ for line segment between $(- 1, 0)$ and $(0,1)$, and $D_3$ for line segment between $( 1, 0)$ and $(0,1)$. Note that $D_2$ and $D_3$ are rigid by Proposition~\ref{prop:rigid}. For a regular value $t \in \secon{A}$, the fiber $(\fib{A}{t}, \partial \fib{A}{t})$ is isomorphic to $\p^1$ with four marked points where $\tilde{D}_2$ and $\tilde{D}_3$ are each a single point and $\tilde{D}_1$ consists of two points.

As is shown in Section~\ref{sec:exampledeg},  $\textnormal{Det} (\pi |_{\fib{A}{t}}) = \{q_0, q_1, q_2\}$ where $q_0, q_2$ are hypersurface degenerations arising from the triangulations 
\begin{align*} T_0 &= \{(\convhull{A - (0,0)}, A - (0,0))\} , \\ T_2 & = \{ (\convhull{A - (1,0)}, A - (1,0)),(\convhull{A - (-1,0)}, A - (-1,0)) \}. \end{align*}
The point $q_1$ is a stratified Morse singularity relative to $\tilde{D}_1$ of codimension $1$. 

Thus $I_d = \{0,2\}$ and $I_m = \{1\}$. The set $S_1$ is simply $\{1\}$ as the vanishing thimble only intersects $D_1$. The set $S_0$ is empty since $T_0$ does not subdivide any boundary component. However, $S_2 = \{1\}$ since $T_2$ defines a degeneration of the divisor $\tilde{D}_1$ into multiple components. Thus $R = \{2,3\}$ in this case and $\R^3_{\eta_i} = \R \oplus \Z^2$ for $i \in \{0,2\}$. The group $\mathbf{T}_\pi$ then consists of all (simultaneous) angular rotations around the points in  $\tilde{D}_1 \subset \fib{A}{t}$ but only full $2\pi$ rotations around $\tilde{D}_2$ and $\tilde{D}_3$. 
\end{eg}
\begin{defn} Let $\pi : \mcy \to \curve$ be a $\partial$-framed Lefschetz pencil. The $\partial$-frame group $\mbF \in \Symp (N_{\partial \mcz} \mcz / \partial \mcz )$ associated to $\pi$ is given by the collection of maps whose restrictions to $\cup_{j \in R} \divi_j$ are contained in the restriction of $\frme_\pi$.
\end{defn}

We note that if every facet on the boundary of $Q$ is corresponds to a degenerated divisor or one with a stratified Morse singularity over some $q_i \in \detl{\pi}$, then $\mbF$ equals $\Symp (N_{\partial \mcz} \mcz / \partial \mcz)$. On the other hand, if $Q$ is simplicial, then $\mbF$ is a discrete subgroup of $\frme$. Ideally, one would like to obtain more control over the $\partial$-framing for the non-rigid boundary components and incorporate this into a formula such as the one in Proposition \ref{prop:frmechnge}, but this is currently not within our sight. However, we may use the results of the previous sections to prove the following proposition.

\begin{prop} \label{prop:flp} If $\pi : \mcy \to D$ is a $\partial$-framed Lefschetz pencil and $q_\infty$ is chosen as above, then there exists a symplectic connection for which the parallel transport map $\partrans$ lifts to
\begin{equation*} \partran : \Omega_q (\curve - \textnormal{Det} (\pi ) - \{q_\infty \} ) \to \Symp^{\mbF} (\mcz , \partial \mcz ) , \end{equation*}
where $\mbF$ is the $\partial$-frame group associated to $\pi$.
\end{prop}
\begin{proof} For every $i \in I_m$, we have, by definition, that the divisors supporting the degenerate point are horizontal with respect to $\tilde{\omega}$. By Proposition~\ref{prop:stratmon}, the monodromy around $q_i$ is Hamiltonian isotopic to a map supported on the relatively compact neighborhood $K_i$. By the definition of $R$, the vanishing cycle $L_i$ associated to $i \in I_m$ is disjoint from $\divi_j$ for all $j \in R$ so we may choose a neighborhood $K_i$ which also is disjoint. Thus the restriction of the map to the framing group $\mbF$ is well defined. Indeed, the monodromy map is the identity on any rigid $\divi_i$.

%NEED TO FINISH THIS EITHER B\mcy REFERENCE TO SECTION 2 OR DETAILED ANALYSIS HERE

\end{proof}

We observe that for $\partial$-frame groups associated to $\partial$-framed Lefschetz pencils, the exact sequence in Proposition \ref{prop:frame} yields the fiber sequence
\begin{equation*} \Symp^{\mbF} (\mcz,\partial \mcz ) \to \Symp^{\relaxed{\mbF}} (\mcz, \partial \mcz ) \to \R^k / \R^k_{\pi}, \end{equation*}
where $\R^k_\pi$ was defined in equation~\eqref{eq:rkpi}.  
Note that the last group is homotopic to $(S^1)^t$ where $t \leq |I_d|$.

Now, write\gls{gampi} $\gamma$ for the concatenation $\gamma_N \circ \cdots
\circ \gamma_1$ which is independent of the distinguished basis.
Let $N (\gamma )$ be the normalizer of $\gamma$ in the group
$\pi_1 ( \curve - \detl{\pi} - \{q_\infty\} )$. We write $F_N$ for
the free group on $N$ letters and
obtain the commutative diagram
\begin{equation*} \begin{CD}  N (\gamma ) @>>> F_N @>>> F_{N - 1} @>>> 1 \\
@V{\varrho}VV @V{[\partran ]}VV @V{[\partrans]}VV @. \\
 \pi_1 ((S^1)^t ) @>{\delta}>> \pi_0
(\Symp^\mbF (\mcz, \partial \mcz)) @>>> \pi_0 ( \Symp^{\relaxed{\mbF}} (\mcz , \partial \mcz)) @>>> 1 \end{CD} .
\end{equation*}
The bottom row of this diagram arises as the long exact sequence of homotopy groups associated to a fiber exact sequence. The top row is the short exact sequence associated to the quotient group. The homomorphism $\varrho$ is uniquely constructed from the commutativity of the remaining portion of the diagram. 

The image of $\gamma$ under $\varrho$ will be of particular importance. One may interpret this image as the amount of rotation around the boundary needed to isotope $\gamma_N \circ \cdots \circ \gamma_1$ to the identity in $\Symp^\mbF (\mcz, \partial \mcz)$. Under the restrictions laid out above, there is an explicit formula for this map.
\begin{prop} \label{prop:frmechnge} If $\pi : \mcy \to \curve$ is a $\partial$-framed Lefschetz pencil, 
then, for every $i \in T$, there exists a section $s_i : \curve_i \to D_i$ such that
\begin{equation*} \varrho (\gamma ) = \sum_{i \in T} a_i e_i
\end{equation*} where $a_i = \int_{\curve_i} s_i^* (c_1 (N_\mcy \divi_i ))$ and $e_i$ is the
loop in $\Symp (N_{\partial \mcz} \mcz / \partial \mcz )$ corresponding to rotation around
$\divi_i$.
\end{prop}
\begin{proof} Fix $q \in \curve$ to be the base point and $\mcz$ its fiber. By definition of rigid boundary divisor, for every $i \in T$,  the restriction of $\pi$ to $D_i$ is trivial over $\curve$ so that there exists an isomorphism $\psi : D_i \cong \tilde{D}_i \times \mcc$ where $\tilde{D}_i = D_i \cap \mcz$. Over the contractible subset $U_0 = \curve - \{q_\infty\}$, we may extend this to an isomorphism of normal bundles $\tilde{\psi}_i : N_{\tilde{D}_i} \mcz \times U_0 \to N_{\tilde{D}_i \cap \pi^{-1} (U_0)} \pi^{-1} (U_0)$.  Likewise, in an open neighborhood $U_1$ of $q_\infty$, we may trivialize $\tilde{\phi}_i : N_{\tilde{D}_i} \mcz \times U_1 \to N_{\tilde{D}_i \cap \pi^{-1} (U_1)} \pi^{-1} (U_1)$. Taking a circle $\delta$ in the intersection $U_0 \cap U_1$, there is a fiberwise transition function between these trivializations. The multiplicative factor of the transition function on the normal bundle restricted to $\delta$ is given by the transition function on $N_{D_i} \mcy$ restricted to $\curve \times \{p\} \subset D_i$. The winding number is given by the Chern number of $s_i^* (N_{D_i} \mcy)$ where $j : \curve \to D_i$ is a section. On a normal neighborhood of $\divi_i$ in $\mcz$, this is the restriction of $\boundflow (\mathbf{x} )$ to $\divi_i$ where $\mathbf{x} = (0, \ldots , 0 , a_i , 0, \ldots , 0)$. Adding these together for each rigid component yields the claim.
\end{proof}
We end this section by defining a subgroup of the framed symplectomorphism group of a hypersurface in a toric stack.

\begin{defn} \label{defn:curvesubgroup} Let $A \subset \Z^d$ satisfy the hypothesis of Theorem \ref{thm:fpsec} and $i: \curve \to \secon{A}$ be an embedded curve. The group\gls{grcurve} $\mathbf{G}_{\curve} = \partran (i_* (\Omega_q (\curve - \curve \cap \mce_A ))) \subset \fib{A}{i (q)}$ will be called the $\curve$ subgroup of $\Symp (\fib{A}{i(q)}, \partial \fib{A}{i(q)})$.
\end{defn}

One of our stated goals is to understand generators and relations for the group $\mcg_A := \partrans (\Omega_q (\secon{A} - \mce_A ) )$. We may reduce the of complexity of this problem by examining $\partial$-framed Lefschetz pencils and their monodromy.

\begin{prop} Assume that $\secon{A}$ does not have generic isotropy. For any embedded $i : \curve \to \secon{A}$ for which the cycle $i_* [\curve]$ is Poincar\'e dual to a very ample divisor, the group $\pi_0 (\mathbf{G}_\curve )$ surjects onto $\pi_0 (\mcg_A )$. \end{prop}

\begin{proof} For a very ample line bundle $\mcl$ with equivariant linear system $V$ we have an embedding on the coarse space $j: X_{\psec{A}} \to \p (V)$. The Lefschetz Hyperplane Theorem gives a surjection from the fundamental group of the curve arising from a linear section of $j (X_{\psec{A}} - \mce_A)$ and that of $\mcx_Q - \mce_A$. But if  $\mcb$ denotes the points with non-trivial isotropy on $\secon{A}$ and $B$ its coarse space, then $\pi_1 (X_{\psec{A}} - \mce_A - B)$ is a surjection onto $\pi_1 (X_{\psec{A}} - \mce_A)$ yielding the result. 
\end{proof}

%\printnoidxglossary[sort=use, title={Index of Notation}]

\bibliographystyle{plain}
\bibliography{basebib8}

\end{document}